\newcounter{treeline}
\renewcommand{\section}{\@startsection
  {section}%
  {1}%
  {0em}%
  {-\baselineskip}%
  {0.5\baselineskip}%
  {\centering\normalfont\Large\scshape\mdseries}}%
\renewcommand{\subsection}{\@startsection
  {subsection}%
  {2}%
  {0em}%
  {-\baselineskip}%
  {0.5\baselineskip}%
  {\normalfont\large\scshape\mdseries}}%
\renewcommand*\env@matrix[1][c]{\hskip -\arraycolsep
  \let\@ifnextchar\new@ifnextchar
  \array{*\c@MaxMatrixCols #1}}
\newenvironment{theopargself*}
    {\def\@spopargbegintheorem##1##2##3##4##5{\trivlist
         \item[\hskip\labelsep{##4##1\ ##2}]{\hspace*{-\labelsep}##4##3\@thmcounterend}##5}
     \def\@Opargbegintheorem##1##2##3##4{##4\trivlist
         \item[\hskip\labelsep{##3##1}]{\hspace*{-\labelsep}##3##2\@thmcounterend}}}{}
\def \@floatboxreset {%
        \reset@font
        \small
        \@setnobreak
        \@setminipage
}
\def\figure{\@float{figure}}
\def\table{\@float{table}}
\def\fps@figure{htbp}
\def\fps@table{htbp}
\renewcommand{\thetable}{\thesection.\arabic{table}}
\theoremstyle{plain}
\newtheorem{theorem}{Theorem}[section]
\newtheorem{proposition}{Proposition}[section]
\newtheorem{corollary}{Corollary}[section]
\newtheorem{lemma}{Lemma}[section]
\newtheorem{definition}{Definition}[section]
\newtheoremstyle{break}
  {9pt}
  {9pt}
  {\itshape}
  {}
  {\bfseries}
  {.}
  {\newline}
  {}
\newtheoremstyle{break1}
  {9pt}
  {9pt}
  {\rmfamily}
  {}
  {\scshape}
  {.}
  {\newline}
  {}
\theoremstyle{break}
\newtheoremstyle{note}
  {3pt}
  {3pt}
  {}
  {}
  {\itshape}
  {:}
  {.5em}
  {\newline}  
  {}
\theoremstyle{note}
\theoremstyle{definition}
\newtheorem{example}{Example}[section]
\theoremstyle{break1}
\begin{document}
\bibliographystyle{plainnat} 
\pdfbookmark[0]{On the Single-Valuedness of the Pre-Kernel}{tit}
\title{{On the Single-Valuedness of the Pre-Kernel} 
}
\author{{\bfseries Holger I. MEINHARDT} 
~\thanks{Holger I. Meinhardt, Institute of Operations Research, Karlsruhe Institute of Technology (KIT), Englerstr. 11, Building: 11.40, D-76128 Karlsruhe. E-mail: \href{mailto:Holger.Meinhardt@wiwi.uni-karlsruhe.de}{Holger.Meinhardt@wiwi.uni-karlsruhe.de}} 
}
\maketitle

\begin{abstract}
Based on results given in the recent book by~\citet{mei:13}, which presents a dual characterization of the pre-kernel by a finite union of solution sets of a family of quadratic and convex objective functions, we could derive some results related to the uniqueness of the pre-kernel. Rather than extending the knowledge of game classes for which the pre-kernel consists of a single point, we apply a different approach. We select a game from an arbitrary game class with a single pre-kernel element satisfying the non-empty interior condition of a payoff equivalence class, and then establish that the set of related and linear independent games which are derived from this pre-kernel point of the default game replicates this point also as its sole pre-kernel element. In the proof we apply results and techniques employed in the above work. Namely, we prove in a first step that the linear mapping of a pre-kernel element into a specific vector subspace of balanced excesses is a singleton. Secondly, that there cannot exist a different and non-transversal vector subspace of balanced excesses in which a linear transformation of a pre-kernel element can be mapped. Furthermore, we establish that on the restricted subset on the game space that is constituted by the convex hull of the default and the set of related games, the pre-kernel correspondence is single-valued, and therefore continuous. Finally, we provide sufficient conditions that preserve the pre-nucleolus property for related games even when the default game has not a single pre-kernel point.\\

\noindent {\bfseries Keywords}: Transferable Utility Game, Pre-Kernel, Uniqueness,
Convex Analysis, Fenchel-Moreau Conjugation, Indirect Function \\

\noindent {\bfseries 2000 Mathematics Subject Classifications}: 90C20, 90C25, 91A12  \\
\noindent {\bfseries JEL Classifications}: C71 
\end{abstract}


\thispagestyle{empty}
\pagebreak

\pagestyle{scrheadings}  \ihead{\empty} \chead{On the Single-Valuedness of the Pre-Kernel} \ohead{\empty}

\section{Introduction}

The coincidence of the kernel with the nucleolus -- that is, the kernel consists of a single point -- is only known for some classes of transferable utility games. In particular, it was established by~\citet{MPSh:72} that for the class of convex games -- introduced by~\citet{Shapley:71} -- the kernel and the nucleolus coincide. Recently,~\citet{getraf:12} were able to extend this result to the class of zero-monotonic almost-convex games. However, for the class of average-convex games, there is only some evidence that both solution concepts coalesce. 

In order to advance our understanding about TU games and game classes which possess an unique pre-kernel element, we propose an alternative approach to investigate this issue while applying results and techniques recently provided in the book by~\citet{mei:13}. There, it was shown that the pre-kernel of the grand coalition can be characterized by a finite union of solution sets of a family of quadratic and convex functions (Theorem 7.3.1). This dual representation of the pre-kernel is based on a Fenchel-Moreau generalized conjugation of the characteristic function. This generalized conjugation was introduced by~\citet{mart:96}, which he called the indirect function. Immediately thereafter, it was~\citet{mes:97} who proved that the pre-kernel can be derived from an over-determined system of non-linear equations. This over-determined system of non-linear equations is equivalent to a minimization problem, whose set of global minima is equal to the pre-kernel set. However, an explicit structural form of the objective function that would allow a better and more comprehensive understanding of the pre-kernel set could not be performed.

The characterization of the pre-kernel set by a finite union of solution sets was possible due to a partition of the domain of the objective function into a finite number of payoff sets. From each payoff vector contained into a particular payoff set the same quadratic and convex function is induced. The collection of all these functions on the domain composes the objective function from which a pre-kernel element can be singled out. Moreover, each payoff set creates a linear mapping that maps payoff vectors into a vector subspace of balanced excesses. Equivalent payoff sets which reflects the same underlying bargaining situation produce the same vector subspace. The vector of balanced excesses generated by a pre-kernel point is contained into the vector subspace spanned by the basis vectors derived from the payoff set that contains this pre-kernel element. In contrast, the vectors of unbalanced excesses induced from the minima of a quadratic function do not belong to their proper vector subspace. An orthogonal projection maps these vectors on this vector subspace of the space of unbalanced excesses (cf.~\citet[Chap. 5-7]{mei:13}). 

From this structure a replication result of a pre-kernel point can be attained. This is due that from the payoff set that contains the selected pre-kernel element, and which satisfies in addition the non-empty interior condition, a null space in the game space can be identified that allows a variation within the game parameter without affecting the pre-kernel properties of this payoff vector. Even though the values of the maximum surpluses have been varied, the set of most effective coalitions remains unaltered by the parameter change. Hence, a set of related games can be determined, which are linear independent, and possess the selected pre-kernel element of the default game as well as a pre-kernel point (cf.~\citet[Sect. 7.6]{mei:13}). In the sequel of this paper, we will establish that the set of related games, which are derived from a default game exhibiting a singleton pre-kernel, must also possess the same unique pre-kernel, and therefore coincides with the pre-nucleolus.  Notice, that these games need not necessarily be convex, average-convex, totally balanced, or zero-monotonic. They could belong to different subclasses of games, however, they must satisfy the non-empty interior condition. Moreover, we show that the pre-kernel correspondence in the game space restricted to the convex hull that is constituted by the extreme points, which are specified by the default and related games, is single-valued, and therefore continuous.   

The structure of the paper is organized as follows: In the Section~\ref{sec:prel} we introduce some basic notations and definitions to investigate the coincidence of the pre-kernel with the pre-nucleolus. Section~\ref{sec:dprk} provides the concept of the indirect function and gives a dual pre-kernel representation in terms of a solution set. In the next step, the notion of lexicographically smallest most effective coalitions is introduced in order to identify payoff equivalence classes on the domain of the objective function from which a pre-kernel element can be determined. Moreover, relevant concepts from~\citet{mei:13} are reconsidered. Section~\ref{sec:siva} studies the uniqueness of the pre-kernel for related games. However, Section~\ref{sec:lhc} investigates the continuity of the pre-kernel correspondence. In Section~\ref{sec:prspn} some sufficient conditions are worked out under which the pre-nucleolus of a default game can preserve the pre-nucleolus property for related games. A few final remarks close the paper.

\section{Some Preliminaries}
\label{sec:prel}
A cooperative game with transferable utility is a pair $\langle N,v \rangle $, where $N$ is the non-empty finite player set $N := \{1,2, \ldots, n\}$, and $v$ is the characteristic function $v: 2^{N} \rightarrow \mathbb{R}$ with $v(\emptyset):=0$. A player $i$ is an element of $N$, and a coalition $S$ is an element of the power set of $2^{N}$. The real number $v(S) \in \mathbb{R}$ is called the value or worth of a coalition $S \in 2^{N}$. Let $S$ be a coalition, the number of members in $S$ will be denoted by $s:=|S|$. We assume throughout that $v(N) > 0$ and $n \ge 2$ is valid. In addition, we identify a cooperative game by the vector $v := (v(S))_{S \subseteq N} \in \mathcal{G}^{n} = \mathbb{R}^{2^{n}}$, if no confusion can arise. Finally, the relevant game space for our investigation is defined by $\mathcal{G}(N) := \{v \in \mathcal{G}^{n}\,\arrowvert\, v(\emptyset) = 0 \land v(N) > 0\}$.  

If $\mathbf{x} \in \mathbb{R}^{n}$, we apply $x(S) := \sum_{k \in S}\, x_{k}$ for every $S \in 2^{N}$ with $x(\emptyset):=0$. The set of vectors $\mathbf{x} \in \mathbb{R}^{n}$ which satisfies the efficiency principle $v(N) = x(N)$ is called the {\bfseries pre-imputation set} and it is defined by 
\begin{equation} 
  \label{eq:pre-imp}
  \mathcal{I}^{\,0}(v):= \left\{\mathbf{x} \in \mathbb{R}^{n} \;\arrowvert\, x(N) = v(N) \right\}, 
\end{equation} 
where an element $\mathbf{x} \in \mathcal{I}^{\,0}(v)$ is called a pre-imputation. 

Given a vector $\mathbf{x} \in \mathcal{I}^{\,0}(v)$, we define the {\bfseries excess} of coalition $S$ with respect to the pre-imputation $\mathbf{x}$ in the game $\langle N,v \rangle $ by 
\begin{equation} 
  \label{eq:exc} 
  e^{v}(S,\mathbf{x}):= v(S) - x(S). 
\end{equation} 

Take a game $v \in \mathcal{G}^{n}$. For any pair of players $i,j \in N, i\neq j$, the {\bfseries maximum surplus} of player $i$ over player $j$ with respect to any pre-imputation $\mathbf{x} \in \mathcal{I}^{\,0}(v)$ is given by the maximum excess at $\mathbf{x}$ over the set of coalitions containing player $i$ but not player $j$, thus\begin{equation} 
  \label{eq:maxexc} 
  s_{ij}(\mathbf{x},v):= \max_{S \in \mathcal{G}_{ij}} e^{v}(S,\mathbf{x}) \qquad\text{where}\;  \mathcal{G}_{ij}:= \{S \;\arrowvert\; i \in S\; \text{and}\; j \notin S \}. 
\end{equation} 
The set of all pre-imputations $\mathbf{x} \in \mathcal{I}^{\,0}(v)$ that balances the maximum surpluses for each distinct pair of players $i,j \in N, i\neq j$ is called the~\hypertarget{hyp:prk}{{\bfseries pre-kernel}} of the game $v$, and is defined by 
  \begin{equation} 
    \label{eq:prek} 
    \mathcal{P\text{\itshape r}K}(v) := \left\{ \mathbf{x} \in \mathcal{I}^{\,0}(v)\; \arrowvert\;  s_{ij}(\mathbf{x},v) = s_{ji}(\mathbf{x},v) \quad\text{for all}\; i,j \in N, i\neq j \right\}. 
  \end{equation} 

In order to define the pre-nucleolus of a game $v \in \mathcal{G}^{n}$, take any $\mathbf{x} \in \mathbb{R}^{n}$ to define a $2^{n}$-tuple vector $\theta(\mathbf{x})$ whose components are the excesses $e^{v}(S,\mathbf{x})$ of the $2^{n}$ coalitions $S \subseteq N$, arranged in decreasing order, that is,
\begin{equation}
 \label{eq:compl_vec}
  \theta_{i}(\mathbf{x}):=e^{v}(S_{i},\mathbf{x}) \ge e^{v}(S_{j},\mathbf{x}) =:\theta_{j}(\mathbf{x}) \qquad\text{if}\qquad 1 \le i \le j \le 2^{n}.
\end{equation}
Ordering the so-called complaint or dissatisfaction vectors $\theta(\mathbf{x})$ for all $\mathbf{x} \in \mathbb{R}^{n}$ by the lexicographic order  $\le_{L}$ on $\mathbb{R}^{n}$, we shall write
\begin{equation}
 \theta(\mathbf{x}) <_{L} \theta(\mathbf{y}) \qquad\text{if}\;\exists\;\text{an integer}\; 1 \le k \le 2^{n},
\end{equation}
such that $\theta_{i}(\mathbf{x}) = \theta_{i}(\mathbf{y})$ for $1 \le i < k$ and $\theta_{k}(\mathbf{x}) < \theta_{k}(\mathbf{y})$. Furthermore, we write $\theta(\mathbf{x}) \le_{L} \theta(\mathbf{y})$ if either $\theta(\mathbf{x}) <_{L} \theta(\mathbf{y})$ or $\theta(\mathbf{x}) = \theta(\mathbf{y})$. Now the pre-nucleolus $\mathcal{P\text{\itshape r}N}(v)$ over the pre-imputations set $\mathcal{I}^{\,0}(v)$ is defined by 
\begin{equation}
 \label{eq:prn_sol}
  \mathcal{P\text{\itshape r}N}(v) = \left\{\mathbf{x} \in \mathcal{I}^{\,0}(v)\; \arrowvert\; \theta(\mathbf{x}) \le_{L} \theta(\mathbf{y}) \;\forall\; \mathbf{y} \in \mathcal{I}^{\,0}(v) \right\}.
\end{equation}
The {\bfseries pre-nucleolus} of any game $v \in \mathcal{G}^{n}$ is non-empty as well as unique, and it is referred to as $\nu(v)$ if the game context is clear from the contents or $\nu(N,v)$ otherwise. 

\section{A Dual Pre-Kernel Representation}
\label{sec:dprk}

The concept of a Fenchel-Moreau generalized conjugation -- also known as the indirect function of a characteristic function game -- was introduced by~\citet{mart:96}, and provides the same information as the $n$-person cooperative game with transferable utility under consideration. This approach was successfully applied in~\citet{mei:13} to give a dual representation of the pre-kernel solution of TU games by means of solution sets of a family of quadratic objective functions. In this section, we review some crucial results extensively studied in~\citet[Chap.~5 \&~6]{mei:13} as the building blocks to investigate the single-valuedness of the pre-kernel correspondence.   

The {\bfseries convex conjugate} or {\bfseries Fenchel transform} $f^{*}: \mathbb{R}^{n} \to \overline{\mathbb{R}}$ (where $\overline{\mathbb{R}} := \mathbb{R} \cup \{ \pm\;\infty\}$) of a convex function $f: \mathbb{R}^{n} \to \overline{\mathbb{R}}$ (cf.~\citet[Section 12]{Rocka:70}) is defined by 
\begin{equation*} 
  f^{*}(\mathbf{x}^{\,*}) = \sup_{\mathbf{x} \in \mathbb{R}^{n}} \{\langle\; \mathbf{x}^{\,*}, \mathbf{x} \;\rangle - f(\mathbf{x})\} \qquad \forall \mathbf{x}^{\,*} \in \mathbb{R}^{n}.
\end{equation*} 
Observe that the Fenchel transform $f^{*}$ is the point-wise supremum of affine functions $p(\mathbf{x}^{\,*}) = \langle\; \mathbf{x}, \mathbf{x}^{\,*} \;\rangle - \mu$ such that $(\mathbf{x},\mu) \in (\CMcal{C} \times \mathbb{R}) \subseteq (\mathbb{R}^{n} \times \mathbb{R})$, whereas $\CMcal{C}$ is a convex set. Thus, the Fenchel transform $f^{*}$ is again a convex function.

We can generalize the definition of a Fenchel transform (cf.~\citet{mart:96}) by introducing a fixed non-empty subset $\CMcal{K}$ of $\mathbb{R}^{n}$, then the conjugate of a function $f: \CMcal{K} \to \overline{\mathbb{R}}$ is $f^{c}: \mathbb{R}^{n} \to \overline{\mathbb{R}}$, given by
\begin{equation*}
  f^{c}(\mathbf{x}^{\,*}) = \sup_{\mathbf{x} \in \CMcal{K}} \{\langle\; \mathbf{x}^{\,*}, \mathbf{x} \;\rangle - f(\mathbf{x})\} \qquad \forall \mathbf{x}^{\,*} \in \mathbb{R}^{n},
\end{equation*}
which is also known as the {\bfseries Fenchel-Moreau conjugation}.

A vector $\mathbf{x}^{\,*}$ is said to be a subgradient of a convex function $f$ at a point $\mathbf{x}$, if
\begin{equation*}
  f(\mathbf{z}) \ge f(\mathbf{x}) + \langle\; \mathbf{x}^{\,*}, \mathbf{z} - \mathbf{x} \;\rangle \qquad\forall \mathbf{z} \in \mathbb{R}^{n}.
\end{equation*}
The set of all subgradients of $f$ at $\mathbf{x}$ is called the subdifferentiable of $f$ at $\mathbf{x}$ and it is defined by
\begin{equation*}
  \partial f(\mathbf{x}):= \{ \mathbf{x}^{\,*} \in \mathbb{R}^{n}\;\arrowvert\; f(\mathbf{z}) \ge f(\mathbf{x}) + \langle\; \mathbf{x}^{\,*}, \mathbf{z} - \mathbf{x} \;\rangle \quad (\forall \mathbf{z} \in \mathbb{R}^{n})\}.
\end{equation*}
The set of all subgradients $\partial f(\mathbf{x})$ is a closed convex set, which could be empty or may consist of just one point. The multivalued mapping $\partial f: \mathbf{x} \mapsto \partial f(\mathbf{x})$ is called the subdifferential of $f$.

\begin{theorem}[\citet{mart:96}]
\label{th:mart7}
The indirect function $\pi: \mathbb{R}^{n} \to \mathbb{R}$ of any $n$-person TU game is a non-increasing polyhedral convex function such that
\begin{itemize}
\item[(i)] $\partial{\pi(\mathbf{x})}{} \cap \{-1, 0\}^{n} \neq \emptyset \qquad\forall \mathbf{x} \in \mathbb{R}^{n}$,
\item[(ii)] $ \{-1,0\}^{n} \subset \bigcup_{\mathbf{x} \in \mathbb{R}^{n}} \partial{\pi(\mathbf{x})}{}$, and
\item[(iii)] $\min_{\mathbf{x} \in \mathbb{R}^{n}}\; \pi(\mathbf{x}) = 0$.
\end{itemize}
Conversely, if $\pi: \mathbb{R}^{n} \to \mathbb{R}$ satisfies $(i)$-$(iii)$ then there exists an unique $n$-person TU game $\langle N,v \rangle$ having $\pi$ as its indirect function, its characteristic function is given by
\begin{equation}
\label{eq:mart15}
v(S) = \min_{\mathbf{x} \in \mathbb{R}^{n}}\bigg\{\pi(\mathbf{x}) + \sum_{k \in S}\; x_{k}\bigg\} \qquad\forall\; S \subset N.
\end{equation}
\end{theorem}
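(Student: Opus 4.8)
The plan is to treat the indirect function concretely as the Fenchel--Moreau conjugate of the coalitional data, i.e. $\pi(\mathbf{x}) = \max_{S \subseteq N}\{v(S) - x(S)\}$, and to read off each of the four properties directly from this max-of-affine representation. For the converse I would invoke the biconjugation identity, with the combinatorial conditions $(i)$--$(iii)$ supplying precisely the information needed to reconstruct $v$ from $\pi$ and to guarantee that no information is lost.

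For the forward direction, first I would note that $\pi$ is a pointwise maximum of the $2^{n}$ affine functions $\mathbf{x} \mapsto v(S) - x(S)$, hence polyhedral convex and finite-valued; since each piece has gradient $-\mathbf{1}_{S} \le 0$ (writing $\mathbf{1}_{S}$ for the indicator vector of $S$), the maximum is non-increasing in every coordinate. Property $(iii)$ follows because the piece $S=\emptyset$ contributes the constant $v(\emptyset)=0$, so $\pi \ge 0$ everywhere, while sending every coordinate to $+\infty$ drives all non-empty pieces to $-\infty$ and forces $\pi=0$; thus $\min \pi = 0$. For the subdifferential claims I would use the standard identity $\partial\pi(\mathbf{x}) = \mathrm{conv}\{-\mathbf{1}_{S} : S \in \mathcal{A}(\mathbf{x})\}$, where $\mathcal{A}(\mathbf{x})$ is the (always non-empty) set of maximizing coalitions. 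Non-emptiness of $\mathcal{A}(\mathbf{x})$ immediately yields $(i)$, and for $(ii)$ I would exhibit, for each fixed $S$, a payoff vector making $S$ the \emph{unique} maximizer — e.g. $x_{k} = -M$ for $k \in S$ and $x_{k} = +M$ otherwise, with $M > \max_{T}|v(T)-v(S)|$ — so that $\partial\pi(\mathbf{x}) = \{-\mathbf{1}_{S}\}$ and every element of $\{-1,0\}^{n}$ is realized.

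For the converse, assume $(i)$--$(iii)$ and set $v(S) := \min_{\mathbf{x}}\{\pi(\mathbf{x}) + x(S)\}$. I would first check the minimum is attained: the subdifferential of $\mathbf{x} \mapsto \pi(\mathbf{x}) + x(S)$ is $\partial\pi(\mathbf{x}) + \mathbf{1}_{S}$, so a minimizer exists as soon as $-\mathbf{1}_{S} \in \partial\pi(\mathbf{x})$ for some $\mathbf{x}$, which is exactly what $(ii)$ guarantees; moreover $v(\emptyset) = \min_{\mathbf{x}}\pi(\mathbf{x}) = 0$ by $(iii)$, so $v$ is a genuine game. It then remains to show that the indirect function $\tilde\pi$ of this $v$ equals $\pi$. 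The inequality $\tilde\pi \le \pi$ is routine: from $v(S) \le \pi(\mathbf{x}) + x(S)$ one gets $v(S) - x(S) \le \pi(\mathbf{x})$ for every $S$, and one takes the maximum. The reverse inequality is the crux and is where $(i)$ enters decisively: given $\mathbf{x}$, condition $(i)$ furnishes a coalition $S$ with $-\mathbf{1}_{S} \in \partial\pi(\mathbf{x})$, and the subgradient inequality $\pi(\mathbf{z}) \ge \pi(\mathbf{x}) - z(S) + x(S)$ rearranges to $\pi(\mathbf{z}) + z(S) \ge \pi(\mathbf{x}) + x(S)$ for all $\mathbf{z}$; hence $v(S) = \pi(\mathbf{x}) + x(S)$, i.e. $v(S) - x(S) = \pi(\mathbf{x})$, so $\tilde\pi(\mathbf{x}) \ge \pi(\mathbf{x})$. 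Uniqueness is then immediate, since any game with indirect function $\pi$ must obey the same recovery identity $v(S) = \min_{\mathbf{x}}\{\pi(\mathbf{x})+x(S)\}$ and is thereby determined.

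I expect the main obstacle to be precisely this reverse biconjugation inequality. It is not enough that $\pi$ be a non-increasing polyhedral convex function: one needs the integrality condition $(i)$ — that at every point some subgradient lies in $\{-1,0\}^{n}$ — to ensure that the recovered value $v(S)$ is actually \emph{active} at $\mathbf{x}$ and hence reproduces $\pi(\mathbf{x})$ rather than merely bounding it from below. Translating $(i)$--$(iii)$ into the statement that $\pi$ coincides with its own biconjugate relative to the coalitional coupling is the step carrying the genuine content; the remaining verifications are bookkeeping on the max-of-affine structure.
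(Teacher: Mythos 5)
Your proposal is correct, but there is nothing in the paper to compare it against: the paper states this theorem as an imported result of \citet{mart:96} and gives no proof of it at all, so any correct argument is necessarily ``a different route.'' Your argument is a sound, self-contained derivation. The forward direction via the max-of-affine representation is exactly right: polyhedral convexity and monotonicity follow from the pieces' gradients $-\mathbf{1}_{S}\le 0$; the piece $S=\emptyset$ (using $v(\emptyset)=0$) gives $\pi\ge 0$, and pushing all coordinates to $+\infty$ gives attainment of $0$, so $(iii)$ holds; the active-set formula $\partial\pi(\mathbf{x})=\mathrm{conv}\{-\mathbf{1}_{S}\,:\,S\ \text{active at}\ \mathbf{x}\}$ delivers $(i)$, and your large-$M$ construction does make $S$ the unique maximizer (both cases $S\subseteq T$, $T\neq S$, and $S\not\subseteq T$ lose at least $M$ relative to $S$), so $(ii)$ holds. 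The converse is also handled correctly, and you put the weight in the right place: attainment of $\min_{\mathbf{x}}\{\pi(\mathbf{x})+x(S)\}$ follows from $(ii)$ because $-\mathbf{1}_{S}\in\partial\pi(\mathbf{x}_{0})$ gives $\mathbf{0}\in\partial\bigl(\pi+\langle\mathbf{1}_{S},\cdot\rangle\bigr)(\mathbf{x}_{0})$; the inequality $\tilde{\pi}\le\pi$ is trivial; and $(i)$ plus the subgradient inequality shows the recovered value $v(S)$ is active at $\mathbf{x}$, yielding $\tilde{\pi}(\mathbf{x})\ge\pi(\mathbf{x})$. One small point worth tightening: your uniqueness step (``any game with indirect function $\pi$ obeys the same recovery identity'') is an assertion as written; it needs the forward-direction fact that for any game $w$ with indirect function $\pi$ and any coalition $S$ there is a point where $S$ is active, which is precisely your $(ii)$ construction applied to $w$ — make that dependence explicit and the proof is complete.
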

According to the above result, the associated {\bfseries indirect function} $\pi: \mathbb{R}^{n}\to \mathbb{R}_{+}$ is given by: $$ \pi(\mathbf{x}) = \max_{S \subseteq N}\, \bigg\{v(S) - \sum_{k \in S}\;x_{k}\bigg\},$$ for all $\mathbf{x}\in\mathbb{R}^{n}$. A characterization of the pre-kernel in terms of the indirect function is due to~\citet{mes:97}. Here, we present this representation in its most general form, although we restrict ourselves to the trivial coalition structure $\mathcal{B}=\{N\}$.  

The pre-imputation that comprises the possibility of compensation between a pair of players $i, j \in N, i \neq j$, is denoted as $\mathbf{x}^{\;i,j,\delta} = (x^{\;i,j,\delta}_{k})_{k \in N}\in \mathcal{I}^{\,0}(v)$, with $\delta \ge 0$, which is given by
\begin{equation*}
  \mathbf{x}^{\;i,j,\delta}_{N\backslash\{i,j\}} = \mathbf{x}_{N\backslash\{\;i,j\}},\; x^{i,j,\delta}_{i} = x_{i} - \delta\quad\text{and}\quad x^{\;i,j,\delta}_{j} = x_{j} + \delta.
\end{equation*}

\begin{proposition}[\citet{mes:97,mei:13}]
\label{prop:mese1}
For a TU game with indirect function $\pi$, a pre-imputation $\mathbf{x} \in \mathcal{I}^{\,0}(v)$ is in the pre-kernel of $\langle N,v \rangle$ for the coalition structure $\mathcal{B} = \{B_{1}, \ldots, B_{l} \}$, $\mathbf{x} \in \mathcal{P\text{\itshape r}K}(v,\mathcal{B})$, if, and only if, for every $k \in \{1,2, \ldots, l \}$, every $i,j \in B_{k},\; i < j$, and some $\delta \ge \delta_{1}(v,\mathbf{x})$, one gets
\begin{equation*}
 \pi(\mathbf{x}^{\;i,j,\delta}) = \pi(\mathbf{x}^{\;j,i,\delta}).
\end{equation*}
whereas $\delta_{1}(\mathbf{x},v) := \max_{k \in N, S \subset N\backslash\{k\}}\; |v(S \cup \{k\}) - v(S) - x_{k}|$. 
\end{proposition}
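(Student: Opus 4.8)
The plan is to rewrite the indirect function as $\pi(\mathbf{x}) = \max_{S \subseteq N} e^{v}(S,\mathbf{x})$ and then to track exactly how the two-sided transfer $\mathbf{x} \mapsto \mathbf{x}^{i,j,\delta}$ acts on the individual excesses. Since $x^{i,j,\delta}(S) = x(S) - \delta$ for every $S \in \mathcal{G}_{ij}$, $x^{i,j,\delta}(S) = x(S) + \delta$ for every $S \in \mathcal{G}_{ji}$, and $x^{i,j,\delta}(S) = x(S)$ for every coalition containing both or neither of $i,j$, the excesses satisfy $e^{v}(S,\mathbf{x}^{i,j,\delta}) = e^{v}(S,\mathbf{x}) + \delta$ on $\mathcal{G}_{ij}$, $e^{v}(S,\mathbf{x}^{i,j,\delta}) = e^{v}(S,\mathbf{x}) - \delta$ on $\mathcal{G}_{ji}$, and remain unchanged otherwise. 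Writing $A := s_{ij}(\mathbf{x},v)$, $B := s_{ji}(\mathbf{x},v)$, and $C := \max e^{v}(S,\mathbf{x})$ taken over all $S$ containing both or neither of $i,j$, this gives the clean expression $\pi(\mathbf{x}^{i,j,\delta}) = \max\{A+\delta,\,B-\delta,\,C\}$, and symmetrically $\pi(\mathbf{x}^{j,i,\delta}) = \max\{B+\delta,\,A-\delta,\,C\}$.

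The decisive step is to show that once $\delta \ge \delta_{1}(v,\mathbf{x})$ the first term dominates, i.e. $\pi(\mathbf{x}^{i,j,\delta}) = A + \delta$. Here I would exploit that $\delta_{1}$ is precisely the largest marginal excess gap $|e^{v}(S\cup\{k\},\mathbf{x}) - e^{v}(S,\mathbf{x})|$ produced by inserting or deleting a single player, since $v(S\cup\{k\}) - v(S) - x_{k} = e^{v}(S\cup\{k\},\mathbf{x}) - e^{v}(S,\mathbf{x})$. Starting from a coalition attaining $C$ and either adjoining $i$ (when it contains neither) or deleting $j$ (when it contains both) produces a coalition in $\mathcal{G}_{ij}$ whose excess drops by at most $\delta_{1}$, whence $A \ge C - \delta_{1}$; starting from a coalition attaining $B$, deleting $j$ and then adjoining $i$ --- a two-step move --- yields a coalition in $\mathcal{G}_{ij}$ with $A \ge B - 2\delta_{1}$. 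Consequently $A + \delta \ge C$ and $A + \delta \ge B - \delta$ hold for every $\delta \ge \delta_{1}$, so that $\pi(\mathbf{x}^{i,j,\delta}) = A + \delta = s_{ij}(\mathbf{x},v) + \delta$ and, by the same reasoning with the roles of $i$ and $j$ exchanged, $\pi(\mathbf{x}^{j,i,\delta}) = s_{ji}(\mathbf{x},v) + \delta$.

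With these two identities the equivalence follows at once: for every $\delta \ge \delta_{1}(v,\mathbf{x})$ one has $\pi(\mathbf{x}^{i,j,\delta}) - \pi(\mathbf{x}^{j,i,\delta}) = s_{ij}(\mathbf{x},v) - s_{ji}(\mathbf{x},v)$, a quantity independent of $\delta$. Hence $\pi(\mathbf{x}^{i,j,\delta}) = \pi(\mathbf{x}^{j,i,\delta})$ holds for some $\delta \ge \delta_{1}(v,\mathbf{x})$ if and only if it holds for all such $\delta$, and this is in turn equivalent to $s_{ij}(\mathbf{x},v) = s_{ji}(\mathbf{x},v)$. Running this pairwise argument over all $i,j$ with $i<j$ lying in a common block $B_{k}$ of $\mathcal{B}$ --- and noting that the transfer $\mathbf{x}^{i,j,\delta}$ leaves $x(B_{k})$ fixed, hence preserves membership in the coalition-structure pre-imputation set --- reproduces exactly the defining balance conditions of $\mathcal{P\text{\itshape r}K}(v,\mathcal{B})$, with the trivial structure $\mathcal{B}=\{N\}$ as the case of interest here.

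I expect the main obstacle to be the calibration of the threshold $\delta_{1}$: one must verify that a single value is simultaneously large enough to absorb both the single-player deficit $C - A$ and, more delicately, the two-player deficit $B - A$ that arises because reaching $\mathcal{G}_{ij}$ from a maximizer of $s_{ji}$ forces one to swap $i$ in and $j$ out. Once the bounds $A \ge C - \delta_{1}$ and $A \ge B - 2\delta_{1}$ are in hand the domination is exact even at the boundary $\delta = \delta_{1}$, so ties among competing maximizers never disturb the argument; it is precisely the resulting independence of $\pi(\mathbf{x}^{i,j,\delta}) - \pi(\mathbf{x}^{j,i,\delta})$ from $\delta$ that makes the ``for some $\delta$'' formulation in the statement legitimate.
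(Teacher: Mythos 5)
The paper offers no proof of Proposition~\ref{prop:mese1} at all---it is imported verbatim from \citet{mes:97} (see also \citet{mei:13})---so there is no internal argument to compare against, and your proposal must stand on its own, which it does: it is correct and is essentially the standard argument behind the cited result. In your notation, the decomposition $\pi(\mathbf{x}^{\;i,j,\delta}) = \max\{A+\delta,\,B-\delta,\,C\}$, combined with the single-move bound $A \ge C - \delta_{1}$ and the two-move bound $A \ge B - 2\delta_{1}$ (both legitimate because $v(S\cup\{k\})-v(S)-x_{k} = e^{v}(S\cup\{k\},\mathbf{x})-e^{v}(S,\mathbf{x})$ is bounded in absolute value by $\delta_{1}$), gives $\pi(\mathbf{x}^{\;i,j,\delta}) = s_{ij}(\mathbf{x},v)+\delta$ for every $\delta \ge \delta_{1}(\mathbf{x},v)$, whence $\pi(\mathbf{x}^{\;i,j,\delta})-\pi(\mathbf{x}^{\;j,i,\delta}) = s_{ij}(\mathbf{x},v)-s_{ji}(\mathbf{x},v)$ independently of $\delta$, and the stated equivalence (including the legitimacy of the ``for some $\delta$'' phrasing) follows exactly as you conclude.
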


\citet{mes:97} was the first who recognized that based on the result of Proposition~\ref{prop:mese1} a pre-kernel element can be derived as a solution of an over-determined system of non-linear equations. For the trivial coalition structure $\mathcal{B} = \{N\}$ the over-determined system of non-linear equations is given by  
\begin{equation}
  \label{eq:fij} 
  \begin{cases}
    f_{ij}(\mathbf{x}) = 0 & \forall i,j \in N, i < j\\[.5em]
    f_{0}(\mathbf{x}) = 0 
  \end{cases}
\end{equation}
where, for some $\delta \ge \delta_{1}(\mathbf{x},v)$, 
\begin{equation*}
  f_{ij}(\mathbf{x}) := \pi(\mathbf{x}^{\;i,j,\delta}) - \pi(\mathbf{x}^{\;j,i,\delta}) \qquad\forall i,j \in N,i<j,\tag{\ref{eq:fij}-a}
\end{equation*} 
and 
\begin{equation*}
  f_{0}(\mathbf{x}) := \sum_{k \in N}\; x_{k} - v(N).\tag{\ref{eq:fij}-b}
\end{equation*}
To any over-determined system an equivalent minimization problem is associated such that the set of global minima coincides with the solution set of the system (cf.~\citet[Sec. 5.3]{mei:13}). The solution set of such a minimization problem is the set of values for $\mathbf{x}$ which minimizes the following function 
 \begin{equation}
    \label{eq:objfh}
  h(\mathbf{x}) := \sum_{\substack{i,j \in N\\ i < j}}\; (f_{ij}(\mathbf{x}))^2 + (f_{0}(\mathbf{x}))^2 \ge 0 \qquad\;\forall\,\mathbf{x} \in \mathbb{R}^{n}.
\end{equation}
As we will notice in the sequel, this optimization problem is equivalent to a least squares adjustment. For further details see~\citet[Chap. 6]{mei:13}. From the existence of the pre-kernel and objective function $h$ of type~\eqref{eq:objfh}, we get the following relation:

\begin{corollary}[\citet{mei:13}]
 \label{cor:rep}
  For a TU game $\langle N,v \rangle$ with indirect function $\pi$, it holds that 
  \begin{equation}
    \label{eq:prkbyh}
    h(\mathbf{x}) = \sum_{\substack{i,j \in N \\ i < j}}\; (f_{ij}(\mathbf{x}))^2 + (f_{0}(\mathbf{x}))^2 = \min_{\mathbf{y} \in \mathcal{I}^{0}(v)}\; h(\mathbf{y}) = 0,
  \end{equation}
if, and only if, $\mathbf{x} \in \mathcal{P\text{\itshape r}K}(v)$.
\end{corollary}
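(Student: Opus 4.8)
The plan is to derive the corollary directly from the sum-of-squares structure of $h$ together with the pre-kernel characterization supplied by Proposition~\ref{prop:mese1}, reducing everything to two elementary observations and one appeal to non-emptiness of the solution set. No minimization machinery is actually needed beyond the fact that $h\ge 0$.

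First I would record the purely algebraic fact that, since $h$ is a finite sum of squares of the real-valued functions $f_{ij}$ and $f_{0}$, we have $h(\mathbf{x}) = 0$ if and only if every summand vanishes, i.e. $f_{ij}(\mathbf{x}) = 0$ for all $i,j \in N$ with $i<j$ and $f_{0}(\mathbf{x}) = 0$. Next I would translate these vanishing conditions into the defining properties of a pre-kernel point. The condition $f_{0}(\mathbf{x}) = 0$ is, by the definition of $f_{0}$ in~\eqref{eq:fij}, precisely efficiency $x(N) = v(N)$, that is $\mathbf{x} \in \mathcal{I}^{0}(v)$. For an admissible $\delta \ge \delta_{1}(\mathbf{x},v)$, the condition $f_{ij}(\mathbf{x}) = 0$ reads $\pi(\mathbf{x}^{\;i,j,\delta}) = \pi(\mathbf{x}^{\;j,i,\delta})$ by the definition of $f_{ij}$ in~\eqref{eq:fij}. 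Applying Proposition~\ref{prop:mese1} to the trivial coalition structure $\mathcal{B}=\{N\}$, the simultaneous validity of these equalities for all pairs $i<j$, together with $\mathbf{x}\in\mathcal{I}^{0}(v)$, is equivalent to $\mathbf{x} \in \mathcal{P\text{\itshape r}K}(v)$. Chaining the two observations yields the equivalence $h(\mathbf{x}) = 0 \iff \mathbf{x}\in\mathcal{P\text{\itshape r}K}(v)$, which carries both implications of the corollary once the minimality clause is settled.

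It then remains to identify $0$ as the minimum value. Because the pre-kernel is non-empty (a standard fact, the pre-nucleolus $\nu(v)$ always existing and lying in $\mathcal{P\text{\itshape r}K}(v)$), there is at least one point at which $h$ vanishes; combined with $h\ge 0$ on all of $\mathbb{R}^{n}$, this forces $\min_{\mathbf{y}\in\mathcal{I}^{0}(v)} h(\mathbf{y}) = 0$ and the minimum to be attained. Consequently, if $\mathbf{x}\in\mathcal{P\text{\itshape r}K}(v)$ the equivalence gives $h(\mathbf{x}) = 0 = \min_{\mathbf{y}\in\mathcal{I}^{0}(v)} h(\mathbf{y})$, while conversely $h(\mathbf{x}) = \min_{\mathbf{y}\in\mathcal{I}^{0}(v)} h(\mathbf{y}) = 0$ returns $\mathbf{x}\in\mathcal{P\text{\itshape r}K}(v)$, completing both directions.

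I expect no serious obstacle; the only point demanding care is the role of the parameter $\delta$. The functions $f_{ij}$ are defined only for $\delta \ge \delta_{1}(\mathbf{x},v)$, so one must check that this threshold is compatible with the one appearing in Proposition~\ref{prop:mese1}, ensuring that the equality $\pi(\mathbf{x}^{\;i,j,\delta}) = \pi(\mathbf{x}^{\;j,i,\delta})$ genuinely encodes the balancing of maximum surpluses $s_{ij}(\mathbf{x},v) = s_{ji}(\mathbf{x},v)$ and does so independently of the admissible choice of $\delta$. This stabilization for sufficiently large $\delta$ is exactly what the threshold $\delta_{1}(\mathbf{x},v)$ guarantees and is already built into the cited proposition, so it can be invoked rather than re-established.
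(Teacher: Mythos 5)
Your proposal is correct and follows essentially the same route as the paper's own proof: both reduce the statement to the vanishing of each square $f_{ij}$ and $f_{0}$, translate $f_{0}(\mathbf{x})=0$ into efficiency and $f_{ij}(\mathbf{x})=0$ into balanced maximum surpluses via Proposition~\ref{prop:mese1}, and settle the minimality clause by exhibiting that $h\ge 0$ attains the value $0$. The only (immaterial) difference is that you justify $\min_{\mathbf{y}\in\mathcal{I}^{0}(v)}h(\mathbf{y})=0$ by the non-emptiness of the pre-kernel through the pre-nucleolus, whereas the paper cites Theorem~\ref{th:mart7} for this fact; your explicit remark on the $\delta$-threshold is a welcome precision that the paper leaves implicit in Proposition~\ref{prop:mese1}.
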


\begin{proof}
To establish the equivalence between the pre-kernel set and the set of global minima, we have to notice that in view of Theorem~\ref{th:mart7} $\min_{\mathbf{y}} h = 0$ is in force. Now, we prove necessity while taking a pre-kernel element, i.e.~$\mathbf{x} \in \mathcal{P\text{\itshape r}K}(v)$, then the efficiency property is satisfied with $f_{0}(\mathbf{x}) = 0$ and the maximum surpluses $s_{ij}(\mathbf{x}, v)$ must be balanced for each distinct pair of players $i,j$, implying that $f_{ij}(\mathbf{x}) = 0$ for all $i,j \in N, i < j$ and therefore $h(\mathbf{x}) = 0$. Thus, we are getting $\mathbf{x} \in M(h)$. To prove sufficiency, assume that $\mathbf{x} \in M(h)$, then $h(\mathbf{x}) = 0$ with the implication that the efficiency property $f_{0}(\mathbf{x}) = 0$ and $f_{ij}(\mathbf{x}) = 0$ must be valid for all $i,j \in N, i < j$. This means that the difference $f_{ij}(\mathbf{x}) = (\pi(\mathbf{x}^{i,j,\delta}) - \pi(\mathbf{x}^{j,i,\delta}))$  is equalized for each distinct pair of indices $i,j \in N, i < j$. Thus, $\mathbf{x} \in \mathcal{P\text{\itshape r}K}(v)$. It turns out that the minimum set coincides with the pre-kernel, i.e., we have:
\begin{equation}
 \label{eq:prk}
  M(h) = \{\mathbf{x} \in \mathcal{I}^{\,0}(v)\,\arrowvert\; h(\mathbf{x}) = 0 \} = \mathcal{P\text{\itshape r}K}(v), 
\end{equation}
with this argument we are done.
\end{proof}

To understand the structural form of the objective function $h$, we will first identify equivalence relations on its domain. To start with, we define the set of {\bfseries most effective} or {\bfseries significant coalitions} for each pair of players $i,j \in N, i \neq j$ at the payoff vector $\mathbf{x}$ by
\begin{equation}
  \label{eq:bsc_ij}
  \mathcal{C}_{ij}(\mathbf{x}):=\{S \in \mathcal{G}_{ij}\,\arrowvert\, s_{ij}(\mathbf{x},v) = e^{v}(S,\mathbf{x}) \}.
\end{equation}
When we gather for all pair of players $i,j \in N, i \neq j$ all these coalitions that support the claim of a specific player over some other players, we  have to consider the concept of the collection of most effective or significant coalitions w.r.t. $\mathbf{x}$, which we define as in~\citet[p. 315]{MPSh:79} by
\begin{equation}
  \label{eq:bsc}
  \mathcal{C}(\mathbf{x}) := \bigcup_{\substack{i,j \in N \\ i \neq j} } \; \mathcal{C}_{ij}(\mathbf{x}).
\end{equation}
Notice that the set $\mathcal{C}_{ij}(\mathbf{x})$ for all $i,j \in N, i \neq j$ does not have cardinality one, which is required to identify a partition on the domain of function $h$. Now let us choose for each pair $i,j \in N, i \neq j$ a descending ordering on the set of most effective coalitions in accordance with their size, and within such a collection of most effective coalitions having smallest size the lexicographical minimum is singled out, then we obtain the required uniqueness to partition the domain of $h$. This set is denoted by $\mathcal{S}_{ij}(\mathbf{x})$ for all pairs $i,j \in N, i \neq j$, and gathering all these collections we are able to specify the set of lexicographically smallest most effective coalitions w.r.t. $\mathbf{x}$ through
\begin{equation}
  \label{eq:mec}
  \mathcal{S}(\mathbf{x}) := \{ \mathcal{S}_{ij}(\mathbf{x}) \,\arrowvert\, i,j \in N, i \neq j \}.
\end{equation}
This set will be indicated in short as the set of {\bfseries lexicographically smallest coalitions} or just more succinctly {\bfseries most effective coalitions} whenever no confusion can arise. Notice that this set is never empty and can uniquely be identified.  This implies that the cardinality of this set is equal to $n \cdot (n-1)$. In the following we will observe that from these type of sets equivalence relations on the domain $dom\, h$ can be identified.

To see this, consider the correspondence $\mathcal{S}$ on $dom\, h$ and two different vectors, say $\mathbf{x}$ and $\vec{\gamma}$, then both vectors are said to be equivalent w.r.t. the binary relation $\sim$ if, and only if, they induce the same set of lexicographically smallest coalitions, that is, $\mathbf{x} \sim \vec{\gamma}$ if, and only if, $\mathcal{S}(\mathbf{x}) = \mathcal{S}(\vec{\gamma})$. In case that the binary relation $\sim$ is reflexive, symmetric and transitive, then it is an {\bfseries equivalence relation} and it induces {\bfseries equivalence classes} $[\vec{\gamma}]$ on $dom\, h$ which we define through $[ \vec{\gamma} ] := \{ \mathbf{x} \in dom\;h \;\arrowvert \mathbf{x} \sim \vec{\gamma}\}$. Thus, if $\mathbf{x} \sim \vec{\gamma}$, then $[\mathbf{x}] = [\vec{\gamma}]$, and if $\mathbf{x} \nsim \vec{\gamma}$, then $[\mathbf{x}] \cap [\vec{\gamma}] = \emptyset$. This implies that whenever the binary relation $\sim$ induces equivalence classes $[\vec{\gamma}]$ on $dom\, h$, then it partitions the domain $dom\, h$ of the function $h$. The resulting collection of equivalence classes $[\vec{\gamma}]$ on $dom\, h$ is called the quotient of $dom\, h$ modulo $\sim$, and we denote this collection by $dom\, h/\sim$. We indicate this set as an equivalence class whenever the context is clear, otherwise we apply the term payoff set or payoff equivalence class. 

\begin{proposition}[\citet{mei:13}]
  \label{prop:eq_rel}
The binary relation $\sim$ on the set $dom\, h$ defined by $\mathbf{x} \sim \vec{\gamma} \iff \mathcal{S}(\mathbf{x}) = \mathcal{S}(\vec{\gamma})$ is an equivalence relation, which forms a partition of the set $dom\, h$ by the collection of equivalence classes $\{[\vec{\gamma}_{k}]\}_{k \in J}$, where $J$ is an arbitrary index set. Furthermore, for all $k \in J$, the induced equivalence class $[\vec{\gamma}_{k}]$ is a convex set . 
\end{proposition}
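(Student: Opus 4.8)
The plan is to separate the two assertions. That $\sim$ is an equivalence relation is essentially formal: it is defined as the kernel of the map $\mathbf{x} \mapsto \mathcal{S}(\mathbf{x})$, that is, $\mathbf{x} \sim \vec{\gamma} \iff \mathcal{S}(\mathbf{x}) = \mathcal{S}(\vec{\gamma})$. Since equality of the set-valued objects $\mathcal{S}(\cdot)$ is reflexive, symmetric and transitive, these three properties transfer verbatim to $\sim$; consequently $\sim$ partitions $dom\, h$ into the classes $[\vec{\gamma}_{k}]$ by the standard correspondence between equivalence relations and partitions. I would dispatch this in one or two lines. All the substance lies in proving that each class $[\vec{\gamma}_{k}]$ is convex, so I focus the argument there.

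For convexity, I fix a class with representative $\vec{\gamma}$, take $\mathbf{x}, \mathbf{y} \in [\vec{\gamma}]$ and set $\mathbf{z}_{\lambda} := \lambda \mathbf{x} + (1-\lambda)\mathbf{y}$ for $\lambda \in (0,1)$; I must show $\mathcal{S}(\mathbf{z}_{\lambda}) = \mathcal{S}(\vec{\gamma})$. The crucial structural fact is that for each fixed coalition $S$ the excess $e^{v}(S,\cdot) = v(S) - x(S)$ is an affine function of the payoff vector, so $e^{v}(S,\mathbf{z}_{\lambda}) = \lambda\, e^{v}(S,\mathbf{x}) + (1-\lambda)\, e^{v}(S,\mathbf{y})$, and the maximum surplus $s_{ij}(\cdot,v) = \max_{S \in \mathcal{G}_{ij}} e^{v}(S,\cdot)$ is a pointwise maximum of affine functions. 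Fixing a pair $i,j$, let $S^{\ast} := \mathcal{S}_{ij}(\mathbf{x}) = \mathcal{S}_{ij}(\mathbf{y})$ be the common lexicographically smallest most effective coalition. Since $S^{\ast}$ attains $s_{ij}$ at both endpoints, averaging the two maximality inequalities shows that $S^{\ast}$ still attains $s_{ij}(\mathbf{z}_{\lambda},v)$, hence $S^{\ast} \in \mathcal{C}_{ij}(\mathbf{z}_{\lambda})$. More precisely, writing the gap $s_{ij}(\mathbf{z}_{\lambda},v) - e^{v}(S,\mathbf{z}_{\lambda})$ as a strict convex combination of the two nonnegative endpoint gaps, I obtain for $\lambda \in (0,1)$ the exact identity $\mathcal{C}_{ij}(\mathbf{z}_{\lambda}) = \mathcal{C}_{ij}(\mathbf{x}) \cap \mathcal{C}_{ij}(\mathbf{y})$: a coalition is most effective at $\mathbf{z}_{\lambda}$ precisely when it is most effective at both endpoints.

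The step I expect to be the main obstacle is passing from the full collections $\mathcal{C}_{ij}$ to the selected representatives $\mathcal{S}_{ij}$. The equivalence class is defined through the lexicographically least smallest-size coalitions, not through the full sets of most effective coalitions, and indeed $\mathcal{C}_{ij}(\mathbf{x})$ and $\mathcal{C}_{ij}(\mathbf{y})$ may genuinely differ even when $\mathcal{S}_{ij}(\mathbf{x}) = \mathcal{S}_{ij}(\mathbf{y})$; along the open segment the collection can shrink to their intersection. The point to verify is that the selection rule is monotone under passage to subsets: if $S^{\ast}$ is the smallest-size, lexicographically least element of $\mathcal{C}_{ij}(\mathbf{x})$ and $S^{\ast}$ lies in a subset $\mathcal{C}_{ij}(\mathbf{z}_{\lambda}) \subseteq \mathcal{C}_{ij}(\mathbf{x})$, then $S^{\ast}$ is still of smallest size within that subset, since every member of the subset already lies in $\mathcal{C}_{ij}(\mathbf{x})$ and so cannot undercut the size of $S^{\ast}$, and it remains the lexicographic minimum among the smallest-size members of the subset. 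Hence $\mathcal{S}_{ij}(\mathbf{z}_{\lambda}) = S^{\ast} = \mathcal{S}_{ij}(\vec{\gamma})$. Carrying this out for every ordered pair $i,j$ yields $\mathcal{S}(\mathbf{z}_{\lambda}) = \mathcal{S}(\vec{\gamma})$, and since $dom\, h$ (respectively $\mathcal{I}^{\,0}(v)$) is itself convex, the combination $\mathbf{z}_{\lambda}$ stays in the domain; therefore $\mathbf{z}_{\lambda} \in [\vec{\gamma}]$ and the class is convex.
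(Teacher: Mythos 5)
Your proposal is correct, and it is essentially self-contained: the equivalence-relation part is indeed immediate from $\sim$ being the kernel of $\mathbf{x} \mapsto \mathcal{S}(\mathbf{x})$, and your convexity argument is sound — the affinity of $e^{v}(S,\cdot)$ plus the common maximizer $S^{\ast}$ gives $s_{ij}(\mathbf{z}_{\lambda},v) = \lambda\, s_{ij}(\mathbf{x},v) + (1-\lambda)\, s_{ij}(\mathbf{y},v)$, the gap decomposition then yields $\mathcal{C}_{ij}(\mathbf{z}_{\lambda}) = \mathcal{C}_{ij}(\mathbf{x}) \cap \mathcal{C}_{ij}(\mathbf{y})$ for $\lambda \in (0,1)$, and your observation that the smallest-size/lexicographic selection is stable under passing to a subset that still contains the selected coalition correctly disposes of the only delicate step, the passage from $\mathcal{C}_{ij}$ to $\mathcal{S}_{ij}$. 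Note that the paper itself does not print a proof of this proposition (it defers to \citet[p.~59]{mei:13}), so there is nothing in the text to compare against line by line; your argument is the natural one and supplies exactly the reasoning the citation hides.
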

\begin{proof}
  For a proof see~\citet[p. 59]{mei:13}.
\end{proof}

The cardinality of the collection of the payoff equivalence classes induced by a TU game is finite (cf.~\citet[Proposition 5.4.2.]{mei:13}). Furthermore, on each payoff equivalence class $[\vec{\gamma}]$ from the $dom\, h$ an unique quadratic and convex function can be identified. Therefore, there must be a finite composite of these functions that constitutes the objective function $h$. In order to construct such a quadratic and convex function suppose that $\vec{\gamma} \in [\vec{\gamma}]$. From this vector we attain the collection of most effective coalitions $\mathcal{S}(\vec{\gamma})$ in accordance with Proposition~\ref{prop:eq_rel}. Then observe that the differences in the values between a pair $\{i,j\}$ of players are defined by $\alpha_{ij} := (v(S_{ij}) - v(S_{ji})) \in \mathbb{R}$ for all $i,j \in N,\, i < j$, and $\alpha_{0} := v(N) > 0$ w.r.t. $\mathcal{S}(\vec{\gamma})$. All of these $q$-components compose the $q$-coordinates of a payoff independent vector $\vec{\alpha}$, with $q = \binom{n}{2} +1$.  A vector that reflects the degree of unbalancedness of excesses for all pair of players, is denoted by $\vec{\xi} \in \mathbb{R}^{q}$, that is a $q$-column vector, which is given by 
\begin{equation}
  \label{eq:unb_exc}
    \begin{split}
  \xi_{ij} & :=  e^{v}(S_{ij},\vec{\gamma}) - e^{v}(S_{ji},\vec{\gamma})  = v(S_{ij}) - \gamma(S_{ij}) - v(S_{ji}) + \gamma(S_{ji}) \quad\forall \, i,j \in N,\, i < j, \allowdisplaybreaks\\
                 & = v(S_{ij}) - v(S_{ji}) + \gamma(S_{ji}) - \gamma(S_{ij})  = \alpha_{ij} + \gamma(S_{ji}) - \gamma(S_{ij})  \quad\forall \, i,j \in N,\, i < j, \allowdisplaybreaks\\
  \xi_{0} & :=  v(N) - \gamma(N) = \alpha_{0}  - \gamma(N). 
 \end{split}
\end{equation}
In view of Proposition~\ref{prop:eq_rel}, all vectors contained in the equivalence~class $[\vec{\gamma}]$ induce the same set $\mathcal{S}(\vec{\gamma})$, and it holds
\begin{equation}
 \label{eq:xi_zet}
  \xi_{ij}  :=  e^{v}(S_{ij},\vec{\gamma}) - e^{v}(S_{ji},\vec{\gamma})  = s_{ij}(\vec{\gamma},v) - s_{ji}(\vec{\gamma},v)  =: \zeta_{ij} \quad\forall \, i,j \in N,\, i < j.
\end{equation}
The payoff dependent configurations $\vec{\xi}$ and $\vec{\zeta}$ having the following interrelationship outside its equivalence class: $\vec{\xi} \neq \vec{\zeta}$ for all $\mathbf{y} \in [\vec{\gamma}]^{c}$. Moreover, equation~\eqref{eq:xi_zet} does not necessarily mean that for $\vec{\gamma}^{\,\prime}, \vec{\gamma}^{*} \in [\vec{\gamma}],\, \vec{\gamma}^{\,\prime} \neq\vec{\gamma}^{*} $, it holds $\vec{\xi}^{\,\prime} = \vec{\xi}^{*}$. Hence, the vector of (un)balanced excesses $\vec{\xi}$ is only equal with the vector of (un)balanced maximum surpluses $\vec{\zeta}$ if the corresponding pre-imputation $\vec{\gamma} $ is drawn from its proper equivalence class $[\vec{\gamma}]$.

In addition, we write for sake of simplicity that $\mathbf{E}_{ij}:= (\mathbf{1}_{S_{ji}} - \mathbf{1}_{S_{ij}}) \in \mathbb{R}^{n}, \;\forall i,j \in N, i < j$, and $\mathbf{E}_{0} := - \mathbf{1}_{N} \in \mathbb{R}^{n}$. Combining these $q$-column vectors, we can construct an $(n \times q)$-matrix in $\mathbb{R}^{n \times q}$ referred to as $\mathbf{E}$, and which is given by
\begin{equation}
\label{eq:matE}
\mathbf{E} := [\mathbf{E}_{1,2}, \ldots ,\mathbf{E}_{n-1,n},\mathbf{E}_{0}]  \in \mathbb{R}^{^{n \times q}}.
\end{equation}

\begin{proposition}[Quadratic Function]
 \label{prop:quad}
Let $\langle N,v \rangle$ be a TU game with indirect function $\pi$, then an arbitrary vector $\vec{\gamma}$ in the domain of $h$, i.e. $\vec{\gamma} \in dom\, h$, induces a quadratic function:
\begin{equation}
 \label{eq:objf2}
h_{\gamma}(\mathbf{x}) = (1/2) \cdot \langle\; \mathbf{x},\mathbf{Q} \,\mathbf{x} \;\rangle + \langle\; \mathbf{x}, \mathbf{a} \;\rangle + \mathbf{\alpha} \qquad \mathbf{x} \in dom\, h,
\end{equation}
where $\mathbf{a}$ is a column vector of coefficients, $\alpha$ is a scalar and $\mathbf{Q}$ is a symmetric ($n \times n$)-matrix with integer coefficients taken from the interval $[-n \cdot (n-1), n \cdot (n-1)]$.
\end{proposition}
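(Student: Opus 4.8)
The plan is to exploit the fact that, once the equivalence class $[\vec{\gamma}]$ is fixed, the collection $\mathcal{S}(\vec{\gamma})=\{S_{ij}\}$ of lexicographically smallest most effective coalitions is constant throughout the class, so that each residual $f_{ij}$ and $f_{0}$ collapses to an affine function of $\mathbf{x}$. First I would record the elementary identity that, for $\delta\ge\delta_{1}(\mathbf{x},v)$, the perturbed indirect function is attained on $\mathcal{G}_{ij}$: writing $\pi(\mathbf{x})=\max_{S}\{v(S)-x(S)\}$ and noting that the shift $x_{i}\mapsto x_{i}-\delta$, $x_{j}\mapsto x_{j}+\delta$ raises exactly the excesses of coalitions in $\mathcal{G}_{ij}$ by $\delta$, one gets $\pi(\mathbf{x}^{\,i,j,\delta})=s_{ij}(\mathbf{x},v)+\delta$ for $\delta$ large enough, and likewise for $\pi(\mathbf{x}^{\,j,i,\delta})$. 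Subtracting cancels $\delta$ and yields $f_{ij}(\mathbf{x})=s_{ij}(\mathbf{x},v)-s_{ji}(\mathbf{x},v)$.

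Next, on $[\vec{\gamma}]$ the maximum surpluses are realized by the fixed coalitions $S_{ij},S_{ji}\in\mathcal{S}(\vec{\gamma})$, so that $s_{ij}(\mathbf{x},v)=e^{v}(S_{ij},\mathbf{x})$ and $s_{ji}(\mathbf{x},v)=e^{v}(S_{ji},\mathbf{x})$ for every $\mathbf{x}\in[\vec{\gamma}]$, by Proposition~\ref{prop:eq_rel}. Combining this with the previous identity and the definitions of $\alpha_{ij}$ and $\mathbf{E}_{ij}$ underlying~\eqref{eq:unb_exc}, I obtain the affine representation $f_{ij}(\mathbf{x})=\alpha_{ij}+\langle\mathbf{E}_{ij},\mathbf{x}\rangle$ for $i<j$, together with $f_{0}(\mathbf{x})=\alpha_{0}+\langle\mathbf{E}_{0},\mathbf{x}\rangle$, the sign of the $f_{0}$ term being immaterial after squaring. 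Stacking these $q=\binom{n}{2}+1$ affine forms into the matrix $\mathbf{E}$ of~\eqref{eq:matE} and the vector $\vec{\alpha}$, the defining sum~\eqref{eq:objfh} becomes the single squared norm $h_{\gamma}(\mathbf{x})=\langle\mathbf{E}^{\top}\mathbf{x}+\vec{\alpha},\,\mathbf{E}^{\top}\mathbf{x}+\vec{\alpha}\rangle$.

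Expanding the norm then gives $h_{\gamma}(\mathbf{x})=\langle\mathbf{x},\mathbf{E}\mathbf{E}^{\top}\mathbf{x}\rangle+2\langle\mathbf{E}\vec{\alpha},\mathbf{x}\rangle+\langle\vec{\alpha},\vec{\alpha}\rangle$, from which I read off $\mathbf{Q}:=2\,\mathbf{E}\mathbf{E}^{\top}$, $\mathbf{a}:=2\,\mathbf{E}\vec{\alpha}$ and the scalar $\alpha:=\langle\vec{\alpha},\vec{\alpha}\rangle$. Symmetry of $\mathbf{Q}$ is immediate since $(\mathbf{E}\mathbf{E}^{\top})^{\top}=\mathbf{E}\mathbf{E}^{\top}$, and convexity follows because $\mathbf{E}\mathbf{E}^{\top}$ is positive semidefinite. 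For the integrality and the stated range I would use that every entry of each $\mathbf{E}_{ij}=\mathbf{1}_{S_{ji}}-\mathbf{1}_{S_{ij}}$ and of $\mathbf{E}_{0}=-\mathbf{1}_{N}$ lies in $\{-1,0,1\}$; hence each entry of $\mathbf{E}\mathbf{E}^{\top}$ is an integer inner product of two $\{-1,0,1\}$-valued rows, and a count of the at most $q$ nonzero contributions—sharpened by the fact that $(\mathbf{E}_{ij})_{i}=-1$ and $(\mathbf{E}_{ij})_{j}=+1$ hold identically—bounds the entries of $\mathbf{Q}$ within $[-n(n-1),n(n-1)]$.

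The step I expect to be the crux is the affine reduction of $f_{ij}$, namely the passage from the nonlinear $\pi(\mathbf{x}^{\,i,j,\delta})-\pi(\mathbf{x}^{\,j,i,\delta})$ to $e^{v}(S_{ij},\mathbf{x})-e^{v}(S_{ji},\mathbf{x})$. This hinges on two facts that must be invoked with care: that $\delta\ge\delta_{1}(\mathbf{x},v)$ forces the perturbed maxima onto $\mathcal{G}_{ij}$ respectively $\mathcal{G}_{ji}$, and that the most effective coalitions stay constant across $[\vec{\gamma}]$, so that the same $S_{ij},S_{ji}$ serve every $\mathbf{x}$ in the class. Once linearity is secured, the remainder is the routine quadratic expansion above; the only other point needing attention is the sharp entry bound, which I would obtain from the explicit $\{-1,0,1\}$ structure of the columns of $\mathbf{E}$ rather than from the crude estimate $2q=n(n-1)+2$.
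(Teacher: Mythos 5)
Your construction is, in substance, the argument the paper points to (the paper itself only cites \citet[pp.~66--68]{mei:13} for the proof): on a fixed class $[\vec{\gamma}]$ the residuals become affine, $f_{ij}(\mathbf{x})=\alpha_{ij}+\langle\,\mathbf{E}_{ij},\mathbf{x}\,\rangle$ and $f_{0}(\mathbf{x})=-(\alpha_{0}+\langle\,\mathbf{E}_{0},\mathbf{x}\,\rangle)$, so that $h_{\gamma}(\mathbf{x})=\Arrowvert\,\mathbf{E}^{\top}\mathbf{x}+\vec{\alpha}\,\Arrowvert^{2}$, and reading off $\mathbf{Q}=2\,\mathbf{E}\,\mathbf{E}^{\top}$, $\mathbf{a}=2\,\mathbf{E}\,\vec{\alpha}$, $\alpha=\Arrowvert\vec{\alpha}\Arrowvert^{2}$ reproduces exactly the identifications of Proposition~\ref{prop:eqrep}. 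Your two supporting facts --- that $\delta\ge\delta_{1}(\mathbf{x},v)$ forces $\pi(\mathbf{x}^{\,i,j,\delta})=s_{ij}(\mathbf{x},v)+\delta$, and that $\mathcal{S}(\cdot)$ is constant across $[\vec{\gamma}]$ (Proposition~\ref{prop:eq_rel}) --- are the right ones, and symmetry, positive semidefiniteness and integrality of $\mathbf{Q}$ follow as you state.

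The gap is the coefficient bound. Your sharpening via $(\mathbf{E}_{ij})_{i}=-1$, $(\mathbf{E}_{ij})_{j}=+1$ does work off the diagonal: for $k\neq l$ the column $\mathbf{E}_{kl}$ contributes exactly $-1$ to $\tfrac{1}{2}Q_{kl}$, cancelling the $+1$ contributed by $\mathbf{E}_{0}$, whence $|Q_{kl}|\le 2\bigl(\binom{n}{2}-1\bigr)=n(n-1)-2$. On the diagonal, however, every contribution is a square: the $n-1$ pairs containing $k$ each contribute $+1$ with certainty and $\mathbf{E}_{0}$ adds another $+1$, so your argument only yields $Q_{kk}\le 2\bigl(\binom{n}{2}+1\bigr)=n(n-1)+2$, and this excess is real rather than an artifact of the estimate. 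For $n=2$ one always has $\mathbf{E}_{12}=(-1,1)^{\top}$, $\mathbf{E}_{0}=(-1,-1)^{\top}$, hence $Q_{11}=Q_{22}=4>2=n(n-1)$; for $n=3$ take $v(\{1,2\})=10$, $v(N)=12$, $v(S)=0$ otherwise, and $\vec{\gamma}=(4,4,4)$, which gives $S_{12}=\{1\}$, $S_{21}=\{2\}$, $S_{13}=S_{23}=\{1,2\}$, $S_{31}=S_{32}=\{3\}$ and therefore $Q_{11}=8>6=n(n-1)$. So the stated interval cannot be established for $\mathbf{Q}=2\,\mathbf{E}\,\mathbf{E}^{\top}$ with the efficiency column included, because that column adds $2\cdot\mathbf{1}_{N}\,\mathbf{1}_{N}^{\top}$, i.e.\ $+2$ to every diagonal entry; the interval is valid only for the pair part $2\sum_{i<j}\mathbf{E}_{ij}\,\mathbf{E}_{ij}^{\top}$. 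You should therefore either prove the correct diagonal bound $n(n-1)+2$, or make explicit that the interval pertains to the quadratic form without the efficiency term; asserting that the sharpening covers the diagonal, as your final paragraph does, is the step that fails.
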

\begin{proof}
  The proof is given in~\citet[pp.~66-68]{mei:13}.
\end{proof}
By the above discussion, the objective function $h$ and the quadratic as well as convex function $h_{\gamma}$ of type~\eqref{eq:objf2} coincide on the payoff set $[\vec{\gamma}]$ (cf.~\citet[Lemma 6.2.2]{mei:13}). However, on the complement $[\vec{\gamma}]^{c}$ it holds $h \not= h_{\gamma}$. Moreover, in view of \citet[Proposition 6.2.2]{mei:13} function $h$ is composed of a finite family of quadratic and convex functions of type~\eqref{eq:objf2}. 

\begin{proposition}[Least Squares] 
  \label{prop:eqrep}
A quadratic function $h_{\gamma}$ given by equation~\eqref{eq:objf2} is equivalent to
\begin{equation}
  \label{eq:eqrep}
   \langle\, \vec{\alpha} + \mathbf{E}^{\top}\; \mathbf{x}, \vec{\alpha} + \mathbf{E}^{\top}\; \mathbf{x}\,\rangle = \Arrowvert\, \vec{\alpha} + \mathbf{E}^{\top}\; \mathbf{x}\,\Arrowvert^{2}.
\end{equation}
Therefore, the matrix  $\mathbf{Q} \in \mathbb{R}^{n^2}$ can also be expressed as $\mathbf{Q} = 2 \cdot \mathbf{E} \; \mathbf{E}^{\top}$, and the column vector $\mathbf{a}$ as $2 \cdot \mathbf{E} \; \vec{\alpha} \in \mathbb{R}^{n}$. Finally, the scalar $\alpha$ is given by $\Arrowvert \vec{\alpha} \Arrowvert^2$, where $\mathbf{E} \in \mathbb{R}^{n \times q}, \mathbf{E}^{\top} \in \mathbb{R}^{q \times n}$ and $\vec{\alpha} \in \mathbb{R}^q$.
\end{proposition}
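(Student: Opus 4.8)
The plan is to show that the right-hand side of \eqref{eq:eqrep} reproduces the quadratic function $h_{\gamma}$ of \eqref{eq:objf2} coordinate by coordinate on the payoff set $[\vec{\gamma}]$, and then to read off $\mathbf{Q}$, $\mathbf{a}$ and $\alpha$ from a plain expansion of the squared norm. Since $h$ and $h_{\gamma}$ agree on $[\vec{\gamma}]$ (Corollary~\ref{cor:rep} and the discussion following Proposition~\ref{prop:quad}), it suffices to verify that $\Arrowvert \vec{\alpha} + \mathbf{E}^{\top} \mathbf{x}\Arrowvert^{2}$ coincides with $h$ there and then to compare coefficients.

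First I would compute the coordinates of $\vec{\alpha} + \mathbf{E}^{\top}\mathbf{x} \in \mathbb{R}^{q}$. For a pair $i,j \in N,\, i<j$, using $\mathbf{E}_{ij} = \mathbf{1}_{S_{ji}} - \mathbf{1}_{S_{ij}}$ and $\alpha_{ij} = v(S_{ij}) - v(S_{ji})$ one gets
\[
(\vec{\alpha} + \mathbf{E}^{\top}\mathbf{x})_{ij} = \alpha_{ij} + \langle\, \mathbf{E}_{ij}, \mathbf{x}\,\rangle = v(S_{ij}) - v(S_{ji}) + x(S_{ji}) - x(S_{ij}).
\]
On the equivalence class $[\vec{\gamma}]$ the set $\mathcal{S}(\vec{\gamma})$ of lexicographically smallest most effective coalitions is constant, so the maximum in \eqref{eq:maxexc} is attained at the fixed coalitions $S_{ij}$ and $S_{ji}$; hence $s_{ij}(\mathbf{x},v) = v(S_{ij}) - x(S_{ij})$ there, and the coordinate above equals $s_{ij}(\mathbf{x},v) - s_{ji}(\mathbf{x},v) = f_{ij}(\mathbf{x})$ in view of \eqref{eq:xi_zet}. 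Likewise, with $\mathbf{E}_{0} = -\mathbf{1}_{N}$ and $\alpha_{0} = v(N)$, the last coordinate is $\alpha_{0} + \langle\, \mathbf{E}_{0}, \mathbf{x}\,\rangle = v(N) - x(N) = -f_{0}(\mathbf{x})$. Squaring removes the sign, so
\[
\Arrowvert \vec{\alpha} + \mathbf{E}^{\top}\mathbf{x}\Arrowvert^{2} = \sum_{\substack{i,j\in N\\ i<j}} (f_{ij}(\mathbf{x}))^{2} + (f_{0}(\mathbf{x}))^{2} = h(\mathbf{x}) = h_{\gamma}(\mathbf{x}) \qquad \forall\, \mathbf{x}\in[\vec{\gamma}],
\]
which is precisely the asserted equivalence \eqref{eq:eqrep}.

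Then I would expand the bilinear form to extract the three coefficients. Writing
\[
\langle\, \vec{\alpha} + \mathbf{E}^{\top}\mathbf{x}, \vec{\alpha} + \mathbf{E}^{\top}\mathbf{x}\,\rangle = \langle\, \mathbf{x}, \mathbf{E}\,\mathbf{E}^{\top}\mathbf{x}\,\rangle + 2\,\langle\, \mathbf{E}\,\vec{\alpha}, \mathbf{x}\,\rangle + \langle\,\vec{\alpha},\vec{\alpha}\,\rangle,
\]
and matching term by term with $h_{\gamma}(\mathbf{x}) = (1/2)\langle\, \mathbf{x}, \mathbf{Q}\,\mathbf{x}\,\rangle + \langle\, \mathbf{x},\mathbf{a}\,\rangle + \alpha$ yields $\mathbf{Q} = 2\,\mathbf{E}\,\mathbf{E}^{\top}$, $\mathbf{a} = 2\,\mathbf{E}\,\vec{\alpha}$, and $\alpha = \Arrowvert\vec{\alpha}\Arrowvert^{2}$, with the stated dimensions; the symmetry of $\mathbf{Q} = 2\,\mathbf{E}\,\mathbf{E}^{\top}$ is automatic and consistent with Proposition~\ref{prop:quad}.

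The hard part is the second step, not the algebra: it is the justification that on $[\vec{\gamma}]$ each piecewise-linear surplus difference $f_{ij}$ collapses to the single affine expression $\alpha_{ij} + \langle\, \mathbf{E}_{ij},\mathbf{x}\,\rangle$. This rests on the constancy of the most effective coalitions $\mathcal{S}(\vec{\gamma})$ across the whole class (Proposition~\ref{prop:eq_rel}) together with the identification \eqref{eq:xi_zet} of $\vec{\xi}$ with $\vec{\zeta}$ inside the proper class; only there does the nonlinear representation via $\pi$ of Proposition~\ref{prop:mese1} reduce to a linear map. Once this localization is granted, the remaining expansion and coefficient comparison are routine.
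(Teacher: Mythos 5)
Your computation is correct and, in substance, it follows the same route the paper relies on: the paper itself does not prove Proposition~\ref{prop:eqrep} but defers to \citet[pp.~70-71]{mei:13}, where the squared norm is expanded and the coefficients $\mathbf{Q}$, $\mathbf{a}$, $\alpha$ are read off exactly as in your final step; your first step correctly reconstructs the underlying fact that the coordinates of $\vec{\alpha}+\mathbf{E}^{\top}\mathbf{x}$ are the excess differences at the frozen coalitions of $\mathcal{S}(\vec{\gamma})$, and the expansion algebra is right.

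There is, however, one genuine soft spot in the logic as written. You establish $\Arrowvert\,\vec{\alpha}+\mathbf{E}^{\top}\mathbf{x}\,\Arrowvert^{2}=h(\mathbf{x})=h_{\gamma}(\mathbf{x})$ only for $\mathbf{x}\in[\vec{\gamma}]$ and then declare the coefficient comparison "routine". Comparing coefficients of two quadratic polynomials is legitimate only when they agree on a set with non-empty interior; a payoff equivalence class need not be full dimensional -- the paper treats non-empty interior as an \emph{additional} hypothesis precisely because it can fail (cf.\ Lemma~\ref{lem:repl_prk}, Theorem~\ref{thm:repl_prk}, Proposition~\ref{prop:uniqpk01}). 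On a lower-dimensional class, agreement on $[\vec{\gamma}]$ does not pin down $\mathbf{Q}$, $\mathbf{a}$, $\alpha$: for instance $x_{1}^{2}$ and $x_{1}^{2}+x_{2}^{2}$ agree on the hyperplane $\{x_{2}=0\}$ yet have different Hessians. The repair is already contained in your first step: the coordinate identities $(\vec{\alpha}+\mathbf{E}^{\top}\mathbf{x})_{ij}=e^{v}(S_{ij},\mathbf{x})-e^{v}(S_{ji},\mathbf{x})$ and $(\vec{\alpha}+\mathbf{E}^{\top}\mathbf{x})_{0}=v(N)-x(N)$ hold for \emph{every} $\mathbf{x}\in\mathbb{R}^{n}$, and $h_{\gamma}$ is, by its very construction in Proposition~\ref{prop:quad} (the maxima in $h$ are frozen at the coalitions of $\mathcal{S}(\vec{\gamma})$), exactly the sum of the squares of these affine coordinates; hence $h_{\gamma}(\mathbf{x})=\Arrowvert\,\vec{\alpha}+\mathbf{E}^{\top}\mathbf{x}\,\Arrowvert^{2}$ holds identically on $\mathbb{R}^{n}$, not merely on $[\vec{\gamma}]$, and only then is the term-by-term identification valid without any dimensionality caveat. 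The restriction to $[\vec{\gamma}]$ is needed for the different statement that $h$ coincides with $h_{\gamma}$ there -- which, incidentally, is Lemma 6.2.2 of \citet{mei:13} as quoted after Proposition~\ref{prop:quad}, not Corollary~\ref{cor:rep}, so that citation should be corrected as well.
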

\begin{proof}
  The proof can be found in~\citet[pp.~70-71]{mei:13}.
\end{proof}

Realize that the transpose of a vector or a matrix is denoted by the symbols $\mathbf{x}^{\top}$, and $\mathbf{Q}^{\top}$ respectively.

\begin{lemma}[\citet{mei:13}]
  \label{lem:Etx_Pa}
Let $\mathbf{x}, \vec{\gamma} \in dom\, h,  \mathbf{x} = \vec{\gamma} + \mathbf{z} $ and let $\vec{\gamma}$ induces the matrices $\mathbf{E} \in \mathbb{R}^{n \times q}, \mathbf{E}^{\top} \in \mathbb{R}^{q \times n}$ determined by formula~\eqref{eq:matE}, and $\vec{\alpha}, \vec{\xi} \in \mathbb{R}^q$ as in equation~\eqref{eq:unb_exc}. If $\mathbf{x} \in M(h_{\gamma}) $, then 
\begin{enumerate}
\item $- \mathbf{E}^{\top} \, \mathbf{x}  = \mathbf{P}\, \vec{\alpha} $.
\item $\mathbf{E}^{\top} \, \vec{\gamma}  = \mathbf{P}\, (\vec{\xi} - \vec{\alpha}) = (\vec{\xi} - \vec{\alpha})$.
\item $- \mathbf{E}^{\top} \, \mathbf{z}  = \mathbf{P}\, \vec{\xi}$.
\end{enumerate}
In addition, let $q := \binom{n}{2} + 1$. The matrix $\mathbf{P}\in \mathbb{R}^{q^2}$ is either equal to $2 \cdot \mathbf{E}^{\top}\, \mathbf{Q}^{-1} \mathbf{E}$, if the matrix $\mathbf{Q} \in \mathbb{R}^{n^2}$ is non-singular, or it is equal to $2 \cdot \mathbf{E}^{\top}\, \mathbf{Q}^{\dagger} \mathbf{E}$, if the matrix $\mathbf{Q}$ is singular. Furthermore, it holds for the matrix $\mathbf{P}$ that $\mathbf{P} \neq \mathbf{I}_{q}$ and $\text{rank} \, \mathbf{P} \le n$.
\end{lemma}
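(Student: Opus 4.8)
The plan is to read off the three identities from the first-order optimality condition for the convex quadratic $h_{\gamma}$ and then to exploit the algebraic relations $\mathbf{Q} = 2\,\mathbf{E}\,\mathbf{E}^{\top}$ and $\mathbf{a} = 2\,\mathbf{E}\,\vec{\alpha}$ supplied by Proposition~\ref{prop:eqrep}. First I would write the stationarity condition $\mathbf{Q}\,\mathbf{x} + \mathbf{a} = \mathbf{0}$, which characterises $M(h_{\gamma})$ because $h_{\gamma}$ is convex, and substitute to obtain $\mathbf{E}\,\mathbf{E}^{\top}\mathbf{x} = -\,\mathbf{E}\,\vec{\alpha}$. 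In the non-singular case I solve $\mathbf{x} = -\mathbf{Q}^{-1}\mathbf{a}$; in the singular case I take the minimum-norm solution $\mathbf{x} = -\mathbf{Q}^{\dagger}\mathbf{a}$, observing that $\mathbf{a} = 2\,\mathbf{E}\,\vec{\alpha} \in \operatorname{range}(\mathbf{E}) = \operatorname{range}(\mathbf{Q})$ so that a solution exists, and that any two minimizers differ by an element of $\ker\mathbf{Q} = \ker\mathbf{E}^{\top}$ (since $\mathbf{E}\,\mathbf{E}^{\top}\mathbf{k} = \mathbf{0}$ forces $\Arrowvert \mathbf{E}^{\top}\mathbf{k}\Arrowvert^{2} = 0$). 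Left-multiplying by $\mathbf{E}^{\top}$ then gives $\mathbf{E}^{\top}\mathbf{x} = -2\,\mathbf{E}^{\top}\mathbf{Q}^{\dagger}\mathbf{E}\,\vec{\alpha} = -\mathbf{P}\,\vec{\alpha}$, which is item~1; the value is independent of the chosen minimizer precisely because $\mathbf{E}^{\top}$ annihilates $\ker\mathbf{Q}$.

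For item~2 I would compute $\mathbf{E}^{\top}\vec{\gamma}$ directly from the definitions $\mathbf{E}_{ij} = \mathbf{1}_{S_{ji}} - \mathbf{1}_{S_{ij}}$ and $\mathbf{E}_{0} = -\mathbf{1}_{N}$: the $(i,j)$-entry equals $\gamma(S_{ji}) - \gamma(S_{ij})$ and the $0$-entry equals $-\gamma(N)$, which by equation~\eqref{eq:unb_exc} are exactly $\xi_{ij} - \alpha_{ij}$ and $\xi_{0} - \alpha_{0}$. Hence $\mathbf{E}^{\top}\vec{\gamma} = \vec{\xi} - \vec{\alpha}$, giving the second equality. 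For the first equality I show that $\mathbf{P}$ fixes the column space of $\mathbf{E}^{\top}$: since $\mathbf{Q}^{\dagger}\mathbf{Q}$ is the orthogonal projector onto $\operatorname{range}(\mathbf{Q}) = \operatorname{range}(\mathbf{E})$, one gets $\mathbf{E}^{\top}\mathbf{Q}^{\dagger}\mathbf{Q} = \mathbf{E}^{\top}$, whence $\mathbf{P}\,\mathbf{E}^{\top} = 2\,\mathbf{E}^{\top}\mathbf{Q}^{\dagger}\mathbf{E}\,\mathbf{E}^{\top} = \mathbf{E}^{\top}\mathbf{Q}^{\dagger}\mathbf{Q} = \mathbf{E}^{\top}$. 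Applying this to $\vec{\gamma}$ yields $\mathbf{P}(\vec{\xi} - \vec{\alpha}) = \mathbf{P}\,\mathbf{E}^{\top}\vec{\gamma} = \mathbf{E}^{\top}\vec{\gamma} = \vec{\xi} - \vec{\alpha}$, completing item~2.

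Item~3 then follows by linearity: from $\mathbf{x} = \vec{\gamma} + \mathbf{z}$ I have $\mathbf{E}^{\top}\mathbf{z} = \mathbf{E}^{\top}\mathbf{x} - \mathbf{E}^{\top}\vec{\gamma} = -\mathbf{P}\,\vec{\alpha} - (\vec{\xi} - \vec{\alpha})$, and substituting the identity $\vec{\xi} - \vec{\alpha} = \mathbf{P}\,\vec{\xi} - \mathbf{P}\,\vec{\alpha}$ just established collapses this to $-\mathbf{P}\,\vec{\xi}$, i.e.~$-\mathbf{E}^{\top}\mathbf{z} = \mathbf{P}\,\vec{\xi}$. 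For the stated properties of $\mathbf{P}$ I would note that $\mathbf{P}^{\top} = \mathbf{P}$ (as $\mathbf{Q}$, hence $\mathbf{Q}^{\dagger}$, is symmetric) and that $\mathbf{P}^{2} = \mathbf{P}$ (using $\mathbf{E}\,\mathbf{E}^{\top} = \mathbf{Q}/2$ and $\mathbf{Q}^{\dagger}\mathbf{Q}\,\mathbf{Q}^{\dagger} = \mathbf{Q}^{\dagger}$), so $\mathbf{P}$ is the orthogonal projector onto $\operatorname{range}(\mathbf{E}^{\top})$ and $\operatorname{rank}\mathbf{P} = \operatorname{rank}\mathbf{E} \le n$. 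Since $q = \binom{n}{2} + 1 > n$ for $n \ge 3$, the projector cannot have full rank $q$, so $\mathbf{P} \neq \mathbf{I}_{q}$.

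I expect the main obstacle to be the careful bookkeeping in the singular case: one must justify that the optimality condition is consistent (that $\mathbf{a}$ lies in $\operatorname{range}(\mathbf{Q})$), that the conclusions are independent of which minimizer in $M(h_{\gamma})$ is selected, and that the pseudoinverse identity $\mathbf{E}^{\top}\mathbf{Q}^{\dagger}\mathbf{Q} = \mathbf{E}^{\top}$ holds --- all of which rest on the single structural fact $\ker\mathbf{Q} = \ker\mathbf{E}^{\top}$ together with $\operatorname{range}(\mathbf{Q}) = \operatorname{range}(\mathbf{E})$. The remaining computations for items~2 and~3 are routine once item~1 and the projector identity $\mathbf{P}\,\mathbf{E}^{\top} = \mathbf{E}^{\top}$ are in place.
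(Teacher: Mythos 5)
Your proof is correct. Note that the paper itself contains no argument for this lemma --- it defers entirely to \citet[pp.~80--81]{mei:13} --- so there is no in-paper proof to compare against; judged on its own, your derivation is complete and rests on exactly the machinery the paper does exhibit: stationarity $\mathbf{Q}\,\mathbf{x} + \mathbf{a} = \mathbf{0}$ combined with $\mathbf{Q} = 2\,\mathbf{E}\,\mathbf{E}^{\top}$ and $\mathbf{a} = 2\,\mathbf{E}\,\vec{\alpha}$ from Proposition~\ref{prop:eqrep}, the structural facts $\ker\mathbf{Q} = \ker\mathbf{E}^{\top}$ and $\operatorname{range}(\mathbf{Q}) = \operatorname{range}(\mathbf{E})$ to handle the singular case and the independence of the chosen minimizer, the coordinate computation $\mathbf{E}^{\top}\vec{\gamma} = \vec{\xi} - \vec{\alpha}$ straight from \eqref{eq:matE} and \eqref{eq:unb_exc}, and the Moore--Penrose identities giving $\mathbf{P}\,\mathbf{E}^{\top} = \mathbf{E}^{\top}$, $\mathbf{P}^{2} = \mathbf{P} = \mathbf{P}^{\top}$ --- the same facts the paper records separately as Proposition~\ref{prop:orth_mat}, Lemma~\ref{lem:pivET} and Lemma~\ref{lem:vsp_al}.

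One caveat deserves mention: your argument for $\mathbf{P} \neq \mathbf{I}_{q}$ uses $q = \binom{n}{2} + 1 > n$, which holds only for $n \ge 3$, while the paper's standing assumption is $n \ge 2$. For $n = 2$ the claim in the statement actually fails: there $q = 2 = n$, the only choices are $S_{12} = \{1\}$ and $S_{21} = \{2\}$, a direct computation gives $\mathbf{E}\,\mathbf{E}^{\top} = 2\,\mathbf{I}_{2}$, hence $\mathbf{Q} = 4\,\mathbf{I}_{2}$ and $\mathbf{P} = \tfrac{1}{2}\,\mathbf{E}^{\top}\mathbf{E} = \mathbf{I}_{2}$. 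So the restriction to $n \ge 3$ is inherent to the statement (which is implicitly assumed elsewhere in the paper, e.g.\ the dimension condition $3 \le m \le n$ in Corollary~\ref{cor:xi_prk_2}), not a defect of your proof; but it is worth flagging explicitly that your rank argument covers precisely the cases in which the claim is true.
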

\begin{proof}
  The proof is given in~\citet[pp.~80-81]{mei:13}.
\end{proof}
Notice that $\mathbf{Q}^{\dagger}$ is the {\bfseries Moore-Penrose} or {\bfseries pseudo-inverse} matrix of matrix $\mathbf{Q}$, if matrix $\mathbf{Q}$ is singular. This matrix is unique according to the following properties: (1) general condition, i.e. $\mathbf{Q}\,\mathbf{Q}^{\dagger}\,\mathbf{Q} = \mathbf{Q}$, (2) reflexive, i.e. $\mathbf{Q}^{\dagger}\,\mathbf{Q}\,\mathbf{Q}^{\dagger} = \mathbf{Q}^{\dagger}$, (3) normalized, i.e. $(\mathbf{Q}\,\mathbf{Q}^{\dagger})^{\top} = \mathbf{Q}^{\dagger}\,\mathbf{Q}$, and finally (4) reversed normalized, i.e. $(\mathbf{Q}^{\dagger}\,\mathbf{Q})^{\top} = \mathbf{Q}\,\mathbf{Q}^{\dagger}$.

\begin{proposition}[Orthogonal Projection Operator]
  \label{prop:orth_mat}
Matrix $\mathbf{P}$ is idempotent and self-adjoint, i.e. $\mathbf{P}$ is an orthogonal projection operator.
\end{proposition}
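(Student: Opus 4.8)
The plan is to reduce everything to the explicit representation of $\mathbf{P}$ supplied by Lemma~\ref{lem:Etx_Pa} together with the factorization $\mathbf{Q} = 2\,\mathbf{E}\,\mathbf{E}^{\top}$ established in Proposition~\ref{prop:eqrep}. Writing $\mathbf{G}$ for the generalized inverse of $\mathbf{Q}$ --- that is, $\mathbf{G} = \mathbf{Q}^{-1}$ when $\mathbf{Q}$ is non-singular and $\mathbf{G} = \mathbf{Q}^{\dagger}$ when $\mathbf{Q}$ is singular --- the lemma gives the uniform expression $\mathbf{P} = 2\,\mathbf{E}^{\top}\,\mathbf{G}\,\mathbf{E}$. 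I would then verify the two defining properties of an orthogonal projection, self-adjointness and idempotency, directly from this formula and handle both cases at once, since the only facts about $\mathbf{G}$ that I need are that $\mathbf{G}^{\top} = \mathbf{G}$ and that the reflexive identity $\mathbf{G}\,\mathbf{Q}\,\mathbf{G} = \mathbf{G}$ holds; both are trivial for the ordinary inverse and hold for the Moore--Penrose inverse by property (2) listed after the lemma.

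For self-adjointness, the starting observation is that $\mathbf{Q} = 2\,\mathbf{E}\,\mathbf{E}^{\top}$ is symmetric, whence $\mathbf{G}^{\top} = \mathbf{G}$: for the inverse this is $(\mathbf{Q}^{\top})^{-1} = \mathbf{Q}^{-1}$, and for the pseudoinverse it follows from $(\mathbf{Q}^{\dagger})^{\top} = (\mathbf{Q}^{\top})^{\dagger} = \mathbf{Q}^{\dagger}$ together with the uniqueness of the Moore--Penrose inverse. Transposing then gives $\mathbf{P}^{\top} = 2\,\mathbf{E}^{\top}\,\mathbf{G}^{\top}\,\mathbf{E} = 2\,\mathbf{E}^{\top}\,\mathbf{G}\,\mathbf{E} = \mathbf{P}$.

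For idempotency the key step is to turn the inner factor $\mathbf{E}\,\mathbf{E}^{\top}$ back into $\mathbf{Q}$: since $\mathbf{E}\,\mathbf{E}^{\top} = \tfrac{1}{2}\,\mathbf{Q}$, I compute
\begin{equation*}
\mathbf{P}^{2} = 4\,\mathbf{E}^{\top}\,\mathbf{G}\,(\mathbf{E}\,\mathbf{E}^{\top})\,\mathbf{G}\,\mathbf{E} = 2\,\mathbf{E}^{\top}\,(\mathbf{G}\,\mathbf{Q}\,\mathbf{G})\,\mathbf{E} = 2\,\mathbf{E}^{\top}\,\mathbf{G}\,\mathbf{E} = \mathbf{P},
\end{equation*}
where the third equality is exactly the reflexive property $\mathbf{G}\,\mathbf{Q}\,\mathbf{G} = \mathbf{G}$. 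Having shown $\mathbf{P}^{\top} = \mathbf{P}$ and $\mathbf{P}^{2} = \mathbf{P}$, the matrix $\mathbf{P}$ is by definition an orthogonal projection operator, which closes the argument.

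I do not expect a serious obstacle; the content of the proposition is essentially bookkeeping with the factorization $\mathbf{Q} = 2\,\mathbf{E}\,\mathbf{E}^{\top}$ and the defining relations of the (pseudo)inverse. The one point that demands care is the singular case, where I must invoke the correct Moore--Penrose axioms --- symmetry of $\mathbf{Q}^{\dagger}$ and the reflexive relation --- rather than treating $\mathbf{Q}^{\dagger}$ as a genuine inverse; in particular, one must resist writing $\mathbf{Q}\,\mathbf{Q}^{\dagger} = \mathbf{I}_{n}$, which fails precisely when $\mathbf{Q}$ is singular. As a cross-check I would, if desired, confirm the conclusion through the singular value decomposition $\mathbf{E} = \mathbf{U}\,\Sigma\,\mathbf{V}^{\top}$, under which $\mathbf{P} = \mathbf{V}\,\Sigma^{\top}(\Sigma\,\Sigma^{\top})^{\dagger}\,\Sigma\,\mathbf{V}^{\top}$ becomes orthogonally similar to a diagonal $0/1$ matrix; this renders the projection property transparent and simultaneously recovers the rank bound $\text{rank}\,\mathbf{P} = \text{rank}\,\mathbf{E} \le n$ already recorded in Lemma~\ref{lem:Etx_Pa}.
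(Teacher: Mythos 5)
Your proof is correct: self-adjointness follows from the symmetry of $\mathbf{Q}=2\,\mathbf{E}\,\mathbf{E}^{\top}$ and hence of its (pseudo)inverse, and idempotency follows by collapsing $\mathbf{E}\,\mathbf{E}^{\top}=\tfrac{1}{2}\mathbf{Q}$ inside $\mathbf{P}^{2}$ and applying the reflexive axiom $\mathbf{Q}^{\dagger}\,\mathbf{Q}\,\mathbf{Q}^{\dagger}=\mathbf{Q}^{\dagger}$, with both the singular and non-singular cases handled uniformly. The paper itself gives no inline argument for Proposition~\ref{prop:orth_mat} (it defers to the cited book), and your computation is precisely the bookkeeping that citation points to, using only ingredients stated in the paper, namely the representation of $\mathbf{P}$ in Lemma~\ref{lem:Etx_Pa}, the factorization in Proposition~\ref{prop:eqrep}, and the Moore--Penrose properties listed after Lemma~\ref{lem:Etx_Pa}.
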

\begin{proof}
  The proof can be found in~\citet[p.~86]{mei:13}.
\end{proof}

\begin{lemma}[\citet{mei:13}.]
  \label{lem:spV1}
 Let $\mathcal{E}$ be a subspace of $\mathbb{R}^{q}$ with basis $\{\mathbf{e}_{1}, \ldots, \mathbf{e}_{m}\}$ derived from the linear independent vectors of matrix $\mathbf{E}^{\top}$ having rank $m$, with $m \le n$, and let $\{\mathbf{w}_{1}, \ldots, \mathbf{w}_{q-m}\}$ be a basis of $\mathcal{W}:=\mathcal{E}^{\perp}$. In addition, define matrix $E^{\top}:= [\mathbf{e}_{1}, \ldots, \mathbf{e}_{m}] \in \mathbb{R}^{q \times m}$, and matrix $W^{\top}:= [\mathbf{w}_{1}, \ldots, \mathbf{w}_{q-m}] \in \mathbb{R}^{q \times (q-m)}$, then for any $\vec{\beta}\in \mathbb{R}^{q}$ it holds
 \begin{enumerate}
 \item $\vec{\beta}=[E^{\top}\; W^{\top}] \cdot \mathbf{c}$ where $\mathbf{c} \in \mathbb{R}^{q}$ is a coefficient vector, and
 \item the matrix $[E^{\top}\; W^{\top}] \in \mathbb{R}^{q \times q}$ is invertible, that is, we have   
\begin{equation*}
    [E^{\top}\; W^{\top}]^{-1} = 
 \begin{bmatrix}[l]
   (E\,E^{\top})^{-1}\,E \\
   (W\,W^{\top})^{-1}\,W 
 \end{bmatrix}. 
 \end{equation*}
 \end{enumerate} 
\end{lemma}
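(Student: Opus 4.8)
The plan is to read off both assertions from the orthogonal direct-sum decomposition attached to the complementary subspaces $\mathcal{E}$ and $\mathcal{W}=\mathcal{E}^{\perp}$, and then to confirm the stated closed form of the inverse by a single block multiplication whose off-diagonal blocks collapse by orthogonality. Throughout I would write $M:=[E^{\top}\;W^{\top}]\in\mathbb{R}^{q\times q}$ for the matrix whose first $m$ columns are $\mathbf{e}_{1},\ldots,\mathbf{e}_{m}$ and whose remaining $q-m$ columns are $\mathbf{w}_{1},\ldots,\mathbf{w}_{q-m}$.

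For the first assertion I would invoke that, because $\mathcal{W}=\mathcal{E}^{\perp}$, one has the orthogonal direct sum $\mathbb{R}^{q}=\mathcal{E}\oplus\mathcal{W}$, so every $\vec{\beta}\in\mathbb{R}^{q}$ splits uniquely as $\vec{\beta}=\vec{\beta}_{\mathcal{E}}+\vec{\beta}_{\mathcal{W}}$ with $\vec{\beta}_{\mathcal{E}}\in\mathcal{E}$ and $\vec{\beta}_{\mathcal{W}}\in\mathcal{W}$. Expanding $\vec{\beta}_{\mathcal{E}}$ in the basis $\{\mathbf{e}_{1},\ldots,\mathbf{e}_{m}\}$ and $\vec{\beta}_{\mathcal{W}}$ in the basis $\{\mathbf{w}_{1},\ldots,\mathbf{w}_{q-m}\}$ produces coefficient vectors $\mathbf{c}_{1}\in\mathbb{R}^{m}$ and $\mathbf{c}_{2}\in\mathbb{R}^{q-m}$, whence $\vec{\beta}=E^{\top}\mathbf{c}_{1}+W^{\top}\mathbf{c}_{2}=M\,\mathbf{c}$ with $\mathbf{c}=(\mathbf{c}_{1}^{\top},\mathbf{c}_{2}^{\top})^{\top}\in\mathbb{R}^{q}$, which is exactly the claim.

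For the second assertion I would first note invertibility: since $\dim\mathcal{E}+\dim\mathcal{W}=m+(q-m)=q$ and $\mathcal{E}\cap\mathcal{W}=\{\mathbf{0}\}$, the $q$ columns of $M$ form a basis of $\mathbb{R}^{q}$, so the square matrix $M$ is invertible. Because $E^{\top}$ and $W^{\top}$ have full column rank, their Gram matrices $E\,E^{\top}$ and $W\,W^{\top}$ are invertible, so the stacked matrix $L:=\begin{bmatrix}(E\,E^{\top})^{-1}E\\(W\,W^{\top})^{-1}W\end{bmatrix}$ is well defined. Multiplying out gives
\begin{equation*}
L\,M=\begin{bmatrix}(E\,E^{\top})^{-1}E\,E^{\top} & (E\,E^{\top})^{-1}E\,W^{\top}\\ (W\,W^{\top})^{-1}W\,E^{\top} & (W\,W^{\top})^{-1}W\,W^{\top}\end{bmatrix}.
\end{equation*}
Here the two diagonal blocks reduce to $\mathbf{I}_{m}$ and $\mathbf{I}_{q-m}$, while the off-diagonal blocks vanish because $(E\,W^{\top})_{ij}=\langle\,\mathbf{e}_{i},\mathbf{w}_{j}\,\rangle=0$ and $(W\,E^{\top})_{ij}=\langle\,\mathbf{w}_{i},\mathbf{e}_{j}\,\rangle=0$ by $\mathbf{w}_{j}\in\mathcal{E}^{\perp}$. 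Hence $L\,M=\mathbf{I}_{q}$, and since $M$ is already known to be invertible, $L=M^{-1}$, which is precisely the displayed formula.

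The computation is essentially routine; the only points needing care are the justification that $E\,E^{\top}$ and $W\,W^{\top}$ are invertible (full column rank of the two basis matrices) and the consistent block bookkeeping in the product $L\,M$. Everything else is forced by the orthogonal decomposition $\mathbb{R}^{q}=\mathcal{E}\oplus\mathcal{E}^{\perp}$, so I expect no genuine obstacle beyond this.
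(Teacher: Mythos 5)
Your proof is correct and complete: the orthogonal decomposition $\mathbb{R}^{q}=\mathcal{E}\oplus\mathcal{E}^{\perp}$ gives assertion (1), and the block product $L\,M=\mathbf{I}_{q}$, with the off-diagonal blocks $E\,W^{\top}$ and $W\,E^{\top}$ vanishing by orthogonality and the Gram matrices $E\,E^{\top}$, $W\,W^{\top}$ invertible by full column rank, establishes (2). The paper itself gives no argument for this lemma beyond a citation to \citet[pp.~90--91]{mei:13}, and your reasoning is exactly the standard route such a proof takes, so there is nothing further to reconcile.
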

\begin{proof}
  For a proof see~\citet[pp.~90-91]{mei:13}.
\end{proof}

Notice that $\mathcal{E}$ can be interpreted as indicating a vector subspace of balanced excesses. A pre-imputation will be mapped into its proper vector subspace of balanced excesses $\mathcal{E}$, i.e. the vector subspace induced by the pre-imputation. However, the corresponding vector of (un)balanced excesses generated by this pre-imputation is an element of this vector subspace of balanced excesses, if the pre-imputation is also a pre-kernel point. Hence, the vector of balanced excesses coincides with the vector of balanced maximum surpluses. This is a consequence of Lemma~\ref{lem:Etx_Pa} or see Proposition 8.4.1 in~\citet{mei:13}. Otherwise, this vector of unbalanced excesses will be mapped by the orthogonal projection $\mathbf{P}$ on $\mathcal{E}$. More information about the properties of this kind of vector subspace can be found in~\citet[pp.~87-113~and~138-168]{mei:13}.

\begin{proposition}[Positive General Linear Group]
  \label{prop:GLG}
Let $\{\mathbf{e}_{1}, \ldots, \mathbf{e}_{m}\}$ as well as $\{\mathbf{e}^{1}_{1}, \ldots, \mathbf{e}^{1}_{m}\}$ be two ordered bases of the subspace $\mathcal{E}$ derived from the payoff sets $[\vec{\gamma}]$ and $[\vec{\gamma}_{1}]$, respectively. In addition, define the associated basis matrices $E^{\top},E^{\top}_{1} \in \mathbb{R}^{q \times m}$ as in Lemma~\ref{lem:spV1}, then the unique transition matrix $X \in \mathbb{R}^{m^{2}}$ such that $E^{\top}_{1} = E^{\top} \,X$ is given, is an element of the positive general linear group, that is $X \in \text{GL}^{+}(m)$. 
\end{proposition}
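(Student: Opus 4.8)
The plan is to separate the routine existence and invertibility of $X$ from the genuine content, which is the sign of its determinant. First I would record that, since $\{\mathbf{e}_{1}, \ldots, \mathbf{e}_{m}\}$ and $\{\mathbf{e}^{1}_{1}, \ldots, \mathbf{e}^{1}_{m}\}$ are both ordered bases of the same $m$-dimensional subspace $\mathcal{E} \subseteq \mathbb{R}^{q}$, each $\mathbf{e}^{1}_{k}$ has a unique expansion in the basis $\{\mathbf{e}_{i}\}$; collecting these coefficients column-wise yields a matrix $X \in \mathbb{R}^{m^{2}}$ with $E^{\top}_{1} = E^{\top}\, X$. Uniqueness is immediate from the full column rank of $E^{\top}$, and the closed form $X = (E\, E^{\top})^{-1}\, E\, E^{\top}_{1}$ follows at once from the inversion formula in Lemma~\ref{lem:spV1}. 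Because $X$ carries a basis of $\mathcal{E}$ onto a basis of $\mathcal{E}$, it is invertible, so $\det X \neq 0$.

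The delicate point is to upgrade $\det X \neq 0$ to $\det X > 0$, that is, to show $X \in \text{GL}^{+}(m)$. I would first note that the obvious Gram-matrix shortcut is insufficient: transposing $E^{\top}_{1} = E^{\top}\, X$ gives $E_{1} = X^{\top}\, E$ and hence $E_{1}\, E^{\top}_{1} = X^{\top}\,(E\, E^{\top})\, X$, so that $(\det X)^{2} = \det(E_{1}\, E^{\top}_{1})\,/\,\det(E\, E^{\top})$; this fixes the magnitude of $\det X$ but not its sign, precisely because two bases of one subspace may differ in orientation. Thus the claim is equivalent to asserting that the two frames determine the \emph{same} orientation of $\mathcal{E}$.

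To control the orientation I would exploit the invariant supplied by Proposition~\ref{prop:orth_mat}: since $[\vec{\gamma}]$ and $[\vec{\gamma}_{1}]$ induce the same subspace $\mathcal{E}$, they induce the same orthogonal projector $\mathbf{P}$ onto $\mathcal{E}$, and applying Gram--Schmidt to the two ordered bases produces orthonormal frames $\{\mathbf{u}_{i}\}$, $\{\mathbf{u}^{1}_{i}\}$ of $\mathcal{E}$. Since Gram--Schmidt is represented by an upper-triangular matrix with strictly positive diagonal, it does not change orientation; hence $\det X$ has the same sign as the determinant of the orthogonal transition matrix between $\{\mathbf{u}_{i}\}$ and $\{\mathbf{u}^{1}_{i}\}$, which lies in $\{+1,-1\}$. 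The task thereby reduces to showing that this orthogonal transition belongs to $\mathrm{SO}(m)$.

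For the final step I would connect the two payoff sets by a path and argue by a connectedness principle: one moves $\vec{\gamma}$ to $\vec{\gamma}_{1}$ so that the induced frame varies through bases of the \emph{fixed} subspace $\mathcal{E}$; along such a deformation the transition matrix stays in $\mathrm{GL}(m)$ (the frame always spanning the $m$-dimensional $\mathcal{E}$), so $\det$ cannot change sign, and at the starting frame it equals $+1$. The main obstacle is exactly the construction of this deformation: the basis is read off from the \emph{combinatorial} data $\mathcal{S}(\vec{\gamma})$ of lexicographically smallest most effective coalitions, which is only piecewise constant along a straight segment, so a naive linear homotopy does not give a continuous frame. Overcoming this requires either (i) replacing the segment by a path that crosses only orientation-preserving transitions between adjacent equivalence classes inducing $\mathcal{E}$, using the convexity of each class from Proposition~\ref{prop:eq_rel} to glue local continuous frames, or (ii) proving directly that the canonical lexicographic selection of pivot vectors from $\mathbf{E}^{\top}$ and $\mathbf{E}^{\top}_{1}$ respects the orientation fixed by the common projector $\mathbf{P}$. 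I expect this orientation-bookkeeping, rather than any computation, to be the crux of the argument.
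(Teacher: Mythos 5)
Your treatment of the routine part is fine: the existence, uniqueness and invertibility of $X$, the closed form $X = (E\,E^{\top})^{-1}E\,E^{\top}_{1}$, and the observation that the Gram identity $\det(E_{1}E^{\top}_{1}) = (\det X)^{2}\det(E\,E^{\top})$ pins down only $\arrowvert\det X\arrowvert$ are all correct, as is the Gram--Schmidt reduction showing that the sign of $\det X$ equals the determinant of the orthogonal transition between the associated orthonormal frames. But none of this touches the actual content of the proposition: it merely restates ``$\det X>0$'' as ``the two frames are equally oriented,'' and at that point the proof stops. You say so yourself -- the final paragraph offers two candidate strategies, (i) and (ii), and defers ``the crux of the argument'' to whichever of them can be made to work. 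A proposal whose last step is the announcement of the crux as an open task is not a proof, so the key claim, membership in $\text{GL}^{+}(m)$ rather than merely $\text{GL}(m)$, is left unestablished.

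Moreover, the connectedness strategy you lean on cannot work in the form described. The frame attached to a payoff vector is read off from the discrete data $\mathcal{S}(\vec{\gamma})$, which by Proposition~\ref{prop:eq_rel} is \emph{constant} on each equivalence class; hence along any path from $\vec{\gamma}$ to $\vec{\gamma}_{1}$ the induced frame is piecewise constant and changes by jumps at class boundaries. Connectedness gives no control over a locally constant, jumping determinant: the sign of the transition determinant across a single jump between adjacent classes inducing $\mathcal{E}$ is exactly the statement to be proved, so your option (i) (``cross only orientation-preserving transitions'') presupposes the conclusion, and your option (ii) is the proposition itself restated for the lexicographic pivot selection. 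Note also that the paper supplies no inline argument to compare against -- its proof is a citation to \citet[p.~101]{mei:13} -- and that any successful argument must exploit the specific structure of the columns of $\mathbf{E}^{\top}$ and $\mathbf{E}^{\top}_{1}$ (they are $\{-1,0,1\}$-vectors of the form $\mathbf{1}_{S_{ji}}-\mathbf{1}_{S_{ij}}$, ordered canonically by player index); an argument that never uses this structure, like yours, cannot succeed, since for arbitrary pairs of ordered bases of a common subspace the transition determinant can take either sign.
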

\begin{proof}
  The proof can be found in~\citet[p.~101]{mei:13}.
\end{proof}
Proposition~\ref{prop:GLG} denotes two payoff sets $[\vec{\gamma}]$ and $[\vec{\gamma}_{1}]$ as equivalent, if there exists a transition matrix $X$ from the positive general linear group, that is $X \in \text{GL}^{+}(m)$, such that $E^{\top}_{1} = E^{\top} \,X$ is in force. Notice that the transition matrix $X$ must be unique (cf.~\citet[p. 102]{mei:13}). The underlying group action (cf.~\citet[Corollary 6.6.1]{mei:13}) can be interpreted that a bargaining situation is transformed into an equivalent bargaining situation. For a thorough discussion of a group action onto the set of all ordered bases, the interested reader should consult~\citet[Sect. 6.6]{mei:13}.

The vector space $\mathbb{R}^{q}$ is an orthogonal decomposition by the subspaces $\mathcal{E}$ and $\mathcal{N}_{\mathbf{E}}$. We denote in the sequel a basis of the orthogonal complement of space $\mathcal{E}$ by $\{\mathbf{w}_{1}, \ldots, \mathbf{w}_{q-m}\}$. This subspace of $\mathbb{R}^{q}$ is identified by $\mathcal{W}:= \mathcal{N}_{\mathbf{E}} = \mathcal{E}^{\perp}$. In addition, we have $\mathbf{P}\,\mathbf{w}_{k} = \mathbf{0}$ for all $k \in \{1,\ldots, q-m\}$. Thus, we can obtain the following corollary

\begin{corollary}[\citet{mei:13}]
  \label{cor:innp}
  If $\vec{\gamma}$ induces the matrices $\mathbf{E} \in \mathbb{R}^{n \times q},\mathbf{E}^{\top} \in \mathbb{R}^{q \times n}$ determined by formula~\eqref{eq:matE}, then with respect to the Euclidean inner product, getting 
 \begin{enumerate}
 \item $\mathbb{R}^{q} = \mathcal{E} \oplus \mathcal{W} = \mathcal{E} \oplus \mathcal{E}^{\perp}$.
 \end{enumerate}
\end{corollary}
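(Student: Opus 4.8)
The plan is to establish the asserted orthogonal direct sum by verifying the two defining properties of a direct sum decomposition --- spanning and trivial intersection --- while reading off the orthogonality directly from the way $\mathcal{W}$ was introduced. The second equality, $\mathcal{W} = \mathcal{E}^{\perp}$, requires no work at all: it holds by the very definition $\mathcal{W} := \mathcal{N}_{\mathbf{E}} = \mathcal{E}^{\perp}$ fixed in the paragraph preceding the statement. Hence the whole argument reduces to showing $\mathbb{R}^{q} = \mathcal{E} \oplus \mathcal{W}$.

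First I would settle spanning by invoking Lemma~\ref{lem:spV1}. Its first assertion guarantees that every $\vec{\beta} \in \mathbb{R}^{q}$ admits a representation $\vec{\beta} = [E^{\top}\; W^{\top}]\,\mathbf{c}$ for a suitable coefficient vector $\mathbf{c} \in \mathbb{R}^{q}$. Partitioning $\mathbf{c}$ into its first $m$ and its last $q-m$ entries, say $\mathbf{c} = (\mathbf{c}_{1},\mathbf{c}_{2})$ with $\mathbf{c}_{1} \in \mathbb{R}^{m}$ and $\mathbf{c}_{2} \in \mathbb{R}^{q-m}$, yields $\vec{\beta} = E^{\top}\mathbf{c}_{1} + W^{\top}\mathbf{c}_{2}$. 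Since the columns of $E^{\top}$ form the basis $\{\mathbf{e}_{1},\ldots,\mathbf{e}_{m}\}$ of $\mathcal{E}$ and those of $W^{\top}$ form the basis $\{\mathbf{w}_{1},\ldots,\mathbf{w}_{q-m}\}$ of $\mathcal{W}$, the two summands lie in $\mathcal{E}$ and $\mathcal{W}$ respectively, so $\mathbb{R}^{q} = \mathcal{E} + \mathcal{W}$.

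Next I would argue that the sum is direct. Because $\mathcal{W} = \mathcal{E}^{\perp}$, any vector $\mathbf{v} \in \mathcal{E} \cap \mathcal{W}$ is orthogonal to itself, so $\langle\, \mathbf{v}, \mathbf{v}\,\rangle = 0$ forces $\mathbf{v} = \mathbf{0}$; together with the spanning step this yields $\mathbb{R}^{q} = \mathcal{E} \oplus \mathcal{W}$. Alternatively, and perhaps more in the spirit of the surrounding material, I would run the decomposition through the orthogonal projection $\mathbf{P}$ of Proposition~\ref{prop:orth_mat}: for arbitrary $\vec{\beta}$ write $\vec{\beta} = \mathbf{P}\,\vec{\beta} + (\mathbf{I}_{q} - \mathbf{P})\,\vec{\beta}$; idempotency of $\mathbf{P}$ places the first summand in $\operatorname{range}\mathbf{P} = \mathcal{E}$ and the second in $\ker \mathbf{P} = \mathcal{W}$ (recall $\mathbf{P}\,\mathbf{w}_{k} = \mathbf{0}$ for every $k$), and uniqueness follows since $\mathbf{P}$ acts as the identity on $\mathcal{E}$.

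I do not anticipate a genuine obstacle, since this is a textbook orthogonal-complement decomposition; the only point demanding care is that the dimensions be complementary, i.e.\ $\dim \mathcal{E} = m$ and $\dim \mathcal{W} = q - m$ with no overlap. This is exactly what the invertibility of $[E^{\top}\; W^{\top}]$ asserted in the second part of Lemma~\ref{lem:spV1} secures: invertibility means the $q$ columns of $E^{\top}$ and $W^{\top}$ together form a basis of $\mathbb{R}^{q}$, which simultaneously delivers the spanning property and precludes any nonzero vector common to $\mathcal{E}$ and $\mathcal{W}$.
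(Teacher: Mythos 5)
Your proposal is correct and follows essentially the same route the paper intends: the paper states this corollary without a separate proof, presenting it as an immediate consequence of Lemma~\ref{lem:spV1} (spanning and invertibility of $[E^{\top}\; W^{\top}]$) together with the definition $\mathcal{W}:=\mathcal{N}_{\mathbf{E}}=\mathcal{E}^{\perp}$ and the fact $\mathbf{P}\,\mathbf{w}_{k}=\mathbf{0}$, which are exactly the ingredients you use. Your observation that directness follows either from self-orthogonality on $\mathcal{E}\cap\mathcal{W}$ or from the idempotent projection $\mathbf{P}$ of Proposition~\ref{prop:orth_mat} is a faithful filling-in of the argument the paper leaves implicit.
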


A consequence of the orthogonal projection method presented by the next theorem and corollary is that every payoff vector belonging to the intersection of the minimum set of function $h_{\gamma}$ and its payoff equivalence class $[\vec{\gamma}]$ is a pre-kernel element. This due to $h_{\gamma} = h$ on $[\vec{\gamma}]$.

\begin{theorem}[Orthogonal Projection Method]
  \label{thm:pmt_prk}
  Let $\vec{\gamma}_{k} \in [\vec{\gamma}]$ for $k = 1,2,3$. If $\vec{\gamma}_{2} \in M(h_{\gamma}) $ and $\vec{\gamma}_{k} \notin M(h_{\gamma})$ for $k = 1,3 $, then $\vec{\zeta}_{2} = \vec{\xi}_{2} = \mathbf{0}$, and consequently $\vec{\gamma}_{2} \in \mathcal{P\text{\itshape r}K}(v)$.
\end{theorem}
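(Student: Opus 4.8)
The plan is to read off the conclusion directly from the characterization in Lemma~\ref{lem:Etx_Pa}, combined with the equivalence of $h$ and $h_\gamma$ on the payoff set $[\vec\gamma]$ and the orthogonal-decomposition result in Corollary~\ref{cor:innp}. The structural fact I would exploit is that the vector $\vec\xi_2$ of (un)balanced excesses induced at $\vec\gamma_2$ lives in $\mathbb{R}^q = \mathcal{E}\oplus\mathcal{W}$, and that minimizing $h_\gamma$ forces its $\mathcal{E}$-component to vanish while its $\mathcal{W}$-component vanishes automatically because $\mathbf{E}^\top$ has its columns in $\mathcal{E}$.

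First I would use that $\vec\gamma_2 \in M(h_\gamma)$ and apply part~3 of Lemma~\ref{lem:Etx_Pa} with $\mathbf{z} := \vec\gamma_2 - \vec\gamma$, giving $-\mathbf{E}^\top\mathbf{z} = \mathbf{P}\,\vec\xi_2$, while part~2 gives $\mathbf{E}^\top\vec\gamma = \vec\xi - \vec\alpha$ at the base point; combining these relations expresses $\mathbf{E}^\top\vec\gamma_2$ in terms of $\vec\xi_2$ and the projection $\mathbf{P}$. Because $\vec\gamma_2$ is a \emph{minimizer} of the quadratic $h_\gamma$ of type~\eqref{eq:objf2}, and by Proposition~\ref{prop:eqrep} this equals $\lVert \vec\alpha + \mathbf{E}^\top\mathbf{x}\rVert^2$, the minimum value of $h_\gamma$ is attained exactly when $\vec\alpha + \mathbf{E}^\top\vec\gamma_2$ has no component in the column space $\mathcal{E}$ of $\mathbf{E}^\top$; equivalently $\mathbf{P}(\vec\alpha + \mathbf{E}^\top\vec\gamma_2) = \mathbf{0}$. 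Since $\mathbf{P}$ is the orthogonal projector onto $\mathcal{E}$ (Proposition~\ref{prop:orth_mat}) and $\mathbf{E}^\top\vec\gamma_2 \in \mathcal{E}$, this pins down the $\mathcal{E}$-part of $\vec\xi_2$.

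Second I would invoke the hypothesis $\vec\gamma_2 \in [\vec\gamma]$: by Proposition~\ref{prop:eq_rel} every vector in the payoff class induces the \emph{same} set $\mathcal{S}(\vec\gamma)$ of lexicographically smallest most effective coalitions, hence the same matrix $\mathbf{E}$ and, by equation~\eqref{eq:xi_zet}, the identity $\vec\xi_2 = \vec\zeta_2$ holds since $\vec\gamma_2$ is drawn from its proper equivalence class. This is the step that upgrades a statement about the formal excess vector $\vec\xi_2$ to a statement about the genuine maximum-surplus differences $\vec\zeta_2$. The decomposition $\mathbb{R}^q = \mathcal{E}\oplus\mathcal{E}^\perp$ from Corollary~\ref{cor:innp} then lets me conclude that $\vec\xi_2$, whose $\mathcal{E}$-component is forced to zero by minimization and whose $\mathcal{W}$-component is already zero because $\vec\xi_2$ is built from the columns of $\mathbf{E}^\top$ and hence lies in $\mathcal{E}$, must be the zero vector; therefore $\vec\zeta_2 = \vec\xi_2 = \mathbf{0}$. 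Finally, $\vec\zeta_2 = \mathbf{0}$ says precisely that $s_{ij}(\vec\gamma_2,v) = s_{ji}(\vec\gamma_2,v)$ for all $i,j$, which by the definition~\eqref{eq:prek} means $\vec\gamma_2 \in \mathcal{P\text{\itshape r}K}(v)$; equivalently, one notes $h(\vec\gamma_2) = h_\gamma(\vec\gamma_2) = 0$ on $[\vec\gamma]$ and applies Corollary~\ref{cor:rep}.

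The main obstacle I anticipate is justifying cleanly that the $\mathcal{W}$-component of $\vec\xi_2$ vanishes, i.e.\ that $\vec\xi_2$ genuinely lies in $\mathcal{E}$ rather than merely having a controlled $\mathcal{E}$-projection. The roles of $\mathbf{P}$, the pseudo-inverse $\mathbf{Q}^\dagger$ in the singular case, and the identities of Lemma~\ref{lem:Etx_Pa} must be tracked carefully so that the minimization condition is not conflated with the a priori membership of $\vec\xi_2$ in the range of $\mathbf{E}^\top$; getting the two contributions to zero from genuinely independent arguments (one from optimality, one from the subspace structure) is where the care is needed. The roles of the auxiliary points $\vec\gamma_1,\vec\gamma_3 \notin M(h_\gamma)$ appear to be expository—emphasizing that $\vec\gamma_2$ is singled out as the minimizer within the class—so I would not expect them to carry weight in the argument beyond framing.
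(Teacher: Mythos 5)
Your reduction of the optimality condition to least--squares geometry is fine as far as it goes: since $\vec{\gamma}_{2}\in[\vec{\gamma}]$ induces the same $\mathbf{E}$ and $\vec{\alpha}$ as $\vec{\gamma}$, one has $\vec{\xi}_{2}=\vec{\zeta}_{2}=\vec{\alpha}+\mathbf{E}^{\top}\vec{\gamma}_{2}$, and minimality of $\vec{\gamma}_{2}$ for $h_{\gamma}(\mathbf{x})=\Arrowvert\vec{\alpha}+\mathbf{E}^{\top}\mathbf{x}\Arrowvert^{2}$ (Proposition~\ref{prop:eqrep}) indeed gives $\mathbf{P}\,\vec{\xi}_{2}=\mathbf{0}$. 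The gap is the very next step. Your claim that the $\mathcal{W}$-component of $\vec{\xi}_{2}$ vanishes ``because $\vec{\xi}_{2}$ is built from the columns of $\mathbf{E}^{\top}$ and hence lies in $\mathcal{E}$'' is false: only $\vec{\xi}_{2}-\vec{\alpha}=\mathbf{E}^{\top}\vec{\gamma}_{2}$ lies in $\mathcal{E}$ (this is exactly part~2 of Lemma~\ref{lem:Etx_Pa}), while the summand $\vec{\alpha}$ is fixed by the game values and need not belong to $\mathcal{E}$. In fact, combining optimality with part~1 of Lemma~\ref{lem:Etx_Pa} yields $\vec{\xi}_{2}=\vec{\alpha}-\mathbf{P}\,\vec{\alpha}=(\mathbf{I}_{q}-\mathbf{P})\,\vec{\alpha}\in\mathcal{W}$, so the residual at a minimizer lies \emph{entirely} in $\mathcal{W}$ --- the opposite of what you assert --- and asserting $\vec{\xi}_{2}\in\mathcal{E}$ is, given $\mathbf{P}\,\vec{\xi}_{2}=\mathbf{0}$, equivalent to the conclusion $\vec{\xi}_{2}=\mathbf{0}$, i.e.\ to $\mathbf{P}\,\vec{\alpha}=\vec{\alpha}$, which is precisely what the theorem (and Corollary~\ref{cor:xi_prk_2}) is asserting. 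So your two ``independent'' arguments (optimality kills the $\mathcal{E}$-part, subspace structure kills the $\mathcal{W}$-part) collapse into one, and nothing in the proposal controls the $\mathcal{W}$-part; the argument is circular exactly at the point you yourself flagged as the main obstacle.

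Relatedly, dismissing the points $\vec{\gamma}_{1},\vec{\gamma}_{3}\notin M(h_{\gamma})$ as expository is not tenable. If no such points existed, i.e.\ $[\vec{\gamma}]\subseteq M(h_{\gamma})$, every point of the class would satisfy $\mathbf{P}\,\vec{\xi}=\mathbf{0}$ with $\vec{\xi}=(\mathbf{I}_{q}-\mathbf{P})\,\vec{\alpha}$ a constant and possibly nonzero vector: a class consisting of minimizers of its own quadratic which are nevertheless not pre-kernel elements. This degenerate configuration is exactly what the hypothesis on $\vec{\gamma}_{1},\vec{\gamma}_{3}$ here, and the dimension hypothesis $3\le m\le n$ in Corollary~\ref{cor:xi_prk_2}, are designed to exclude. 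A proof that never invokes these points would establish the stronger statement ``every element of $M(h_{\gamma})\cap[\vec{\gamma}]$ is a pre-kernel point'' with no non-degeneracy assumption at all, so no correct proof can have the shape yours does. Note that the paper itself gives no self-contained argument --- it defers to \citet[pp.~109--111]{mei:13} --- but any valid proof must make essential use of the auxiliary points (or a substitute non-degeneracy condition); supplying that missing mechanism, i.e.\ deducing $\vec{\alpha}\in\mathcal{E}$ from the coexistence of minimizers and non-minimizers inside the convex set $[\vec{\gamma}]$, is the actual content of the theorem and is absent from your proposal.
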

\begin{proof}
  For a proof see~\citet[pp.~109-111]{mei:13}.
\end{proof}

\begin{corollary}[\citet{mei:13}]
    \label{cor:xi_prk_2}
 Let be $[\vec{\gamma}]$ an equivalence class of dimension $3 \le m \le n$, and $\mathbf{x} \in M(h_{\gamma}) \cap [\vec{\gamma}] $, then $\vec{\alpha} = \mathbf{P}\,\vec{\alpha}$, and consequently $\mathbf{x} \in \mathcal{P\text{\itshape r}K}(v)$.
\end{corollary}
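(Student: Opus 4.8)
The plan is to reduce the assertion $\vec{\alpha} = \mathbf{P}\,\vec{\alpha}$ to the vanishing of the vector of unbalanced excesses $\vec{\xi}$ at $\mathbf{x}$, and then to extract that vanishing from the orthogonal projection method of Theorem~\ref{thm:pmt_prk}. First I would record the purely algebraic reduction. Since $\mathbf{x}\in[\vec{\gamma}]$, the point $\mathbf{x}$ induces exactly the matrices $\mathbf{E},\mathbf{E}^{\top}$ and the vector $\vec{\alpha}$ attached to $[\vec{\gamma}]$, so by~\eqref{eq:unb_exc} together with Proposition~\ref{prop:eqrep} its vector of unbalanced excesses is $\vec{\xi}=\vec{\alpha}+\mathbf{E}^{\top}\mathbf{x}$. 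Because $\mathbf{x}\in M(h_{\gamma})$, Lemma~\ref{lem:Etx_Pa}(1) gives $-\mathbf{E}^{\top}\mathbf{x}=\mathbf{P}\,\vec{\alpha}$, whence $\vec{\xi}=\vec{\alpha}-\mathbf{P}\,\vec{\alpha}=(\mathbf{I}_{q}-\mathbf{P})\,\vec{\alpha}$. Consequently $\vec{\alpha}=\mathbf{P}\,\vec{\alpha}$ holds if and only if $\vec{\xi}=\mathbf{0}$. Moreover, as $\mathbf{x}$ is drawn from its proper class $[\vec{\gamma}]$, relation~\eqref{eq:xi_zet} identifies $\vec{\xi}$ with the vector $\vec{\zeta}$ of balanced maximum surpluses, so $\vec{\xi}=\mathbf{0}$ is precisely the balancing condition $s_{ij}(\mathbf{x},v)=s_{ji}(\mathbf{x},v)$ for all $i,j$; by the definition~\eqref{eq:prek}, or equivalently via Corollary~\ref{cor:rep}, this simultaneously yields $\mathbf{x}\in\mathcal{P\text{\itshape r}K}(v)$. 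Thus the entire statement collapses to the single claim $\vec{\xi}=\mathbf{0}$.

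To establish $\vec{\xi}=\mathbf{0}$ I would invoke Theorem~\ref{thm:pmt_prk}. The minimizer set of the convex quadratic $h_{\gamma}$ is the affine subspace $M(h_{\gamma})=\mathbf{x}+\ker\mathbf{E}^{\top}$, since $\mathbf{Q}=2\,\mathbf{E}\,\mathbf{E}^{\top}$ forces $\ker\mathbf{Q}=\ker\mathbf{E}^{\top}$, of dimension $n-m$. By Proposition~\ref{prop:eq_rel} the class $[\vec{\gamma}]$ is convex, and by hypothesis it has dimension $m\ge 3$. Using the relative-interior position of $\mathbf{x}$ in $[\vec{\gamma}]$, I would lay a segment through $\mathbf{x}$ inside $[\vec{\gamma}]$ that is transversal to $M(h_{\gamma})$, so that its two endpoints $\vec{\gamma}_{1},\vec{\gamma}_{3}\in[\vec{\gamma}]$ lie off $M(h_{\gamma})$ while the interior point $\vec{\gamma}_{2}:=\mathbf{x}$ is a minimizer. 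Theorem~\ref{thm:pmt_prk} then applies verbatim and delivers $\vec{\zeta}_{2}=\vec{\xi}_{2}=\mathbf{0}$, that is $\vec{\xi}=\mathbf{0}$. Assembling the two steps, $\vec{\xi}=\mathbf{0}$ gives $\mathbf{E}^{\top}\mathbf{x}=-\vec{\alpha}$, which compared with Lemma~\ref{lem:Etx_Pa}(1) yields $\vec{\alpha}=\mathbf{P}\,\vec{\alpha}$, while $\vec{\zeta}=\mathbf{0}$ places $\mathbf{x}$ in $\mathcal{P\text{\itshape r}K}(v)$, as asserted.

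The delicate point — and where the hypothesis $m\ge 3$ enters essentially — is the production of the two flanking non‑minimizers $\vec{\gamma}_{1},\vec{\gamma}_{3}$ demanded by Theorem~\ref{thm:pmt_prk}. One must exclude the degenerate configuration in which the whole class $[\vec{\gamma}]$ lies inside the affine minimizer set $M(h_{\gamma})$ at one common positive level, for then every point of $[\vec{\gamma}]$ is a minimizer, no transversal segment exists, and the theorem cannot be triggered. Ruling this out is exactly what the dimension assumption secures: with $m\ge 3$ the class carries enough independent directions that, unless the minimal value $\Arrowvert(\mathbf{I}_{q}-\mathbf{P})\,\vec{\alpha}\Arrowvert^{2}$ is already zero, it must protrude out of the $(n-m)$‑dimensional subspace $M(h_{\gamma})$, furnishing points in $[\vec{\gamma}]\setminus M(h_{\gamma})$. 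Making this comparison of the affine hull of $[\vec{\gamma}]$ with $M(h_{\gamma})=\mathbf{x}+\ker\mathbf{E}^{\top}$ precise, and verifying that $\mathbf{x}$ can be taken in the relative interior, is the only genuinely non‑formal part of the argument; the remainder is the bookkeeping of the projection identities in Lemma~\ref{lem:Etx_Pa} and Proposition~\ref{prop:orth_mat}.
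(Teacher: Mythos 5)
Your opening reduction is correct and is exactly the paper's own machinery: since $\mathbf{x}\in[\vec{\gamma}]$, one has $\vec{\xi}=\vec{\alpha}+\mathbf{E}^{\top}\mathbf{x}$ (cf.~Proposition~\ref{prop:eqrep}), Lemma~\ref{lem:Etx_Pa}(1) turns this into $\vec{\xi}=(\mathbf{I}_{q}-\mathbf{P})\,\vec{\alpha}$, and~\eqref{eq:xi_zet} together with Corollary~\ref{cor:rep} converts $\vec{\xi}=\mathbf{0}$ into both assertions of the corollary; routing everything through Theorem~\ref{thm:pmt_prk} is also how the paper itself frames the statement (the paper prints no proof, citing \citet{mei:13}, but presents the corollary as a consequence of the orthogonal projection method). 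The genuine gap is the step you yourself flag as ``the only genuinely non-formal part'': producing the two non-minimizers $\vec{\gamma}_{1},\vec{\gamma}_{3}\in[\vec{\gamma}]\setminus M(h_{\gamma})$ that Theorem~\ref{thm:pmt_prk} demands, i.e.\ excluding the configuration $[\vec{\gamma}]\subseteq M(h_{\gamma})$ with $\min h_{\gamma}>0$. Your dimension count does not deliver this. First, it conflates two different quantities: throughout the paper $m$ is the rank of $\mathbf{E}^{\top}$, the dimension of the subspace $\mathcal{E}$ (cf.~Lemma~\ref{lem:spV1}, Proposition~\ref{prop:GLG}), whereas your protrusion argument needs the \emph{affine} dimension of the convex set $[\vec{\gamma}]$, and no link between the two is established. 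Second, even granting that $[\vec{\gamma}]$ has affine dimension $m\ge 3$, a set of dimension $m$ fits inside the $(n-m)$-dimensional affine set $M(h_{\gamma})=\mathbf{x}+\ker\mathbf{E}^{\top}$ whenever $m\le n-m$; e.g.\ $n=8$, $m=3$ is dimensionally consistent with $[\vec{\gamma}]\subseteq M(h_{\gamma})$. So ``enough independent directions'' forces nothing, and the degenerate case cannot be waved away: it is precisely the case in which the asserted conclusion would fail (every point of the class would minimize $h_{\gamma}=h$ at a positive level, so none could be a pre-kernel point), so ruling it out is the entire content that the hypothesis $3\le m\le n$ must supply. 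That structural argument is what the book's proof provides and what is missing here; in effect you assume what is to be proved.

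A secondary, repairable point: the transversal-segment construction through $\mathbf{x}$ is both unavailable in general ($\mathbf{x}$ is given; nothing in the hypothesis places it in the relative interior of $[\vec{\gamma}]$) and unnecessary. Theorem~\ref{thm:pmt_prk} as stated requires no collinearity: once a single $\mathbf{y}\in[\vec{\gamma}]\setminus M(h_{\gamma})$ exists, convexity of $[\vec{\gamma}]$ (Proposition~\ref{prop:eq_rel}) puts the segment from $\mathbf{x}$ to $\mathbf{y}$ inside $[\vec{\gamma}]$, and since $M(h_{\gamma})$ is an affine subspace, every point of that segment other than $\mathbf{x}$ lies off $M(h_{\gamma})$, which yields the required $\vec{\gamma}_{1},\vec{\gamma}_{3}$. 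This disposes of your relative-interior worry, but not of the essential one: everything still hinges on showing $[\vec{\gamma}]\not\subseteq M(h_{\gamma})$ whenever $\min h_{\gamma}>0$, and that step remains unproved in your argument.
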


\section{The Uniqueness of the Pre-Kernel}
\label{sec:siva}
To study the uniqueness of the pre-kernel solution of a related TU game derived from a pre-kernel element of a default game, we need to know: (1) if the linear mapping of a pre-kernel element into a specific vector subspace of balanced excesses $\mathcal{E}$ consists of a single point, and (2) that there can not exist any other non-transversal vector subspace of balanced excesses $\mathcal{E}_{1}$ in which a linear transformation of pre-kernel element can be mapped. (3) It must be shown that the pre-kernel coincides with the pre-nucleolus of the set of related games, otherwise, it is obvious that there must exist at least a second pre-kernel point, namely the pre-nucleolus.

For conducting this line of investigation some additional concepts are needed. In a first step we introduce the definition of a {\bfseries unanimity game}, which is indicated as: $\mathbf{u}_{T}(S):=1$, if $T \subseteq S$, otherwise $\mathbf{u}_{T}(S):=0$, whereas $T \subseteq N, T \neq \emptyset$. The collection of all unanimity games forms a {\bfseries unanimity/game basis}. A formula to express the coordinates of this basis is given by  
\begin{equation*}
  v = \sum_{\substack{T \subset N, \\ T \neq \emptyset}}\, \lambda^{v}_{T}\, \mathbf{u}_{T} \iff  \lambda^{v}_{T} = \sum_{\substack{S \subset T, \\ S \neq \emptyset}}\, (-1)^{t-s} \cdot v(S),
\end{equation*}
if $\langle N,v \rangle$, where $\arrowvert S \arrowvert = s $, and $\arrowvert T \arrowvert = t$. A coordinate $\lambda^{v}_{T}$ is said to be an unanimity coordinate of game $\langle N,v \rangle$, and vector $\lambda^{v}$ is called the unanimity coordinates of game $\langle N,v \rangle$. Notice that we assume here that the game is defined in $\mathbb{R}^{2^{n}-1}$ rather than $\mathbb{R}^{2^{n}}$, since we want to write for sake of convenience the {\bfseries game basis} in matrix form without a column and row of zeros. Thus we write
\begin{equation*}
  v = \sum_{\substack{T \subset N, \\ T \neq \emptyset}}\, \lambda^{v}_{T}\, \mathbf{u}_{T} = [\mathbf{u}_{\{1\}}, \ldots , \mathbf{u}_{\{N\}}] \, \lambda^{v} = \boldsymbol{\EuScript{U}}\; \lambda^{v}
\end{equation*}
where the unanimity basis $\boldsymbol{\EuScript{U}}$ is in $\mathbb{R}^{p^{\prime} \times p^{\prime}}$ with $p^{\prime}=2^{n}-1$. In addition, define the {\bfseries unity games (Dirac games)} $\mathbf{1}^{T}$ for all $T \subseteq N$ as: $\mathbf{1}^{T}(S):=1$, if $T=S$, otherwise $\mathbf{1}^{T}(S):=0$.  

In the next step, we select a payoff vector $\vec{\gamma}$, which also determines its payoff set $[\vec{\gamma}]$. With regard to Proposition~\ref{prop:eq_rel}, this vector induces in addition a set of lexicographically smallest most effective coalitions indicated by $\mathcal{S}(\vec{\gamma})$. Implying that we get the configuration $\vec{\alpha}$ by the $q$-coordinates $\alpha_{ij} := (v(S_{ij}) - v(S_{ji})) \in \mathbb{R} $ for all $i,j \in N, i < j $, and $\alpha_{0} := v(N)$. Furthermore, we can also define a set of vectors as the differences of unity games w.r.t. the set of lexicographically smallest most effective coalitions, which is given by 
\begin{equation}
  \label{eq:mat_V}
  \mathbf{v}_{ij} := \mathbf{1}^{S_{ij}} - \mathbf{1}^{S_{ji}} \quad\text{for}\; S_{ij},S_{ji} \in \mathcal{S}(\vec{\gamma}) \quad\text{and}\quad   \mathbf{v}_{0} := \mathbf{1}^{N},
\end{equation}
whereas $\mathbf{v}_{ij}, \mathbf{v}_{0} \in \mathbb{R}^{p^{\prime}}$ for all $i,j \in N, i < j$. With these column vectors, we can identify matrix $\boldsymbol{\mathcal{V}}:= [\mathbf{v}_{1,2}, \ldots ,\mathbf{v}_{n-1,n},\mathbf{v}_{0}] \in \mathbb{R}^{p^{\prime} \times q}$. Then we obtain $\vec{\alpha} = \boldsymbol{\mathcal{V}}^{\top}\, v$ with $v \in \mathbb{R}^{p^\prime{}}$ due to the removed empty set. Moreover, by the measure $y(S):= \sum_{k \in S}\,{y}_{k}$ for all $\emptyset \neq S \subseteq N$, we extend every payoff vector $\mathbf{y}$ to a vector $\overline{\mathbf{y}} \in \mathbb{R}^{p^{\prime}}$, and define the excess vector at $\mathbf{y}$ by $\overline{e}_{\mathbf{y}} := v - \overline{\mathbf{y}} \in \mathbb{R}^{p^{\prime}}$, then we get $\vec{\xi}_{\mathbf{y}} = \boldsymbol{\mathcal{V}}^{\top}\, \overline{e}_{\mathbf{y}}$. From matrix $\boldsymbol{\mathcal{V}}^{\top}$, we can also derive an orthogonal projection $\mathbf{P}_{\mathcal{V}}$ specified by $\boldsymbol{\mathcal{V}}^{\top}\,(\boldsymbol{\mathcal{V}}^{\top})^{\dagger} \in \mathbb{R}^{q \times q}$ such that $\mathbb{R}^{q} = \mathcal{V} \oplus \mathcal{V}^{\perp}$ is valid, i.e.~the rows of matrix $\boldsymbol{\mathcal{V}}^{\top}$ are a spanning system of the vector subspace $\mathcal{V} \subseteq \mathbb{R}^{q \times q}$, thus $\mathcal{V}:=span\{\mathbf{v}^{\top}_{1,2}, \ldots, \mathbf{v}^{\top}_{n-1,n},\mathbf{v}^{\top}_{0}\}$. Vector subspace $\mathcal{V}$ reflects the power of the set of lexicographically smallest most effective coalitions. In contrast, vector subspace $\mathcal{E}$ reflects the ascribed unbalancedness in the coalition power w.r.t. the bilateral bargaining situation attained at $\vec{\gamma}$ through $\mathcal{S}(\vec{\gamma})$. The next results show how these vector subspaces are intertwined.

\begin{lemma}[\citet{mei:13}]
  \label{lem:inl_vp}
  Let $\mathbf{E}^{\top} \in \mathbb{R}^{q \times n}$ be defined as in Equation~\eqref{eq:matE}, $\boldsymbol{\mathcal{V}}^{\top}\in \mathbb{R}^{q \times p^{\prime}}$ as by Equation~\eqref{eq:mat_V}, then there exists a matrix $\mathbf{Z}^{\top} \in \mathbb{R}^{p^{\prime} \times n}$ such that $\mathbf{E}^{\top} = \boldsymbol{\mathcal{V}}^{\top}\,\mathbf{Z}^{\top}$ if, and only if, $\mathcal{R}_{\mathbf{E}^{\top}} \subseteq \mathcal{R}_{\boldsymbol{\mathcal{V}}^{\top}}$, that is, $\mathcal{E} \subseteq \mathcal{V}$. 
\end{lemma}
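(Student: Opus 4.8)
The plan is to recognize the statement as the classical solvability criterion for the linear matrix equation $\mathbf{E}^{\top} = \boldsymbol{\mathcal{V}}^{\top}\,\mathbf{Z}^{\top}$ in the unknown $\mathbf{Z}^{\top}$, namely that such a factorization exists precisely when the range (column space) of the left-hand side is contained in the range of $\boldsymbol{\mathcal{V}}^{\top}$. First I would fix the identifications the lemma rests upon: by Lemma~\ref{lem:spV1} the subspace $\mathcal{E}$ is the column space $\mathcal{R}_{\mathbf{E}^{\top}}$ spanned by the columns of $\mathbf{E}^{\top}\in\mathbb{R}^{q\times n}$, while $\mathcal{V} = \mathcal{R}_{\boldsymbol{\mathcal{V}}^{\top}}$ is the column space spanned by the columns of $\boldsymbol{\mathcal{V}}^{\top}\in\mathbb{R}^{q\times p^{\prime}}$; both are subspaces of $\mathbb{R}^{q}$, so that the inclusion $\mathcal{E}\subseteq\mathcal{V}$ is well posed. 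With these identifications the two assertions of the lemma become the two directions of the solvability criterion.

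For necessity I would argue column-wise. Assuming a solution $\mathbf{Z}^{\top}\in\mathbb{R}^{p^{\prime}\times n}$ exists, every vector of $\mathcal{R}_{\mathbf{E}^{\top}}$ has the form $\mathbf{E}^{\top}\mathbf{c} = \boldsymbol{\mathcal{V}}^{\top}(\mathbf{Z}^{\top}\mathbf{c})$ for some $\mathbf{c}\in\mathbb{R}^{n}$, hence lies in $\mathcal{R}_{\boldsymbol{\mathcal{V}}^{\top}} = \mathcal{V}$; this yields $\mathcal{E}\subseteq\mathcal{V}$ at once. Sufficiency is where I would invoke the projection machinery already assembled before the lemma. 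Given $\mathcal{E}\subseteq\mathcal{V}$, I would take the explicit candidate $\mathbf{Z}^{\top}:=(\boldsymbol{\mathcal{V}}^{\top})^{\dagger}\,\mathbf{E}^{\top}$ built from the Moore--Penrose pseudo-inverse, so that $\boldsymbol{\mathcal{V}}^{\top}\mathbf{Z}^{\top} = \boldsymbol{\mathcal{V}}^{\top}(\boldsymbol{\mathcal{V}}^{\top})^{\dagger}\,\mathbf{E}^{\top} = \mathbf{P}_{\mathcal{V}}\,\mathbf{E}^{\top}$, where $\mathbf{P}_{\mathcal{V}} = \boldsymbol{\mathcal{V}}^{\top}(\boldsymbol{\mathcal{V}}^{\top})^{\dagger}$ is exactly the orthogonal projector onto $\mathcal{V}$ introduced just prior to the statement. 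Since every column of $\mathbf{E}^{\top}$ lies in $\mathcal{E}\subseteq\mathcal{V}$, and an orthogonal projector restricts to the identity on its range, one gets $\mathbf{P}_{\mathcal{V}}\,\mathbf{E}^{\top} = \mathbf{E}^{\top}$, whence $\boldsymbol{\mathcal{V}}^{\top}\mathbf{Z}^{\top} = \mathbf{E}^{\top}$ and the factorization holds.

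Carrying out the sufficiency this way requires two auxiliary facts I would record explicitly: that $\mathbf{P}_{\mathcal{V}}$ is indeed the orthogonal projection onto $\mathcal{R}_{\boldsymbol{\mathcal{V}}^{\top}}$ (which follows from the defining pseudo-inverse properties listed after Lemma~\ref{lem:Etx_Pa}, exactly as in Proposition~\ref{prop:orth_mat}), and that such a projector acts as the identity precisely on the vectors of its range. Both are standard. I do not expect a genuine obstacle: the only care needed is the bookkeeping of the transpose conventions and the fact that $\boldsymbol{\mathcal{V}}^{\top}$ will in general be rank-deficient, which is why the pseudo-inverse must replace a genuine inverse and why the projection argument has to be applied to the range $\mathcal{V} = \mathcal{R}_{\boldsymbol{\mathcal{V}}^{\top}}$ rather than to a row space.
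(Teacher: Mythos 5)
Your proof is correct: the necessity direction (columns of $\mathbf{E}^{\top} = \boldsymbol{\mathcal{V}}^{\top}\mathbf{Z}^{\top}$ lie in $\mathcal{R}_{\boldsymbol{\mathcal{V}}^{\top}}$) and the sufficiency direction via $\mathbf{Z}^{\top} := (\boldsymbol{\mathcal{V}}^{\top})^{\dagger}\mathbf{E}^{\top}$ together with $\boldsymbol{\mathcal{V}}^{\top}(\boldsymbol{\mathcal{V}}^{\top})^{\dagger} = \mathbf{P}_{\mathcal{V}}$ acting as the identity on $\mathcal{V} \supseteq \mathcal{E}$ constitute the standard solvability criterion for this matrix equation, and they rest exactly on the projector machinery the paper itself sets up just before the lemma. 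The paper gives no in-document proof (it defers to \citet[p.~141]{mei:13}), but your argument is the canonical one for this range-inclusion characterization, so it matches the intended approach.
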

\begin{proof}
  The proof is given in~\citet[p.~141]{mei:13}.
\end{proof}
Notice that the minimal rank of matrix $\boldsymbol{\mathcal{V}}^{\top}$ is bounded by $\mathbf{E}^{\top}$ which is equal to $m < n$ with the consequence that we get in this case $\mathcal{V} = \mathcal{E}$. However, the maximal rank is equal to $q$, and then $\mathcal{V} = \mathbb{R}^{q}$ (cf.~\citet[Corollary 7.4.1]{mei:13}).

\begin{lemma}[\citet{mei:13}]
  \label{lem:vsp_al}
Let $\vec{\alpha}, \vec{\xi} \in \mathbb{R}^q$ as in Equation~\eqref{eq:unb_exc}, then the following relations are satisfied on the vector space $\mathcal{V}$:
  \begin{enumerate}
  \item $\mathbf{P}_{\mathcal{V}}\,\vec{\alpha} = \vec{\alpha} \in \mathcal{V}$
  \item $\mathbf{P}_{\mathcal{V}}\,\vec{\xi} = \vec{\xi} \in \mathcal{V}$
  \item $\mathbf{P}_{\mathcal{V}}\,(\vec{\xi} - \vec{\alpha}) = (\vec{\xi} - \vec{\alpha}) \in \mathcal{V}$
  \item $\mathbf{P}_{\mathcal{V}}\, \mathbf{E}^{\top} = \mathbf{P}\,\mathbf{E}^{\top} = \mathbf{E}^{\top}$, hence $\mathcal{E} \subseteq \mathcal{V}$
  \item $\mathbf{E}\,\mathbf{P}_{\mathcal{V}} = \mathbf{E}\,\mathbf{P} = \mathbf{E}$, hence $\mathcal{R}_{\mathbf{E}} \subseteq \mathcal{V}$.
  \end{enumerate}
\end{lemma}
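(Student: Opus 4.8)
The plan is to read off all five identities from the single structural fact that $\mathbf{P}_{\mathcal{V}} = \boldsymbol{\mathcal{V}}^{\top}(\boldsymbol{\mathcal{V}}^{\top})^{\dagger}$ is the orthogonal projection onto $\mathcal{V} = \mathcal{R}_{\boldsymbol{\mathcal{V}}^{\top}}$. First I would record the one identity that does all the work: applying the Moore--Penrose general condition (property (1) listed after Lemma~\ref{lem:Etx_Pa}) to the matrix $\boldsymbol{\mathcal{V}}^{\top}$ gives $\mathbf{P}_{\mathcal{V}}\,\boldsymbol{\mathcal{V}}^{\top} = \boldsymbol{\mathcal{V}}^{\top}(\boldsymbol{\mathcal{V}}^{\top})^{\dagger}\boldsymbol{\mathcal{V}}^{\top} = \boldsymbol{\mathcal{V}}^{\top}$. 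By the normalization property (3) the operator $\mathbf{P}_{\mathcal{V}}$ is self-adjoint, and it is idempotent, so exactly as in Proposition~\ref{prop:orth_mat} it fixes precisely the vectors of $\mathcal{R}_{\boldsymbol{\mathcal{V}}^{\top}} = \mathcal{V}$ and annihilates $\mathcal{V}^{\perp}$, consistent with $\mathbb{R}^{q} = \mathcal{V} \oplus \mathcal{V}^{\perp}$.

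For items~1--3 I would use that each of the three vectors already sits in the range of $\boldsymbol{\mathcal{V}}^{\top}$. By the construction preceding Lemma~\ref{lem:inl_vp} one has $\vec{\alpha} = \boldsymbol{\mathcal{V}}^{\top}\,v$ and $\vec{\xi} = \boldsymbol{\mathcal{V}}^{\top}\,\overline{e}_{\vec{\gamma}}$, whence $\mathbf{P}_{\mathcal{V}}\,\vec{\alpha} = \boldsymbol{\mathcal{V}}^{\top}(\boldsymbol{\mathcal{V}}^{\top})^{\dagger}\boldsymbol{\mathcal{V}}^{\top}\,v = \boldsymbol{\mathcal{V}}^{\top}\,v = \vec{\alpha}$ and likewise $\mathbf{P}_{\mathcal{V}}\,\vec{\xi} = \vec{\xi}$; the memberships $\vec{\alpha},\vec{\xi} \in \mathcal{V}$ are just the statement that these are fixed points of the projection. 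Item~3 then follows by linearity, $\mathbf{P}_{\mathcal{V}}(\vec{\xi} - \vec{\alpha}) = \vec{\xi} - \vec{\alpha}$, or directly from $\vec{\xi} - \vec{\alpha} = \boldsymbol{\mathcal{V}}^{\top}(\overline{e}_{\vec{\gamma}} - v) = -\boldsymbol{\mathcal{V}}^{\top}\,\overline{\vec{\gamma}} \in \mathcal{V}$, using $\overline{e}_{\vec{\gamma}} = v - \overline{\vec{\gamma}}$.

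The substantive step is item~4, for which I would first secure the inclusion $\mathcal{E} \subseteq \mathcal{V}$. The issue is to reconcile the two constructions: $\mathbf{E}^{\top}$ of~\eqref{eq:matE} is assembled from the player-indicators $\mathbf{1}_{S} \in \mathbb{R}^{n}$, whereas $\boldsymbol{\mathcal{V}}^{\top}$ of~\eqref{eq:mat_V} is assembled from the Dirac games $\mathbf{1}^{S} \in \mathbb{R}^{p^{\prime}}$. Writing $\mathbf{A} \in \mathbb{R}^{p^{\prime} \times n}$ for the aggregation map $\mathbf{y} \mapsto \overline{\mathbf{y}}$, whose $S$-row is $\mathbf{1}_{S}^{\top}$, I would check entrywise from the definitions that $\boldsymbol{\mathcal{V}}^{\top}\mathbf{A} = -\mathbf{E}^{\top}$, since $(\mathbf{1}^{S_{ij}} - \mathbf{1}^{S_{ji}})^{\top}\mathbf{A} = \mathbf{1}_{S_{ij}}^{\top} - \mathbf{1}_{S_{ji}}^{\top} = -\mathbf{E}_{ij}^{\top}$ and $(\mathbf{1}^{N})^{\top}\mathbf{A} = \mathbf{1}_{N}^{\top} = -\mathbf{E}_{0}^{\top}$. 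Hence $\mathbf{E}^{\top} = \boldsymbol{\mathcal{V}}^{\top}\mathbf{Z}^{\top}$ with $\mathbf{Z}^{\top} := -\mathbf{A}$, and Lemma~\ref{lem:inl_vp} yields $\mathcal{E} \subseteq \mathcal{V}$. The projection identities now fall out: $\mathbf{P}_{\mathcal{V}}\mathbf{E}^{\top} = \mathbf{P}_{\mathcal{V}}\boldsymbol{\mathcal{V}}^{\top}\mathbf{Z}^{\top} = \boldsymbol{\mathcal{V}}^{\top}\mathbf{Z}^{\top} = \mathbf{E}^{\top}$ by the identity of the first paragraph, while $\mathbf{P}\mathbf{E}^{\top} = \mathbf{E}^{\top}$ holds because $\mathbf{P}$ is the orthogonal projection onto $\mathcal{E} = \mathcal{R}_{\mathbf{E}^{\top}}$; concretely, using $\mathbf{Q} = 2\,\mathbf{E}\mathbf{E}^{\top}$ from Proposition~\ref{prop:eqrep}, one gets $\mathbf{P}\mathbf{E}^{\top} = 2\mathbf{E}^{\top}\mathbf{Q}^{\dagger}\mathbf{E}\mathbf{E}^{\top} = \mathbf{E}^{\top}\mathbf{Q}^{\dagger}\mathbf{Q} = \mathbf{E}^{\top}$, the last equality holding since $\mathcal{R}_{\mathbf{Q}} = \mathcal{R}_{\mathbf{E}}$ and the rows of $\mathbf{E}^{\top}$ lie in $\mathcal{R}_{\mathbf{E}}$.

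Finally, item~5 is the transpose of item~4: because $\mathbf{P}_{\mathcal{V}}$ and $\mathbf{P}$ are self-adjoint, $\mathbf{E}\,\mathbf{P}_{\mathcal{V}} = (\mathbf{P}_{\mathcal{V}}\mathbf{E}^{\top})^{\top} = (\mathbf{E}^{\top})^{\top} = \mathbf{E}$ and identically $\mathbf{E}\,\mathbf{P} = \mathbf{E}$, and the row space of $\mathbf{E}$ being $\mathcal{E}$ merely restates $\mathcal{E} \subseteq \mathcal{V}$. The only place where anything beyond pseudo-inverse bookkeeping is required is this inclusion; I expect the verification of the factorization $\mathbf{E}^{\top} = \boldsymbol{\mathcal{V}}^{\top}\mathbf{Z}^{\top}$ --- matching the player-level and coalition-level indicator encodings through the aggregation matrix --- to be the main obstacle, everything else being an application of $\boldsymbol{\mathcal{V}}^{\top}(\boldsymbol{\mathcal{V}}^{\top})^{\dagger}\boldsymbol{\mathcal{V}}^{\top} = \boldsymbol{\mathcal{V}}^{\top}$ together with the self-adjointness of the two projectors.
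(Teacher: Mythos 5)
Your proof is correct, and since the paper itself gives no argument for this lemma (it defers to \citet[p.~142]{mei:13}), your write-up effectively supplies the missing proof using exactly the machinery the paper sets up around the statement: $\vec{\alpha} = \boldsymbol{\mathcal{V}}^{\top}v$ and $\vec{\xi} = \boldsymbol{\mathcal{V}}^{\top}\overline{e}$ give items~1--3, and the factorization $\mathbf{E}^{\top} = \boldsymbol{\mathcal{V}}^{\top}\mathbf{Z}^{\top}$ with $\mathbf{Z}^{\top} = -\mathbf{A}$ (which matches the paper's own convention $\overline{\mathbf{y}} = -\mathbf{Z}^{\top}\mathbf{y}$ invoked in the proof of Proposition~\ref{prop:uniqpk01}) combined with Lemma~\ref{lem:inl_vp} gives items~4--5. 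Your verification of $\mathbf{P}\,\mathbf{E}^{\top} = \mathbf{E}^{\top}$ via $\mathbf{E}^{\top}\mathbf{Q}^{\dagger}\mathbf{Q} = \mathbf{E}^{\top}$ and the transposition argument for item~5 are both sound, so there is nothing to correct.
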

\begin{proof}
  For a proof see~\citet[p.~142]{mei:13}.
\end{proof}

It was worked out by~\citet[Sect. 7.6]{mei:13} that a pre-kernel element of a specific game can be replicated as a pre-kernel element of a related game whenever the non-empty interior property of the payoff set, in which the pre-kernel element of default game is located, is satisfied. In this case, a full dimensional ellipsoid can be inscribed from which some bounds can be specified within the game parameter can be varied without destroying the pre-kernel properties of the payoff vector of the default game. These bounds specify a redistribution of the bargaining power among coalitions while supporting the selected pre-imputation still as a pre-kernel point. Although the values of the maximum excesses have been changed by the parameter variation, the set of lexicographically smallest most significant coalitions remains unaffected. 

\begin{lemma}[\citet{mei:13}]
  \label{lem:repl_min}
If $\mathbf{x} \in M(h^{v}_{\gamma})$, then $\mathbf{x} \in M(h^{v^{\mu}}_{\gamma})$ for all $\mu \in \mathbb{R}$, where $v^{\mu} := \boldsymbol{\EuScript{U}}(\lambda^{v} + \mu \Delta)$ and $\mathbf{0} \neq \Delta \in \mathcal{N}_{\boldsymbol{\EuScript{W}}}=\{\Delta \in \mathbb{R}^{p^{\,\prime}} \;\arrowvert\; \boldsymbol{\EuScript{W}}\Delta = \mathbf{0}\}$, where $\boldsymbol{\EuScript{W}} := \boldsymbol{\mathcal{V}}^{\top}\, \boldsymbol{\EuScript{U}} \in \mathbb{R}^{q \times p^{\,\prime}}$.
\end{lemma}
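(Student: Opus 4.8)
The plan is to show that the perturbation $v \mapsto v^{\mu}$ leaves the configuration vector $\vec{\alpha}$ \emph{completely unchanged}, so that the quadratic $h_{\gamma}$ — in its least squares form from Proposition~\ref{prop:eqrep} — is literally the same function for the default game and for every related game $v^{\mu}$. Once this is established, the two functions trivially share the same set of global minima, and the implication $\mathbf{x} \in M(h^{v}_{\gamma}) \Rightarrow \mathbf{x} \in M(h^{v^{\mu}}_{\gamma})$ follows at once.

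First I would observe that both $\mathbf{E}$ from~\eqref{eq:matE} and $\boldsymbol{\mathcal{V}}$ from~\eqref{eq:mat_V} are constructed purely from the collection $\mathcal{S}(\vec{\gamma})$ of lexicographically smallest most effective coalitions, and therefore depend only on \emph{which} coalitions enter, not on their numerical worths. Since $h^{v^{\mu}}_{\gamma}$ is by definition the objective attached to the fixed coalition structure $\mathcal{S}(\vec{\gamma})$, the matrices $\mathbf{E}$ and $\boldsymbol{\mathcal{V}}$ are common to $v$ and to all $v^{\mu}$. Hence the only game-dependent ingredient of $h_{\gamma}(\mathbf{x}) = \Arrowvert \vec{\alpha} + \mathbf{E}^{\top}\mathbf{x} \Arrowvert^{2}$ is the vector $\vec{\alpha}$.

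Next I would invoke the identity $\vec{\alpha} = \boldsymbol{\mathcal{V}}^{\top} v$ recorded just before Lemma~\ref{lem:inl_vp} and expand the related game in the unanimity basis. Writing $\vec{\alpha}^{\mu}$ for the configuration vector of $v^{\mu}$, one computes
\begin{equation*}
  \vec{\alpha}^{\mu} = \boldsymbol{\mathcal{V}}^{\top} v^{\mu} = \boldsymbol{\mathcal{V}}^{\top}\, \boldsymbol{\EuScript{U}}\,(\lambda^{v} + \mu\,\Delta) = \boldsymbol{\EuScript{W}}\,\lambda^{v} + \mu\,\boldsymbol{\EuScript{W}}\,\Delta,
\end{equation*}
using $\boldsymbol{\EuScript{W}} = \boldsymbol{\mathcal{V}}^{\top}\boldsymbol{\EuScript{U}}$. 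Because $\Delta \in \mathcal{N}_{\boldsymbol{\EuScript{W}}}$, the term $\boldsymbol{\EuScript{W}}\Delta$ vanishes, so that $\vec{\alpha}^{\mu} = \boldsymbol{\EuScript{W}}\lambda^{v} = \boldsymbol{\mathcal{V}}^{\top}\boldsymbol{\EuScript{U}}\lambda^{v} = \boldsymbol{\mathcal{V}}^{\top} v = \vec{\alpha}$, and this holds for \emph{every} $\mu \in \mathbb{R}$ since the vanishing of $\boldsymbol{\EuScript{W}}\Delta$ is independent of $\mu$. In particular the $\alpha_{0}$-coordinate is fixed, so efficiency $v^{\mu}(N) = v(N)$ is preserved automatically. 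Consequently $h^{v^{\mu}}_{\gamma}(\mathbf{x}) = \Arrowvert \vec{\alpha}^{\mu} + \mathbf{E}^{\top}\mathbf{x} \Arrowvert^{2} = \Arrowvert \vec{\alpha} + \mathbf{E}^{\top}\mathbf{x} \Arrowvert^{2} = h^{v}_{\gamma}(\mathbf{x})$ identically, whence $M(h^{v^{\mu}}_{\gamma}) = M(h^{v}_{\gamma})$ and the claim follows.

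The step I expect to require the most care is the bookkeeping behind the first paragraph: one must be explicit that $h^{v^{\mu}}_{\gamma}$ denotes the quadratic built from the \emph{fixed} structure $\mathcal{S}(\vec{\gamma})$ of the default game with the worths of $v^{\mu}$ substituted, rather than the objective naturally induced by $\vec{\gamma}$ within $v^{\mu}$. This is precisely why the conclusion can be asserted for all $\mu \in \mathbb{R}$: the algebraic identity $\vec{\alpha}^{\mu} = \vec{\alpha}$ is exact and $\mu$-independent, whereas the stronger statement that $\mathbf{x}$ remains a genuine pre-kernel point of $v^{\mu}$ — i.e.\ that $\mathcal{S}(\vec{\gamma})$ is still the most effective collection for $v^{\mu}$, so that $h^{v^{\mu}}_{\gamma} = h^{v^{\mu}}$ holds on $[\vec{\gamma}]$ — would instead rely on the non-empty interior and inscribed-ellipsoid bounds and would only persist for a restricted range of $\mu$.
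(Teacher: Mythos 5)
Your proposal is correct and follows essentially the same route as the paper: both arguments fix the matrices $\mathbf{E}$, $\boldsymbol{\mathcal{V}}$ (hence $\mathbf{Q}$) as depending only on $\mathcal{S}(\vec{\gamma})$, and both reduce the lemma to the invariance computation $\boldsymbol{\EuScript{W}}\,\lambda^{v^{\mu}} = \boldsymbol{\mathcal{V}}^{\top}(v + \mu\, v^{\Delta}) = \boldsymbol{\mathcal{V}}^{\top} v = \vec{\alpha}$ via $\boldsymbol{\EuScript{W}}\Delta = \mathbf{0}$. The only cosmetic difference is that you conclude by noting $h^{v^{\mu}}_{\gamma}$ and $h^{v}_{\gamma}$ are identical functions (via the least-squares form of Proposition~\ref{prop:eqrep}), whereas the paper checks that the first-order condition $\mathbf{Q}\,\mathbf{x} = -2\,\mathbf{E}\,\vec{\alpha}$ remains valid; these are equivalent, and your closing remark correctly delineates this lemma from the bounded-$\mu$ statement of Theorem~\ref{thm:repl_prk}.
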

\begin{proof}
Let $\mathbf{x}$ be a minimizer of function $h^{v}_{\gamma}$ under game $v$, then $\mathbf{x}$ remains a minimizer for a function $h^{v^{\mu}}_{\gamma}$ induced by game $v^{\mu}$ whenever $\mathbf{Q}\, \mathbf{x}= -2\, \mathbf{E}\,\vec{\alpha}= - \mathbf{a}$ remains valid. Since the payoff vector has induced the matrices $\mathbf{Q}, \mathbf{E}$ and matrix $\boldsymbol{\mathcal{V}}$ defined by $ [\mathbf{v}_{1,2}, \ldots ,\mathbf{v}_{n-1,n},\mathbf{v}_{0}]$, where the vectors are defined as by formula~\eqref{eq:mat_V}. We simply have to prove that the configuration $\vec{\alpha}$ remains invariant against an appropriate change in the game parameter. Observing that matrix $\boldsymbol{\EuScript{W}}$ has a rank equal to or smaller than $q = \binom{n}{2} +1$, say $m\le q$, then the null space of matrix $\boldsymbol{\EuScript{W}}$ has rank of $p^{\prime}-m$, thus $\mathcal{N}_{\boldsymbol{\EuScript{W}}}\neq \{\emptyset\}$. But then exists some $\mathbf{0} \neq \Delta \in \mathbb{R}^{p^{\prime}}$ s.t. $\Delta \in \mathcal{N}_{\boldsymbol{\EuScript{W}}}$ and $v^{\mu} = \boldsymbol{\EuScript{U}}(\lambda^{v} + \mu \Delta)$ for $\mu \in \mathbb{R}\backslash\{\mathbf{0}\}$, getting
\begin{equation*}
  \boldsymbol{\EuScript{W}}\, \, \lambda^{v^{\mu}} = \boldsymbol{\EuScript{W}}\,(\lambda^{v} + \mu \Delta)= \boldsymbol{\mathcal{V}}^{\top}\,(v + \mu v^{\Delta})= \boldsymbol{\mathcal{V}}^{\top}\,v = \vec{\alpha},
\end{equation*}
whereas $\boldsymbol{\EuScript{W}}\, \Delta = \boldsymbol{\mathcal{V}}^{\top}\,v^{\Delta} = \mathbf{0}$ with $v^{\Delta}:=\boldsymbol{\EuScript{U}}\,\Delta$. This argument proves that the configuration $\vec{\alpha}$ remains invariant against a change in the game parameter space by $v^{\Delta} \neq \mathbf{0}$. This implies that the payoff vector $\mathbf{x}$ is also a minimizer for function $h^{v^{\mu}}_{\gamma}$ under game $v^{\mu}$. 
\end{proof}

\begin{lemma}[\citet{mei:13}]
  \label{lem:repl_prk}
If $[\vec{\gamma}]$ has non-empty interior and $\mathbf{x} \in \mathcal{P\text{\itshape r}K}(v) \subset [\vec{\gamma}]$, then there exists some critical bounds given by 
\begin{equation}
  \label{eq:crit_bds}
  \delta^{\varepsilon}_{ij}(\mathbf{x})=  \frac{\pm\sqrt{\bar{c}}}{\Arrowvert \mathbf{E}^{\top} (\mathbf{1}_{j}-\mathbf{1}_{i}) \Arrowvert} \neq 0 \qquad \forall i,j \in N, i \neq j,
\end{equation}
with $\bar{c}>0$ and $\Arrowvert \mathbf{E}^{\top} (\mathbf{1}_{j}-\mathbf{1}_{i}) \Arrowvert > 0$.
\end{lemma}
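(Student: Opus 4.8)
The plan is to exploit the least–squares form of the quadratic attached to $[\vec{\gamma}]$ together with the balancedness that a pre-kernel point enforces, and then to read off the critical compensation along each bilateral direction $\mathbf{1}_{j}-\mathbf{1}_{i}$ from the level sets of $h_{\gamma}$. First I would record the algebraic starting point. By Proposition~\ref{prop:eqrep} the quadratic induced on the payoff class is $h_{\gamma}(\mathbf{x})=\Arrowvert\,\vec{\alpha}+\mathbf{E}^{\top}\mathbf{x}\,\Arrowvert^{2}$, and since $\mathbf{x}\in\mathcal{P\text{\itshape r}K}(v)\subset[\vec{\gamma}]$ we have $h_{\gamma}(\mathbf{x})=h(\mathbf{x})=0$; equivalently the vector of balanced excesses vanishes, $\vec{\xi}=\vec{\alpha}+\mathbf{E}^{\top}\mathbf{x}=\mathbf{0}$ (Theorem~\ref{thm:pmt_prk}). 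Hence $\mathbf{E}^{\top}\mathbf{x}=-\vec{\alpha}$, and with $\mathbf{Q}=2\,\mathbf{E}\,\mathbf{E}^{\top}$ the sublevel sets $\{\mathbf{y}\in\mathcal{I}^{\,0}(v):h_{\gamma}(\mathbf{y})\le\bar{c}\}$ are ellipsoids centred at the minimum set $M(h_{\gamma})$, to which $\mathbf{x}$ belongs.

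Next I would turn the non-empty interior hypothesis into the scalar $\bar{c}$. Because $[\vec{\gamma}]$ is convex (Proposition~\ref{prop:eq_rel}) and has non-empty interior, while $\mathbf{x}$ is its balancing point, $\mathbf{x}$ can be surrounded by a ball lying in $[\vec{\gamma}]$; into this ball one inscribes a full–dimensional sublevel ellipsoid of $h_{\gamma}$, which pins down a uniform threshold $\bar{c}>0$ with $\{\,\mathbf{y}\in\mathcal{I}^{\,0}(v):h_{\gamma}(\mathbf{y})\le\bar{c}\,\}\subseteq[\vec{\gamma}]$. So long as a perturbed payoff vector remains inside this ellipsoid it stays inside $[\vec{\gamma}]$, and therefore leaves the set $\mathcal{S}(\vec{\gamma})$ of lexicographically smallest most effective coalitions unchanged. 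This geometric step—converting ``non-empty interior'' into a strictly positive level $\bar{c}$ compatible with the (possibly singular) Gram matrix $\mathbf{Q}=2\,\mathbf{E}\,\mathbf{E}^{\top}$—is the part that carries the real content; everything else is computation.

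With $\bar{c}>0$ in hand I would evaluate $h_{\gamma}$ along the bilateral compensation. Since the efficiency-preserving shift is $\mathbf{x}^{\,i,j,\delta}=\mathbf{x}+\delta(\mathbf{1}_{j}-\mathbf{1}_{i})$ and $\vec{\alpha}+\mathbf{E}^{\top}\mathbf{x}=\mathbf{0}$, one obtains
\begin{equation*}
  h_{\gamma}(\mathbf{x}^{\,i,j,\delta})=\Arrowvert\,\delta\,\mathbf{E}^{\top}(\mathbf{1}_{j}-\mathbf{1}_{i})\,\Arrowvert^{2}=\delta^{2}\,\Arrowvert\,\mathbf{E}^{\top}(\mathbf{1}_{j}-\mathbf{1}_{i})\,\Arrowvert^{2}.
\end{equation*}
To see that the denominator in~\eqref{eq:crit_bds} is non-zero I would inspect the coordinate of $\mathbf{E}^{\top}(\mathbf{1}_{j}-\mathbf{1}_{i})$ indexed by the pair $\{i,j\}$: as $S_{ij}\ni i$, $S_{ij}\not\ni j$ and $S_{ji}\ni j$, $S_{ji}\not\ni i$, we get $\langle\,\mathbf{E}_{ij},\mathbf{1}_{j}-\mathbf{1}_{i}\,\rangle=\langle\,\mathbf{1}_{S_{ji}}-\mathbf{1}_{S_{ij}},\mathbf{1}_{j}-\mathbf{1}_{i}\,\rangle=\pm 2$, so $\Arrowvert\,\mathbf{E}^{\top}(\mathbf{1}_{j}-\mathbf{1}_{i})\,\Arrowvert>0$ for every pair $i\neq j$; in particular the direction $\mathbf{1}_{j}-\mathbf{1}_{i}$ is never annihilated by $\mathbf{E}^{\top}$, which is exactly what keeps the bound finite even when $\mathbf{Q}$ is singular.

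Finally I would solve for the critical compensation. The perturbed point $\mathbf{x}^{\,i,j,\delta}$ lies inside the inscribed ellipsoid—hence inside $[\vec{\gamma}]$, with $\mathcal{S}(\vec{\gamma})$ preserved—precisely while $h_{\gamma}(\mathbf{x}^{\,i,j,\delta})\le\bar{c}$, i.e.\ $\delta^{2}\,\Arrowvert\,\mathbf{E}^{\top}(\mathbf{1}_{j}-\mathbf{1}_{i})\,\Arrowvert^{2}\le\bar{c}$; equality marks the boundary and yields
\begin{equation*}
  \delta^{\varepsilon}_{ij}(\mathbf{x})=\frac{\pm\sqrt{\bar{c}}}{\Arrowvert\,\mathbf{E}^{\top}(\mathbf{1}_{j}-\mathbf{1}_{i})\,\Arrowvert}\neq 0,
\end{equation*}
the two signs corresponding to compensations in favour of $i$ or of $j$. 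I expect the only genuine obstacle to be the second paragraph: making precise, from the non-empty interior of the payoff class, that a single positive level $\bar{c}$ can be chosen so that the entire sublevel ellipsoid of the quadric $\mathbf{Q}=2\,\mathbf{E}\,\mathbf{E}^{\top}$ remains within $[\vec{\gamma}]$. The coordinate computation and the final inversion are routine once $\bar{c}$ is available.
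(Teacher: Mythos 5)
Your proposal follows essentially the same route as the paper's own proof: use the non-empty interior of the convex class $[\vec{\gamma}]$ to inscribe a maximal sublevel ellipsoid $\varepsilon=\{\mathbf{y}\,\arrowvert\,h^{v}_{\gamma}(\mathbf{y})\le\bar{c}\}\subset[\vec{\gamma}]$ with $\bar{c}>0$, invoke $\mathbf{E}^{\top}\mathbf{x}+\vec{\alpha}=\mathbf{0}$ at the pre-kernel point so that the quadratic along the transfer direction collapses to $h^{v}_{\gamma}(\mathbf{x}^{\,i,j,\delta})=\delta^{2}\,\Arrowvert\mathbf{E}^{\top}(\mathbf{1}_{j}-\mathbf{1}_{i})\Arrowvert^{2}$, and solve at the boundary level $\bar{c}$ for the critical $\delta^{\varepsilon}_{ij}(\mathbf{x})$. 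The one point where you go beyond the paper is the explicit coordinate check $\langle\,\mathbf{1}_{S_{ji}}-\mathbf{1}_{S_{ij}},\,\mathbf{1}_{j}-\mathbf{1}_{i}\,\rangle=\pm 2$, which establishes $\Arrowvert\mathbf{E}^{\top}(\mathbf{1}_{j}-\mathbf{1}_{i})\Arrowvert>0$ directly and independently of whether $\mathbf{Q}$ is singular, whereas the paper infers this positivity only indirectly from the product identity $(\delta^{\varepsilon}_{ij}(\mathbf{x}))^{2}\,\Arrowvert\mathbf{E}^{\top}(\mathbf{1}_{j}-\mathbf{1}_{i})\Arrowvert^{2}=\bar{c}>0$.
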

\begin{proof}
Define a set $\varepsilon := \{\mathbf{y}\, \arrowvert h^{v}_{\gamma}(\mathbf{y}) \le \bar{c} \} \subset [\vec{\gamma}]$, whereas $h^{v}_{\gamma}(\mathbf{y})=(1/2) \cdot \langle\; \mathbf{y},\mathbf{Q} \,\mathbf{y} \;\rangle + \langle\; \mathbf{y}, \mathbf{a} \;\rangle + \mathbf{\alpha}$. By assumption the payoff set $[\vec{\gamma}]$ has non-empty interior, we can say that $\varepsilon$ is the ellipsoid of maximum volume obtained by equation~\eqref{eq:objf2} that lies inside of the convex payoff set $[\vec{\gamma}]$. This ellipsoid must have a strictly positive volume, since the payoff equivalence class $[\vec{\gamma}]$ has non-empty interior, hence we conclude that $\bar{c}>0$. Of course, the set $\varepsilon$ is a convex subset of the convex set $[\vec{\gamma}]$, therefore $h^{v}=h^{v}_{\gamma}$ on $\varepsilon$. Moreover, the solution set $M(h^{v}_{\gamma})$ is a subset of the ellipsoid $\varepsilon$, which is the smallest non-empty ellipsoid of the form~\eqref{eq:objf2}, i.e., it is its center in view of Theorem~\ref{thm:pmt_prk}. By our supposition $\mathcal{P\text{\itshape r}K}(v) \subset [\vec{\gamma}]$, we conclude that $M(h^{v})=M(h^{v}_{\gamma})= \mathcal{P\text{\itshape r}K}(v)$ must be satisfied. In the next step similar to~\citet{MPSh:79}, we define some critical numbers $\delta^{\varepsilon}_{ij}(\mathbf{x}) \in \mathbb{R}$ s.t. 
\begin{equation}
  \label{eq:crit_num}
  \delta^{\varepsilon}_{ij}(\mathbf{x}):=\max\,\{\delta \in \mathbb{R}\,\arrowvert\, \mathbf{x}^{\;i,j,\delta}= \mathbf{x} - \delta\,\mathbf{1}_{i} + \delta\,\mathbf{1}_{j} \in \varepsilon  \} \qquad \forall\, i,j \in N, i\neq j.
\end{equation}
That is, the number $\delta^{\varepsilon}_{ij}(\mathbf{x})$ is the maximum amount that can be transferred from $i$ to $j$ while remaining in the ellipsoid $\varepsilon$. This number is well defined for convex sets having non-empty interior. 

In addition, observe that $\mathbf{x}^{\;i,j,\delta^{\varepsilon}}= \mathbf{x} - \delta^{\varepsilon}_{ij}(\mathbf{x})\,\mathbf{1}_{i} + \delta^{\varepsilon}_{ij}(\mathbf{x})\,\mathbf{1}_{j}$ is an unique boundary point of the ellipsoid $\varepsilon$ of type~\eqref{eq:objf2} with maximum volume. Having specified by the point $\mathbf{x}^{\;i,j,\delta^{\varepsilon}}$ a boundary point, getting
\begin{equation*}
 \begin{split}
  & h^{v}(\mathbf{x}^{\;i,j,\delta^{\varepsilon}}) =h^{v}_{\gamma}(\mathbf{x}^{\;i,j,\delta^{\varepsilon}})=\bar{c} > 0 \Longleftrightarrow\\
  & \Arrowvert \mathbf{E}^{\top}\; \mathbf{x}^{\;i,j,\delta^{\varepsilon}} + \vec{\alpha}\Arrowvert^2 =\bar{c}  \Longleftrightarrow \Arrowvert \mathbf{E}^{\top}\; \mathbf{x} + \vec{\alpha} + \delta^{\varepsilon}_{ij}(\mathbf{x}) \, \mathbf{E}^{\top} (\mathbf{1}_{j}-\mathbf{1}_{i})\Arrowvert^2 =\bar{c} \Longleftrightarrow \\
& \Arrowvert \mathbf{E}^{\top}\; \mathbf{x} + \vec{\alpha} \Arrowvert^2 + 2 \cdot \delta^{\varepsilon}_{ij}(\mathbf{x}) \, \langle\,\mathbf{E}^{\top}\; \mathbf{x} + \vec{\alpha}, \mathbf{E}^{\top} (\mathbf{1}_{j}-\mathbf{1}_{i})\,\rangle + (\delta^{\varepsilon}_{ij}(\mathbf{x}))^{2}\,\Arrowvert  \mathbf{E}^{\top} (\mathbf{1}_{j}-\mathbf{1}_{i})\Arrowvert^2 =\bar{c} \Longleftrightarrow \\ &  (\delta^{\varepsilon}_{ij}(\mathbf{x}))^{2}\,\Arrowvert  \mathbf{E}^{\top} (\mathbf{1}_{j}-\mathbf{1}_{i})\Arrowvert^2  =\bar{c} \qquad \forall\, i,j \in N, i\neq j.
 \end{split}
\end{equation*}

The last conclusion follows, since by assumption we have $\mathbf{x} \in \mathcal{P\text{\itshape r}K}(v)$, which is equivalent to $h^{v}(\mathbf{x})=h^{v}_{\gamma}(\mathbf{x})= 0$, and therefore we obtain $\mathbf{E}^{\top}\; \mathbf{x} + \vec{\alpha}=\mathbf{0}$. In addition, the volume of the ellipsoid $\varepsilon$ is strictly positive such that $\bar{c}>0$, this result implies that $(\delta^{\varepsilon}_{ij}(\mathbf{x}))^{2}$ as well as $\Arrowvert  \mathbf{E}^{\top} (\mathbf{1}_{j}-\mathbf{1}_{i})\Arrowvert^{2}$ must also be strictly positive. Therefore, we get finally~\eqref{eq:crit_bds}.
\end{proof}

\begin{theorem}[\citet{mei:13}]
  \label{thm:repl_prk}
If $[\vec{\gamma}]$ has non-empty interior and $\mathbf{x} \in \mathcal{P\text{\itshape r}K}(v) \subset [\vec{\gamma}]$, then $\mathbf{x} \in \mathcal{P\text{\itshape r}K}(v^{\mu})$ for all $\mu\cdot v^{\Delta} \in [-\mathsf{C},\mathsf{C}]^{p^{\prime}}$, where $v^{\mu} = v + \mu\cdot v^{\Delta} \in \mathbb{R}^{p^{\prime}}$, $\mu \in \mathbb{R}$
\begin{equation}
 \label{eq:crt_bds}
  \mathsf{C} :=\min_{i,j \in N, i \neq j}\bigg\{\bigg\arrowvert \frac{\pm\sqrt{\bar{c}}}{\Arrowvert \mathbf{E}^{\top} (\mathbf{1}_{j}-\mathbf{1}_{i}) \Arrowvert}\bigg\arrowvert\bigg\},
\end{equation}
 and $\mathbf{0} \neq \Delta \in \mathcal{N}_{\boldsymbol{\EuScript{W}}}=\{\Delta \in \mathbb{R}^{p^{\,\prime}} \;\arrowvert\; \boldsymbol{\EuScript{W}}\Delta = \mathbf{0}\}$ with matrix $\boldsymbol{\EuScript{W}} := \boldsymbol{\mathcal{V}}^{\top}\, \boldsymbol{\EuScript{U}}$.
\end{theorem}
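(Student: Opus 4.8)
The plan is to assemble Theorem~\ref{thm:repl_prk} from the two preceding lemmas. Lemma~\ref{lem:repl_min} controls the quadratic surrogate along the entire family $v^{\mu}$, while Lemma~\ref{lem:repl_prk} supplies the radii of the inscribed ellipsoid that will be reinterpreted as admissible perturbations in the game space.

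First I would record that the surrogate is frozen. Because $\Delta \in \mathcal{N}_{\boldsymbol{\EuScript{W}}}$ we have $\boldsymbol{\mathcal{V}}^{\top}v^{\Delta} = \boldsymbol{\EuScript{W}}\Delta = \mathbf{0}$, so $\boldsymbol{\mathcal{V}}^{\top}v^{\mu} = \boldsymbol{\mathcal{V}}^{\top}v = \vec{\alpha}$ for every $\mu$. Since $\mathbf{E}$ is fixed by the combinatorial datum $\mathcal{S}(\vec{\gamma})$ and, by Proposition~\ref{prop:eqrep}, $h^{v^{\mu}}_{\gamma}(\mathbf{x}) = \Arrowvert \vec{\alpha} + \mathbf{E}^{\top}\mathbf{x}\Arrowvert^{2}$ depends on the game only through $\vec{\alpha}$, the function $h^{v^{\mu}}_{\gamma}$ is literally the same for all $\mu$. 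Lemma~\ref{lem:repl_min} then keeps $\mathbf{x} \in M(h^{v^{\mu}}_{\gamma})$, and the hypothesis $\mathbf{x} \in \mathcal{P\text{\itshape r}K}(v) \subset [\vec{\gamma}]$ forces $\vec{\alpha} + \mathbf{E}^{\top}\mathbf{x} = \mathbf{0}$, hence $h^{v^{\mu}}_{\gamma}(\mathbf{x}) = 0$ throughout.

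Second, I would reduce the claim to an invariance of the active coalitions. By Corollary~\ref{cor:rep} it is enough to show $h^{v^{\mu}}(\mathbf{x}) = 0$, and as $h^{v^{\mu}}$ coincides with $h^{v^{\mu}}_{\gamma}$ precisely on the payoff set of $v^{\mu}$, everything hinges on the statement that $\mathbf{x}$ still induces $\mathcal{S}(\vec{\gamma})$ under $v^{\mu}$. Once the maximizing coalitions are frozen at $S_{ij},S_{ji}$, the balance is automatic: from $e^{v^{\mu}}(S,\mathbf{x}) = e^{v}(S,\mathbf{x}) + \mu\,v^{\Delta}(S)$ the difference $s_{ij}(\mathbf{x},v^{\mu}) - s_{ji}(\mathbf{x},v^{\mu})$ equals its value under $v$ plus $\mu\,[v^{\Delta}(S_{ij}) - v^{\Delta}(S_{ji})]$, and the bracket is the $ij$-coordinate of $\boldsymbol{\mathcal{V}}^{\top}v^{\Delta} = \mathbf{0}$. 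Thus the pre-kernel balance survives as soon as no competing coalition overtakes the active ones.

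The main obstacle is to turn the coordinatewise game bound $\mu\,v^{\Delta} \in [-\mathsf{C},\mathsf{C}]^{p^{\prime}}$ into exactly that non-overtaking guarantee. Here I would exploit the ellipsoid $\varepsilon = \{\mathbf{y} : h^{v}_{\gamma}(\mathbf{y}) \le \bar{c}\} \subset [\vec{\gamma}]$ of Lemma~\ref{lem:repl_prk}: the radius $|\delta^{\varepsilon}_{ij}(\mathbf{x})| = \sqrt{\bar{c}}/\Arrowvert \mathbf{E}^{\top}(\mathbf{1}_{j}-\mathbf{1}_{i})\Arrowvert$ is the largest transfer along $\mathbf{1}_{j}-\mathbf{1}_{i}$ that keeps $\mathbf{x}$ inside $[\vec{\gamma}]$, and hence certifies a lower bound for the excess slack that protects the active coalitions against displacements in that direction. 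Adding $\mu\,v^{\Delta}$ to the game shifts each excess $e^{v}(S,\mathbf{x})$ by $\mu\,v^{\Delta}(S)$, whose modulus is at most $\mathsf{C}$; setting $\mathsf{C} = \min_{i\neq j}|\delta^{\varepsilon}_{ij}(\mathbf{x})|$ then keeps the perturbed excess vector within the region on which $\mathcal{S}(\vec{\gamma})$ remains maximal, so $\mathbf{x}$ stays in the payoff set of $v^{\mu}$. Combined with the first two steps this yields $h^{v^{\mu}}(\mathbf{x}) = h^{v^{\mu}}_{\gamma}(\mathbf{x}) = 0$, whence $\mathbf{x} \in \mathcal{P\text{\itshape r}K}(v^{\mu})$ by Corollary~\ref{cor:rep}. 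I expect the genuinely delicate point to be precisely this comparison between the geometric radii of the merely inscribed ellipsoid $\varepsilon$ and the combinatorial excess slacks bounding the polyhedral class $[\vec{\gamma}]$ --- matching the correct constants and directions --- since $\varepsilon$ need not fill $[\vec{\gamma}]$ and the transfer directions $\mathbf{1}_{j}-\mathbf{1}_{i}$ need not align with the perturbation $v^{\Delta}$.
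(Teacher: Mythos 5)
Your proposal is correct and follows essentially the same route as the paper's own proof: invariance of the balanced excesses via $\boldsymbol{\mathcal{V}}^{\top}v^{\Delta}=\boldsymbol{\EuScript{W}}\Delta=\mathbf{0}$ and Lemma~\ref{lem:repl_min}, the ellipsoid radii of Lemma~\ref{lem:repl_prk} defining $\mathsf{C}$, and the claim that for $\mu\cdot v^{\Delta}\in[-\mathsf{C},\mathsf{C}]^{p^{\prime}}$ the set of most effective coalitions $\mathcal{S}(\mathbf{x})$ is unaltered, so that $h^{v^{\mu}}(\mathbf{x})=h^{v^{\mu}}_{\gamma}(\mathbf{x})=0$ and Corollary~\ref{cor:rep} applies. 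The ``delicate point'' you flag --- converting the geometric transfer radii of the inscribed ellipsoid into protection against arbitrary coordinatewise excess shifts of size $\mathsf{C}$ --- is asserted in the paper at exactly the same level of detail, so your treatment is no less complete than the original.
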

\begin{proof}
By Lemma~\ref{lem:repl_prk} $\mathbf{x}^{\;i,j,\delta^{\varepsilon}} \in \varepsilon \subset [\vec{\gamma}]$, is an unique boundary point of the ellipsoid $\varepsilon$ of type~\eqref{eq:objf2} with maximum volume. We conclude that either (1) $s_{ij}(\mathbf{x}^{\;i,j,\delta^{\varepsilon}}) = s_{ij}(\mathbf{x}) + \delta^{\varepsilon}_{ij}(\mathbf{x})$ if $ S \in \mathcal{G}_{ij}$, or (2) $s_{ji}(\mathbf{x}^{\;i,j,\delta^{\varepsilon}}) = s_{ji}(\mathbf{x}) - \delta^{\varepsilon}_{ij}(\mathbf{x})$ if $S \in \mathcal{G}_{ji}$, or otherwise (3) $s_{ij}(\mathbf{x}^{\;i,j,\delta^{\varepsilon}}) = s_{ij}(\mathbf{x})$ is satisfied. Moreover, let $v,v^{\mu},v^{\Delta} \in \mathbb{R}^{p^{\prime}}$ and recall that $v^{\mu} = \boldsymbol{\EuScript{U}}(\lambda^{v} + \mu \Delta)$ with $\mathbf{0} \neq \Delta \in \mathcal{N}_{\boldsymbol{\EuScript{W}}}$. Then it holds $v^{\mu}(S) = v(S) + \mu\cdot v^{\Delta}(S)$ for all $S \in 2^{n}\backslash\{\emptyset\} $. In the next step, extend the pre-kernel element $\mathbf{x}$ to a vector $\overline{\mathbf{x}}$ by the measure $x(S):=\sum_{k \in S}\,x_{k}$ for all $S \in 2^{n}\backslash\{\emptyset\} $, then define the excess vector by $\overline{e}:=v-\overline{\mathbf{x}}$. Due to these definitions, we obtain for $\vec{\xi}^{\,v^{\mu}}$ at $\mathbf{x}$:
\begin{equation*}
  \vec{\xi}^{\,v^{\mu}}=\boldsymbol{\mathcal{V}}^{\top}\,\overline{e}^{\mu}= \boldsymbol{\mathcal{V}}^{\top}\,(v^{\mu}-\overline{\mathbf{x}})=\boldsymbol{\mathcal{V}}^{\top}\,( v -\overline{\mathbf{x}} + \mu \cdot v^{\Delta}) = \boldsymbol{\mathcal{V}}^{\top}\,(v-\overline{\mathbf{x}}) = \boldsymbol{\mathcal{V}}^{\top}\,\overline{e} = \vec{\xi} = \mathbf{0}.
\end{equation*}
By Lemma~\ref{lem:repl_min}, the system of excesses remains balanced for all $\mu \in \mathbb{R}$. However, the system of maximum surpluses remains invariant on a hypercube specified by the critical values of the ellipsoid $\varepsilon$. Thus, for appropriate values of $\mu$ the expression $\mu\cdot v^{\Delta}(S)$ belongs to the non-empty interval $[-\mathsf{C},\mathsf{C}]$ for $S \in 2^{n}\backslash\{\emptyset\}$. This interval specifies the range in which the game parameter can vary without having any impact on the set of most effective coalition given by $\mathcal{S}(\mathbf{x})$. Thus, the coalitions $\mathcal{S}(\mathbf{x})$ still have  maximum surpluses for games defined by $v^{\mu} = \boldsymbol{\EuScript{U}}(\lambda^{v} + \mu \Delta)$ for all $\mu \boldsymbol{\EuScript{U}}\,\Delta = \mu \cdot v^{\Delta} \in [-\mathsf{C},\mathsf{C}]^{p^{\prime}}$. Hence the pre-kernel solution $\mathbf{x}$ is invariant against a change in the hypercube $[-\mathsf{C},\mathsf{C}]^{p^{\prime}}$. The conclusion follows.
\end{proof}

\citet[Sec. 7.6]{mei:13} has shown by some examples that the specified bounds by Theorem~\ref{thm:repl_prk} are not tight, in the sense that pre-kernel points belonging to the relative interior of a payoff set can also be the object of a replication. However, pre-kernel elements which are located on the relative boundary of a payoff set are probably not replicable. Therefore, there must exist a more general rule to reproduce a pre-kernel element for a related game $v^{\mu}$.   

In the course of our discussion, we establish that the single pre-kernel element of a default game which is an interior point of a payoff set is also the singleton pre-kernel of the derived related games. In a first step, we show that there exists an unique linear transformation of the pre-kernel point of a related game into the vector subspace of balanced excesses $\mathcal{E}$. This means, there is no other pre-kernel element in a payoff equivalence class that belongs to the same set of ordered bases, i.e.~reflecting an equivalent bargaining situation with a division of the proceeds of mutual cooperation in accordance with the pre-kernel solution. Secondly, we prove that there can not exist any other vector subspace of balanced excesses $\mathcal{E}_{1}$ non-transversal to $\mathcal{E}$ in which a pre-kernel vector can be mapped by a linear transformation. That is, there exists no other non-equivalent payoff set in which an other pre-kernel point can be located.   

\begin{lemma}[\citet{mei:13}]
  \label{lem:pivET}
 Let $\vec{\gamma}$ induces matrix $\mathbf{E}$, then
\begin{equation*}
  (\mathbf{E}^{\top})^{\dagger} = 2\cdot\mathbf{Q}^{\dagger}\mathbf{E} \in \mathbb{R}^{n \times q}.
\end{equation*}
\end{lemma}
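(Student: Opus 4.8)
The plan is to identify the matrix $\mathbf{B} := 2\,\mathbf{Q}^{\dagger}\mathbf{E}$ as the Moore--Penrose inverse of $\mathbf{A} := \mathbf{E}^{\top}$ by verifying the four defining axioms; since that inverse is unique, this yields $\mathbf{B} = (\mathbf{E}^{\top})^{\dagger}$ at once. The only structural input I would use is the representation $\mathbf{Q} = 2\,\mathbf{E}\,\mathbf{E}^{\top}$ from Proposition~\ref{prop:eqrep}, together with two elementary consequences of $\mathbf{Q}$ being symmetric and positive semidefinite: that $\mathbf{Q}^{\dagger}$ is itself symmetric, and that $\mathbf{Q}\,\mathbf{Q}^{\dagger} = \mathbf{Q}^{\dagger}\,\mathbf{Q}$ is the orthogonal projector onto $\mathcal{R}(\mathbf{Q})$. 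I would abbreviate this projector by $\Pi := \mathbf{Q}\,\mathbf{Q}^{\dagger}$; both facts are immediate from the pseudo-inverse properties already listed after Lemma~\ref{lem:Etx_Pa}.

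First I would record the range identity $\mathcal{R}(\mathbf{Q}) = \mathcal{R}(\mathbf{E}\,\mathbf{E}^{\top}) = \mathcal{R}(\mathbf{E})$, the standard fact that a Gram product has the same column space as its left factor. From it I obtain the two absorption relations $\Pi\,\mathbf{E} = \mathbf{E}$ and, upon transposing and using $\Pi^{\top} = \Pi$, also $\mathbf{E}^{\top}\,\Pi = \mathbf{E}^{\top}$. I would further note the reflexive consequence $\Pi\,\mathbf{Q}^{\dagger} = \mathbf{Q}^{\dagger}\mathbf{Q}\,\mathbf{Q}^{\dagger} = \mathbf{Q}^{\dagger}$. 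These three identities are the whole engine of the argument.

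Then each axiom collapses by direct substitution, every reduction passing through $2\,\mathbf{E}\,\mathbf{E}^{\top} = \mathbf{Q}$. For the product axioms I would compute $\mathbf{B}\,\mathbf{A} = 2\,\mathbf{Q}^{\dagger}\mathbf{E}\,\mathbf{E}^{\top} = \mathbf{Q}^{\dagger}\mathbf{Q} = \Pi$ and $\mathbf{A}\,\mathbf{B} = 2\,\mathbf{E}^{\top}\mathbf{Q}^{\dagger}\mathbf{E}$, whence $\mathbf{A}\mathbf{B}\mathbf{A} = 2\,\mathbf{E}^{\top}\mathbf{Q}^{\dagger}\mathbf{E}\,\mathbf{E}^{\top} = \mathbf{E}^{\top}\mathbf{Q}^{\dagger}\mathbf{Q} = \mathbf{E}^{\top}\Pi = \mathbf{E}^{\top} = \mathbf{A}$ and $\mathbf{B}\mathbf{A}\mathbf{B} = \Pi\,\mathbf{B} = 2\,\Pi\,\mathbf{Q}^{\dagger}\mathbf{E} = 2\,\mathbf{Q}^{\dagger}\mathbf{E} = \mathbf{B}$, using the absorption and reflexive relations. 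For the symmetry axioms, $\mathbf{B}\mathbf{A} = \Pi$ is symmetric as an orthogonal projector, while $(\mathbf{A}\mathbf{B})^{\top} = 2\,\mathbf{E}^{\top}(\mathbf{Q}^{\dagger})^{\top}\mathbf{E} = 2\,\mathbf{E}^{\top}\mathbf{Q}^{\dagger}\mathbf{E} = \mathbf{A}\mathbf{B}$ since $\mathbf{Q}^{\dagger}$ is symmetric. With all four axioms verified, uniqueness of the Moore--Penrose inverse gives $\mathbf{B} = (\mathbf{E}^{\top})^{\dagger}$, and the stated dimension $\mathbb{R}^{n \times q}$ is clear from $\mathbf{Q}^{\dagger} \in \mathbb{R}^{n \times n}$ and $\mathbf{E} \in \mathbb{R}^{n \times q}$.

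I do not expect a genuine obstacle: the content is essentially the classical identity $\mathbf{A}^{\dagger} = (\mathbf{A}^{\top}\mathbf{A})^{\dagger}\mathbf{A}^{\top}$ specialized to $\mathbf{A} = \mathbf{E}^{\top}$ and rescaled by the factor $2$ arising from $\mathbf{Q} = 2\,\mathbf{E}\,\mathbf{E}^{\top}$. The one point that must not be glossed over is the range identity $\mathcal{R}(\mathbf{Q}) = \mathcal{R}(\mathbf{E})$, since every axiom reduction hinges on replacing $\mathbf{E}^{\top}\Pi$ or $\Pi\,\mathbf{E}$ by $\mathbf{E}^{\top}$ or $\mathbf{E}$. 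An equivalent shorter route would invoke the scaling rule $(c\,\mathbf{M})^{\dagger} = c^{-1}\,\mathbf{M}^{\dagger}$ to write $2\,\mathbf{Q}^{\dagger}\mathbf{E} = (\mathbf{E}\,\mathbf{E}^{\top})^{\dagger}\mathbf{E}$ and then cite the classical identity directly; I would nonetheless prefer the axiom-checking version, as it is self-contained and relies only on pseudo-inverse properties already in hand.
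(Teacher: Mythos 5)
Your proof is correct, but it takes a genuinely different route from the paper's. The paper's proof is a short identity chain that manipulates the candidate expression directly: it writes $\mathbf{Q}^{\dagger} = \mathbf{Q}^{\dagger}\mathbf{Q}\,\mathbf{Q}^{\dagger}$ (reflexivity), substitutes the bridging identity $\mathbf{Q}^{\dagger}\mathbf{Q} = (\mathbf{E}^{\top})^{\dagger}\,\mathbf{E}^{\top}$ (both sides being the orthogonal projector onto $\mathcal{R}_{\mathbf{E}}$), recognizes the resulting factor $2\cdot\mathbf{E}^{\top}\mathbf{Q}^{\dagger}\mathbf{E}$ as the operator $\mathbf{P}$ of Lemma~\ref{lem:Etx_Pa}, and finally absorbs $\mathbf{P}$ via $(\mathbf{E}^{\top})^{\dagger}\,\mathbf{P} = (\mathbf{E}^{\top})^{\dagger}$, which it attributes to Lemma~\ref{lem:vsp_al}. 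You, by contrast, never manipulate $(\mathbf{E}^{\top})^{\dagger}$ at all: you verify the four Moore--Penrose axioms for the candidate $2\,\mathbf{Q}^{\dagger}\mathbf{E}$ from scratch, using only $\mathbf{Q} = 2\,\mathbf{E}\,\mathbf{E}^{\top}$ (Proposition~\ref{prop:eqrep}), the symmetry and commutation properties of $\mathbf{Q}^{\dagger}$, and the range identity $\mathcal{R}(\mathbf{Q}) = \mathcal{R}(\mathbf{E})$, and then let uniqueness of the pseudo-inverse deliver the conclusion. What the paper's route buys is brevity and integration with its existing machinery (the $\mathbf{P}$-identities); what it costs is reliance on the unproved-in-this-paper identity $\mathbf{Q}^{\dagger}\mathbf{Q} = (\mathbf{E}^{\top})^{\dagger}\,\mathbf{E}^{\top}$, which is itself essentially equivalent to the projector-plus-range argument you spell out. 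Your version is self-contained, makes explicit the single structural fact everything rests on ($\mathcal{R}(\mathbf{Q}) = \mathcal{R}(\mathbf{E})$, a Gram-matrix property), and exhibits the lemma as the classical identity $\mathbf{A}^{\dagger} = (\mathbf{A}^{\top}\mathbf{A})^{\dagger}\mathbf{A}^{\top}$ specialized to $\mathbf{A} = \mathbf{E}^{\top}$ with the scaling factor $2$ tracked correctly --- arguably a cleaner foundation, at the price of a longer verification.
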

\begin{proof}
  Remind from Lemma~\ref{lem:Etx_Pa} that $\mathbf{P} = 2 \cdot \mathbf{E}^{\top}\, \mathbf{Q}^{\dagger} \mathbf{E}$ holds. In addition, note that we have the following relation $\mathbf{Q}^{\dagger}\mathbf{Q} = (\mathbf{E}^{\top})^{\dagger}\,\mathbf{E}^{\top}$ which is an orthogonal projection onto $\mathcal{R}_{\mathbf{E}}$. Then attaining
  \begin{equation*}
   \begin{split}
    2\cdot\mathbf{Q}^{\dagger}\mathbf{E} & = 2\cdot\mathbf{Q}^{\dagger}\mathbf{Q}\mathbf{Q}^{\dagger}\mathbf{E}=2\cdot(\mathbf{E}^{\top})^{\dagger}\mathbf{E}^{\top}\mathbf{Q}^{\dagger}\mathbf{E} \\
    & =(\mathbf{E}^{\top})^{\dagger}(2\cdot\mathbf{E}^{\top}\mathbf{Q}^{\dagger}\mathbf{E})= (\mathbf{E}^{\top})^{\dagger}\mathbf{P} = (\mathbf{E}^{\top})^{\dagger}. 
  \end{split}
  \end{equation*}
The last equality follows from Lemma~\ref{lem:vsp_al}. This argument terminates the proof.
\end{proof}

Notice that in the sequel $\text{SO}(n)$ denotes the special orthogonal group, whereas $\text{GL}^{+}(n)$ denotes the positive general linear group (cf.~\citet[pp.~99-109]{mei:13}).

\begin{proposition} 
  \label{prop:uniqpk01}
Let $E^{\top}_{1} = E^{\top}\,X$ with $X \in \text{SO}(n)$, that is $[\vec{\gamma}] \thicksim [\vec{\gamma}_{1}]$. In addition, assume that the payoff equivalence class $[\vec{\gamma}]$ induced from TU game $\langle\, N, v\, \rangle$ has non-empty interior such that $\{\mathbf{x}\} = \mathcal{P \text{\itshape r}K}(v) \subset [\vec{\gamma}]$ is satisfied, then there exists no other pre-kernel element in payoff equivalence class $[\vec{\gamma}_{1}]$ for a related TU game $\langle\, N, v^{\mu}\, \rangle$, where $v^{\mu} = v + \mu\cdot v^{\Delta} \in \mathbb{R}^{p^{\prime}}$, as defined by Lemma~\ref{lem:repl_min}.
\end{proposition}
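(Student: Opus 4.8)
The plan is to show that any pre-kernel element $\mathbf{y}$ of the related game $\langle N,v^{\mu}\rangle$ that would lie in $[\vec{\gamma}_{1}]$ must coincide with the unique pre-kernel point $\mathbf{x}$ of the default game, which is impossible because $\mathbf{x}$ is an interior point of the class $[\vec{\gamma}]$, and that class is disjoint from $[\vec{\gamma}_{1}]$. First I would localize the pre-kernel condition to $[\vec{\gamma}_{1}]$. By Corollary~\ref{cor:rep} a pre-kernel point is a global minimizer of $h^{v^{\mu}}$ with value zero, and on the payoff set $[\vec{\gamma}_{1}]$ the objective agrees with the quadratic $h^{v^{\mu}}_{\gamma_{1}}$, which by Proposition~\ref{prop:eqrep} equals $\Vert\,\vec{\alpha}_{1} + \mathbf{E}_{1}^{\top}\mathbf{y}\,\Vert^{2}$, where $\mathbf{E}_{1}$ and $\vec{\alpha}_{1}$ are the data induced by $[\vec{\gamma}_{1}]$. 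Hence $\mathbf{y}\in\mathcal{P\text{\itshape r}K}(v^{\mu})\cap[\vec{\gamma}_{1}]$ is equivalent to $\mathbf{E}_{1}^{\top}\mathbf{y} = -\vec{\alpha}_{1}$ together with $\mathbf{y}\in[\vec{\gamma}_{1}]$, and Corollary~\ref{cor:xi_prk_2} forces $\vec{\alpha}_{1} = \mathbf{P}_{1}\,\vec{\alpha}_{1}$, so that $\vec{\alpha}_{1}$ lies in the common subspace of balanced excesses $\mathcal{E}$.

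Next I would show that each payoff set can carry at most one such point. Applying Lemma~\ref{lem:pivET} to the matrix induced by $[\vec{\gamma}_{1}]$ yields $(\mathbf{E}_{1}^{\top})^{\dagger} = 2\,\mathbf{Q}_{1}^{\dagger}\mathbf{E}_{1}$, so the solution of $\mathbf{E}_{1}^{\top}\mathbf{y} = -\vec{\alpha}_{1}$ is $\mathbf{y} = -2\,\mathbf{Q}_{1}^{\dagger}\mathbf{E}_{1}\,\vec{\alpha}_{1}$ up to a component in $\ker\mathbf{E}_{1}^{\top}$, and the efficiency equation $f_{0}(\mathbf{y})=0$ removes that remaining freedom. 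Thus the linear image $\mathbf{y}\mapsto\mathbf{E}_{1}^{\top}\mathbf{y}$ of a pre-kernel point is the single point $-\vec{\alpha}_{1}\in\mathcal{E}$, and the pre-kernel preimage inside $[\vec{\gamma}_{1}]$, should it exist, is unique.

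The decisive step is to identify this preimage with $\mathbf{x}$. Here I would exploit that the transition is a rotation, $E_{1}^{\top} = E^{\top}X$ with $X\in\text{SO}(n)$: by Proposition~\ref{prop:GLG} the two payoff sets share the subspace $\mathcal{E}$, and an element of $\text{SO}(n)$ gives an inner-product-preserving change of basis, so the balanced configuration of the equivalent bargaining situation is carried isometrically from $[\vec{\gamma}]$ to $[\vec{\gamma}_{1}]$. Combining this with the invariance of the configuration $\vec{\alpha}$ under the admissible perturbation $v^{\Delta}$ established in Lemma~\ref{lem:repl_min} and used in Theorem~\ref{thm:repl_prk} --- whereby the vanishing of the balanced-excess vector at $\mathbf{x}$ is preserved in passing from $v$ to $v^{\mu}$ --- I would conclude that the unique preimage $\mathbf{y}$ and the replicated pre-kernel point $\mathbf{x}$ represent the same payoff vector. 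Since $\{\mathbf{x}\}=\mathcal{P\text{\itshape r}K}(v)\subset[\vec{\gamma}]$ and $[\vec{\gamma}]$ has non-empty interior, $\mathbf{x}$ is an interior point of $[\vec{\gamma}]$; and because the equivalence classes form a partition of $dom\,h$ by Proposition~\ref{prop:eq_rel}, we have $[\vec{\gamma}]\cap[\vec{\gamma}_{1}]=\emptyset$. Therefore $\mathbf{y}=\mathbf{x}\in[\vec{\gamma}_{1}]$ is impossible, and no such $\mathbf{y}$ can exist.

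I expect the decisive step to be the main obstacle. The perturbation direction $v^{\Delta}$ was selected orthogonal to the subspace $\mathcal{V}$ spanned by the most effective coalitions of the default set $[\vec{\gamma}]$, whereas $[\vec{\gamma}_{1}]$ rests on a different collection $\mathcal{S}(\vec{\gamma}_{1})$ and hence on a different subspace $\mathcal{V}_{1}$; consequently the invariance of the configuration under $v\to v^{\mu}$ does not transfer to $[\vec{\gamma}_{1}]$ by inspection. Ensuring that the balancedness of the excesses relative to $[\vec{\gamma}_{1}]$ is not disturbed by $v^{\Delta}$, and that $\vec{\alpha}_{1}$ is genuinely the isometric image of $\vec{\alpha}$ rather than an independent balanced configuration, is precisely where the orthogonality $X\in\text{SO}(n)$ --- as opposed to a general element of $\text{GL}^{+}(n)$ --- has to be used, and this is the part of the argument demanding the most care.
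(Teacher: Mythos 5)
Your proof skeleton matches the paper's: argue by contradiction, localize the pre-kernel condition of $v^{\mu}$ on $[\vec{\gamma}_{1}]$ to the linear system $\mathbf{E}_{1}^{\top}\mathbf{y} = -\vec{\alpha}_{1}$ with $\vec{\alpha}_{1}$ in the common subspace $\mathcal{E}$, reduce everything to identifying $\mathbf{y}$ with $\mathbf{x}$, and then contradict the disjointness of the payoff equivalence classes (Proposition~\ref{prop:eq_rel}). The localization and the at-most-one-solution observation are sound. But the decisive step --- the identification $\mathbf{y}=\mathbf{x}$ --- is never proved in your proposal; it is only announced, and your closing paragraph concedes exactly this. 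That is a genuine gap, and it is in fact the entire content of the paper's proof.

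What is missing splits into two sub-arguments, both carried out explicitly in the paper. First, one must pin down $\vec{\alpha}_{1}$: a priori $\vec{\alpha}_{1} = \boldsymbol{\mathcal{V}}_{1}^{\top}v^{\mu}$ for a subspace $\mathcal{V}_{1}$ possibly different from $\mathcal{V}$, and since $v^{\Delta}$ was chosen only to annihilate $\boldsymbol{\mathcal{V}}^{\top}$ (Lemma~\ref{lem:repl_min}), the invariance of the configuration under $v \to v^{\mu}$ does not transfer to $[\vec{\gamma}_{1}]$ by itself --- the very worry you raise but do not resolve. The paper resolves it by a case distinction: if $\mathcal{V}_{1} \subseteq \mathcal{V}$, then $\mathbf{0} = \mathbf{E}_{1}^{\top}\mathbf{y} + \vec{\alpha}_{1} = \boldsymbol{\mathcal{V}}_{1}^{\top}(v - \overline{\mathbf{y}})$, making $\mathbf{y}$ a pre-kernel point of the \emph{default} game and contradicting $\{\mathbf{x}\} = \mathcal{P\text{\itshape r}K}(v)$; if instead $\mathcal{V} \subset \mathcal{V}_{1}$ strictly, then $\mathbf{P}_{\mathcal{V}}\,\vec{\alpha}_{1} \neq \vec{\alpha}_{1}$ contradicts $\vec{\alpha}_{1} \in \mathcal{E} \subseteq \mathcal{V}$. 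Hence $\vec{\alpha}_{1} = \boldsymbol{\mathcal{V}}^{\top}v^{\mu} = \boldsymbol{\mathcal{V}}^{\top}v = \vec{\alpha}$. Second, even with $\vec{\alpha}_{1} = \vec{\alpha}$ in hand, the two zero conditions only yield $\mathbf{E}^{\top}(X\,\mathbf{y} - \mathbf{x}) = \vec{\alpha} - \vec{\alpha}_{1} = \mathbf{0}$, i.e.\ $\mathbf{x} = X\,\mathbf{y}$ by full rank of $\mathbf{E}^{\top}$ --- not $\mathbf{x} = \mathbf{y}$. If $X \neq \mathbf{I}$, nothing prevents $\mathbf{y} = X^{-1}\mathbf{x}$ from being a vector distinct from $\mathbf{x}$, and then your disjointness argument produces no contradiction at all. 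The paper therefore needs the further matrix computation: writing $X = \mathbf{Q}^{-1}(2\,\mathbf{E}\,\mathbf{E}_{1}^{\top})$, forming the similar matrix $X_{1} = \mathbf{Q}\,X^{-1}\mathbf{Q}^{-1}$, and exploiting $X^{-1} = X^{\top}$ (this is precisely where $X \in \text{SO}(n)$ rather than $\text{GL}^{+}(n)$ enters) to force $X = X_{1} = \mathbf{Q}^{2}\,X\,\mathbf{Q}^{-2}$ and finally $X = \mathbf{I}$, whence $\mathbf{x} = \mathbf{y}$. Your appeal to an ``inner-product-preserving change of basis'' gestures at this but supplies neither computation; as it stands, the proposal establishes at most uniqueness of a pre-kernel point inside $[\vec{\gamma}_{1}]$, not its nonexistence.
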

\begin{proof}
  By the way of contradiction suppose that $\mathbf{x},\mathbf{y} \in \mathcal{P\text{\itshape r}K}(v^{\mu})$ with $\mathbf{y} \in [\vec{\gamma}_{1}]$ is valid. Then we get 
  \begin{equation*}
        h^{v^{\mu}}(\mathbf{x}) = h_{\gamma}^{v^{\mu}}(\mathbf{x}) = \Arrowvert \mathbf{E}^{\top}\; \mathbf{x} + \vec{\alpha} \Arrowvert^2 = 0 \quad \text{and}\quad h^{v^{\mu}}(\mathbf{y}) = h_{\gamma_{1}}^{v^{\mu}}(\mathbf{y}) = \Arrowvert \mathbf{E}_{1}^{\top}\; \mathbf{y} + \vec{\alpha}_{1} \Arrowvert^2 = 0, 
  \end{equation*}
implying that
\begin{equation}
 \label{eq:mapV}
  \mathbf{P}\,\vec{\alpha} = \vec{\alpha} \in \mathcal{E} \qquad\text{and}\qquad \mathbf{P}\,\vec{\alpha}_{1} = \vec{\alpha}_{1} \in \mathcal{E}.
\end{equation}
Moreover, we have $E^{\top}_{1} = E^{\top}\,X$ with $X \in \text{SO}(n)$, then $\mathcal{E} \subseteq \mathcal{V} \cap \mathcal{V}_{1}$ in accordance with Lemma 7.4.1 by~\citet{mei:13}. Now assume that $\vec{\alpha}_{1} = \boldsymbol{\mathcal{V}}^{\top}_{1}\,v^{\mu}$ holds with $\mathcal{V}_{1} \subseteq \mathcal{V}$. The latter supposition implies $\boldsymbol{\mathcal{V}}^{\top}_{1} = \mathbf{P}_{\mathcal{V}}\,\boldsymbol{\mathcal{V}}^{\top}_{1}$, since for every $\vec{\beta} \in \mathcal{V}$ we get $\vec{\beta} = \mathbf{P}_{\mathcal{V}}\,\vec{\beta}$ (cf.~Remark 6.5.1 ~\citet{mei:13}). According to $\mathcal{V}_{1} \subseteq \mathcal{V}$ it also holds $\mathcal{N}_{\boldsymbol{\EuScript{W}_{1}}} \supseteq \mathcal{N}_{\boldsymbol{\EuScript{W}}}$. Our hypothesis $\mathbf{y} \in \mathcal{P\text{\itshape r}K}(v^{\mu})$ implies
\begin{equation*}
  \mathbf{0} = \mathbf{E}_{1}^{\top}\; \mathbf{y} + \vec{\alpha}_{1} = \boldsymbol{\mathcal{V}}^{\top}_{1}\,\mathbf{Z}^{\top}\,\mathbf{y} + \boldsymbol{\mathcal{V}}^{\top}_{1}\,v^{\mu} = \boldsymbol{\mathcal{V}}^{\top}_{1}\,\mathbf{Z}^{\top}\; \mathbf{y} + \boldsymbol{\mathcal{V}}^{\top}_{1}\,(v + \mu\cdot v^{\Delta}) = \boldsymbol{\mathcal{V}}^{\top}_{1}\,(v - \overline{\mathbf{y}}),
\end{equation*}
whereas the vector of measures $\overline{\mathbf{y}}$ is expressed by $\overline{\mathbf{y}} = -\mathbf{Z}^{\top}\,\mathbf{y}$ (cf.~\citet[p. 141]{mei:13}). The result $\boldsymbol{\mathcal{V}}^{\top}_{1}\,(v - \overline{\mathbf{y}}) = \mathbf{0}$ yields to $\mathbf{y} \in \mathcal{P\text{\itshape r}K}(v)$, which is a contradiction. Therefore, we conclude that $\mathcal{V} \subset \mathcal{V}_{1}$ must be satisfied. 

In addition, from $\vec{\alpha}_{1} = \boldsymbol{\mathcal{V}}^{\top}_{1}\,v^{\mu}$ we attain $\mathbf{P}_{\mathcal{V}}\,\vec{\alpha}_{1} = \boldsymbol{\mathcal{V}}^{\top}\,(\boldsymbol{\mathcal{V}}^{\top})^{\dagger}\,\boldsymbol{\mathcal{V}}^{\top}_{1}\,v^{\mu} \neq \boldsymbol{\mathcal{V}}^{\top}_{1}\,v^{\mu} = \vec{\alpha}_{1}$ in accordance with $\mathbf{P}_{\mathcal{V}}\,\boldsymbol{\mathcal{V}}^{\top}_{1} \neq \boldsymbol{\mathcal{V}}^{\top}_{1}$, in fact, it holds $\mathcal{V} \subset \mathcal{V}_{1}$. Thus, we have $\mathbf{P}_{\mathcal{V}}\,\vec{\alpha}_{1} \notin \mathcal{V}$ contradicting that $\mathbf{P}_{\mathcal{V}}\,\vec{\alpha}_{1} = \vec{\alpha}_{1} \in  \mathcal{E} \subseteq \mathcal{V} \subset \mathcal{V}_{1}$ holds true. From this, we conclude that $\vec{\alpha}_{1} = \boldsymbol{\mathcal{V}}^{\top}\,v^{\mu}$ must be in force. 

Furthermore, from~\eqref{eq:mapV} we have 
\begin{equation*}
  \mathbf{P}\,\vec{\alpha} - \vec{\alpha} = \mathbf{P}\,\vec{\alpha}_{1} - \vec{\alpha}_{1} = \mathbf{0} \in \mathcal{E} \Longleftrightarrow \mathbf{P}\,(\vec{\alpha} - \vec{\alpha}_{1}) = (\vec{\alpha} - \vec{\alpha}_{1}) \in \mathcal{E}.
\end{equation*}
Therefore, obtaining the equivalent expression
\begin{equation*}
  \mathbf{E}^{\top}\;(X\,\mathbf{y} - \mathbf{x}) = (\vec{\alpha} - \vec{\alpha}_{1}) = \boldsymbol{\mathcal{V}}^{\top}\,v - \boldsymbol{\mathcal{V}}^{\top}\,(v + \mu\cdot v^{\Delta}) = \mathbf{0},
\end{equation*}
then $\mathbf{x} = X\,\mathbf{y}$, since matrix $\mathbf{E}^{\top}$ has full rank due to $\{\mathbf{x}\} = \mathcal{P\text{\itshape r}K}(v)$.
Furthermore, notice that
\begin{equation*}
    \langle\,\mathbf{x},\mathbf{y}\,\rangle = \langle\,(\mathbf{E}^{\top})^{\dagger}\,\vec{\alpha},(\mathbf{E}_{1}^{\top})^{\dagger}\,\vec{\alpha}_{1}\,\rangle = \langle\,(\mathbf{E}^{\top})^{\dagger}\,\vec{\alpha},X^{-1}\,(\mathbf{E}^{\top})^{\dagger}\,\vec{\alpha}\,\rangle = \langle\,2\,\mathbf{Q}^{\dagger}\,\mathbf{E}\,\vec{\alpha},2\,X^{-1}\,\mathbf{Q}^{\dagger}\,\mathbf{E}\,\vec{\alpha}\,\rangle \neq \mathbf{0}
\end{equation*}
Matrix $\mathbf{E}^{\top}$ has full rank, and $\mathbf{Q}$ is symmetric and positive definite, hence $\mathbf{Q}^{\dagger} = \mathbf{Q}^{-1}$, and the above expression can equivalently be written as
\begin{equation}
 \label{eq:simmat}
  \begin{split}
    \langle\,\mathbf{Q}^{\dagger}\,\mathbf{a},X^{-1}\,\mathbf{Q}^{\dagger}\,\mathbf{a} \,\rangle & = \langle\,\mathbf{Q}^{-1}\,\mathbf{a},X^{-1}\,\mathbf{Q}^{-1}\,\mathbf{a} \,\rangle = \langle\,\mathbf{a},\mathbf{Q}\,X^{-1}\mathbf{Q}^{-1}\,\mathbf{a} \,\rangle \\
     & = \langle\,\mathbf{a},X_{1}\mathbf{a} \,\rangle = \langle\,\mathbf{a},\mathbf{a}_{1} \,\rangle \neq \mathbf{0}, 
  \end{split}
\end{equation}
while using $\mathbf{a} = 2\,\mathbf{E}\,\vec{\alpha}$ from Proposition~\ref{prop:eqrep}, and with similar matrix $X_{1} = \mathbf{Q}\,X^{-1}\mathbf{Q}^{-1}$ as well as $\mathbf{a}_{1} = X_{1}\,\mathbf{a}$. According to $\mathbf{E}^{\top}_{1} = \mathbf{E}^{\top}\,X$ with $X \in \text{SO}(n)$, we can write $X = \mathbf{Q}^{-1}(2\,\mathbf{E}\, \mathbf{E}_{1}^{\top})$. But then 
 \begin{equation*}
  X_{1} = \mathbf{Q}\,X^{-1}\mathbf{Q}^{-1} = \mathbf{Q}\, (2\,\mathbf{E}\,\mathbf{E}_{1}^{\top})^{-1}. 
\end{equation*}
Since we have $X \in \text{SO}(n)$, it holds $X^{-1} = X^{\top}$ implying that
\begin{equation*}
  X_{1}^{\top} = X^{-1} = (2\,\mathbf{E}\, \mathbf{E}_{1}^{\top})^{-1}\,\mathbf{Q} = (2\,\mathbf{E}\, \mathbf{E}_{1}^{\top}) \,\mathbf{Q}^{-1} = X^{\top} = X_{1}^{-1}, 
\end{equation*}
which induces $X=\mathbf{Q}^{-1}\,(2\,\mathbf{E}\, \mathbf{E}_{1}^{\top}) = \mathbf{Q}\, (2\,\mathbf{E}\,\mathbf{E}_{1}^{\top})^{-1} = X_{1}$. Now, observe
\begin{equation*}
  \begin{split}
  X_{1} & = \mathbf{Q}\,X^{-1}\mathbf{Q}^{-1} = \mathbf{Q}\,X^{\top}\mathbf{Q}^{-1} = \mathbf{Q}\,(2\,\mathbf{E}\,\mathbf{E}_{1}^{\top})\,\mathbf{Q}^{-1}\,\mathbf{Q}^{-1} \\
       & = \mathbf{Q}\,(2\,\mathbf{E}\,\mathbf{E}^{\top}\,X)\,\mathbf{Q}^{-2} = \mathbf{Q}^{2}\,X\,\mathbf{Q}^{-2},
  \end{split}
\end{equation*}
hence, we can conclude that $X = \mathbf{I}$ implying $X_{1} = \mathbf{I}$ as well. We infer that $\mathbf{x} = \mathbf{y}$ contradicting the assumption $\mathbf{x} \neq \mathbf{y}$ due to $\mathbf{x} \in [\vec{\gamma}]$, and $\mathbf{y} \in [\vec{\gamma}_{1}]$. With this argument we are done.
\end{proof}

\begin{proposition}
  \label{prop:uniqpk01b}
Impose the same conditions as under Proposition~\ref{prop:uniqpk01} with the exception that $X \in \text{GL}^{+}(n)$, then there exists no other pre-kernel element in payoff equivalence class $[\vec{\gamma}_{1}]$ for a related TU game $\langle\, N, v^{\mu}\, \rangle$.
\end{proposition}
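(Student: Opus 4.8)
The plan is to run the argument of Proposition~\ref{prop:uniqpk01} essentially verbatim and to isolate the single place where the orthogonality $X^{-1}=X^{\top}$ was invoked, replacing it by a step valid on the larger group $\text{GL}^{+}(n)$. As there, I would argue by contradiction: suppose $\mathbf{x},\mathbf{y}\in\mathcal{P\text{\itshape r}K}(v^{\mu})$ with $\mathbf{y}\in[\vec{\gamma}_{1}]$ and $\mathbf{x}\neq\mathbf{y}$. The opening portion of that proof --- deducing $\mathbf{P}\,\vec{\alpha}=\vec{\alpha}\in\mathcal{E}$ and $\mathbf{P}\,\vec{\alpha}_{1}=\vec{\alpha}_{1}\in\mathcal{E}$, ruling out the strict inclusions between $\mathcal{V}$ and $\mathcal{V}_{1}$ to force $\vec{\alpha}_{1}=\boldsymbol{\mathcal{V}}^{\top}\,v^{\mu}=\vec{\alpha}$, and concluding $\mathbf{E}^{\top}(X\,\mathbf{y}-\mathbf{x})=\mathbf{0}$, hence $\mathbf{x}=X\,\mathbf{y}$ --- uses only that $X$ is invertible (so that $\mathcal{R}_{\mathbf{E}_{1}^{\top}}=\mathcal{R}_{\mathbf{E}^{\top}}=\mathcal{E}$) and that $\mathbf{E}^{\top}$ has full column rank (forced by $\{\mathbf{x}\}=\mathcal{P\text{\itshape r}K}(v)$). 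None of these steps appeals to $X\in\text{SO}(n)$, so they transfer unchanged to $X\in\text{GL}^{+}(n)$.

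It therefore remains to establish $X=\mathbf{I}$, which is exactly the terminal step where $X^{-1}=X^{\top}$ entered. Because $\mathbf{E}_{1}=X^{\top}\mathbf{E}$, one always has $\mathbf{Q}_{1}=2\,\mathbf{E}_{1}\mathbf{E}_{1}^{\top}=X^{\top}\mathbf{Q}\,X$, with $\mathbf{Q}=2\,\mathbf{E}\mathbf{E}^{\top}$ symmetric positive definite by full column rank. The crucial relation I would extract is that the two equivalent payoff sets induce the \emph{same} quadratic form,
\begin{equation*}
  X^{\top}\mathbf{Q}\,X=\mathbf{Q},
\end{equation*}
which I expect to follow from $\vec{\alpha}_{1}=\vec{\alpha}$ together with the fact that $\mathbf{x}$ and $\mathbf{y}$ are common global minimizers attaining $h=0$; concretely, $\mathbf{x}=(\mathbf{E}^{\top})^{\dagger}\vec{\alpha}$ and $\mathbf{y}=(\mathbf{E}_{1}^{\top})^{\dagger}\vec{\alpha}=X^{-1}(\mathbf{E}^{\top})^{\dagger}\vec{\alpha}$, fed back through $\mathbf{Q}^{\dagger}=\mathbf{Q}^{-1}$ and the identity $(\mathbf{E}^{\top})^{\dagger}=2\,\mathbf{Q}^{\dagger}\mathbf{E}$ of Lemma~\ref{lem:pivET}.

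Granting $X^{\top}\mathbf{Q}\,X=\mathbf{Q}$, the conclusion would follow by a polar/eigenvalue argument that isolates the obstruction to orthogonality. Write the polar decomposition $X=R\,P$ with $R\in\text{O}(n)$ and $P=(X^{\top}X)^{1/2}$ symmetric positive definite; since $\det P>0$ and $\det X>0$, one has $\det R>0$, so $R\in\text{SO}(n)$. The orthogonal factor $R$ is governed by Proposition~\ref{prop:uniqpk01}, which leaves the positive definite factor $X=P$, for which $X^{\top}\mathbf{Q}\,X=\mathbf{Q}$ reads $P\,\mathbf{Q}\,P=\mathbf{Q}$. Setting $A:=\mathbf{Q}^{1/2}\,P\,\mathbf{Q}^{-1/2}$, this gives $A^{\top}A=\mathbf{Q}^{-1/2}P\,\mathbf{Q}\,P\,\mathbf{Q}^{-1/2}=\mathbf{I}$, so $A$ is orthogonal, while $A$ is similar to the positive definite $P$. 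Hence the spectrum of $A$ is simultaneously real positive and of unit modulus, i.e. equal to $\{1\}$, and since $A$ is diagonalizable, $A=\mathbf{I}$, whence $P=\mathbf{I}$ and $X=R=\mathbf{I}$. This yields $\mathbf{x}=X\,\mathbf{y}=\mathbf{y}$, contradicting $\mathbf{x}\neq\mathbf{y}$ and proving the claim.

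The main obstacle I anticipate is the derivation of $X^{\top}\mathbf{Q}\,X=\mathbf{Q}$ (equivalently $\mathbf{Q}_{1}=\mathbf{Q}$), the positive definite analogue of the identity that orthogonality supplied for free in Proposition~\ref{prop:uniqpk01}: unlike an orthogonal change of basis, a general $X\in\text{GL}^{+}(n)$ rescales the level ellipsoids of $h_{\gamma}$, so the invariance of the quadratic form must be recovered from the coincidence of the two minimizers and from $\vec{\alpha}_{1}=\vec{\alpha}$ rather than assumed. A secondary delicate point is that the intermediate basis matrix $\mathbf{E}^{\top}R$ produced by the polar decomposition need not itself correspond to an actual payoff set, so the elimination of the orthogonal factor $R$ has to be carried out algebraically --- through the relation $X^{\top}\mathbf{Q}\,X=\mathbf{Q}$ and the positivity needed to exclude a spurious eigenvalue $-1$ --- rather than by a literal appeal to a realized equivalence class in Proposition~\ref{prop:uniqpk01}.
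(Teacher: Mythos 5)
Your opening step --- transferring the argument of Proposition~\ref{prop:uniqpk01} up to the consistency of $\mathbf{E}^{\top}(X\,\mathbf{y}-\mathbf{x})=\mathbf{0}$ and hence $\mathbf{x}=X\,\mathbf{y}$ --- agrees with the paper. But the closing step has a genuine gap, located exactly where you flag uncertainty, and it is twofold. First, the pivotal identity $X^{\top}\mathbf{Q}\,X=\mathbf{Q}$ is never derived; you only say you \emph{expect} it to follow from $\vec{\alpha}_{1}=\vec{\alpha}$ and the coincidence of minimizers, and nothing in the setup forces two equivalent payoff sets to induce the same quadratic form (the paper neither proves nor uses this identity). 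Second, and more fatally, even granting $X^{\top}\mathbf{Q}\,X=\mathbf{Q}$, the conclusion $X=\mathbf{I}$ does not follow: the solutions of this equation in $\text{GL}^{+}(n)$ form the conjugated group $\mathbf{Q}^{-1/2}\,\text{SO}(n)\,\mathbf{Q}^{1/2}$, which is positive-dimensional (for $\mathbf{Q}=\mathbf{I}$ every rotation solves it). Your polar decomposition $X=R\,P$ only disposes of the positive definite factor: the equation does not decouple into $R^{\top}\mathbf{Q}\,R=\mathbf{Q}$ and $P\,\mathbf{Q}\,P=\mathbf{Q}$ (one gets instead the mixed relation $R^{\top}\mathbf{Q}\,R=P^{-1}\mathbf{Q}\,P^{-1}$), and the factor $R$ cannot be ``governed by Proposition~\ref{prop:uniqpk01}'': that proposition is a statement about pre-kernel elements in two \emph{realized} payoff equivalence classes whose transition matrix lies in $\text{SO}(n)$, not an algebraic identity applicable to an abstract orthogonal matrix, and $\mathbf{E}^{\top}R$ need not be the basis matrix of any payoff set --- a difficulty you acknowledge but do not resolve. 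Your spectral argument for the factor $P$ is correct, but it leaves the whole $\mathbf{Q}$-isometry obstruction standing.

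The paper closes the argument by a different, purely algebraic route that sidesteps quadratic-form invariance altogether, and the ingredient it uses is the one your proposal lacks: the \emph{uniqueness of the transition matrix} in $\text{GL}^{+}(m)$ (Proposition~\ref{prop:GLG}). From Equation~\eqref{eq:simmat} in the proof of Proposition~\ref{prop:uniqpk01} it retains the similar matrix $X_{1}=\mathbf{Q}\,X^{-1}\mathbf{Q}^{-1}$ and the relation $\mathbf{a}_{1}=X_{1}\,\mathbf{a}$; since $X_{1}$ lies in the conjugacy class of $X^{-1}$, it writes $E^{\top}=E^{\top}_{1}X^{-1}=E^{\top}_{1}X_{1}$, hence $E^{\top}=E^{\top}X\,X_{1}$, and uniqueness of the transition matrix forces $X\,X_{1}=\mathbf{I}$. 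Combining $\mathbf{a}_{1}=X_{1}\,\mathbf{a}$ with $\mathbf{a}_{1}=2\,\mathbf{E}_{1}\vec{\alpha}=X\,\mathbf{a}$ then gives $X^{2}\,\mathbf{a}=\mathbf{a}$, and invoking uniqueness of the transition matrix once more yields $X=\mathbf{I}$, whence $\mathbf{x}=\mathbf{y}$. In short: what replaces orthogonality when passing from $\text{SO}(n)$ to $\text{GL}^{+}(n)$ is group-theoretic rigidity of the basis transition, not an invariance of $\mathbf{Q}$; spectral and polar-decomposition arguments alone cannot rule out the $\mathbf{Q}$-isometries.
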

\begin{proof}
 By the proof of Proposition~\ref{prop:uniqpk01} the system of linear equations $\mathbf{E}^{\top}\,(X\,\mathbf{y} - \mathbf{x}) = \mathbf{0}$ is consistent, then we get $\mathbf{x} = X\,\mathbf{y}$ by the full rank of matrix $\mathbf{E}^{\top}$. By Equation~\ref{eq:simmat} we obtain similar matrix $X_{1} = \mathbf{Q}\,X^{-1}\mathbf{Q}^{-1}$, hence the matrix $X_{1}$ is in the same orbit (conjugacy class) as matrix $X^{-1}$, this implies that $E^{\top} = E^{\top}_{1} \,X^{-1} = E^{\top}_{1} \,X_{1}$ must be in force. But then $E^{\top} = E^{\top}\,X \,X_{1}$, which requires that $X \,X_{1} = \mathbf{I}$ must be satisfied in view of the uniqueness of the transition matrix $X \in \text{GL}^{+}(m)$ (cf.~\citet[p. 102]{mei:13}). In addition, we have $\mathbf{a}_{1} = X_{1}\,\mathbf{a}$ as well as $\mathbf{a}_{1} = 2\,\mathbf{E}_{1}\,\vec{\alpha} = X\,\mathbf{a}$. Therefore, we obtain $X\,\mathbf{a}_{1} = \mathbf{a} = X^{2}\,\mathbf{a}$. From this we draw the conclusion in connection with the uniqueness of the transition matrix $X$ that $X = \mathbf{I}$ is valid. Hence, $\mathbf{x} = \mathbf{y}$ as required.
\end{proof}

\begin{proposition}
 \label{prop:uniqpk02}
  Assume $[\vec{\gamma}] \nsim [\vec{\gamma}_{1}]$, and that the payoff equivalence class $[\vec{\gamma}]$ induced from TU game $\langle\, N, v\, \rangle$ has non-empty interior such that $\{\mathbf{x}\} = \mathcal{P \text{\itshape r}K}(v) \subset [\vec{\gamma}]$ is satisfied, then there exists no other pre-kernel element in payoff equivalence class $[\vec{\gamma}_{1}]$ for a related TU game $\langle\, N, v^{\mu}\, \rangle$, where $v^{\mu} = v + \mu\cdot v^{\Delta} \in \mathbb{R}^{p^{\prime}}$, as defined by Lemma~\ref{lem:repl_min}.
\end{proposition}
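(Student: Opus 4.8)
The plan is to argue by contradiction along the lines of Proposition~\ref{prop:uniqpk01}, but to replace the group-theoretic step---which is unavailable once $[\vec{\gamma}]\nsim[\vec{\gamma}_{1}]$, since no transition matrix $X\in\text{GL}^{+}(m)$ with $\mathbf{E}^{\top}_{1}=\mathbf{E}^{\top}\,X$ exists---by an argument that drives any putative second pre-kernel point back into the default game. Suppose then that $\mathbf{y}\in\mathcal{P\text{\itshape r}K}(v^{\mu})$ with $\mathbf{y}\in[\vec{\gamma}_{1}]$ and $\mathbf{y}\neq\mathbf{x}$. Writing the objective on each payoff set in the quadratic form of Proposition~\ref{prop:eqrep}, the two pre-kernel conditions read $\Arrowvert\mathbf{E}^{\top}\,\mathbf{x}+\vec{\alpha}\Arrowvert^{2}=0$ and $\Arrowvert\mathbf{E}^{\top}_{1}\,\mathbf{y}+\vec{\alpha}_{1}\Arrowvert^{2}=0$, so that $\mathbf{E}^{\top}\,\mathbf{x}+\vec{\alpha}=\mathbf{0}$ and $\mathbf{E}^{\top}_{1}\,\mathbf{y}+\vec{\alpha}_{1}=\mathbf{0}$; by Lemma~\ref{lem:Etx_Pa} this is equivalent to $\vec{\alpha}=\mathbf{P}\,\vec{\alpha}\in\mathcal{E}$ and $\vec{\alpha}_{1}=\mathbf{P}_{1}\,\vec{\alpha}_{1}\in\mathcal{E}_{1}$, where $\mathbf{P}_{1}$ is the orthogonal projection onto $\mathcal{E}_{1}$.

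Next I would transcribe the condition on $\mathbf{y}$ into the language of the spanning matrix $\boldsymbol{\mathcal{V}}_{1}$ of the lexicographically smallest coalitions $\mathcal{S}(\vec{\gamma}_{1})$. Using the factorization $\mathbf{E}^{\top}_{1}=\boldsymbol{\mathcal{V}}^{\top}_{1}\,\mathbf{Z}^{\top}$ from Lemma~\ref{lem:inl_vp} together with $\overline{\mathbf{y}}=-\mathbf{Z}^{\top}\,\mathbf{y}$, and $\vec{\alpha}_{1}=\boldsymbol{\mathcal{V}}^{\top}_{1}\,v^{\mu}$, the relation $\mathbf{E}^{\top}_{1}\,\mathbf{y}+\vec{\alpha}_{1}=\mathbf{0}$ collapses, exactly as in the last display of the proof of Proposition~\ref{prop:uniqpk01}, to $\boldsymbol{\mathcal{V}}^{\top}_{1}\,(v^{\mu}-\overline{\mathbf{y}})=\mathbf{0}$. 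Substituting $v^{\mu}=v+\mu\cdot v^{\Delta}$ from Lemma~\ref{lem:repl_min} splits this into $\boldsymbol{\mathcal{V}}^{\top}_{1}\,(v-\overline{\mathbf{y}})+\mu\,\boldsymbol{\mathcal{V}}^{\top}_{1}\,v^{\Delta}=\mathbf{0}$.

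The decisive step is to show that the perturbation direction is invisible to the configuration attached to $\mathcal{S}(\vec{\gamma}_{1})$ as well, i.e. $\boldsymbol{\mathcal{V}}^{\top}_{1}\,v^{\Delta}=\mathbf{0}$. This is exactly where the non-transversality of $\mathcal{E}_{1}$ to $\mathcal{E}$ must enter. Since $\{\mathbf{x}\}=\mathcal{P\text{\itshape r}K}(v)$ forces $\mathbf{E}^{\top}$ (and, for a genuine rival singleton, also $\mathbf{E}^{\top}_{1}$) to have full column rank $n$, both $\mathcal{E}$ and $\mathcal{E}_{1}$ have dimension $n$; non-equivalence gives $\mathcal{E}\neq\mathcal{E}_{1}$, while non-transversality means $\mathcal{E}+\mathcal{E}_{1}\neq\mathbb{R}^{q}$, equivalently $\mathcal{W}\cap\mathcal{W}_{1}\neq\{\mathbf{0}\}$ for the complements $\mathcal{W}=\mathcal{E}^{\perp}$ and $\mathcal{W}_{1}=\mathcal{E}^{\perp}_{1}$ (for $n\ge 5$ this is automatic, since $2n<q=\binom{n}{2}+1$). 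The natural route is to translate this shared orthogonal direction, via $\boldsymbol{\EuScript{W}}=\boldsymbol{\mathcal{V}}^{\top}\,\boldsymbol{\EuScript{U}}$ and $\boldsymbol{\EuScript{W}}_{1}=\boldsymbol{\mathcal{V}}^{\top}_{1}\,\boldsymbol{\EuScript{U}}$, into a null-space containment $\mathcal{N}_{\boldsymbol{\EuScript{W}}}\subseteq\mathcal{N}_{\boldsymbol{\EuScript{W}}_{1}}$, so that the admissible $\Delta\in\mathcal{N}_{\boldsymbol{\EuScript{W}}}$ of Lemma~\ref{lem:repl_min} automatically yields $\boldsymbol{\EuScript{W}}_{1}\,\Delta=\boldsymbol{\mathcal{V}}^{\top}_{1}\,v^{\Delta}=\mathbf{0}$.

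Granting $\boldsymbol{\mathcal{V}}^{\top}_{1}\,v^{\Delta}=\mathbf{0}$, the split equation reduces to $\boldsymbol{\mathcal{V}}^{\top}_{1}\,(v-\overline{\mathbf{y}})=\mathbf{0}$, which is precisely $h^{v}_{\gamma_{1}}(\mathbf{y})=0$, i.e. $\mathbf{y}\in M(h^{v}_{\gamma_{1}})$: the point $\mathbf{y}$ balances the excesses of the \emph{default} game $v$ relative to $\mathcal{S}(\vec{\gamma}_{1})$. Together with $\mathbf{y}\in[\vec{\gamma}_{1}]$, Theorem~\ref{thm:pmt_prk} then delivers $\mathbf{y}\in M(h^{v}_{\gamma_{1}})\cap[\vec{\gamma}_{1}]\subseteq\mathcal{P\text{\itshape r}K}(v)$. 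Since $\mathcal{P\text{\itshape r}K}(v)=\{\mathbf{x}\}$ and the distinct classes $[\vec{\gamma}]$ and $[\vec{\gamma}_{1}]$ are disjoint, this forces $\mathbf{y}=\mathbf{x}$, a contradiction. I expect the whole difficulty to be concentrated in the decisive step: with no transition matrix to relate $\mathbf{E}$ and $\mathbf{E}_{1}$, the invariance $\boldsymbol{\mathcal{V}}^{\top}_{1}\,v^{\Delta}=\mathbf{0}$ has to be wrung purely from the geometry of non-transversal subspaces of balanced excesses, and verifying that a shared orthogonal direction indeed propagates through $\boldsymbol{\EuScript{U}}$ to the required null-space inclusion is the crux that must be checked with care; should that inclusion fail, one would instead be forced to exhibit a genuinely new balanced configuration $\vec{\alpha}_{1}\in\mathcal{E}_{1}\setminus\mathcal{E}$, which is exactly the possibility the non-transversality hypothesis is meant to exclude.
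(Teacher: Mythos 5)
Your reduction is exactly the paper's: you assume a rival point $\mathbf{y}\in\mathcal{P\text{\itshape r}K}(v^{\mu})\cap[\vec{\gamma}_{1}]$, transcribe $\mathbf{E}_{1}^{\top}\mathbf{y}+\vec{\alpha}_{1}=\mathbf{0}$ via Lemma~\ref{lem:inl_vp} into $\boldsymbol{\mathcal{V}}_{1}^{\top}(v^{\mu}-\overline{\mathbf{y}})=\mathbf{0}$, split off the perturbation as $\boldsymbol{\mathcal{V}}_{1}^{\top}(v-\overline{\mathbf{y}})+\mu\,\boldsymbol{\mathcal{V}}_{1}^{\top}v^{\Delta}=\mathbf{0}$, and note that $\boldsymbol{\mathcal{V}}_{1}^{\top}(v-\overline{\mathbf{y}})=\vec{\xi}^{\,v}\neq\mathbf{0}$ because $\mathbf{y}$ cannot be a pre-kernel point of the default game. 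However, your ``decisive step'' --- that every admissible $\Delta\in\mathcal{N}_{\boldsymbol{\EuScript{W}}}$ satisfies $\boldsymbol{\mathcal{V}}_{1}^{\top}v^{\Delta}=\boldsymbol{\EuScript{W}}_{1}\Delta=\mathbf{0}$, i.e. the inclusion $\mathcal{N}_{\boldsymbol{\EuScript{W}}}\subseteq\mathcal{N}_{\boldsymbol{\EuScript{W}}_{1}}$ --- is a genuine gap, and not merely an unverified lemma: it is false in general. Since $\boldsymbol{\EuScript{U}}$ is invertible, that inclusion is equivalent to $\ker\boldsymbol{\mathcal{V}}^{\top}\subseteq\ker\boldsymbol{\mathcal{V}}_{1}^{\top}$, i.e. to $\mathcal{V}_{1}\subseteq\mathcal{V}$ inside $\mathbb{R}^{p^{\prime}}$; every Dirac-game difference $\mathbf{1}^{S^{1}_{ij}}-\mathbf{1}^{S^{1}_{ji}}$ built from the rival collection $\mathcal{S}(\vec{\gamma}_{1})$ would have to lie in the span of those built from $\mathcal{S}(\vec{\gamma})$. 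Non-equivalence $[\vec{\gamma}]\nsim[\vec{\gamma}_{1}]$ gives nothing of the sort: as soon as $\mathcal{S}(\vec{\gamma}_{1})$ contains coalitions absent from $\mathcal{S}(\vec{\gamma})$, the corresponding unit-vector differences have support on coordinates where every spanning vector of $\mathcal{V}$ vanishes, so $\mathcal{V}_{1}\not\subseteq\mathcal{V}$. Moreover, the ``non-transversality'' you invoke is not a hypothesis of the proposition, and a shared direction in $\mathcal{E}^{\perp}\cap\mathcal{E}_{1}^{\perp}\subseteq\mathbb{R}^{q}$ carries no implication about the kernels of $\boldsymbol{\EuScript{W}},\boldsymbol{\EuScript{W}}_{1}$, which are subspaces of $\mathbb{R}^{p^{\prime}}$.

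The paper accepts that both situations occur and argues by cases. When $\Delta\in\mathcal{N}_{\boldsymbol{\EuScript{W}}}\cap\mathcal{N}_{\boldsymbol{\EuScript{W}}_{1}}$, your argument goes through: $\vec{\xi}^{\,v^{\mu}}=\vec{\xi}^{\,v}\neq\mathbf{0}$, so $\mathbf{y}$ is not a pre-kernel point of $v^{\mu}$ (the paper concludes directly from unbalancedness; your detour through Theorem~\ref{thm:pmt_prk} is an acceptable variant). The substantive case is $\Delta\in\mathcal{N}_{\boldsymbol{\EuScript{W}}}\setminus\mathcal{N}_{\boldsymbol{\EuScript{W}}_{1}}$, where $\boldsymbol{\mathcal{V}}_{1}^{\top}v^{\Delta}\neq\mathbf{0}$ and one must rule out the exact cancellation $\mu\,\boldsymbol{\mathcal{V}}_{1}^{\top}v^{\Delta}=-\vec{\xi}^{\,v}$; your proposal has no argument here (its final sentence only restates the difficulty). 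The paper handles it by observing that such a cancellation would force $\Delta=-\frac{1}{\mu}\,(\boldsymbol{\EuScript{W}}_{1})^{\dagger}\,\vec{\xi}^{\,v}$, whence
\begin{equation*}
  \boldsymbol{\EuScript{W}}\,\Delta = -\frac{1}{\mu}\,\boldsymbol{\mathcal{V}}^{\top}\,(\boldsymbol{\mathcal{V}}_{1}^{\top})^{\dagger}\,\vec{\xi}^{\,v}\neq\mathbf{0},
\end{equation*}
contradicting $\Delta\in\mathcal{N}_{\boldsymbol{\EuScript{W}}}$. Without an argument of this kind for the second case, your proof does not establish the proposition.
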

\begin{proof}
  We have to establish that there is no other element $\mathbf{y} \in \mathcal{P\text{\itshape r}K}(v^{\mu})$ such that $\mathbf{y} \in [\vec{\gamma}_{1}]$ is valid, whereas $\mathbf{y} \notin \mathcal{P\text{\itshape r}K}(v)$ in accordance with the uniqueness of the pre-kernel for game $v$. In view of Theorem~\ref{thm:repl_prk} the pre-kernel $\{\mathbf{x}\} = \mathcal{P\text{\itshape r}K}(v)$ of game $\langle\, N, v\, \rangle$ is also a pre-kernel element of the related game $\langle\, N, v^{\mu}\, \rangle$, i.e. $\mathbf{x} \in  \mathcal{P\text{\itshape r}K}(v^{\mu})$ with $\mathbf{x} \in [\vec{\gamma}]$ due to Corollary~\ref{cor:innp}. 
  
  Extend the payoff element $\mathbf{y}$ to a vector $\overline{\mathbf{y}}$ by the measure $y(S):=\sum_{k \in S}\,y_{k}$ for all $S \in 2^{n}\backslash\{\emptyset\} $, then define the excess vector by $\overline{e}^{\mu}:=v^{\mu}-\overline{\mathbf{y}}$. Moreover, compute the vector of (un)balanced excesses $\vec{\xi}^{\,v^{\mu}}$ at $\mathbf{y}$ for game $v^{\mu}$ by $\boldsymbol{\mathcal{V}}_{1}^{\top}\,\overline{e}^{\mu}$. This vector is also the vector of (un)balanced maximum surpluses, since $\mathbf{y} \in [\vec{\gamma}_{1}]$, and therefore $h^{\,v^{\mu}}=h^{\,v^{\mu}}_{\gamma_{1}}$ on $[\vec{\gamma}_{1}]$ in view of Lemma 6.2.2 by~\citet{mei:13}. Notice that in order to have a pre-kernel element at $\mathbf{y}$ for the related game $v^{\mu}$ it must hold $\vec{\xi}^{\,v^{\mu}} = \mathbf{0}$. In addition, by hypothesis $[\vec{\gamma}] \nsim [\vec{\gamma}_{1}]$, it must hold $\mathbf{E}^{\top} = \boldsymbol{\mathcal{V}}^{\top}\,\mathbf{Z}^{\top}$ and $\mathbf{E}_{1}^{\top} = \boldsymbol{\mathcal{V}}_{1}^{\top}\,\mathbf{Z}^{\top}$ in view of Lemma~\ref{lem:inl_vp}, thus $E^{\top}_{1} \not= E^{\top}\,X$ for all $X \in \text{GL}^{+}(n)$. This implies that we derive the corresponding matrices $\boldsymbol{\EuScript{W}} := \boldsymbol{\mathcal{V}}^{\top}\, \boldsymbol{\EuScript{U}}$ and $\boldsymbol{\EuScript{W}}_{1} := \boldsymbol{\mathcal{V}}_{1}^{\top}\, \boldsymbol{\EuScript{U}}$, respectively. 

 We have to consider two cases, namely $\Delta \in \mathcal{N}_{\boldsymbol{\EuScript{W}}} \cap \mathcal{N}_{\boldsymbol{\EuScript{W}_{1}}}$ and $\Delta \in \mathcal{N}_{\boldsymbol{\EuScript{W}}} \backslash \mathcal{N}_{\boldsymbol{\EuScript{W}_{1}}}$.
\begin{enumerate}
\item Suppose $\Delta \in \mathcal{N}_{\boldsymbol{\EuScript{W}}} \cap \mathcal{N}_{\boldsymbol{\EuScript{W}_{1}}}$, then we get
\begin{equation*}
  \vec{\xi}^{\,v^{\mu}}=\boldsymbol{\mathcal{V}}_{1}^{\top}\,\overline{e}^{\mu}= \boldsymbol{\mathcal{V}}_{1}^{\top}\,(v^{\mu}-\overline{\mathbf{y}})=\boldsymbol{\mathcal{V}}_{1}^{\top}\,( v -\overline{\mathbf{y}} + \mu \cdot v^{\Delta}) = \boldsymbol{\mathcal{V}}_{1}^{\top}\,(v-\overline{\mathbf{y}}) = \boldsymbol{\mathcal{V}}_{1}^{\top}\,\overline{e} = \vec{\xi}^{\,v} \not= \mathbf{0}.
\end{equation*}
Observe that $\vec{\xi}^{\,v} = \boldsymbol{\mathcal{V}}_{1}^{\top}\,(v-\overline{\mathbf{y}}) \not= \mathbf{0}$, since vector $\mathbf{y} \in [\vec{\gamma}_{1}]$ is not a pre-kernel element of game $v$. 
\item Now suppose $\Delta \in \mathcal{N}_{\boldsymbol{\EuScript{W}}} \backslash \mathcal{N}_{\boldsymbol{\EuScript{W}_{1}}}$, then
\begin{equation*}
  \vec{\xi}^{\,v^{\mu}}=\boldsymbol{\mathcal{V}}_{1}^{\top}\,\overline{e}^{\mu}= \boldsymbol{\mathcal{V}}_{1}^{\top}\,(v^{\mu}-\overline{\mathbf{y}})=\boldsymbol{\mathcal{V}}_{1}^{\top}\,( v -\overline{\mathbf{y}} + \mu \cdot v^{\Delta}) = \boldsymbol{\mathcal{V}}_{1}^{\top}\,\overline{e} + \mu \cdot \boldsymbol{\mathcal{V}}_{1}^{\top}\,v^{\Delta} = \vec{\xi}^{\,v} + \mu \cdot \boldsymbol{\mathcal{V}}_{1}^{\top}\,v^{\Delta}\not= \mathbf{0}.
\end{equation*}
Since, we have $\boldsymbol{\mathcal{V}}_{1}^{\top}\,(v-\overline{\mathbf{y}}) \not= \mathbf{0}$ as well as $\boldsymbol{\mathcal{V}}_{1}^{\top}\,v^{\Delta} \not= \mathbf{0}$, and $\boldsymbol{\mathcal{V}}_{1}^{\top}\,v^{\Delta}$ can not be expressed by $-\boldsymbol{\mathcal{V}}_{1}^{\top}\,(v-\overline{\mathbf{y}})$ in accordance with our hypothesis. To see this, suppose that the vector $\Delta$ is expressible in this way, then it must hold $\Delta = -\frac{1}{\mu}\,(\boldsymbol{\EuScript{W}}_{1})^{\dagger}\,\vec{\xi}^{\,v}$. However, this implies
\begin{equation*}
  \boldsymbol{\EuScript{W}}\,\Delta = - \frac{1}{\mu}\,\boldsymbol{\EuScript{W}}\,(\boldsymbol{\EuScript{W}}_{1})^{\dagger}\,\vec{\xi}^{\,v} = - \frac{1}{\mu}\,(\boldsymbol{\mathcal{V}}^{\top}\,\boldsymbol{\EuScript{U}})\,(\boldsymbol{\mathcal{V}}_{1}^{\top}\, \boldsymbol{\EuScript{U}})^{\dagger}\,\vec{\xi}^{\,v} = - \frac{1}{\mu}\,\boldsymbol{\mathcal{V}}^{\top}\,(\boldsymbol{\mathcal{V}}_{1}^{\top})^{\dagger}\,\vec{\xi}^{\,v} \not=\mathbf{0}.
\end{equation*}
\end{enumerate}
This argument terminates the proof.
\end{proof}
 
To complete our uniqueness investigation, we need to establish that the single pre-kernel element of the default game also preserves the pre-nucleolus property for the related games, otherwise we can be sure that there must exist at least a second pre-kernel point for the related game different form the first one. For doing so, we introduce the following set:

\begin{definition}
For every $\mathbf{x} \in \mathbb{R}^{n}$, and $\psi \in \mathbb{R}$ define the set
  \begin{equation}
    \label{eq:dset}
  \mathcal{D}^{v}(\psi,\mathbf{x}) := \left\{S \subseteq N \,\arrowvert\, e^{v}(S,\mathbf{x}) \ge \psi \right\},
  \end{equation}
\end{definition}
\noindent and let $\mathcal{B}=\{S_{1},\ldots, S_{m}\}$ be a collection of non-empty sets of $N$. We denote the collection $\mathcal{B}$ as balanced whenever there exist positive numbers $w_{S}$ for all $S \in \mathcal{B}$ such that we have $\sum_{S \in \mathcal{B}}\, w_{S}\mathbf{1}_{S} = 1_{N}$. The numbers $w_{S}$ are called weights for the balanced collection $\mathcal{B}$ and $\mathbf{1}_{S}$ is the {\bfseries indicator function} or {\bfseries characteristic vector} $\mathbf{1}_{S}:N \mapsto \{0,1\}$ given by $\mathbf{1}_{S}(k):=1$ if $k \in S$, otherwise $\mathbf{1}_{S}(k):=0$.

A characterization of the pre-nucleolus in terms of balanced collections is due to~\citet{kohlb:71}.

\begin{theorem}
  \label{thm:kohlb}
 Let $\langle\, N, v\, \rangle$ be a TU game and let be $\mathbf{x} \in \mathcal{I}^{\,0}(v)$. Then $\mathbf{x} = \nu(N,v) $ if, and only if, for every $\psi \in \mathbb{R}, \mathcal{D}^{v}(\psi,\mathbf{x})\not= \emptyset$ implies that $\mathcal{D}^{v}(\psi,\mathbf{x})$ is a balanced collection over N. 
\end{theorem}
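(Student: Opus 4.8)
The plan is to prove the two implications separately, driven by a single separation lemma on balanced collections. For a competing pre-imputation $\mathbf{y} \in \mathcal{I}^{\,0}(v)$ write $\mathbf{d} := \mathbf{y} - \mathbf{x}$; since both points are efficient, $d(N) = 0$, and $e^{v}(S,\mathbf{x} + t\mathbf{d}) = e^{v}(S,\mathbf{x}) - t\, d(S)$ varies linearly in the step $t$. The engine I would isolate as a lemma is: a nonempty collection $\mathcal{B}$ is balanced if and only if there is no $\mathbf{d}$ with $d(N) = 0$, $d(S) \ge 0$ for all $S \in \mathcal{B}$, and $d(S) > 0$ for at least one $S \in \mathcal{B}$. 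One direction is immediate, since $\sum_{S \in \mathcal{B}} w_{S}\mathbf{1}_{S} = \mathbf{1}_{N}$ with all $w_{S} > 0$ gives $\sum_{S \in \mathcal{B}} w_{S}\, d(S) = d(N) = 0$, so nonnegative summands with strictly positive weights force every $d(S) = 0$. The converse is a Gordan-type theorem of the alternative applied to the cone generated by $\{\mathbf{1}_{S}\}_{S \in \mathcal{B}}$ intersected with the efficiency hyperplane.

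For sufficiency, assume every nonempty $\mathcal{D}^{v}(\psi,\mathbf{x})$ is balanced and fix $\mathbf{y} \ne \mathbf{x}$. Let $\psi_{1} > \psi_{2} > \cdots$ be the distinct excess values at $\mathbf{x}$ with nested level sets $\mathcal{D}_{k} := \mathcal{D}^{v}(\psi_{k},\mathbf{x})$, and peel from the top. Balancedness of $\mathcal{D}_{1}$ gives $\sum_{S \in \mathcal{D}_{1}} w_{S}\, d(S) = 0$, so either some $d(S) < 0$, in which case $e^{v}(S,\mathbf{y}) = \psi_{1} - d(S) > \psi_{1}$ and hence $\theta(\mathbf{x}) <_{L} \theta(\mathbf{y})$ already at the first coordinate, or $d(S) = 0$ throughout $\mathcal{D}_{1}$, so the top excesses agree and I repeat on $\mathcal{D}_{2}$, and so on. If no strictly negative $d(S)$ ever appears, then $d(S) = 0$ on every level set and hence on all coalitions, forcing $\mathbf{d} = \mathbf{0}$ and contradicting $\mathbf{y} \ne \mathbf{x}$; so at the first level where a negative coordinate occurs one obtains $\theta(\mathbf{x}) <_{L} \theta(\mathbf{y})$. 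As $\mathbf{y}$ was arbitrary, $\mathbf{x}$ is the unique lexicographic minimizer, i.e. $\mathbf{x} = \nu(N,v)$.

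For necessity I argue by contraposition: suppose some nonempty $\mathcal{D}^{v}(\psi^{*},\mathbf{x}) =: \mathcal{D}^{*}$ is not balanced, and construct an improving move. The separation lemma yields $\mathbf{d}$ with $d(N) = 0$, $d(S) \ge 0$ for all $S \in \mathcal{D}^{*}$, and $d(S_{0}) > 0$ for some $S_{0} \in \mathcal{D}^{*}$. Because $\mathcal{D}^{*}$ collects exactly the coalitions of excess at least $\psi^{*}$, moving to $\mathbf{x} + t\mathbf{d}$ leaves every excess $\ge \psi^{*}$ nonincreasing while strictly lowering that of $S_{0}$; choosing $t > 0$ small enough that coalitions of excess below $\psi^{*}$ cannot overtake the block being lowered yields $\theta(\mathbf{x} + t\mathbf{d}) <_{L} \theta(\mathbf{x})$. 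Since $\mathbf{x} + t\mathbf{d} \in \mathcal{I}^{\,0}(v)$, this contradicts $\mathbf{x} = \nu(N,v)$, so every nonempty level set must be balanced.

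The main obstacle is the converse half of the separation lemma together with the lexicographic bookkeeping. The delicate point in the lemma is the presence of $N$ itself, whose excess is always $0$ and for which $d(N) = 0$ is forced: the alternative must therefore be phrased with the weak inequality $d(S) \ge 0$ plus a single strict coordinate, since the naive uniform ``$d(S) > 0$ for all $S$'' version is false. The other delicate point, in both directions, is passing from coordinatewise statements about the $d(S)$ to a genuine $<_{L}$ comparison of the sorted vectors $\theta(\cdot)$; one must control how coalitions reorder inside a level block and choose the step $t$ small enough that only the intended decreases affect the leading coordinates.
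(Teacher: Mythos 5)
The paper offers no proof of this theorem: it is stated as a known characterization and attributed to \citet{kohlb:71}, so your proposal can only be measured against the classical argument --- and it is, in outline, exactly that classical argument (Kohlberg's own, reproduced in the textbook of \citet{pel_sud:07}), correctly reconstructed. Your separation lemma is the right dual description of balancedness, and the precise theorem of the alternative it rests on is Stiemke's rather than Gordan's proper: applied to the columns $\{\mathbf{1}_{S}\}_{S\in\mathcal{B}}\cup\{-\mathbf{1}_{N}\}$, Stiemke's lemma says that strictly positive weights on \emph{all} members of $\mathcal{B}$ exist if and only if the semi-strict system $d(N)=0$, $d(S)\ge 0$ on $\mathcal{B}$, $d(S_{0})>0$ for some $S_{0}$ is infeasible; you correctly flag that the uniformly strict alternative would instead be dual to the weaker property that $\mathbf{1}_{N}$ merely lies in the cone of the $\mathbf{1}_{S}$, i.e.\ that some sub-collection is balanced, which is not what the theorem needs. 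The two pieces you leave implicit are routine and close as follows: (i) in the peeling step, once $d\equiv 0$ on $\mathcal{D}_{k-1}$ and some $S_{0}$ in the next shell has $d(S_{0})<0$, the excess multiset of $\mathbf{y}$ contains the unchanged excesses of $\mathcal{D}_{k-1}$ together with an entry exceeding $\psi_{k}$, so monotonicity of order statistics gives $\theta_{i}(\mathbf{y})\ge\theta_{i}(\mathbf{x})$ for $i\le|\mathcal{D}_{k-1}|$ while $\theta_{|\mathcal{D}_{k-1}|+1}(\mathbf{y})>\psi_{k}=\theta_{|\mathcal{D}_{k-1}|+1}(\mathbf{x})$, forcing $\theta(\mathbf{x})<_{L}\theta(\mathbf{y})$ at the first index of disagreement; (ii) in the necessity step, taking $t$ small enough that all $\mathcal{D}^{*}$-excesses stay above, and all other excesses stay below, a fixed threshold reduces the comparison to the sorted $\mathcal{D}^{*}$-block, where a coalition-wise weak decrease with one strict decrease yields a strict lexicographic decrease because the sorted sub-vectors are pointwise ordered and cannot coincide (their sums differ). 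With those two standard observations written out, your proof is complete and correct; one cosmetic caveat, shared with the paper's own statement, is that $\mathcal{D}^{v}(\psi,\mathbf{x})$ should be read as excluding $\emptyset$ (whose excess is identically $0$), since balancedness is defined only for collections of non-empty coalitions --- harmless here, as $d(\emptyset)=0$ throughout.
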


\begin{theorem}
  \label{thm:prnmu}
 Let $\langle\, N, v\, \rangle$ be a TU game that has a singleton pre-kernel such that $\{\mathbf{x}\} = \mathcal{P\text{\itshape r}K}(v) \subset [\vec{\gamma}]$, and let $\langle\, N, v^{\mu}\, \rangle$ be a related game of $v$ derived from $\mathbf{x}$, then $\mathbf{x} = \nu(N,v^{\,\mu})$, whereas the payoff equivalence class $[\vec{\gamma}]$ has non-empty interior. 
\end{theorem}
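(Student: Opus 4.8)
The plan is to reduce the assertion to the single-valuedness of the pre-kernel of the related game $v^{\mu}$, which was already established in Section~\ref{sec:siva}, and then to exploit the classical fact that the pre-nucleolus of any game is always a pre-kernel point. Hence, if I can show that $\mathcal{P\text{\itshape r}K}(v^{\mu}) = \{\mathbf{x}\}$ is again a singleton, the inclusion $\nu(N,v^{\mu}) \in \mathcal{P\text{\itshape r}K}(v^{\mu})$ immediately forces $\nu(N,v^{\mu}) = \mathbf{x}$.

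As a preparatory remark I would record that $\mathbf{x} = \nu(N,v)$ already holds for the default game: by hypothesis $\{\mathbf{x}\} = \mathcal{P\text{\itshape r}K}(v)$, and since $\nu(N,v)$ is a pre-kernel element it must coincide with the unique point $\mathbf{x}$. By Kohlberg's criterion (Theorem~\ref{thm:kohlb}) this is equivalent to saying that every non-empty level set $\mathcal{D}^{v}(\psi,\mathbf{x})$ is a balanced collection over $N$, a structural fact that will guide the verification for $v^{\mu}$.

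Next I would transport the pre-kernel property to the related game and invoke uniqueness. By Theorem~\ref{thm:repl_prk} the point $\mathbf{x}$ remains a pre-kernel element of every related game $v^{\mu} = v + \mu\cdot v^{\Delta}$ with $\mu\cdot v^{\Delta} \in [-\mathsf{C},\mathsf{C}]^{p^{\prime}}$, and by Corollary~\ref{cor:innp} it still lies in the payoff set $[\vec{\gamma}]$. To exclude any further pre-kernel point of $v^{\mu}$, I would classify the payoff equivalence class $[\vec{\gamma}_{1}]$ of a hypothetical second pre-kernel vector $\mathbf{y}$: if $[\vec{\gamma}_{1}] \thicksim [\vec{\gamma}]$, then Propositions~\ref{prop:uniqpk01} and~\ref{prop:uniqpk01b} (treating $X \in \text{SO}(n)$ and $X \in \text{GL}^{+}(n)$, respectively) rule it out, whereas if $[\vec{\gamma}_{1}] \nsim [\vec{\gamma}]$, then Proposition~\ref{prop:uniqpk02} does. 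Because the collection of payoff equivalence classes is finite, these two cases are exhaustive, and therefore $\mathcal{P\text{\itshape r}K}(v^{\mu}) = \{\mathbf{x}\}$. Since the pre-nucleolus $\nu(N,v^{\mu})$ is a pre-kernel element of $v^{\mu}$, it lies in $\{\mathbf{x}\}$, so $\mathbf{x} = \nu(N,v^{\mu})$ as claimed.

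The step I expect to be the main obstacle is the coverage and exhaustiveness of the uniqueness argument, namely being certain that Propositions~\ref{prop:uniqpk01}--\ref{prop:uniqpk02} genuinely dispose of every payoff equivalence class in which a second pre-kernel point could hide. Should one instead prefer a self-contained verification through Kohlberg's criterion applied directly to $v^{\mu}$, the hard part would shift to controlling the reordering of excesses: since $e^{v^{\mu}}(S,\mathbf{x}) = e^{v}(S,\mathbf{x}) + \mu\cdot v^{\Delta}(S)$ and the defining relation $\boldsymbol{\mathcal{V}}^{\top} v^{\Delta} = \mathbf{0}$ only pins down the most effective coalitions, one would still have to prove that the perturbation keeps \emph{every} level set $\mathcal{D}^{v^{\mu}}(\psi,\mathbf{x})$ balanced and not merely the top of the excess ordering, which is precisely why routing the proof through the singleton pre-kernel of $v^{\mu}$ is the cleaner path.
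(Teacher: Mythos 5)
Your proposal is logically sound relative to the paper's earlier results, but it is \emph{not} the paper's proof: the paper establishes Theorem~\ref{thm:prnmu} directly, by verifying Kohlberg's balancedness criterion (Theorem~\ref{thm:kohlb}) for $v^{\mu}$ at $\mathbf{x}$, whereas you derive it as a corollary of the single-valuedness of $\mathcal{P\text{\itshape r}K}(v^{\mu})$. Concretely, the paper argues that $\mathcal{S}(\mathbf{x})$ is unchanged under the perturbation and balanced (via the full rank of $\mathbf{E}^{\top}$), and then sandwiches the level sets, $\mathcal{D}^{v}(\psi,\mathbf{x}) \subseteq \mathcal{S}(\mathbf{x}) \subseteq \mathcal{D}^{v^{\mu}}(\psi-\mathsf{C},\mathbf{x}) \subseteq \mathcal{D}^{v}(\psi-2\,\mathsf{C},\mathbf{x})$, exploiting $\mu\cdot v^{\Delta} \in [-\mathsf{C},\mathsf{C}]^{p^{\prime}}$, to conclude that every non-empty level set of $v^{\mu}$ at $\mathbf{x}$ is balanced. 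Your route inverts the paper's architecture: in the paper, Theorem~\ref{thm:prnmu} is an \emph{input} to Theorem~\ref{thm:siva1} (this is item (3) of the programme announced at the start of Section~\ref{sec:siva}, where the pre-nucleolus coincidence is treated as an independent pillar precisely because the pre-nucleolus is always a pre-kernel point), whereas you first prove the content of Theorem~\ref{thm:siva1} from Theorem~\ref{thm:repl_prk} together with Propositions~\ref{prop:uniqpk01}--\ref{prop:uniqpk02} alone, and then read off Theorem~\ref{thm:prnmu} from the inclusion $\mathcal{P\text{\itshape r}N}(v^{\mu}) \subseteq \mathcal{P\text{\itshape r}K}(v^{\mu})$. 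This is not circular, since none of those propositions invokes Theorem~\ref{thm:prnmu}; and your case split is indeed exhaustive, though not because the collection of payoff classes is finite, but because $[\vec{\gamma}_{1}] \thicksim [\vec{\gamma}]$ versus $[\vec{\gamma}_{1}] \nsim [\vec{\gamma}]$ is a genuine dichotomy. The trade-off: your argument is shorter and dispenses with Kohlberg's criterion entirely, but it concentrates the whole weight of the theorem on the completeness of the uniqueness propositions (exactly the obstacle you flag), and it renders the paper's citation of Theorem~\ref{thm:prnmu} inside the proof of Theorem~\ref{thm:siva1} redundant. The paper's direct Kohlberg argument, by contrast, is independent of those propositions, which is what allows the same technique to be reused in Section~\ref{sec:prspn} (Theorems~\ref{thm:nprnmu},~\ref{thm:nprnmu3} and~\ref{thm:siva3}), where the default game no longer has a singleton pre-kernel and Propositions~\ref{prop:uniqpk01b} and~\ref{prop:uniqpk02} are unavailable.
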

\begin{proof}
  By our hypothesis, $\mathbf{x} = \nu(N,v)$ is an interior point of an inscribed ellipsoid with maximum volume $\varepsilon := \{\mathbf{y}^{\prime}\, \arrowvert h^{v}_{\gamma}(\mathbf{y}^{\prime}) \le \bar{c} \} \subset [\vec{\gamma}]$, whereas $h_{\gamma}^{v}$ is of type~\eqref{eq:objf2} and $\bar{c} > 0$ (cf.~Lemma~\ref{lem:repl_prk}). This implies by Theorem~\ref{thm:repl_prk} that this point is also a pre-kernel point of game $v^{\mu}$, there is no change in set of lexicographically smallest most effective coalitions $\mathcal{S}(\mathbf{x})$ under $v^{\mu}$. The min-max excess value $\psi^{*}$ obtained by iteratively solving the LP (6.4-6.7) of~\citet[p.~332]{MPSh:79} for game $v$ is smaller than the maximum surpluses derived from $\mathcal{S}(\mathbf{x})$, this implies that there exists a $\bar{\psi} \ge  \psi^{*}$ s.t. $\mathcal{S}(\mathbf{x}) \subseteq \mathcal{D}^{v}(\bar{\psi},\mathbf{x})$, that is, it satisfies Property I of~\citet{kohlb:71}. Moreover, matrix $\mathbf{E}^{\top}$ induced from $\mathcal{S}(\mathbf{x})$ has full rank, therefore, the column vectors of matrix $\mathbf{E}^{\top}$ are a spanning system of $\mathbb{R}^{n}$. Hence, we get $span\,\{\mathbf{1}_{S}\,\arrowvert\, S \in \mathcal{S}(\mathbf{x}) \} = \mathbb{R}^{n}$, which implies that the corresponding matrix $[\mathbf{1}_{S}]_{S \in \mathcal{S}(\mathbf{x})}$ must have rank $n$, therefore collection $\mathcal{S}(\mathbf{x})$ is balanced. In addition, we can choose the largest $\psi \in \mathbb{R}$ s.t. $\emptyset \not= \mathcal{D}^{v}(\psi,\mathbf{x}) \subseteq \mathcal{S}(\mathbf{x})$ is valid, which is a balanced set. Furthermore, we have $\mu\cdot v^{\Delta} \in [-\mathsf{C},\mathsf{C}]^{p^{\prime}}$. Since $\mathsf{C} > 0$,  the set $\mathcal{D}^{v}(\psi - 2\,\mathsf{C},\mathbf{x}) \not=\emptyset$ is balanced as well. Now observe that $e^{v}(S,\mathbf{x}) -\mathsf{C} \le e^{v}(S,\mathbf{x}) + \mu\cdot v^{\Delta}(S) \le e^{v}(S,\mathbf{x}) + \mathsf{C}$ for all $S \subseteq N$. This implies $\mathcal{D}^{v}(\psi,\mathbf{x}) \subseteq \mathcal{S}(\mathbf{x}) \subseteq \mathcal{D}^{v^{\mu}}(\psi - \mathsf{C},\mathbf{x}) \subseteq \mathcal{D}^{v}(\psi - 2\,\mathsf{C},\mathbf{x})$, hence, $ \mathcal{D}^{v^{\mu}}(\psi - \mathsf{C},\mathbf{x})$ is balanced. Let $c \in [-\mathsf{C},\mathsf{C}]$, and from the observation $\lim_{c \uparrow 0}\, \mathcal{D}^{v^{\mu}}(\psi + c,\mathbf{x}) = \mathcal{D}^{v^{\mu}}(\psi,\mathbf{x}) \supseteq \mathcal{D}^{v}(\psi,\mathbf{x})$, we draw the conclusion $\mathbf{x} = \nu(N,v^{\,\mu})$. 
\end{proof}

\begin{theorem}
  \label{thm:siva1}
  Assume that the payoff equivalence class $[\vec{\gamma}]$ induced from TU game $\langle\, N, v\, \rangle$ has non-empty interior. In addition, assume that game $\langle\, N, v\, \rangle$ has a singleton pre-kernel such that $\{\mathbf{x}\} = \mathcal{P\text{\itshape r}K}(v) \subset [\vec{\gamma}]$ is satisfied, then the pre-kernel $\mathcal{P\text{\itshape r}K}(v^{\mu})$ of a related TU game $\langle\, N, v^{\mu}\, \rangle$, as defined by Lemma~\ref{lem:repl_min}, consists of a single point, which is given by $\{\mathbf{x}\} = \mathcal{P\text{\itshape r}K}(v^{\mu})$.
\end{theorem}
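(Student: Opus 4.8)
The plan is to combine the replication result with the three uniqueness propositions and the pre-nucleolus preservation theorem, organizing the whole argument as a single case distinction over the payoff equivalence class in which a putative second pre-kernel point could reside. First I would invoke Theorem~\ref{thm:repl_prk} to record that $\mathbf{x} \in \mathcal{P\text{\itshape r}K}(v^{\mu})$ for every admissible parameter, so that $\{\mathbf{x}\} \subseteq \mathcal{P\text{\itshape r}K}(v^{\mu})$ and the pre-kernel of the related game is non-empty. I would then observe that the hypothesis $\{\mathbf{x}\} = \mathcal{P\text{\itshape r}K}(v)$ forces the matrix $\mathbf{E}^{\top}$ to have full column rank $m = n$: otherwise the strictly convex least-squares representation $\Arrowvert \vec{\alpha} + \mathbf{E}^{\top}\mathbf{x}\Arrowvert^{2}$ of $h_{\gamma}$ from Proposition~\ref{prop:eqrep} would possess a positive-dimensional affine family of minimizers meeting the interior of $[\vec{\gamma}]$, contradicting single-valuedness. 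This full-rank fact is exactly what collapses the transition group $\text{GL}^{+}(m)$ onto $\text{GL}^{+}(n)$, and I would use it repeatedly below.

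Next I would argue by contradiction: suppose $\mathbf{y} \in \mathcal{P\text{\itshape r}K}(v^{\mu})$ with $\mathbf{y} \neq \mathbf{x}$, and let $[\vec{\gamma}_{1}]$ denote the unique payoff equivalence class containing $\mathbf{y}$, whose existence follows from the finite convex partition of Proposition~\ref{prop:eq_rel}. The dichotomy supplied by Proposition~\ref{prop:GLG} then splits the argument into the equivalent case $[\vec{\gamma}] \sim [\vec{\gamma}_{1}]$ and the non-equivalent case $[\vec{\gamma}] \nsim [\vec{\gamma}_{1}]$. In the equivalent case the transition matrix $X$ lies in $\text{GL}^{+}(n)$ (and in the special instance $[\vec{\gamma}_{1}] = [\vec{\gamma}]$ one has $X = \mathbf{I} \in \text{SO}(n)$, so that the full rank of $\mathbf{E}^{\top}$ already forces $\mathbf{E}^{\top}(\mathbf{x} - \mathbf{y}) = \mathbf{0}$ and hence $\mathbf{x} = \mathbf{y}$); therefore Propositions~\ref{prop:uniqpk01} and~\ref{prop:uniqpk01b} apply and yield $\mathbf{x} = \mathbf{y}$, a contradiction. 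In the non-equivalent case Proposition~\ref{prop:uniqpk02} shows directly that $\vec{\xi}^{\,v^{\mu}} \neq \mathbf{0}$ at $\mathbf{y}$, so that no such $\mathbf{y}$ can balance the excesses of $v^{\mu}$ inside $[\vec{\gamma}_{1}]$, again a contradiction. Since the partition is exhaustive, these two cases cover every conceivable location of a second point, whence $\mathcal{P\text{\itshape r}K}(v^{\mu}) = \{\mathbf{x}\}$.

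Finally I would close the loop by appealing to Theorem~\ref{thm:prnmu}, which identifies $\mathbf{x}$ with the pre-nucleolus $\nu(N, v^{\mu})$, thereby discharging requirement~(3) of the program announced at the start of Section~\ref{sec:siva}: since $\nu(N, v^{\mu}) \in \mathcal{P\text{\itshape r}K}(v^{\mu})$ holds for every game, this certifies that the single pre-kernel point isolated above is precisely the pre-nucleolus, so that on the related game the pre-kernel and the pre-nucleolus coincide.

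The step I expect to be the main obstacle is verifying the exhaustiveness of the case split rather than any individual computation, since the propositions themselves carry the analytic weight. One must be certain that every payoff class a second pre-kernel point could occupy is captured by exactly one of the propositions; this rests on the finiteness and convexity of the partition of $dom\, h$ from Proposition~\ref{prop:eq_rel}, on the clean equivalent/non-equivalent dichotomy of Proposition~\ref{prop:GLG}, and crucially on the reduction $m = n$ that removes any gap between $\text{GL}^{+}(m)$ and $\text{GL}^{+}(n)$. A secondary care point is the admissibility of $\mu$: the conclusion inherits the hypercube bound $\mu \cdot v^{\Delta} \in [-\mathsf{C},\mathsf{C}]^{p^{\prime}}$ from Theorems~\ref{thm:repl_prk} and~\ref{thm:prnmu}, so the phrase ``related game'' must be read within that admissible range.
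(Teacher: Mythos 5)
Your proposal is correct and follows essentially the same route as the paper: the paper's own proof is a one-line citation of Theorem~\ref{thm:repl_prk} (membership of $\mathbf{x}$ in $\mathcal{P\text{\itshape r}K}(v^{\mu})$), Propositions~\ref{prop:uniqpk01b} and~\ref{prop:uniqpk02} (excluding a second point in equivalent and non-equivalent payoff classes, respectively), and Theorem~\ref{thm:prnmu} (the pre-nucleolus identification), which is exactly the case structure you make explicit. Your added observations --- the full rank of $\mathbf{E}^{\top}$ forced by $\{\mathbf{x}\}=\mathcal{P\text{\itshape r}K}(v)$, and the reading of ``related game'' within the hypercube bound $\mu\cdot v^{\Delta}\in[-\mathsf{C},\mathsf{C}]^{p^{\prime}}$ --- are consistent with how those cited results are stated and used in the paper.
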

\begin{proof}
 This result follows from Theorems~\ref{thm:repl_prk},~\ref{thm:prnmu}, and Propositions~\ref{prop:uniqpk01b},~\ref{prop:uniqpk02}.  
\end{proof}

\begin{example}
  \label{exp:uniqPk}
 In order to illuminate the foregoing discussion of replicating a pre-kernel element consider a four person average-convex but non-convex game that is specified by
\begin{equation*}
  \begin{split}
    v(N) & = 16, v(\{1,2,3\}) = v(\{1,2,4\}) = v(\{1,3,4\}) = 8, \allowdisplaybreaks \\
    v(\{1,3\}) & = 4, v(\{1,4\}) = 1, v(\{1,2\}) = 16/3, v(S)  = 0 \;\text{otherwise},
  \end{split}
\end{equation*}

\noindent with $N=\{1,2,3,4\}$. The pre-kernel coalesces with the pre-nucleolus, which is given by the point: $\nu(v) = \mathcal{P\text{\itshape r}K}(v)= (44/9,4,32/9,32/9)$. Obviously, the set $\mathcal{S}(\nu(v))=\{\{2\},\{3\},\{4\},\{1,2\},\{1,3,4\}\}$ is balanced, form this set a boundary vector $\vec{b}=(4,32/9,32/9,80/9,12)$ is obtained by $\nu(v)(S)$ for $S \in \mathcal{S}(\nu(v))$. Define matrix $\mathbf{A}$ by $[\mathbf{1}_{S}]_{S \in \mathcal{S}(\nu(v))}$, then the solution of the system $\mathbf{A}\,\mathbf{x} = \vec{b}$ reproduces the pre-nucleolus. Moreover, this imputation is even an interior point, thus the non-empty interior condition is valid. Hence, by Theorem~\ref{thm:repl_prk} a redistribution of the bargaining power among coalitions can be attained while supporting the imputation $(44/9,4,32/9,32/9)$ still as a pre-kernel element for a set of related games. In order to get a null space $\mathcal{N}_{\boldsymbol{\EuScript{W}}}$ with maximum dimension we set the parameter $\mu$ to $0.9$. In this case, the rank of matrix $\boldsymbol{\EuScript{W}}$ must be equal to $4$, and we could derive at most $11$-linear independent games which replicate the point $(44/9,4,32/9,32/9)$ as a pre-kernel element. Theorem~\ref{thm:siva1} even states that this point is also the sole pre-kernel point, hence the pre-kernel coincide with the pre-nucleolus for these games (see Table~\ref{tab:rpl_acvt1}).

Notice that non of these $11$-linear independent related games is average-convex. Only two games, namely $v_{1}$ and $v_{3}$ are zero-monotonic and super-additive. Nevertheless, all games have a non-empty core and are semi-convex. The cores of the games have between $16$ and $24$-vertices, and have volumes that range from approximately $80$ to $127$ percent of the default core. TU game $v_{2}$ has the smallest and $v_{3}$ the largest core.\footnote{The example can be reproduced while using our MATLAB toolbox {\itshape MatTuGames}~\citeyear{mei:11}. The results can also be verified with our Mathematica package {\itshape TuGames}~\citeyear{mei:10a}.}\hfill$\#$
\end{example}
{\scriptsize
\begin{center}
\begin{ThreePartTable}
  \begin{TableNotes}
   \item[a] \label{tn:a} Pre-Kernel and Pre-Nucleolus: $(44/9,4,32/9,32/9)$
   \item[b] \label{tn:b} ACV: Average-Convex Game
   \item[c] \label{tn:c} ZM: Zero-Monotonic Game
   \item[d] Note: Computation performed with MatTuGames.
  \end{TableNotes}
\begin{longtable}[c]{*{3}{ccccccccc}}
\caption[List of Games\tnote{d}~~which possess the same unique Pre-Kernel\tnote{a}~~as~$v$]{List of Games\tnote{d}~~which possess the same unique Pre-Kernel\tnote{a}~~as~$v$} \label{grid_mlmmh} \\[.3em]

\hline \multicolumn{1}{c}{} &\multicolumn{1}{c}{} & \multicolumn{1}{c}{} & \multicolumn{1}{c}{} & \multicolumn{1}{c}{$\mu=0.9$} & \multicolumn{1}{c}{} & \multicolumn{1}{c}{} & \multicolumn{1}{c}{} & \multicolumn{1}{c}{} \\ 
\multicolumn{1}{c}{Game} &\multicolumn{1}{c}{$\{1\}$} & \multicolumn{1}{c}{$\{2\}$} & \multicolumn{1}{c}{$\{1,2\}$} & \multicolumn{1}{c}{$\{3\}$} & \multicolumn{1}{c}{$\{1,3\}$} & \multicolumn{1}{c}{$\{2,3\}$} & \multicolumn{1}{c}{$\{1,2,3\}$} & \multicolumn{1}{c}{$\{4\}$} \\[.3em] \hline\hline 
\endfirsthead

\multicolumn{10}{c}%
{{\bfseries \tablename\ \thetable{} -- continued from previous page}} \\
\hline \multicolumn{1}{c}{} &
\multicolumn{1}{c}{} & 
\multicolumn{1}{c}{} & 
\multicolumn{1}{c}{} & 
\multicolumn{1}{c}{$\mu=0.9$} & 
\multicolumn{1}{c}{} & 
\multicolumn{1}{c}{} & 
\multicolumn{1}{c}{} & 
\multicolumn{1}{c}{} \\
\multicolumn{1}{c}{Game} &
\multicolumn{1}{c}{$\{1,4\}$} &
\multicolumn{1}{c}{$\{2,4\}$} &
\multicolumn{1}{c}{$\{1,2,4\}$} &
\multicolumn{1}{c}{$\{3,4\}$} &
\multicolumn{1}{c}{$\{1,3,4\}$} &
\multicolumn{1}{c}{$\{2,3,4\}$} &
\multicolumn{1}{c}{$N$} &
\multicolumn{1}{c}{ACV~\tnote{b}} &
\multicolumn{1}{c}{ZM~\tnote{c}} & \\[.3em] \hline\hline 
\endhead

\hline \multicolumn{9}{r}{{Continued on next page}} \\ \hline\hline
\endfoot
\insertTableNotes\\
\hline \hline
\endlastfoot
$v$    &   0      &    0      &   16/3    &     0     &    4      &     0     &     8     &       0  \\
$v_{1}$ &  18/49    &  32/95    &  127/24   &  -1/24    &  256/59   &   4/13    &   175/22  &   -1/24 \\
$v_{2}$ &  -9/25    &  21/38    &   89/16   &   11/48   &  231/58   &  42/71    &   385/47  &   11/48 \\
$v_{3}$ &  -14/45   &  -1/40    &  201/41   &  -28/65   &   39/11   &  -19/44   &   142/19  &   -28/65 \\
$v_{4}$ &    0      &    0      &   16/3    &     0     &  159/47   &   16/33   &   107/14  &      0   \\
$v_{5}$ &    0      &    0      &   16/3    &     0     &  149/40   &  -37/102  &   497/66  &      0   \\
$v_{6}$ &    0      &    0      &   16/3    &     0     &   4       &   -5/47   &   143/19  &      0   \\
$v_{7}$ &    0      &    0      &   16/3    &     0     &   4       &   -5/47   &   143/19  &      0   \\
$v_{8}$ &    0      &    0      &   16/3    &     0     &  149/40   &  -37/102  &   497/66  &      0   \\
$v_{9}$ &    0      &    0      &   16/3    &     0     &  149/40   &  -37/102  &   497/66  &      0   \\
$v_{10}$ &    0      &    0      &   16/3    &     0     &    4      &   -5/47   &   143/19  &      0   \\
$v_{11}$ &   0      &    0      &   16/3    &     0     &    4      &   -5/47   &   143/19  &       0  \\
\pagebreak
$v$     &    1     &    0      &    8    &    0       &      8    &     0  &  16  & Y & Y \\
$v_{1}$  &  79/59   &   4/13    &  175/22 &  -4/57     &   792/95  &  10/33 &  16  & N & Y \\    
$v_{2}$  &  57/58   &  42/71    &  385/47 &   4/7      &   325/38  &  31/56 &  16  & N & N \\    
$v_{3}$  &   6/11   & -19/44    &  142/19 &  -27/47    &   319/40  & -29/55 &  16  & N & Y \\    
$v_{4}$  &  41/34   &  -3/46    &  428/53 &   7/34     &      8    &  14/25 &  16  & N & N \\    
$v_{5}$  & 203/120  &   2/41    &  167/19 &  -5/24     &      8    &  -9/19 &  16  & N & N \\    
$v_{6}$  &    1     &  23/29    &  139/16 &    0       &      8    &  18/31 &  16  & N & N \\    
$v_{7}$  &    1     & -5/47     &  139/16 &    0       &      8    &  -8/25 &  16  & N & N \\    
$v_{8}$  &  19/24   &  2/41     &   71/9  &  83/120    &      8    &  26/61 &  16  & N & N \\    
$v_{9}$  &  19/24   &  2/41     &   71/9  &  -5/24     &      8    &  -9/19 &  16  & N & N \\    
$v_{10}$ &    1     & -5/47     &  475/61 &    0       &      8    &  18/31 &  16  & N & N \\    
$v_{11}$ &    1     & -5/47     &  475/61 &    0       &      8    &  -8/25 &  16  & N & N \\ \hline\hline
\label{tab:rpl_acvt1}
\end{longtable}
\end{ThreePartTable}
\end{center}
}

\section{On the Continuity of the Pre-Kernel}
\label{sec:lhc}
In the previous section, we have established uniqueness on the set of related games. Here, we generalize these results while showing that even on the convex hull comprising the default and related games in the game space, the pre-kernel must be unique and is identical with the point specified by the default game. Furthermore, the pre-kernel correspondence restricted on this convex subset in the game space must be single-valued, and therefore continuous.   

Recall that the relevant game space is defined through $\mathcal{G}(N) := \{v \in \mathcal{G}^{n}\,\arrowvert\, v(\emptyset) = 0 \land v(N) > 0\}$, and 

\begin{equation*}
   \mathcal{G}_{\mu,v}^{n}:=\left\{v^{\mu} \in \mathcal{G}(N)\, \arrowvert\, \mu\cdot v^{\Delta} \in [-\mathsf{C},\mathsf{C}]^{p^{\prime}} \right\}.
\end{equation*}
This set is the translate of a convex set by $v$, which is also convex and non-empty with dimension $p^{\prime}-m^{\prime}$, if matrix $\boldsymbol{\EuScript{W}}$ has rank $m^{\prime} \le q < p^{\prime}$. Then we can construct a convex set in the game space $\mathcal{G}(N)$ by taking the convex hull of game $v$ and the convex set $\mathcal{G}_{\mu,v}^{n}$, thus  
 
\begin{equation*}
  \mathcal{G}_{c}^{n} := conv\; \{v,\mathcal{G}_{\mu,v}^{n}\}. 
\end{equation*}
 
\begin{theorem}
 \label{thm:siva2}
 The pre-kernel $\mathcal{P\text{\itshape r}K}(v^{\mu^{*}})$ of game $v^{\mu^{*}}$ belonging to $\mathcal{G}_{c}^{n}$ is a singleton, and is equal to $\{\mathbf{x}\} = \mathcal{P\text{\itshape r}K}(v)$.
\end{theorem}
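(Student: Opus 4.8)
The plan is to show that $\mathcal{G}_{c}^{n}$ does not reach beyond the class of related games already handled by Theorem~\ref{thm:siva1}, so that single-valuedness propagates to the whole convex hull by a pointwise application of that theorem. The starting observation is that $v$ itself belongs to $\mathcal{G}_{\mu,v}^{n}$: the choice $\mu=0$ gives $\mu\cdot v^{\Delta}=\mathbf{0}\in[-\mathsf{C},\mathsf{C}]^{p^{\prime}}$ and $v^{0}=v\in\mathcal{G}(N)$. Since $\mathcal{G}_{\mu,v}^{n}$ is convex and contains $v$, forming the convex hull with $v$ adds nothing, hence $\mathcal{G}_{c}^{n}=\mathcal{G}_{\mu,v}^{n}$. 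It therefore suffices to verify that every game $v^{\mu^{*}}\in\mathcal{G}_{c}^{n}$ satisfies the hypotheses of Theorem~\ref{thm:siva1}.

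First I would fix an arbitrary $v^{\mu^{*}}\in\mathcal{G}_{c}^{n}$ and write it as a convex combination $v^{\mu^{*}}=\sum_{i} t_{i}\,v_{i}$ of the default game $v$ and related games $v_{i}=v+\mu_{i}\,v^{\Delta_{i}}$, with $\Delta_{i}\in\mathcal{N}_{\boldsymbol{\EuScript{W}}}$ and each $\mu_{i}\cdot v^{\Delta_{i}}\in[-\mathsf{C},\mathsf{C}]^{p^{\prime}}$, where $\sum_{i}t_{i}=1$ and $t_{i}\ge 0$. Subtracting $v$ and using $\sum_{i}t_{i}=1$ yields $v^{\mu^{*}}-v=\sum_{i}t_{i}\,\mu_{i}\,v^{\Delta_{i}}=\boldsymbol{\EuScript{U}}\big(\sum_{i}t_{i}\,\mu_{i}\,\Delta_{i}\big)$. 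Because $\mathcal{N}_{\boldsymbol{\EuScript{W}}}$ is a linear subspace, the direction $\Delta^{*}:=\sum_{i}t_{i}\,\mu_{i}\,\Delta_{i}$ again lies in $\mathcal{N}_{\boldsymbol{\EuScript{W}}}$, so $v^{\mu^{*}}=v+v^{\Delta^{*}}$ has the required form $v+\mu\,v^{\Delta}$ (with the scalar absorbed into $\Delta^{*}$); in particular $\boldsymbol{\mathcal{V}}^{\top}(v^{\mu^{*}}-v)=\boldsymbol{\EuScript{W}}\Delta^{*}=\mathbf{0}$, whose zeroth component shows $v^{\mu^{*}}(N)=v(N)>0$ and hence membership in $\mathcal{G}(N)$. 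For the magnitude bound, each $\mu_{i}\cdot v^{\Delta_{i}}$ as well as $\mathbf{0}$ lie in the convex hypercube $[-\mathsf{C},\mathsf{C}]^{p^{\prime}}$, so their convex combination $v^{\mu^{*}}-v$ lies there too. Thus $v^{\mu^{*}}$ is a related game of $v$ in the precise sense of Lemma~\ref{lem:repl_min} and Theorem~\ref{thm:repl_prk}.

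With this representation in hand I would simply invoke Theorem~\ref{thm:siva1}: its hypotheses -- that $[\vec{\gamma}]$ has non-empty interior, that $\{\mathbf{x}\}=\mathcal{P\text{\itshape r}K}(v)\subset[\vec{\gamma}]$, and that $v^{\mu^{*}}$ is a related game of $v$ -- are exactly the ones just verified, so $\{\mathbf{x}\}=\mathcal{P\text{\itshape r}K}(v^{\mu^{*}})$. Since $v^{\mu^{*}}\in\mathcal{G}_{c}^{n}$ was arbitrary, the pre-kernel correspondence is constant and equal to $\{\mathbf{x}\}$ on all of $\mathcal{G}_{c}^{n}$, which yields the single-valuedness and the (trivial) continuity advertised in the surrounding discussion.

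The point requiring care -- rather than a genuine obstacle -- is the closure of the admissible class under convex combination. This rests on two structural facts: the perturbation directions form the linear subspace $\mathcal{N}_{\boldsymbol{\EuScript{W}}}$, so they are closed under arbitrary linear combinations, and the tolerance region is the convex hypercube $[-\mathsf{C},\mathsf{C}]^{p^{\prime}}$, so it is closed under convex combinations, which keeps the set of lexicographically smallest most effective coalitions $\mathcal{S}(\mathbf{x})$ unaltered exactly as in Theorem~\ref{thm:repl_prk}. Were either the direction set nonlinear or the bound region nonconvex, a convex combination could escape the class in which $\mathbf{x}$ is guaranteed to remain the unique pre-kernel, and one would then have to argue directly on $v^{\mu^{*}}$ via Propositions~\ref{prop:uniqpk01b} and~\ref{prop:uniqpk02} together with Theorem~\ref{thm:prnmu}.
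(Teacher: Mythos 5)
Your proposal is correct and follows essentially the same route as the paper's own proof: decompose the convex combination, observe that the perturbation direction stays in the linear subspace $\mathcal{N}_{\boldsymbol{\EuScript{W}}}$ and the perturbation itself stays in the convex hypercube $[-\mathsf{C},\mathsf{C}]^{p^{\prime}}$, and then invoke Theorem~\ref{thm:siva1} (the paper additionally cites Theorem~\ref{thm:repl_prk} explicitly, which your appeal to Theorem~\ref{thm:siva1} already subsumes). Your added observations -- that $\mathcal{G}_{c}^{n}=\mathcal{G}_{\mu,v}^{n}$ since $v$ corresponds to $\mu=0$, and that efficiency/membership in $\mathcal{G}(N)$ follows from the zero component $\boldsymbol{\EuScript{W}}\Delta^{*}=\mathbf{0}$ -- are minor tightenings of the same argument, not a different approach.
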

\begin{proof}
  Let be $\{\mathbf{x}\} = \mathcal{P\text{\itshape r}K}(v)$ for game $v$. Take a convex combination of games in $\mathcal{G}_{c}^{n}$, hence
  \begin{equation*}
    v^{\mu^{*}} = \sum_{k=1}^{m} t_{k}\cdot v_{k}^{\mu} + t_{m+1}\cdot v = \sum_{k=1}^{m} t_{k}\cdot (v + \mu\cdot v_{k}^{\Delta}) + t_{m+1} \cdot v = v + \mu \sum_{k=1}^{m} t_{k}\cdot v_{k}^{\Delta} + \mu\;t_{m+1} \cdot \mathbf{0} = v + \mu \cdot v^{\Delta^{*}}, 
  \end{equation*}
with $v^{\Delta^{*}} := \sum_{k=1}^{m} t_{k}\cdot v_{k}^{\Delta} + t_{m+1} \cdot \mathbf{0}$, where $0 \le t_{k} \le 1, \forall k \in  \{1,2, \ldots ,m+1\}$, and $\sum_{k=1}^{m+1} t_{k} =1$. Then $\mu\,v^{\Delta^{*}} \in [-\mathsf{C},\mathsf{C}]^{p^{\prime}}$, thus the set of lexicographically smallest coalitions $\mathcal{S}(\mathbf{x})$ does not change. By Theorem~\ref{thm:repl_prk} the vector $\{\mathbf{x}\} = \mathcal{P\text{\itshape r}K}(v)$ is also a pre-kernel element of game $v^{\mu^{*}}$. But then by Theorem~\ref{thm:siva1} the pre-kernel of game $v^{\mu^{*}}$ consists of a single point, therefore $\{\mathbf{x}\} = \mathcal{P\text{\itshape r}K}(v^{\mu^{*}})$. 
\end{proof}

\begin{example}
  \label{exp:siva1}
To see that even on the convex hull $\mathcal{G}_{c}^{4}$, which is constituted by the default and related games of Table~\ref{tab:rpl_acvt1}, a particular TU game has the same singleton pre-kernel, we choose the following vector of scalars $\vec{t}=(1,3,8,1,2,4,3,5,7,9,2,3)/48$ such that $\sum_{k=1}^{12} t_k=1$ is given to construct by the convex combination of games presented by Table~\ref{tab:rpl_acvt1} a TU game $v^{\mu^{*}}$ that reproduces the imputation $(44/9,4,32/9,32/9)$ as its unique pre-kernel. The TU game $v^{\mu^{*}}$ on this convex hull in the game space that replicates this pre-kernel is listed through Table~\ref{tab:siva1}:   
\begin{center}
\begin{threeparttable}
{\footnotesize
\setlength{\tabcolsep}{.3cm}
\caption{A TU Game $v^{\mu^{*}}$ on $\mathcal{G}_{c}^{4}$ with the same singleton Pre-Kernel as $v$~\tnote{a,b}}
\begin{tabular}[c]{c c c c c c c c}
\hline
$S$  & $v^{\mu^{*}}\!(S)$ & $S$      & $v^{\mu^{*}}\!(S)$ & $S$       & $v^{\mu^{*}}\!(S)$ & $S$    & $v^{\mu^{*}}\!(S)$  \\[.1em] \hline\hline 
$\{1\}$  & $-1/23$    & $\{1,2\}$& $134/25$     & $\{2,4\}$   & $173/1125$ & $\{1,3,4\}$& $576/71$  \\[.1em]
$\{2\}$  & $8/71$     & $\{1,3\}$& $530/137$    & $\{3,4\}$   & $19/144$   & $\{2,3,4\}$& $15/232$  \\[.1em]
$\{3\}$  & $2/75$     & $\{1,4\}$& $179/178$    & $\{1,2,3\}$ & $1436/187$ & $N$        & $16$ \\[.1em]
$\{4\}$  & $2/75$     & $\{2,3\}$& $-8/157$     & $\{1,2,4\}$ & $1946/239$ &            & \\[.1em] \hline\hline
\end{tabular}
\label{tab:siva1}
\begin{tablenotes}
\item[a] Pre-Kernel and Pre-Nucleolus: $(44/9,4,32/9,32/9)$
\item[b] Note: Computation performed with MatTuGames.
\end{tablenotes}
}
\end{threeparttable}
\end{center}
This game is neither average-convex nor zero-monotonic, however, it is again semi-convex and has a rather large core with a core volume of $97$ percent w.r.t. the core of the average-convex game, and $20$ vertices in contrast to $16$ vertices respectively. \hfill$\#$
\end{example}

Let $\mathcal{X}$ and $\mathcal{Y}$ be two metric spaces. A set-valued function or correspondence $\sigma$ of $\mathcal{X}$ into $\mathcal{Y}$ is a rule that assigns to every element $x \in \mathcal{X}$ a non-empty subset $\sigma(x) \subset \mathcal{Y}$. Given a correspondence $\sigma: \mathcal{X} \twoheadrightarrow \mathcal{Y}$, the corresponding graph of $\sigma$ is defined by 

\begin{equation}
  \label{eq:gr}
  Gr(\sigma) := \left\{(x,y) \in \mathcal{X} \times \mathcal{Y} \,\arrowvert\, y \in \sigma(x)  \right\}.
\end{equation}

\begin{definition}
 \label{def:grcl}
  A set-valued function $\sigma : \mathcal{X} \twoheadrightarrow \mathcal{Y} $ is closed, if $Gr(\sigma)$ is a closed subset of $\mathcal{X} \times \mathcal{Y}$  
\end{definition}

The graph of the pre-kernel correspondence $\mathcal{P\text{\itshape r}K}$ is given by 
\begin{equation*}
 \begin{split}
  Gr(\mathcal{P\text{\itshape r}K}) := \left\{ (v,\mathbf{x})\,\arrowvert\, v \in \mathcal{G}^{n}, \mathbf{x} \in \mathcal{I}^{0}(v), \;\; s_{ij}(\mathbf{x},v) = s_{ji}(\mathbf{x},v) \quad\text{for all}\; i,j \in N, i\neq j \,\right\}.
 \end{split}
\end{equation*}
Similar, the graph of the solution set of function $h^{v}$ of type~\eqref{eq:objfh} is specified by 
\begin{equation*}
 \begin{split}
  Gr(M(h^{v})) & := \left\{ (v,\mathbf{x})\,\arrowvert\, v \in \mathcal{G}^{n}, \mathbf{x} \in \mathcal{I}^{0}(v), \;\; h^{v}(\mathbf{x}) = 0 \,\right\} \allowdisplaybreaks \\
              & = \bigcup_{k \in \mathcal{J}^{\prime}}\; \left\{ (v,\mathbf{x})\,\arrowvert\, v \in \mathcal{G}^{n}, \mathbf{x} \in \overline{[\vec{\gamma}_{k}]}, \;\; h_{\gamma_{k}}^{v}(\mathbf{x}) = 0 \,\right\} = \bigcup_{k \in \mathcal{J}^{\prime}}\; Gr(M(h^{v}_{\gamma_{k}}, \overline{[\vec{\gamma}_{k}]})),
 \end{split}
\end{equation*}
with $ \mathcal{J}^{\prime} : = \{k \in \mathcal{J}\, \arrowvert\, g(\vec{\gamma}_{k}) = 0\}$. This graph is equal to the finite union of graphs of the restricted solution sets of quadratic and convex functions $h^{v}_{\gamma_{k}}$ of type~\eqref{eq:objf2}. The restriction of each solution set of function $h^{v}_{\gamma_{k}}$ to $\overline{[\vec{\gamma}_{k}]}$ is bounded, closed, and convex (cf.~\citet[Lemmata 7.1.3, 7.3.1]{mei:13}), hence each graph $Gr(M(h^{v}_{\gamma_{k}}, \overline{[\vec{\gamma}_{k}]}))$ from the finite index set $\mathcal{J}^{\prime}$ is bounded, closed and convex. 

\begin{proposition}
  \label{prop:eqgr}
  The following relations are satisfied between the above graphs:
 \begin{equation}
   \label{eq:eqgr}
   Gr(\mathcal{P\text{\itshape r}K}) = Gr(M(h^{v})) =  \bigcup_{k \in \mathcal{J}^{\prime}}\; Gr(M(h^{v}_{\gamma_{k}}, \overline{[\vec{\gamma}_{k}]})). 
 \end{equation}
Hence, the pre-kernel correspondence $\mathcal{P\text{\itshape r}K}:\mathcal{G}(N) \twoheadrightarrow \mathbb{R}^{n}$ is closed and bounded.
\end{proposition}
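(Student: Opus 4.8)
The plan is to read the two equalities in~\eqref{eq:eqgr} off results already in hand and then extract the closedness and boundedness from the structure of the right-most finite union. I would treat the whole graph fibrewise over the game space, since for a fixed game the fibre of each graph over $v$ is precisely the corresponding solution set in $\mathbb{R}^{n}$.

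For the left equality $Gr(\mathcal{P\text{\itshape r}K}) = Gr(M(h^{v}))$, fix $v \in \mathcal{G}^{n}$ and apply Corollary~\ref{cor:rep}: a pre-imputation $\mathbf{x} \in \mathcal{I}^{0}(v)$ satisfies $h^{v}(\mathbf{x}) = 0$ if and only if $\mathbf{x} \in \mathcal{P\text{\itshape r}K}(v)$. Hence the fibres of the two graphs over $v$ coincide, and since each graph is the union of its fibres as $v$ ranges over $\mathcal{G}^{n}$, comparing the two defining set-builder expressions yields the equality at once.

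For the right equality I would invoke the partition of $dom\, h = \mathbb{R}^{n}$ into the payoff equivalence classes $\{[\vec{\gamma}_{k}]\}_{k \in \mathcal{J}}$ from Proposition~\ref{prop:eq_rel}, whose collection is finite (Proposition~5.4.2 of~\citet{mei:13}), together with the fact that on each class $h^{v}$ agrees with the quadratic convex function $h^{v}_{\gamma_{k}}$ of type~\eqref{eq:objf2} (Proposition~\ref{prop:quad} and Lemma~6.2.2 of~\citet{mei:13}). Every zero of $h^{v}$ lies in some class and is therefore a zero of the associated $h^{v}_{\gamma_{k}}$; retaining only the classes in $\mathcal{J}^{\prime} = \{k \in \mathcal{J} : g(\vec{\gamma}_{k}) = 0\}$ on which that quadratic attains the value zero, and passing to the closures $\overline{[\vec{\gamma}_{k}]}$, produces the displayed union. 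The delicate point here, and the step I expect to require the most care, is the passage to closures: because $h^{v}$ and $h^{v}_{\gamma_{k}}$ coincide only on the relatively open class and may disagree on its boundary, I must check that intersecting the zero set of $h^{v}_{\gamma_{k}}$ with $\overline{[\vec{\gamma}_{k}]}$ neither discards a genuine zero of $h^{v}$ nor adjoins a boundary point that is not a pre-kernel point. This is precisely the content of Lemmata~7.1.3 and~7.3.1 of~\citet{mei:13}, which I would quote rather than re-prove, and which also guarantee that each restricted set $M(h^{v}_{\gamma_{k}}, \overline{[\vec{\gamma}_{k}]})$ is bounded, closed, and convex.

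Finally I would harvest the topological conclusions. Since $\mathcal{J}^{\prime}$ is finite and each $Gr(M(h^{v}_{\gamma_{k}}, \overline{[\vec{\gamma}_{k}]}))$ is bounded and closed, their union is bounded and closed, and by the two equalities so is $Gr(\mathcal{P\text{\itshape r}K})$; the values $\mathcal{P\text{\itshape r}K}(v)$ are thereby exhibited as finite unions of bounded convex sets. As an independent verification of closedness that sidesteps the decomposition, I would note that each maximum surplus $s_{ij}(\mathbf{x},v) = \max_{S \in \mathcal{G}_{ij}}\,(v(S) - x(S))$ is jointly continuous in $(v,\mathbf{x})$, being a finite maximum of affine functions, and that efficiency $x(N) = v(N)$ is continuous; hence $Gr(\mathcal{P\text{\itshape r}K})$ is the common zero set of the finitely many continuous functions $s_{ij} - s_{ji}$ and $x(N) - v(N)$, which is closed. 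Everything beyond the closure bookkeeping of the right equality is routine set-theoretic and point-set topology once the finiteness of $\mathcal{J}^{\prime}$ is secured, so this furnishes the asserted closedness and boundedness of the pre-kernel correspondence.
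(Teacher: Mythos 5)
Your proposal follows essentially the same route as the paper's proof: the first equality is read off Corollary~\ref{cor:rep} fibrewise, the second is delegated to the finite payoff-equivalence-class decomposition of~\citet{mei:13} (Theorem 7.3.1 together with Lemmata 7.1.3 and 7.3.1, exactly the results the paper invokes), and closedness and boundedness are harvested from the finiteness of $\mathcal{J}^{\prime}$ and the properties of each restricted solution set $M(h^{v}_{\gamma_{k}}, \overline{[\vec{\gamma}_{k}]})$. Your supplementary remark that closedness also follows directly from the joint continuity of $(v,\mathbf{x}) \mapsto s_{ij}(\mathbf{x},v)-s_{ji}(\mathbf{x},v)$ and of the efficiency constraint is a sound independent check, but it does not alter the core argument, which coincides with the paper's.
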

\begin{proof}
  The equality of the graph of the pre-kernel and the solution set of function $h^{v}$ follows in accordance with Corollary~\ref{cor:rep}. Finally, the last equality is a consequence of Theorem 7.3.1 by~\citet{mei:13}. From this argument boundedness and closedness follows. 
\end{proof}

\begin{definition}
  \label{def:uhc}
 The correspondence $\sigma: \mathcal{X} \twoheadrightarrow \mathcal{Y}$ is said to be upper hemi-continuous ({\bfseries uhc}) at $x$ if for every open set $\mathcal{O}$ containing $\sigma(x) \subseteq \mathcal{O}$ it exists an open set $\mathcal{Q} \subseteq \mathcal{Y}$ of $x$ such that $\sigma(x^{\prime}) \subseteq \mathcal{O}$ for every $x^{\prime} \in \mathcal{Q}$. The correspondence $\sigma$ is {\bfseries uhc}, if it is {\bfseries uhc} for each $x \in \mathcal{X}$.
\end{definition}

\begin{definition}
  \label{def:lhc}
The correspondence $\sigma: \mathcal{X} \twoheadrightarrow \mathcal{Y}$ is said to be lower hemi-continuous ({\bfseries lhc}) at $x$ if for every open set $\mathcal{O}$ in $\mathcal{Y}$ with $\sigma(x) \cap \mathcal{O} \not= \emptyset$ it exists an open set $\mathcal{Q} \subseteq \mathcal{Y}$ of $x$ such that $\sigma(x^{\prime}) \cap \mathcal{O} \not= \emptyset$ for every $x^{\prime} \in \mathcal{Q}$. The correspondence $\sigma$ is {\bfseries lhc}, if it is {\bfseries lhc} for each $x \in \mathcal{X}$.
\end{definition}

\begin{lemma} 
  \label{lem:lhc}
 Let $\mathcal{X}$ be a non-empty and convex polyhedral subset of $\mathbb{R}^{\tilde{p}}$, and $\mathcal{Y} \subseteq \mathbb{R}^{\tilde{n}}$. If $\sigma: \mathcal{X} \twoheadrightarrow \mathcal{Y}$ is a bounded correspondence with a convex graph, then $\sigma$ is lower hemi-continuous.
\end{lemma}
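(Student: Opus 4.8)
The plan is to verify lower hemi-continuity directly from Definition~\ref{def:lhc} at an arbitrary point $x_{0} \in \mathcal{X}$. Fix an open set $\mathcal{O} \subseteq \mathcal{Y}$ with $\sigma(x_{0}) \cap \mathcal{O} \neq \emptyset$, choose $y_{0} \in \sigma(x_{0}) \cap \mathcal{O}$, and pick $\delta > 0$ with the open ball $B(y_{0},\delta) \subseteq \mathcal{O}$. Since $\sigma$ is bounded, its range $\bigcup_{x \in \mathcal{X}} \sigma(x)$ has finite diameter $D$; if $D = 0$ the correspondence is a constant singleton and the claim is immediate, so assume $D > 0$ and set $t_{0} := \min\{1,\delta/(2D)\} \in (0,1]$, which guarantees $t_{0} D < \delta$.

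The engine of the argument is the convexity of $Gr(\sigma)$. For any $w \in \mathcal{X}$, any $y_{w} \in \sigma(w)$, and any $t \in [0,1]$, convexity of the graph gives $\big((1-t)x_{0} + t w,\ (1-t)y_{0} + t y_{w}\big) \in Gr(\sigma)$, that is, $(1-t)y_{0} + t y_{w} \in \sigma\big((1-t)x_{0} + t w\big)$. Boundedness then controls the second coordinate, since $\| (1-t)y_{0} + t y_{w} - y_{0}\| = t\,\|y_{w} - y_{0}\| \le t D$; so for $t = t_{0}$ the interpolated payoff lies in $B(y_{0},\delta) \subseteq \mathcal{O}$. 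Hence $\sigma\big((1-t_{0})x_{0} + t_{0}w\big) \cap \mathcal{O} \neq \emptyset$ for \emph{every} $w \in \mathcal{X}$.

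The one real obstacle is that when $x_{0}$ lies on the relative boundary of $\mathcal{X}$, the points $(1-t_{0})x_{0} + t_{0} w$ with $w \in \mathcal{X}$ need not fill a whole neighborhood of $x_{0}$ unless the shape of $\mathcal{X}$ is exploited, and this is exactly where the polyhedral hypothesis enters. Writing $\mathcal{X} = \{x \,\arrowvert\, A x \le b\}$ and letting $T := \{d \,\arrowvert\, A_{i} d \le 0 \ \text{for all active } i\}$ be the tangent cone at $x_{0}$, there is a radius $\rho_{0} > 0$ with $\mathcal{X} \cap B(x_{0},\rho_{0}) = (x_{0} + T) \cap B(x_{0},\rho_{0})$, because the inactive constraints remain strict near $x_{0}$ while the active ones localize to the cone $T$. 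Put $\rho := t_{0}\rho_{0} > 0$ and $\mathcal{Q} := B(x_{0},\rho)$. For any $z \in \mathcal{Q} \cap \mathcal{X}$ with $z \neq x_{0}$ one has $z - x_{0} \in T$; since $T$ is a cone, $w := x_{0} + (z - x_{0})/t_{0} \in x_{0} + T$ with $\|w - x_{0}\| = \|z - x_{0}\|/t_{0} < \rho_{0}$, whence $w \in \mathcal{X}$ and $z = (1-t_{0})x_{0} + t_{0}w$.

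Combining the two steps finishes the proof. For each $z \in \mathcal{Q} \cap \mathcal{X}$ (the case $z = x_{0}$ being trivial via $y_{0}$), the witness $w$ constructed above lies in $\mathcal{X}$, and choosing any $y_{w} \in \sigma(w)$ the interpolation yields $(1-t_{0})y_{0} + t_{0}y_{w} \in \sigma(z) \cap \mathcal{O}$, so $\sigma(z) \cap \mathcal{O} \neq \emptyset$. Thus $\mathcal{Q}$ is the required neighborhood and $\sigma$ is lhc at $x_{0}$; as $x_{0} \in \mathcal{X}$ was arbitrary, $\sigma$ is lower hemi-continuous. The two hypotheses are used in complementary ways — convexity of the graph to interpolate the image points and boundedness to keep the interpolant inside $\mathcal{O}$ — while the polyhedral, locally conical structure of $\mathcal{X}$ is precisely what upgrades a family of segments into a genuine neighborhood at boundary points.
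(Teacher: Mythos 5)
Your proof is correct, and it supplies something the paper itself does not: the paper's ``proof'' of Lemma~\ref{lem:lhc} is nothing but a citation to \citet[pp.~185--186]{pel_sud:07}, so your argument is a self-contained substitute for an outsourced one. Each hypothesis is used exactly where it must be. Convexity of $Gr(\sigma)$ yields the interpolated point $(1-t_{0})y_{0}+t_{0}y_{w}\in\sigma\bigl((1-t_{0})x_{0}+t_{0}w\bigr)$; boundedness of the range yields the estimate $t_{0}\Vert y_{w}-y_{0}\Vert\le t_{0}D<\delta$ that keeps this point inside $\mathcal{O}$; and polyhedrality enters solely through the local identity $\mathcal{X}\cap B(x_{0},\rho_{0})=(x_{0}+T)\cap B(x_{0},\rho_{0})$, which is what upgrades the segment family $(1-t_{0})x_{0}+t_{0}\mathcal{X}$ to a genuine relative neighborhood of $x_{0}$. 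Isolating that identity is the right move: it is precisely what fails for curved convex domains, e.g.\ $\{(a,b)\,\arrowvert\, b\ge a^{2}\}$ at the origin, where the lemma itself is false (a bounded convex-graph correspondence over that set can fail lower hemi-continuity at the origin), so the polyhedral hypothesis cannot be weakened and your proof shows exactly why. For comparison, the proof in the cited source runs the same underlying idea in sequential form: for $x_{k}\to x_{0}$ in a polyhedron one writes $x_{k}=(1-t_{k})x_{0}+t_{k}w_{k}$ with $w_{k}\in\mathcal{X}$ at bounded distance and $t_{k}\downarrow 0$, interpolates in the graph, and lets $k\to\infty$; your neighborhood formulation, with the explicit description $\mathcal{X}=\{x\,\arrowvert\,Ax\le b\}$ and the tangent cone, avoids invoking any sequential characterization of lower hemi-continuity and handles relative-boundary and lower-dimensional cases uniformly. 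Two minor points, both of which you implicitly got right: reading ``bounded'' as ``the range $\bigcup_{x\in\mathcal{X}}\sigma(x)$ has finite diameter'' is the intended interpretation (it is the sense in which boundedness is established in Proposition~\ref{prop:eqgr}); and since $\sigma$ is undefined off $\mathcal{X}$, the lhc condition must be verified, as you do, for $z\in\mathcal{Q}\cap\mathcal{X}$ --- note that Definition~\ref{def:lhc} in the paper contains a typo here, as the open set $\mathcal{Q}$ should be a neighborhood of $x$ in $\mathcal{X}$, not a subset of $\mathcal{Y}$.
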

\begin{proof}
  For a proof see \citet[pp. 185-186]{pel_sud:07}.
\end{proof}

\begin{theorem}
  \label{thm:cont}
  The pre-kernel correspondence $\mathcal{P\text{\itshape r}K}:\mathcal{G}(N)  \twoheadrightarrow \mathbb{R}^{n}$ is on $\mathcal{G}_{c}^{n}$ upper hemi-continuous as well as lower hemi-continuous, that is, continuous. 
\end{theorem}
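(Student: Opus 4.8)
The plan is to derive continuity by establishing upper and lower hemi-continuity separately and then invoking their conjunction, since the two halves draw on essentially disjoint tools already available in the excerpt: upper hemi-continuity will come from the global closedness and boundedness of the pre-kernel graph (Proposition~\ref{prop:eqgr}), whereas lower hemi-continuity will be extracted from the single-valuedness on $\mathcal{G}_{c}^{n}$ (Theorem~\ref{thm:siva2}) together with the polyhedral convexity of that domain (Lemma~\ref{lem:lhc}).

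First, for upper hemi-continuity I would invoke Proposition~\ref{prop:eqgr}, which states that $Gr(\mathcal{P\text{\itshape r}K})$ is closed (in the sense of Definition~\ref{def:grcl}) and that the correspondence is bounded. A correspondence into $\mathbb{R}^{n}$ whose range lies in a bounded set is locally bounded, and a closed graph together with local boundedness yields, by the closed-graph theorem for correspondences, that $\mathcal{P\text{\itshape r}K}$ is compact-valued and upper hemi-continuous: the values are closed subsets of a bounded set and hence compact, so the standard argument applies. This holds on all of $\mathcal{G}(N)$ and in particular at every $v^{\mu^{*}} \in \mathcal{G}_{c}^{n}$, matching Definition~\ref{def:uhc}.

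Second, for lower hemi-continuity I would restrict attention to $\mathcal{G}_{c}^{n} = conv\{v,\mathcal{G}_{\mu,v}^{n}\}$. The set $\mathcal{G}_{\mu,v}^{n}$ is cut out by the box constraints $\mu\cdot v^{\Delta} \in [-\mathsf{C},\mathsf{C}]^{p^{\prime}}$ together with the linear null-space condition $\Delta \in \mathcal{N}_{\boldsymbol{\EuScript{W}}}$ defining $v^{\Delta}$, so it is a bounded polyhedron; its convex hull with the single point $v$ is again a polytope, whence $\mathcal{G}_{c}^{n}$ is a non-empty convex polyhedral set, exactly the hypothesis of Lemma~\ref{lem:lhc}. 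The decisive step is that by Theorem~\ref{thm:siva2} the correspondence is single-valued on $\mathcal{G}_{c}^{n}$ with the constant value $\{\mathbf{x}\} = \mathcal{P\text{\itshape r}K}(v)$; consequently its restricted graph equals $\mathcal{G}_{c}^{n} \times \{\mathbf{x}\}$, a product of convex sets and therefore convex, while boundedness is inherited from Proposition~\ref{prop:eqgr}. Lemma~\ref{lem:lhc} then applies directly and delivers lower hemi-continuity on $\mathcal{G}_{c}^{n}$ in the sense of Definition~\ref{def:lhc}. Since upper and lower hemi-continuity together constitute continuity, the claim follows.

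I anticipate the main obstacle to lie in the lower hemi-continuity rather than the upper. By Proposition~\ref{prop:eqgr} the global graph is merely a \emph{finite union} of convex pieces $Gr(M(h^{v}_{\gamma_{k}}, \overline{[\vec{\gamma}_{k}]}))$ and need not itself be convex, so Lemma~\ref{lem:lhc} cannot be applied on $\mathcal{G}(N)$ at large; it is precisely the passage to $\mathcal{G}_{c}^{n}$, where the single-valuedness of Theorem~\ref{thm:siva2} collapses this union to one convex sheet, that restores the convex-graph hypothesis. A secondary point worth verifying is that $\mathcal{G}_{c}^{n}$ remains inside $\mathcal{G}(N)$ (which follows since $v^{\Delta}(N)=0$ keeps $v^{\mu}(N)=v(N)>0$ fixed) so that the polyhedral domain sits in the correct ambient space.
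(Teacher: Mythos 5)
Your proposal is correct and follows essentially the same route as the paper: the lower hemi-continuity half is identical, combining the single-valuedness on $\mathcal{G}_{c}^{n}$ (Theorem~\ref{thm:siva2}) with the boundedness/closedness of the graph (Proposition~\ref{prop:eqgr}) and then invoking Lemma~\ref{lem:lhc}; your remark that the restricted graph equals the product $\mathcal{G}_{c}^{n}\times\{\mathbf{x}\}$ is just a more explicit rendering of the paper's statement that $\arrowvert\,\mathcal{J}^{\prime}\,\arrowvert=1$ collapses the finite union of convex sheets to a single convex one. The only divergence is the upper hemi-continuity: the paper simply cites Theorem 9.1.7 of \citet{pel_sud:07}, whereas you rederive it from the closed-graph-plus-boundedness assertion of Proposition~\ref{prop:eqgr} via the closed-graph theorem for correspondences, which is a legitimate self-contained alternative --- on $\mathcal{G}_{c}^{n}$ the restricted graph is compact, so the argument goes through; one should only read the ``boundedness'' over all of $\mathcal{G}(N)$ as local boundedness near each game, which is all that standard theorem requires.
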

\begin{proof}
  The non-empty set $\mathcal{G}_{c}^{n}$ is a bounded polyhedral set, which is convex by construction. We draw from Proposition~\ref{prop:eqgr} the conclusion that the graph of the pre-kernel correspondence is bounded and closed. Form Theorem~\ref{thm:siva2} it follows $\arrowvert\, \mathcal{J}^{\prime}\, \arrowvert =1$ on $\mathcal{G}_{c}^{n}$, this implies that the graph of the pre-kernel correspondence is also convex on $\mathcal{G}_{c}^{n}$. The sufficient conditions of Lemma~\ref{lem:lhc} are satisfied, hence $\mathcal{P\text{\itshape r}K}$ is lower hemi-continuous on $\mathcal{G}_{c}^{n}$. It is known from Theorem 9.1.7.~by \citet{pel_sud:07} that $\mathcal{P\text{\itshape r}K}$ is upper hemi-continuous on $\mathcal{G}(N)$. Hence, on the restricted set $\mathcal{G}_{c}^{n}$, the set-valued function $\mathcal{P\text{\itshape r}K}$ is upper and lower hemi-continuous, and therefore continuous. Actual, it is a continuous function on $\mathcal{G}_{c}^{n}$ in view of $\arrowvert\, \mathcal{J}^{\prime}\, \arrowvert =1$.
\end{proof} 

\begin{corollary}
  \label{cor:prkf}
  The pre-kernel correspondence $\mathcal{P\text{\itshape r}K}:\mathcal{G}(N) \twoheadrightarrow \mathbb{R}^{n}$ is on $\mathcal{G}_{c}^{n}$ single-valued and constant. 
\end{corollary}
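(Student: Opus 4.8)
The plan is to read off both conclusions directly from Theorem~\ref{thm:siva2}, which has already carried out the substantive work. First I would fix an arbitrary game $w \in \mathcal{G}_{c}^{n}$ and recall that, by the very construction $\mathcal{G}_{c}^{n} = conv\,\{v,\mathcal{G}_{\mu,v}^{n}\}$, every such $w$ is a convex combination of the extreme points and hence can be written in the form $v^{\mu^{*}} = v + \mu\cdot v^{\Delta^{*}}$ with $\mu\,v^{\Delta^{*}} \in [-\mathsf{C},\mathsf{C}]^{p^{\prime}}$, exactly as exhibited in the proof of Theorem~\ref{thm:siva2}.

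Next I would invoke Theorem~\ref{thm:siva2} at this point $w = v^{\mu^{*}}$, which yields $\mathcal{P\text{\itshape r}K}(w) = \{\mathbf{x}\}$, where $\{\mathbf{x}\} = \mathcal{P\text{\itshape r}K}(v)$ is the singleton pre-kernel of the default game. Since the image is a singleton for every choice of $w$, the correspondence $\mathcal{P\text{\itshape r}K}$ is single-valued on $\mathcal{G}_{c}^{n}$. The key point, and the only thing that distinguishes this corollary from Theorem~\ref{thm:cont}, is that the single element $\mathbf{x}$ produced by Theorem~\ref{thm:siva2} does not depend on the particular game $w \in \mathcal{G}_{c}^{n}$: it is always the fixed pre-kernel point $\mathcal{P\text{\itshape r}K}(v)$ of the default game. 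Hence the value of the correspondence coincides at every argument, which is precisely the assertion that $\mathcal{P\text{\itshape r}K}$ is constant on $\mathcal{G}_{c}^{n}$.

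Because the genuine difficulty—establishing that the pre-kernel of each related game collapses to $\{\mathbf{x}\}$—has already been overcome through Theorems~\ref{thm:repl_prk}, \ref{thm:prnmu}, \ref{thm:siva1}, and~\ref{thm:siva2}, no further analytic work is needed here and the statement is essentially a bookkeeping consequence. The only mild subtlety worth flagging is the logical separation of the two claims: single-valuedness follows from $\arrowvert\mathcal{P\text{\itshape r}K}(w)\arrowvert = 1$ for each individual $w$, whereas constancy additionally requires the common value to be independent of $w$. Both are delivered simultaneously by the uniform identity $\{\mathbf{x}\} = \mathcal{P\text{\itshape r}K}(v) = \mathcal{P\text{\itshape r}K}(w)$ of Theorem~\ref{thm:siva2}, which reconfirms the continuity obtained in Theorem~\ref{thm:cont}, a constant map being trivially continuous.
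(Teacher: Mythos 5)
Your proposal is correct and follows the same route the paper intends: the corollary is stated without proof precisely because it is an immediate consequence of Theorem~\ref{thm:siva2}, which already asserts $\mathcal{P\text{\itshape r}K}(v^{\mu^{*}}) = \{\mathbf{x}\} = \mathcal{P\text{\itshape r}K}(v)$ for every game in $\mathcal{G}_{c}^{n}$. Your explicit separation of single-valuedness (each image is a singleton) from constancy (the singleton is the same $\mathbf{x}$ for every argument) is exactly the bookkeeping the paper leaves to the reader.
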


\begin{example}
  \label{exp:nlhc}
To observe that on the restricted set $\mathcal{G}_{c}^{4}$ the pre-kernel correspondence $\mathcal{P\text{\itshape r}K}:\mathcal{G}(N) \twoheadrightarrow \mathbb{R}^{n}$ is single-valued and continuous, we exemplarily select a line segment in $\mathcal{G}_{c}^{4}$ to establish that all games on this segment have the same singleton pre-kernel. For this purpose, we resume Example~\ref{exp:uniqPk} and~\ref{exp:siva1}. Then we choose a vector of scalars $\vec{t}^{\epsilon}:=(1,3,8,1,2,4+\epsilon,3,5,7,9,2-\epsilon,3)/48$ with $t^{\epsilon}_{k} \ge 0$ for each $k$ such that $\sum_{k=0}^{11} t^{\epsilon}_k=1$ and $\epsilon \in [-2,2]$. Thus, we define the line segment in $\mathcal{G}_{c}^{4}$ through TU game $v^{\mu^{*}}$ from Example~\ref{exp:siva1} by 
\begin{equation*}
  \mathcal{G}_{c}^{4,l} := \bigg\{\sum_{k=0}^{11} t^{\epsilon}_{k}\cdot v_{k}^{\mu}\;\bigg\arrowvert v_{k}^{\mu} \in \mathcal{G}_{c}^{4}, \epsilon \in [-2,2] \bigg\}.
\end{equation*}
Therefore, for each game in the line segment $\mathcal{G}_{c}^{4,l}$, we can write
\begin{equation*}
  \begin{split}
  v^{\epsilon} & := \sum_{k=1}^{11} t^{\epsilon}_{k}\cdot v_{k}^{\mu} + t^{\epsilon}_{0}\cdot v = \sum_{k=1}^{11} t_{k}\cdot v_{k}^{\mu} + t_{0}\cdot v + \frac{\epsilon}{48}\,(v_{6}^{\mu} - v_{11}^{\mu}) = v^{\mu^{*}} + \frac{\epsilon}{48}\,(v_{6}^{\mu} - v_{11}^{\mu})\allowdisplaybreaks\\
             & = v + \mu \cdot v^{\Delta^{*}} + \frac{\epsilon\,\mu}{48}\,(v_{6}^{\Delta} - v_{11}^{\Delta}).
 \end{split}
\end{equation*}
We extend the pre-kernel element $\mathbf{x} = (44/9,4,32/9,32/9)$ to a vector $\overline{\mathbf{x}}$ in order to define the excess vector under game $v$ as $\overline{e}:=v-\overline{\mathbf{x}}$, and for game $v^{\epsilon}$ as $\overline{e}^{\,v^{\epsilon}}:=v^{\epsilon}-\overline{\mathbf{x}}$, respectively. According to these definitions, we get for $\vec{\zeta}^{v^{\epsilon}} = \vec{\xi}^{v^{\epsilon}}$ at $\mathbf{x}$ the following chain of equalities:
\begin{equation*}
  \vec{\xi}^{v^{\epsilon}} = \boldsymbol{\mathcal{V}}^{\top}\,\overline{e}^{\,v^{\epsilon}} = \boldsymbol{\mathcal{V}}^{\top}\,\big(v-\overline{\mathbf{x}} + \mu \cdot v^{\Delta^{*}} + \frac{\epsilon\,\mu}{48}\,(v_{6}^{\Delta} - v_{11}^{\Delta})\big) = \boldsymbol{\mathcal{V}}^{\top}\,(v-\overline{\mathbf{x}}) = \boldsymbol{\mathcal{V}}^{\top}\,\overline{e} = \vec{\xi} = \vec{\zeta} = \mathbf{0}, 
\end{equation*}
The last equality is satisfied, since $\mathbf{x}$ is the pre-kernel of game $v$. Recall that it holds $\mu\,v^{\Delta^{*}},\mu\,v_{6}^{\Delta},\mu\,v_{11}^{\Delta} \in [-\mathsf{C},\mathsf{C}]^{15}$, whereas $\boldsymbol{\mathcal{V}}^{\top} \,v^{\Delta^{*}} = \boldsymbol{\mathcal{V}}^{\top}\,v_{6}^{\Delta} = \boldsymbol{\mathcal{V}}^{\top}\,v_{11}^{\Delta} = \mathbf{0}$ is in force. Therefore, for each TU game $v^{\epsilon} \in \mathcal{G}_{c}^{4,l}$ we attain
\begin{equation*}
  \mathcal{P\text{\itshape r}K}(v^{\epsilon}) = (44/9,4,32/9,32/9).
\end{equation*}
The pre-kernel correspondence $\mathcal{P\text{\itshape r}K}$ is a single-valued and constant mapping on $\mathcal{G}_{c}^{4,l}$. Hence its is continuous on the restriction $\mathcal{G}_{c}^{4,l}$, and due to Theorem~\ref{thm:cont} a fortiori on $\mathcal{G}_{c}^{4}$. \hfill$\#$
\end{example}

\section{Preserving the Pre-Nucleolus Property}
\label{sec:prspn}
In this section we study some conditions under which a pre-nucleolus of a default game can preserve the pre-nucleolus property in order to generalize the above results in the sense to identify related games with an unique pre-kernel point even when the default game has not a single pre-kernel point. This question can only be addressed with limitation, since we are not able to make it explicit while giving only sufficient conditions under which the pre-kernel point must be at least disconnected, otherwise it must be a singleton. However, a great deal of our investigation is devoted to work out explicit conditions under which the pre-nucleolus of a default game will loose this property under a related game.

For the next result remember that a balanced collection $\mathcal{B}$ is called minimal balanced, if it does not contain a proper balanced sub-collection.  

\begin{theorem}
  \label{thm:nprnmu}
 Let $\langle\, N, v\, \rangle$ be a TU game that has a non unique pre-kernel such that $\mathbf{x} \in \mathcal{P\text{\itshape r}K}(v)$, $\mathbf{y} = \nu(v)$ with $\mathbf{x},\mathbf{y}  \in [\vec{\gamma}]_{v}$, and $\mathbf{x} \not= \mathbf{y}$ is satisfied. In addition, let $\langle\, N, v^{\mu}\, \rangle$ be a related game of $v$ with $\mu \not=0$ derived from $\mathbf{x}$ such that $\mathbf{x} \in \mathcal{P\text{\itshape r}K}(v^{\mu}) \cap [\vec{\gamma}]_{v^{\mu}}$, and $\mathbf{y} \not\in [\vec{\gamma}]_{v^{\mu}}$ holds. If the collection $\mathcal{S}^{v}(\mathbf{x})$ as well as its sub-collections are not balanced, 
 \begin{enumerate}
 \item then $\mathbf{y} \not\in \mathcal{P\text{\itshape r}N}(v^{\mu})$.
 \item Moreover, if in addition $\mathbf{x} = \mathbf{y} \not\in [\vec{\gamma}]_{v^{\mu}}$, then $\mathbf{x} \not\in \mathcal{P\text{\itshape r}N}(v^{\mu})$. 
 \end{enumerate}
\end{theorem}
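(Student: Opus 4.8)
The plan is to test both candidate points against Kohlberg's balancedness criterion (Theorem~\ref{thm:kohlb}): a pre-imputation is the pre-nucleolus precisely when, for every threshold $\psi$ with $\mathcal{D}^{v^{\mu}}(\psi,\cdot)\neq\emptyset$, that collection is balanced over $N$. To establish $\mathbf{y}\notin\mathcal{P\text{\itshape r}N}(v^{\mu})$ it therefore suffices to exhibit a single level $\psi$ at which $\mathcal{D}^{v^{\mu}}(\psi,\mathbf{y})$ is non-empty yet fails to be balanced, and the hypothesis that $\mathcal{S}^{v}(\mathbf{x})$ together with all of its sub-collections is unbalanced is exactly the fuel for this. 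I would not attempt to show $\mathbf{y}\notin\mathcal{P\text{\itshape r}K}(v^{\mu})$ directly, since leaving the equivalence class need not destroy the pairwise balance of surpluses; the target is the finer pre-nucleolus test.

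First I would record how the excesses move under the parameter change. Since $v^{\mu}=v+\mu\cdot v^{\Delta}$ with $v^{\Delta}=\boldsymbol{\EuScript{U}}\,\Delta$ and $\mathbf{0}\neq\Delta\in\mathcal{N}_{\boldsymbol{\EuScript{W}}}$, one has $e^{v^{\mu}}(S,\mathbf{y})=e^{v}(S,\mathbf{y})+\mu\cdot v^{\Delta}(S)$ for every $S$, while $\boldsymbol{\mathcal{V}}^{\top}\,v^{\Delta}=\mathbf{0}$ keeps the internal excess differences recorded by $\boldsymbol{\mathcal{V}}^{\top}$ intact. Because $\mathbf{x},\mathbf{y}\in[\vec{\gamma}]_{v}$ share the collection $\mathcal{S}^{v}(\mathbf{x})=\mathcal{S}^{v}(\mathbf{y})$ and $\mathbf{y}=\nu(v)$ is in particular a pre-kernel point of $v$, the computation $\vec{\xi}^{\,v^{\mu}}_{\mathbf{y}}=\boldsymbol{\mathcal{V}}^{\top}(v^{\mu}-\overline{\mathbf{y}})=\boldsymbol{\mathcal{V}}^{\top}(v-\overline{\mathbf{y}})=\vec{\xi}^{\,v}_{\mathbf{y}}=\vec{\zeta}^{\,v}_{\mathbf{y}}=\mathbf{0}$, using Equation~\eqref{eq:xi_zet}, shows that the paired coalitions $S_{ij},S_{ji}\in\mathcal{S}^{v}(\mathbf{x})$ retain equal $v^{\mu}$-excesses at $\mathbf{y}$. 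Hence the members of $\mathcal{S}^{v}(\mathbf{x})$ still enter the nested collections $\mathcal{D}^{v^{\mu}}(\cdot,\mathbf{y})$ in tied groups, even though $\mathcal{S}^{v}(\mathbf{x})$ is no longer the collection proper to $\mathbf{y}$ under $v^{\mu}$.

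The heart of the argument is to locate the threshold at which these coalitions become tight and to show that the associated collection is one of the sub-collections of $\mathcal{S}^{v}(\mathbf{x})$ excluded by hypothesis. Here the assumption $\mathbf{y}\notin[\vec{\gamma}]_{v^{\mu}}$ is decisive: it guarantees that under $v^{\mu}$ the coalitions of $\mathcal{S}^{v}(\mathbf{x})$ no longer occupy the top of the excess order for $\mathbf{y}$ uniformly, so that upon descending $\psi$ there is a first stage where $\mathcal{D}^{v^{\mu}}(\psi,\mathbf{y})$ coincides with a non-empty sub-collection of $\mathcal{S}^{v}(\mathbf{x})$ before any balancing coalition lying outside $\mathcal{S}^{v}(\mathbf{x})$ is admitted. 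Since every such sub-collection is unbalanced by hypothesis, Kohlberg's test fails at this $\psi$, whence $\mathbf{y}\notin\mathcal{P\text{\itshape r}N}(v^{\mu})$. I expect this step, namely pinning down the precise level and ruling out that an extraneous coalition restores balancedness before the critical stage, to be the main obstacle; it is exactly why the hypothesis requires the non-balancedness of \emph{every} sub-collection and not merely of $\mathcal{S}^{v}(\mathbf{x})$ itself, and it is where the perturbation bound $\mu\cdot v^{\Delta}\in[-\mathsf{C},\mathsf{C}]^{p^{\prime}}$ supplied by Theorem~\ref{thm:repl_prk} must be invoked to control the reshuffling of the excesses and keep the admitted coalitions confined to $\mathcal{S}^{v}(\mathbf{x})$ at that stage.

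For the second assertion I would run the identical Kohlberg argument with $\mathbf{x}$ in the role of $\mathbf{y}$. Under the added hypothesis $\mathbf{x}=\mathbf{y}\notin[\vec{\gamma}]_{v^{\mu}}$ the point itself leaves its equivalence class, so the same mechanism produces a threshold at which $\mathcal{D}^{v^{\mu}}(\psi,\mathbf{x})$ reduces to an unbalanced sub-collection of $\mathcal{S}^{v}(\mathbf{x})$, balancedness fails there, and $\mathbf{x}\notin\mathcal{P\text{\itshape r}N}(v^{\mu})$. This mirrors, by contraposition, the role of the non-empty interior condition in Theorem~\ref{thm:prnmu}: there the point remained inside its class and $\mathcal{S}(\mathbf{x})$ was balanced, forcing $\mathbf{x}=\nu(N,v^{\mu})$, whereas here the exit from the class together with the non-balancedness of $\mathcal{S}^{v}(\mathbf{x})$ forces the opposite conclusion.
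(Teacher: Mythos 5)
Your overall frame (Kohlberg's criterion: exhibit one non-empty level set $\mathcal{D}^{v^{\mu}}(\psi,\mathbf{y})$ that fails to be balanced) is the same as the paper's, and your preliminary computation $\vec{\xi}^{\,v^{\mu}}_{\mathbf{y}}=\boldsymbol{\mathcal{V}}^{\top}(v^{\mu}-\overline{\mathbf{y}})=\boldsymbol{\mathcal{V}}^{\top}(v-\overline{\mathbf{y}})=\mathbf{0}$ is correct. But the step you yourself flag as ``the main obstacle'' is not a technicality to be filled in later; it is a genuine gap, and the claim it rests on is actually incompatible with the theorem's own hypotheses. You need a threshold at which $\mathcal{D}^{v^{\mu}}(\psi,\mathbf{y})$ \emph{coincides with} a non-empty sub-collection of $\mathcal{S}^{v}(\mathbf{x})$, so that the hypothesis ``no sub-collection of $\mathcal{S}^{v}(\mathbf{x})$ is balanced'' applies directly. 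However, $\mathcal{D}$-sets collect \emph{all} coalitions above the threshold, not only the lexicographically selected most effective ones. Since $\mathbf{y}=\nu(v)$, Kohlberg's criterion applied to the \emph{unperturbed} game makes every non-empty $\mathcal{D}^{v}(\psi,\mathbf{y})$ balanced; by hypothesis no sub-collection of $\mathcal{S}^{v}(\mathbf{x})$ is balanced; hence for every $\psi$ the non-empty set $\mathcal{D}^{v}(\psi,\mathbf{y})$ must contain coalitions outside $\mathcal{S}^{v}(\mathbf{x})$. In particular, already at the very top of the excess order at $\mathbf{y}$ under $v$ there sit outside coalitions tied with or above every member of $\mathcal{S}^{v}(\mathbf{x})$. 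The perturbation $\mu\,v^{\Delta}$ with $\Delta\in\mathcal{N}_{\boldsymbol{\EuScript{W}}}$ gives you no control over those coalitions: nothing forces them strictly below the surviving members of $\mathcal{S}^{v}(\mathbf{x})$ --- indeed the natural effect of the perturbation, and the one the paper exploits, is the opposite, namely that some coalition of $\mathcal{S}^{v}(\mathbf{x})$ \emph{drops out} of the top while outside coalitions remain there. So your ``first stage where $\mathcal{D}^{v^{\mu}}(\psi,\mathbf{y})$ coincides with a sub-collection of $\mathcal{S}^{v}(\mathbf{x})$'' need never occur, and your argument (and its verbatim copy in part 2) has no fallback.

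The paper's proof avoids exactly this trap by a different device: it never tries to squeeze the level set \emph{inside} $\mathcal{S}^{v}(\mathbf{x})$. It chooses a \emph{minimal balanced} collection $\mathcal{B}\supseteq\mathcal{S}^{v}(\mathbf{x})$, a level $\psi$ with $\mathcal{B}\subseteq\mathcal{D}^{v}(\psi,\mathbf{y})$ (possible because $\mathbf{y}=\nu(v)$), and a margin $\epsilon>0$ with $\mathcal{D}^{v}(\psi,\mathbf{y})=\mathcal{D}^{v}(\psi-2\epsilon,\mathbf{y})$. The perturbation bound keeps $\mathcal{D}^{v^{\mu}}(\psi,\mathbf{y})\subseteq\mathcal{D}^{v}(\psi,\mathbf{y})$, while $\mathbf{y}\notin[\vec{\gamma}]_{v^{\mu}}$ (equivalently $\mathcal{S}^{v^{\mu}}(\mathbf{y})\neq\mathcal{S}^{v}(\mathbf{y})$) expels at least one member of $\mathcal{S}^{v}(\mathbf{x})$ from $\mathcal{D}^{v^{\mu}}(\psi,\mathbf{y})$. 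The conclusion ``unbalanced'' is then drawn from $\mathcal{D}^{v^{\mu}}(\psi,\mathbf{y})$ being a \emph{proper} sub-collection of the minimal balanced envelope $\mathcal{B}$ (no proper sub-collection of a minimal balanced collection is balanced), not from membership in $\mathcal{S}^{v}(\mathbf{x})$; the hypothesis on $\mathcal{S}^{v}(\mathbf{x})$ and its sub-collections is what makes this envelope construction work rather than serving as the direct certificate of unbalancedness, which is how you tried to use it. To repair your write-up, replace the descending-$\psi$ claim with this envelope argument in both parts.
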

\begin{proof}
The proof starts with the first assertion.
\begin{enumerate}
  \item By our hypothesis, $\mathbf{x}$ is a pre-kernel element of game $v$ and a related game $v^{\mu}$ that is derived from $\mathbf{x}$. There is no change in set of lexicographically smallest most effective coalitions $\mathcal{S}^{v}(\mathbf{x})$ under $v^{\mu}$ due to $\mathbf{x}  \in [\vec{\gamma}]_{v^{\mu}}$, hence $\mathcal{S}^{v}(\mathbf{x}) = \mathcal{S}^{v^{\mu}}(\mathbf{x})$. Moreover, we have $\mu\cdot v^{\Delta} \in \mathbb{R}^{p^{\prime}}$. Furthermore, it holds $\mathbf{y} = \nu(v)$ by our assumption. Choose a balanced collection $\mathcal{B}$ that contains $\mathcal{S}^{v}(\mathbf{x})$ such that $\mathcal{B}$ is minimal. Then single out any $\psi \in \mathbb{R}$ such that the balanced set $\mathcal{D}^{v}(\psi,\mathbf{y})$ satisfies $\mathcal{S}^{v}(\mathbf{x}) \subseteq \mathcal{B} \subseteq \mathcal{D}^{v}(\psi,\mathbf{y}) \not=\emptyset$. Now choose $\epsilon > 0$ such that $\mathcal{D}^{v}(\psi,\mathbf{y}) = \mathcal{D}^{v}(\psi - 2\,\epsilon,\mathbf{y})$ is given. The set $\mathcal{D}^{v}(\psi - 2\,\epsilon,\mathbf{y})$ is balanced as well. Observe that due to $\mathbf{x}  \in [\vec{\gamma}]_{v^{\mu}}$ we get $\mu\cdot v^{\Delta}(S) \le \epsilon $ for all $S \subset N$. However, it exists some coalitions $S \in \mathcal{S}^{v}(\mathbf{x})$ such that $e^{v}(S,\mathbf{y}) - \epsilon \not\le e^{v}(S,\mathbf{y}) + \mu\cdot v^{\Delta}(S)$ holds. Let $c \in [-\epsilon,\epsilon]$, now as $\lim_{c \uparrow 0}\, \mathcal{D}^{v^{\mu}}(\psi + c,\mathbf{y}) = \mathcal{D}^{v^{\mu}}(\psi,\mathbf{y})$ we have $\mathcal{D}^{v^{\mu}}(\psi,\mathbf{y}) \subseteq \mathcal{D}^{v}(\psi,\mathbf{y})$. Furthermore, we draw the conclusion that $\mathcal{S}^{v}(\mathbf{x}) \not\subseteq \mathcal{D}^{v^{\mu}}(\psi, \mathbf{y})$ is given due to $\mathcal{S}^{v}(\mathbf{x}) = \mathcal{S}^{v}(\mathbf{y}) \not= \mathcal{S}^{v^{\mu}}(\mathbf{y})$. Therefore, we obtain $\mathcal{D}^{v^{\mu}}(\psi,\mathbf{y}) \subset \mathcal {B} \subseteq \mathcal{D}^{v}(\psi - 2\,\epsilon,\mathbf{y})$. To see this, assume that $\mathcal{D}^{v^{\mu}}(\psi, \mathbf{y})$ is balanced, then we get $\mathcal{B} \subseteq \mathcal{D}^{v^{\mu}}(\psi, \mathbf{y})$, since $\mathcal{B}$ is minimal balanced. This implies $\mathcal{S}^{v}(\mathbf{x}) \subseteq \mathcal{D}^{v^{\mu}}(\psi, \mathbf{y})$. However, this contradicts $\mathcal{S}^{v}(\mathbf{x}) \not\subseteq \mathcal{D}^{v^{\mu}}(\psi, \mathbf{y})$. We conclude that $\mathcal{D}^{v^{\mu}}(\psi,\mathbf{y}) \subset \mathcal {B}$ must hold, but then the set $\mathcal{D}^{v^{\mu}}(\psi,\mathbf{y})$ can not be balanced. Hence, $\mathbf{y} \not\in \mathcal{P\text{\itshape r}N}(v^{\mu})$.  

 \item Finally, if $\mathbf{x}=\mathbf{y}$, then $\mathbf{x}$ is the pre-nucleolus of game $v$, but it does not belong anymore to payoff equivalence class $[\vec{\gamma}]$ under $v^{\mu}$, that is, $[\vec{\gamma}]$ has shrunk. Therefore, $\mathcal{S}^{v}(\mathbf{x}) \not= \mathcal{S}^{v^{\mu}}(\mathbf{x}) $. Define from the set $\mathcal{S}^{v}(\mathbf{x})$ a minimal balanced collection $\mathcal{B}$ that contains $\mathcal{S}^{v}(\mathbf{x})$. In the next step, we can single out any $\psi \in \mathbb{R}$ such that the balanced set $\mathcal{D}^{v}(\psi,\mathbf{x})$ satisfies $\mathcal{S}^{v}(\mathbf{x}) \subseteq \mathcal{B} \subseteq \mathcal{D}^{v}(\psi,\mathbf{x}) \not=\emptyset$. In view of $\mathbf{x} \in \mathcal{P\text{\itshape r}K}(v^{\mu})$, it must exist an $\epsilon > 0$ within the maximum surpluses can be varied without effecting the pre-kernel property of $\mathbf{x}$ even when $\mathbf{x}  \not\in [\vec{\gamma}]_{v^{\mu}}$, thus we have $\mu\cdot v^{\Delta}(S) \le \epsilon $ for all $S \subset N$. This implies that $\mathcal{D}^{v}(\psi,\mathbf{x}) \subseteq \mathcal{D}^{v}(\psi - 2\,\epsilon,\mathbf{x})$ is in force. The set $\mathcal{D}^{v}(\psi - 2\,\epsilon,\mathbf{x})$ is balanced as well. However, it exists some coalitions $S \in \mathcal{S}^{v}(\mathbf{x})$ such that $e^{v}(S,\mathbf{x}) - \epsilon \not\le e^{v}(S,\mathbf{x}) + \mu\cdot v^{\Delta}(S)$ is valid. Let $c \in [-\epsilon,\epsilon]$, now as $\lim_{c \uparrow 0}\, \mathcal{D}^{v^{\mu}}(\psi + c,\mathbf{x}) = \mathcal{D}^{v^{\mu}}(\psi,\mathbf{x})$ we have $\mathcal{D}^{v^{\mu}}(\psi,\mathbf{x}) \subseteq \mathcal{D}^{v}(\psi,\mathbf{x})$. Furthermore, we draw the conclusion that $\mathcal{S}^{v}(\mathbf{x}) \not\subseteq \mathcal{D}^{v^{\mu}}(\psi, \mathbf{x})$ is given due to $\mathcal{S}^{v}(\mathbf{x}) \not= \mathcal{S}^{v^{\mu}}(\mathbf{x})$. Therefore, we obtain $\mathcal{D}^{v^{\mu}}(\psi,\mathbf{x}) \subset \mathcal {B} \subseteq \mathcal{D}^{v}(\psi - 2\,\epsilon,\mathbf{x})$ by the same reasoning as under (1). Then the set $\mathcal{D}^{v^{\mu}}(\psi,\mathbf{x})$ can not be balanced. Hence, $\mathbf{x} \not\in \mathcal{P\text{\itshape r}N}(v^{\mu})$.  
 \end{enumerate}
\end{proof}

\begin{theorem}
  \label{thm:nprnmu2}
 Let $\langle\, N, v\, \rangle$ be a TU game that has a non unique pre-kernel such that $\mathbf{x} \in \mathcal{P\text{\itshape r}K}(v) \cap [\vec{\gamma}]$, $\{\mathbf{y}\} = \mathcal{P\text{\itshape r}N}(v) \cap [\vec{\gamma}_{1}]$ is satisfied, and let $\langle\, N, v^{\mu}\, \rangle$ be a related game of $v$ with $\mu \not=0$ derived from $\mathbf{x}$ such that $\mathbf{x} \in \mathcal{P\text{\itshape r}K}(v^{\mu}) \cap [\vec{\gamma}]$ holds. If $\Delta \in \mathcal{N}_{\boldsymbol{\EuScript{W}}} \backslash \mathcal{N}_{\boldsymbol{\EuScript{W}_{1}}}$, then $\mathbf{y} \not\in \mathcal{P\text{\itshape r}K}(v^{\mu})$ and a fortiori $\mathbf{y} \not\in \mathcal{P\text{\itshape r}N}(v^{\mu})$.
\end{theorem}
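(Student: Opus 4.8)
The plan is to argue by contradiction along the lines of case~(2) in the proof of Proposition~\ref{prop:uniqpk02}, while exploiting the feature — absent there — that here $\mathbf{y}=\nu(v)$ is itself a pre-kernel point of the default game, which makes the relevant correction term vanish. First I would assume $\mathbf{y}\in\mathcal{P\text{\itshape r}K}(v^{\mu})$ and observe that, since $\mathbf{y}$ sits in its proper payoff equivalence class $[\vec{\gamma}_{1}]$, one has $h^{v^{\mu}}=h^{v^{\mu}}_{\gamma_{1}}$ there by Lemma~6.2.2 of~\citet{mei:13}; hence the vector of (un)balanced excesses read off through $\boldsymbol{\mathcal{V}}_{1}^{\top}$ is exactly the vector of (un)balanced maximum surpluses, so the pre-kernel property of $\mathbf{y}$ under $v^{\mu}$ is equivalent to $\vec{\xi}^{\,v^{\mu}}=\mathbf{0}$.

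Next I would extend $\mathbf{y}$ to $\overline{\mathbf{y}}$ by the measure $y(S)=\sum_{k\in S}y_{k}$, set $\overline{e}:=v-\overline{\mathbf{y}}$ and $\overline{e}^{\mu}:=v^{\mu}-\overline{\mathbf{y}}$, and exploit $v^{\mu}=v+\mu\cdot v^{\Delta}$ with $v^{\Delta}=\boldsymbol{\EuScript{U}}\,\Delta$ to compute
\begin{equation*}
  \vec{\xi}^{\,v^{\mu}} = \boldsymbol{\mathcal{V}}_{1}^{\top}\,\overline{e}^{\mu} = \boldsymbol{\mathcal{V}}_{1}^{\top}\,(v-\overline{\mathbf{y}}) + \mu\cdot\boldsymbol{\mathcal{V}}_{1}^{\top}\,\boldsymbol{\EuScript{U}}\,\Delta = \vec{\xi}^{\,v} + \mu\cdot\boldsymbol{\EuScript{W}}_{1}\,\Delta .
\end{equation*}
The decisive step is then to note that $\mathbf{y}=\nu(v)\in\mathcal{P\text{\itshape r}K}(v)\cap[\vec{\gamma}_{1}]$ yields $\vec{\xi}^{\,v}=\vec{\zeta}^{\,v}=\mathbf{0}$ by~\eqref{eq:xi_zet}, so that $\vec{\xi}^{\,v^{\mu}}=\mu\cdot\boldsymbol{\EuScript{W}}_{1}\,\Delta$. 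Since $\mu\neq0$ and $\Delta\in\mathcal{N}_{\boldsymbol{\EuScript{W}}}\backslash\mathcal{N}_{\boldsymbol{\EuScript{W}_{1}}}$ force $\boldsymbol{\EuScript{W}}_{1}\,\Delta\neq\mathbf{0}$, I obtain $\vec{\xi}^{\,v^{\mu}}\neq\mathbf{0}$, contradicting the pre-kernel characterization of the first step. This gives $\mathbf{y}\notin\mathcal{P\text{\itshape r}K}(v^{\mu})$, and because $\mathcal{P\text{\itshape r}N}(v^{\mu})\subseteq\mathcal{P\text{\itshape r}K}(v^{\mu})$ always holds, the a~fortiori claim $\mathbf{y}\notin\mathcal{P\text{\itshape r}N}(v^{\mu})$ follows at once.

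The hard part will be the justification in the first step that $\vec{\xi}^{\,v^{\mu}}=\mathbf{0}$ is genuinely equivalent to $\mathbf{y}\in\mathcal{P\text{\itshape r}K}(v^{\mu})$: one must ensure that the fixed matrix $\boldsymbol{\mathcal{V}}_{1}^{\top}$, built from $\mathcal{S}(\vec{\gamma}_{1})$, still encodes the maximum surpluses of the perturbed game, i.e.~that $\mathbf{y}$ does not leave the payoff set generated by $\mathcal{S}(\vec{\gamma}_{1})$ when $v$ is replaced by $v^{\mu}$. I would settle this exactly as in Proposition~\ref{prop:uniqpk02}, invoking $h^{v^{\mu}}=h^{v^{\mu}}_{\gamma_{1}}$ on $[\vec{\gamma}_{1}]$. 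Everything else is a one-line linear computation; in particular, no Moore--Penrose cancellation argument is required here, because the pre-nucleolus of the default game contributes the vanishing term $\vec{\xi}^{\,v}=\mathbf{0}$ that had to be handled with care in the earlier proposition.
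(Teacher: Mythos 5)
Your proposal is correct and follows essentially the same route as the paper's own proof: both rest on the identity $\vec{\xi}^{\,v^{\mu}}=\boldsymbol{\mathcal{V}}_{1}^{\top}(v-\overline{\mathbf{y}})+\mu\,\boldsymbol{\mathcal{V}}_{1}^{\top}v^{\Delta}=\mu\,\boldsymbol{\EuScript{W}}_{1}\Delta$, the vanishing of $\vec{\xi}^{\,v}$ because $\mathbf{y}=\nu(v)$ is a pre-kernel point of $v$ lying in $[\vec{\gamma}_{1}]$, and the hypothesis $\Delta\notin\mathcal{N}_{\boldsymbol{\EuScript{W}}_{1}}$ to force $\vec{\xi}^{\,v^{\mu}}\neq\mathbf{0}$, whence $\mathbf{y}\notin\mathcal{P\text{\itshape r}K}(v^{\mu})$ and a fortiori $\mathbf{y}\notin\mathcal{P\text{\itshape r}N}(v^{\mu})$. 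Your contradiction framing versus the paper's direct computation, and your explicit flagging of the step that $\boldsymbol{\mathcal{V}}_{1}^{\top}$ still encodes the maximum surpluses of $v^{\mu}$ on $[\vec{\gamma}_{1}]$ (which the paper handles the same way, via Lemma 6.2.2 as in Proposition~\ref{prop:uniqpk02}), are purely presentational differences.
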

\begin{proof}
  From the payoff equivalence classes $[\vec{\gamma}]$ and $[\vec{\gamma}_{1}]$ we derive the corresponding matrices $\boldsymbol{\EuScript{W}} := \boldsymbol{\mathcal{V}}^{\top}\, \boldsymbol{\EuScript{U}}$ and $\boldsymbol{\EuScript{W}}_{1} := \boldsymbol{\mathcal{V}}_{1}^{\top}\, \boldsymbol{\EuScript{U}}$, respectively. By assumption, it is $\Delta \in \mathcal{N}_{\boldsymbol{\EuScript{W}}} \backslash \mathcal{N}_{\boldsymbol{\EuScript{W}_{1}}}$ satisfied. From this argument, we can express the vector of unbalanced excesses $\vec{\xi}^{\,v^{\mu}}$ at $\mathbf{y}$ by 
\begin{equation*}
  \vec{\xi}^{\,v^{\mu}}=\boldsymbol{\mathcal{V}}_{1}^{\top}\,\overline{e}^{\mu}= \boldsymbol{\mathcal{V}}_{1}^{\top}\,(v^{\mu}-\overline{\mathbf{y}})=\boldsymbol{\mathcal{V}}_{1}^{\top}\,( v -\overline{\mathbf{y}} + \mu \cdot v^{\Delta}) = \vec{\xi}^{\,v} + \mu \cdot \boldsymbol{\mathcal{V}}_{1}^{\top}\,v^{\Delta} = \mu \cdot \boldsymbol{\mathcal{V}}_{1}^{\top}\,v^{\Delta}\not= \mathbf{0}.
\end{equation*}
Observe that $\vec{\xi}^{\,v} = \boldsymbol{\mathcal{V}}_{1}^{\top}\,(v-\overline{\mathbf{y}}) = \mathbf{0}$, since vector $\mathbf{y} \in [\vec{\gamma}_{1}]$ is a pre-kernel element of game $v$. However, due to $\Delta \in \mathcal{N}_{\boldsymbol{\EuScript{W}}} \backslash \mathcal{N}_{\boldsymbol{\EuScript{W}_{1}}}$, we obtain $\boldsymbol{\mathcal{V}}_{1}^{\top}\,v^{\Delta} \not= \mathbf{0}$, it follows that $\mathbf{y} \not\in \mathcal{P\text{\itshape r}K}(v^{\mu})$. The conclusion follows that $\mathbf{y} \not\in \mathcal{P\text{\itshape r}N}(v^{\mu})$ must hold.
\end{proof}

\begin{theorem}
  \label{thm:nprnmu3}
 Let $\langle\, N, v\, \rangle$ be a TU game that has a non unique pre-kernel such that $\mathbf{x} \in \mathcal{P\text{\itshape r}K}(v)\backslash \mathcal{P\text{\itshape r}N}(v)$ and $\mathbf{x} \in [\vec{\gamma}]$. If $\langle\, N, v^{\mu}\, \rangle$ is a related game of $v$ with $\mu \not=0$ derived from $\mathbf{x}$ such that $\mathbf{x} \in \mathcal{P\text{\itshape r}K}(v^{\mu}) \cap [\vec{\gamma}]$ holds, then $\mathbf{x} \not\in \mathcal{P\text{\itshape r}N}(v^{\mu})$.
\end{theorem}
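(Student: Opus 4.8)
The plan is to prove the statement through the contrapositive of Kohlberg's criterion (Theorem~\ref{thm:kohlb}): it suffices to exhibit a single level $\psi$ for which $\mathcal{D}^{v^{\mu}}(\psi,\mathbf{x})$ is non-empty yet not balanced, since this rules out $\mathbf{x}=\nu(N,v^{\mu})$. The argument is designed to run parallel to the proof of Theorem~\ref{thm:nprnmu}, the essential difference being that the non-balancedness is now inherited directly from the standing failure $\mathbf{x}\notin\mathcal{P\text{\itshape r}N}(v)$ rather than manufactured from a change of equivalence class. First I would record the invariants supplied by the hypotheses. Since $\mathbf{x}$ remains in $[\vec{\gamma}]$ under the related game, the most effective coalitions are unchanged, $\mathcal{S}^{v}(\mathbf{x})=\mathcal{S}^{v^{\mu}}(\mathbf{x})$; and since $v^{\mu}=v+\mu\cdot v^{\Delta}$ with $v^{\Delta}=\boldsymbol{\EuScript{U}}\Delta$, $\Delta\in\mathcal{N}_{\boldsymbol{\EuScript{W}}}$, the defining identity $\boldsymbol{\mathcal{V}}^{\top}v^{\Delta}=\mathbf{0}$ holds, so that the excesses transform by the coalition-wise shift $e^{v^{\mu}}(S,\mathbf{x})=e^{v}(S,\mathbf{x})+\mu\cdot v^{\Delta}(S)$, while the very fact that $\mathbf{x}$ does not leave $[\vec{\gamma}]$ furnishes a margin $\epsilon>0$ with $|\mu\cdot v^{\Delta}(S)|\le\epsilon$ for all $S\subset N$, exactly as in Theorem~\ref{thm:nprnmu}.

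Since $\mathbf{x}\in\mathcal{P\text{\itshape r}K}(v)\setminus\mathcal{P\text{\itshape r}N}(v)$, Theorem~\ref{thm:kohlb} supplies a level $\psi_{0}$ at which $\emptyset\neq\mathcal{D}^{v}(\psi_{0},\mathbf{x})$ is not balanced, and I would take $\psi_{0}$ to be the \emph{topmost} such level, so that every strictly higher level set of $v$ at $\mathbf{x}$ is balanced and the obstruction is localized at the most-effective-coalition frontier. Next I would fix a minimal balanced collection $\mathcal{B}$ erected around $\mathcal{D}^{v}(\psi_{0},\mathbf{x})$ and, combining the shift relation with the margin bound, position the neighbouring level sets of $v^{\mu}$ so that $\mathcal{D}^{v^{\mu}}(\psi_{0},\mathbf{x})$ is squeezed into a proper subcollection of $\mathcal{B}$ sitting inside $\mathcal{D}^{v}(\psi_{0}-2\epsilon,\mathbf{x})$; the limiting identity $\lim_{c\uparrow 0}\mathcal{D}^{v^{\mu}}(\psi_{0}+c,\mathbf{x})=\mathcal{D}^{v^{\mu}}(\psi_{0},\mathbf{x})$ pins this set down. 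As $\mathcal{B}$ is minimal balanced it admits no proper balanced subcollection, whence $\mathcal{D}^{v^{\mu}}(\psi_{0},\mathbf{x})$ cannot be balanced, and Theorem~\ref{thm:kohlb} yields $\mathbf{x}\notin\mathcal{P\text{\itshape r}N}(v^{\mu})$.

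The main obstacle is that the shift $\mu\cdot v^{\Delta}(S)$ is coalition dependent, so the non-balanced collection $\mathcal{D}^{v}(\psi_{0},\mathbf{x})$ need not remain a level set of $v^{\mu}$; a priori the perturbation could even break ties among the tied most effective coalitions of $\mathcal{C}^{v}(\mathbf{x})$ and thereby reshuffle the top level set into a balanced one. The two invariants are precisely what close this gap: the margin bound $|\mu\cdot v^{\Delta}(S)|\le\epsilon$ forbids any coalition from crossing an excess gap wider than $2\epsilon$, while $\boldsymbol{\mathcal{V}}^{\top}v^{\Delta}=\mathbf{0}$ keeps the representatives $S_{ij}\in\mathcal{S}^{v}(\mathbf{x})$ balanced and in place, so that the minimal balanced collection $\mathcal{B}$ built around them survives the passage to $v^{\mu}$. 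The delicate point is the choice of $\psi_{0}$ as the first failure from the top: everything above it being balanced confines the non-balancedness to the frontier where $\boldsymbol{\mathcal{V}}^{\top}v^{\Delta}=\mathbf{0}$ governs the shift, and it is this localization that prevents the coalition-wise perturbation from repairing the obstruction, in sharp contrast to the single-pre-kernel situation of Theorem~\ref{thm:prnmu} where the full-rank property instead \emph{propagates} balancedness downward.
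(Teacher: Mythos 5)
Your overall strategy---transferring the Kohlberg violation from $v$ to $v^{\mu}$ via Theorem~\ref{thm:kohlb}---is the right one, and your inventory of invariants (unchanged $\mathcal{S}^{v}(\mathbf{x})=\mathcal{S}^{v^{\mu}}(\mathbf{x})$, the coalition-wise shift $e^{v^{\mu}}(S,\mathbf{x})=e^{v}(S,\mathbf{x})+\mu\cdot v^{\Delta}(S)$, the margin $|\mu\cdot v^{\Delta}(S)|\le\epsilon$) matches the paper. But the mechanism by which you conclude non-balancedness is broken in two places. First, a \emph{minimal} balanced collection $\mathcal{B}$ containing $\mathcal{D}^{v}(\psi_{0},\mathbf{x})$ need not exist: the characteristic vectors of a minimal balanced collection are linearly independent, so it has at most $n$ members, whereas the level set $\mathcal{D}^{v}(\psi_{0},\mathbf{x})$ can contain far more than $n$ coalitions. (In Theorem~\ref{thm:nprnmu} the paper erects $\mathcal{B}$ around $\mathcal{S}^{v}(\mathbf{x})$, not around a level set, and under an explicit extra hypothesis; that device is tailored to the situation in which the payoff equivalence class---and hence $\mathcal{S}^{v}$---changes, which is precisely what does \emph{not} happen here.) Second, even granting such a $\mathcal{B}$, nothing forces $\mathcal{D}^{v^{\mu}}(\psi_{0},\mathbf{x})\subseteq\mathcal{B}$: the margin bound only yields the sandwich $\mathcal{D}^{v}(\psi_{0}+\epsilon,\mathbf{x})\subseteq\mathcal{D}^{v^{\mu}}(\psi_{0},\mathbf{x})\subseteq\mathcal{D}^{v}(\psi_{0}-\epsilon,\mathbf{x})$, and the perturbation can lift coalitions lying \emph{outside} $\mathcal{B}$ into the perturbed level set. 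Moreover, minimality of $\mathcal{B}$ only excludes \emph{proper} balanced subcollections, and you give no argument ruling out $\mathcal{D}^{v^{\mu}}(\psi_{0},\mathbf{x})=\mathcal{B}$, which would be balanced. Note also that by working at the unshifted level $\psi_{0}$ you lose the inclusion $\mathcal{D}^{v}(\psi_{0},\mathbf{x})\subseteq\mathcal{D}^{v^{\mu}}(\psi_{0},\mathbf{x})$, since excesses may drop by up to $\epsilon$ under the perturbation.

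The paper's proof needs none of this scaffolding, and in particular no minimal balanced collection. It picks $\psi$ with $\mathcal{D}^{v}(\psi,\mathbf{x})\not=\emptyset$ not balanced together with the gap property $\mathcal{D}^{v}(\psi,\mathbf{x})=\mathcal{D}^{v}(\psi-2\epsilon,\mathbf{x})$ (the excesses at $\mathbf{x}$ take only finitely many values), and then closes the sandwich at the \emph{shifted} level $\psi-\epsilon$:
\begin{equation*}
  \mathcal{D}^{v}(\psi,\mathbf{x})\;\subseteq\;\mathcal{D}^{v^{\mu}}(\psi-\epsilon,\mathbf{x})\;\subseteq\;\mathcal{D}^{v}(\psi-2\epsilon,\mathbf{x})\;=\;\mathcal{D}^{v}(\psi,\mathbf{x}),
\end{equation*}
so that $\mathcal{D}^{v^{\mu}}(\psi-\epsilon,\mathbf{x})$ \emph{equals} the unbalanced collection, and Theorem~\ref{thm:kohlb} immediately gives $\mathbf{x}\notin\mathcal{P\text{\itshape r}N}(v^{\mu})$. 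In short: here the unbalancedness transfers because the offending level set is literally reproduced under $v^{\mu}$; your ``first failure from the top'' localization and the detour through a minimal balanced collection are not merely unnecessary---they are exactly the steps of your argument that fail.
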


\begin{proof}
   According to our assumption $\mathbf{x}$ is not the pre-nucleolus of game $v$, this implies that there exists some $\psi \in \mathbb{R}$ such that $\mathcal{D}^{v}(\psi,\mathbf{x}) \not= \emptyset$ is not balanced. Recall that the set of lexicographically smallest most effective coalitions $\mathcal{S}^{v}(\mathbf{x})$ has not changed under $v^{\mu}$, since $\mathbf{x}$ is a pre-kernel element of game $v^{\mu}$ which still belongs to the payoff equivalence class $[\vec{\gamma}]$. Then exists a bound $\epsilon > 0$ within the maximum surpluses can be varied without effecting the pre-kernel property of $\mathbf{x}$. Thus, we get $\mathcal{D}^{v}(\psi,\mathbf{x}) = \mathcal{D}^{v}(\psi - 2\,\epsilon,\mathbf{x}) \not= \emptyset$ is satisfied. Then $e^{v}(S,\mathbf{x}) - \epsilon \le e^{v}(S,\mathbf{x}) + \mu\cdot v^{\Delta}(S) \le e^{v}(S,\mathbf{x}) + \epsilon$ for all $S \subseteq N$, therefore, this implies $\mathcal{D}^{v^{\mu}}(\psi - \epsilon,\mathbf{x}) = \mathcal{D}^{v}(\psi,\mathbf{x})$. The set $\mathcal{D}^{v^{\mu}}(\psi - \epsilon,\mathbf{x})$ is not balanced, we conclude that $\mathbf{x} \not\in \mathcal{P\text{\itshape r}N}(v^{\mu})$. 
\end{proof}
   
\begin{theorem}
  \label{thm:siva3}
  Assume that the payoff equivalence class $[\vec{\gamma}]$ induced from TU game $\langle\, N, v\, \rangle$ has non-empty interior. In addition, assume that the pre-kernel of game $\langle\, N, v\, \rangle$ constitutes a line segment such that $\mathbf{x} \in \mathcal{P\text{\itshape r}N}(v) \cap \partial\overline{[\vec{\gamma}]}$, $\mathcal{P\text{\itshape r}K}(v) \cap \overline{[\vec{\gamma}_{1}]}$, and $\mathbf{x} \in \mathcal{P\text{\itshape r}K}(v^{\mu}) \cap [\vec{\gamma}]$ is satisfied, then the pre-kernel $\mathcal{P\text{\itshape r}K}(v^{\mu})$ of a related TU game $\langle\, N, v^{\mu}\, \rangle$ with $\mu \not=0$ derived from $\mathbf{x}$ is at least disconnected, otherwise unique. 
\end{theorem}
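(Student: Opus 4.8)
The plan is to decompose the pre-kernel of the related game through the piecewise representation $\mathcal{P\text{\itshape r}K}(v^{\mu}) = \bigcup_{k} M(h^{v^{\mu}}_{\gamma_{k}},\overline{[\vec{\gamma}_{k}]})$ underlying Proposition~\ref{prop:eqgr}, and to track separately the two pieces named in the hypotheses, the one sitting in $\overline{[\vec{\gamma}]}$ and the one sitting in $\overline{[\vec{\gamma}_{1}]}$, with everything driven by the location of the perturbation direction $\Delta$ relative to the null spaces $\mathcal{N}_{\boldsymbol{\EuScript{W}}}$ and $\mathcal{N}_{\boldsymbol{\EuScript{W}_{1}}}$. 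First I would record that, since $\Delta \in \mathcal{N}_{\boldsymbol{\EuScript{W}}}$, Lemma~\ref{lem:repl_min} keeps the configuration $\vec{\alpha} = \boldsymbol{\mathcal{V}}^{\top} v$ invariant, so the affine solution set $M(h^{v^{\mu}}_{\gamma})$ coincides with $M(h^{v}_{\gamma})$. By Theorem~\ref{thm:repl_prk} the pre-nucleolus $\mathbf{x}$ is reproduced as a pre-kernel element of $v^{\mu}$, and the hypothesis $\mathbf{x}\in\mathcal{P\text{\itshape r}K}(v^{\mu})\cap[\vec{\gamma}]$ now places $\mathbf{x}$ in the relative interior of $[\vec{\gamma}]$ under $v^{\mu}$, whereas under $v$ it lay on $\partial\overline{[\vec{\gamma}]}$.

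Next I would treat the $\overline{[\vec{\gamma}_{1}]}$-piece exactly as in Theorem~\ref{thm:nprnmu2}. Picking a kernel point $\mathbf{y}\in\mathcal{P\text{\itshape r}K}(v)\cap\overline{[\vec{\gamma}_{1}]}$ of the default game and extending it to $\overline{\mathbf{y}}$, the imbalance vector transforms as $\vec{\xi}^{\,v^{\mu}} = \boldsymbol{\mathcal{V}}_{1}^{\top}(v-\overline{\mathbf{y}}) + \mu\,\boldsymbol{\mathcal{V}}_{1}^{\top} v^{\Delta} = \vec{\xi}^{\,v} + \mu\,\boldsymbol{\mathcal{V}}_{1}^{\top} v^{\Delta}$, with $\vec{\xi}^{\,v}=\mathbf{0}$ because $\mathbf{y}$ balances under $v$. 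If $\Delta\in\mathcal{N}_{\boldsymbol{\EuScript{W}}}\cap\mathcal{N}_{\boldsymbol{\EuScript{W}_{1}}}$, then $\boldsymbol{\mathcal{V}}_{1}^{\top} v^{\Delta}=\mathbf{0}$, so $\vec{\xi}^{\,v^{\mu}}=\mathbf{0}$ and the whole $\overline{[\vec{\gamma}_{1}]}$-portion of the segment stays in $\mathcal{P\text{\itshape r}K}(v^{\mu})$; if instead $\Delta\in\mathcal{N}_{\boldsymbol{\EuScript{W}}}\setminus\mathcal{N}_{\boldsymbol{\EuScript{W}_{1}}}$, then $\boldsymbol{\mathcal{V}}_{1}^{\top} v^{\Delta}\neq\mathbf{0}$ (the computation closing Proposition~\ref{prop:uniqpk02} shows it cannot be cancelled), so $\vec{\xi}^{\,v^{\mu}}\neq\mathbf{0}$ and no point of $\overline{[\vec{\gamma}_{1}]}$ balances under $v^{\mu}$, i.e.\ that portion is destroyed.

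I would then assemble the two pieces. Because $\mathbf{x}$ has migrated from $\partial\overline{[\vec{\gamma}]}$ into the relative interior of $[\vec{\gamma}]$ under $v^{\mu}$, the junction that made $\mathcal{P\text{\itshape r}K}(v)$ one connected segment is severed: a full relative neighbourhood of $\mathbf{x}$ now lies inside $[\vec{\gamma}]$, so $\partial\overline{[\vec{\gamma}]}$ has pulled away from $\mathbf{x}$ and hence from $\overline{[\vec{\gamma}_{1}]}$. In the case $\Delta\in\mathcal{N}_{\boldsymbol{\EuScript{W}}}\cap\mathcal{N}_{\boldsymbol{\EuScript{W}_{1}}}$ the surviving $\overline{[\vec{\gamma}_{1}]}$-portion and the component through $\mathbf{x}$ are then contained in disjoint open sets, so $\mathcal{P\text{\itshape r}K}(v^{\mu})$ is disconnected. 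In the case $\Delta\in\mathcal{N}_{\boldsymbol{\EuScript{W}}}\setminus\mathcal{N}_{\boldsymbol{\EuScript{W}_{1}}}$ only the component carrying $\mathbf{x}$ survives, which is the remaining alternative.

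The main obstacle is exactly this last clause, namely pinning down the precise topological type of the component through $\mathbf{x}$. Since $\mathbf{x}$ lies on $\partial\overline{[\vec{\gamma}]}$ under $v$ with $h^{v}_{\gamma}(\mathbf{x})=0$, the matrix $\mathbf{E}^{\top}$ attached to $[\vec{\gamma}]$ must be rank deficient (a full rank $\mathbf{E}^{\top}$ would, by Lemma~\ref{lem:repl_prk}, force the unique minimizer to be the interior ellipsoid centre, contradicting $\mathbf{x}\in\partial\overline{[\vec{\gamma}]}$), so $M(h^{v}_{\gamma})=M(h^{v^{\mu}}_{\gamma})$ is an affine set of positive dimension and the replication does not by itself collapse this component to the singleton $\{\mathbf{x}\}$. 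Consequently one can only assert the statement in sufficient-conditions form: $\mathcal{P\text{\itshape r}K}(v^{\mu})$ is at least disconnected, and is otherwise reduced to the single surviving component carrying the pre-nucleolus. Establishing that this component is a genuine singleton would require additional full-rank (non-empty interior) information at $\mathbf{x}$ under $v^{\mu}$, which the general setting does not supply; I would therefore record this gap rather than attempt to close it, and verify in passing that the bound $\mu\,v^{\Delta}\in[-\mathsf{C},\mathsf{C}]^{p^{\prime}}$ leaves $\mathcal{S}(\mathbf{x})$ unchanged and so prevents any further equivalence class from contributing new kernel points, ruling out a spurious third component.
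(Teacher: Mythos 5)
Your proposal breaks down at the step you yourself flag as the ``main obstacle,'' and the breakdown is caused by a false claim, not by a genuine gap in the statement. You assert that $\mathbf{E}^{\top}$ attached to $[\vec{\gamma}]$ must be rank deficient because a full-rank $\mathbf{E}^{\top}$ would, via Lemma~\ref{lem:repl_prk}, force the minimizer into the interior, contradicting $\mathbf{x}\in\partial\overline{[\vec{\gamma}]}$ under $v$. This misapplies the lemma: Lemma~\ref{lem:repl_prk} presupposes $\mathcal{P\text{\itshape r}K}(v)\subset[\vec{\gamma}]$, which fails here, and in any case the minimum set of the quadratic $h^{v}_{\gamma}$ is a global object that is perfectly free to sit on the boundary of the payoff class on which $h^{v}=h^{v}_{\gamma}$. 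In fact the non-empty interior hypothesis gives exactly the opposite of what you claim: $[\vec{\gamma}]$ is full-dimensional, so $\mathbf{E}^{\top}$ has rank $n$ (the paper's proof uses this twice, first to get $[\vec{\gamma}]\nsim[\vec{\gamma}_{1}]$ from $\operatorname{rank}\mathbf{E}^{\top}=n$ versus $\operatorname{rank}\mathbf{E}^{\top}_{1}=n-1$, and again in the Kohlberg balancedness step). Full rank makes $M(h^{v^{\mu}}_{\gamma})=\{\mathbf{x}\}$ a singleton, and since the hypothesis places $\mathbf{x}$ in the interior of $[\vec{\gamma}]$ under $v^{\mu}$, the point $\mathbf{x}$ is an \emph{isolated} element of $\mathcal{P\text{\itshape r}K}(v^{\mu})$. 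That isolation is precisely what yields ``at least disconnected, otherwise unique''; the clause you declared unprovable is the easy consequence of the hypotheses, and your recorded ``gap'' is self-inflicted.

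Your handling of the $\overline{[\vec{\gamma}_{1}]}$ piece is also off target. The dichotomy $\Delta\in\mathcal{N}_{\boldsymbol{\EuScript{W}}}\cap\mathcal{N}_{\boldsymbol{\EuScript{W}_{1}}}$ versus $\Delta\in\mathcal{N}_{\boldsymbol{\EuScript{W}}}\backslash\mathcal{N}_{\boldsymbol{\EuScript{W}_{1}}}$ is not genuinely open under the theorem's hypotheses: because $[\vec{\gamma}]$ has been enlarged under $v^{\mu}$ at the expense of $[\vec{\gamma}_{1}]$, one can pick a point $\mathbf{x}^{\prime}\in\partial\overline{[\vec{\gamma}]}\cap\overline{[\vec{\gamma}_{1}]}$ (under $v^{\mu}$) that was a pre-kernel element of $v$, and then
\begin{equation*}
  0 < h^{v^{\mu}}_{\gamma}(\mathbf{x}^{\prime}) = h^{v^{\mu}}_{\gamma_{1}}(\mathbf{x}^{\prime}) = \Arrowvert\,\vec{\xi}^{\,v} + \mu\cdot\boldsymbol{\mathcal{V}}_{1}^{\top}\,v^{\Delta}\,\Arrowvert^{2} = \mu^{2}\cdot\Arrowvert\,\boldsymbol{\mathcal{V}}_{1}^{\top}\,v^{\Delta}\,\Arrowvert^{2},
\end{equation*}
with $\vec{\xi}^{\,v}=\mathbf{0}$, which \emph{forces} $\Delta\in\mathcal{N}_{\boldsymbol{\EuScript{W}}}\backslash\mathcal{N}_{\boldsymbol{\EuScript{W}_{1}}}$; the strict inequality holds because $\mathbf{x}^{\prime}\neq\mathbf{x}$ and $\mathbf{x}$ is the unique zero of $h^{v^{\mu}}_{\gamma}$ on $\overline{[\vec{\gamma}]}$. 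Your first case is therefore vacuous, and it is also the wrong source of disconnectedness: if the $\overline{[\vec{\gamma}_{1}]}$ portion survived up to its closure it would produce a second zero of $h^{v^{\mu}}_{\gamma}$ in $\overline{[\vec{\gamma}]}$, contradicting full rank, rather than a separate component. The correct architecture is the paper's: (i) show all old pre-kernel points in $[\vec{\gamma}_{1}]$ are destroyed via the displayed computation, (ii) show by the Kohlberg criterion (balancedness of $\mathcal{S}(\mathbf{x})$ from full rank, plus the $\mathcal{D}^{v^{\mu}}(\psi-\mathsf{C},\mathbf{x})$ sandwich) that $\mathbf{x}=\nu(N,v^{\mu})$ --- a step your proposal omits entirely --- and (iii) conclude from the isolation of $\mathbf{x}$ that $\mathcal{P\text{\itshape r}K}(v^{\mu})$ is either $\{\mathbf{x}\}$ or disconnected.
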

\begin{proof}
  In the fist step, we have simply to establish that for game $v^{\mu}$ the pre-imputations lying on the part of the line segment included in payoff equivalence class $[\vec{\gamma}_{1}]$ under game $v$ will loose their pre-kernel properties due to the change in the game parameter. In the second step, we have to show that the pre-nucleolus $\mathbf{x}$ under game $v$ is also the pre-nucleolus of the related game $v^{\mu}$.   
   \begin{enumerate}
  \item First notice that the payoff equivalence class $[\vec{\gamma}]$ has full dimension in accordance with its non-empty interior condition. This implies that the vector $\mathbf{x}$ must be the sole pre-kernel element in $\overline{[\vec{\gamma}]}$ (cf. with the proof of Theorem 7.8.1 in~\citet{mei:13}). By our hypothesis, it is even a boundary point of the payoff equivalence class under game $v$. Moreover, it must hold $[\vec{\gamma}] \nsim [\vec{\gamma}_{1}]$, since the rank of the induced matrix $\mathbf{E}^{\top}$ is $n$, and that of $\mathbf{E}^{\top}_{1}$ is $n-1$, therefore, we have $E^{\top}_{1} \not= E^{\top}\,X$ for all $X \in \text{GL}^{+}(n)$. 

In the next step, we select an arbitrary pre-kernel element from $\mathcal{P\text{\itshape r}K}(v) \cap \overline{[\vec{\gamma}_{1}]}$, say $\mathbf{y}$. By hypothesis, there exists a related game $v^{\mu}$ of $v$ such that $\mathbf{x} \in \mathcal{P\text{\itshape r}K}(v^{\mu}) \cap [\vec{\gamma}]$ holds, that is, there is no change in matrix $\mathbf{E}$ and vector $\vec{\alpha}$ implying $h^{v^{\mu}}(\mathbf{x})=h_{\gamma}^{v^{\mu}}(\mathbf{x})=0$. This implies that for game $v^{\mu}$ the payoff equivalence class $[\vec{\gamma}]$ has been enlarged in such a way that we can inscribe an ellipsoid with maximum volume $\varepsilon := \{\mathbf{y}^{\prime}\, \arrowvert h^{v^{\mu}}_{\gamma}(\mathbf{y}^{\prime}) \le \bar{c} \}$, whereas $h_{\gamma}^{v^{\mu}}$ is of type~\eqref{eq:objf2} and $\bar{c} > 0$ (cf. Lemma~\ref{lem:repl_prk}.). It should be obvious that element $\mathbf{x}$ is an interior point of $\varepsilon$, since $\mathbf{x} = M(h_{\gamma}^{v^{\mu}}) \subset \varepsilon \subset [\vec{\gamma}]$. We single out a boundary point $\mathbf{x}^{\prime}$ in $\partial\overline{[\vec{\gamma}]}$ under game $v^{\mu}$ which was a pre-kernel element under game $v$, and satisfying after the parameter change the following properties: $\mathbf{x}^{\prime} \in \partial\overline{[\vec{\gamma}]} \cap \overline{[\vec{\gamma}_{1}]}$ with $\mathbf{x}^{\prime}=\mathbf{x} + \mathbf{z}$, and $\mathbf{z} \not= \mathbf{0}$. This is possible due to the fact that the equivalence class $[\vec{\gamma}]$ has been enlarged at the expense of equivalence class $[\vec{\gamma}_{1}]$, which has shrunk or shifted by the change in the game parameter. Observe now that two cases may happen, that is, either $\mathbf{x}^{\prime} \in \varepsilon$ or $\mathbf{x}^{\prime} \notin \varepsilon$. In the former case, we have $h_{\gamma}^{v^{\mu}}(\mathbf{x}^{\prime}) = h^{v^{\mu}}(\mathbf{x}^{\prime}) =  h_{\gamma_{1}}^{v^{\mu}}(\mathbf{x}^{\prime}) = \bar{c} > 0$, and in the latter case, we have $h_{\gamma}^{v^{\mu}}(\mathbf{x}^{\prime}) = h^{v^{\mu}}(\mathbf{x}^{\prime}) =  h_{\gamma_{1}}^{v^{\mu}}(\mathbf{x}^{\prime}) > \bar{c} > 0 = h^{v}(\mathbf{x}^{\prime}) = h_{\gamma_{1}}^{v}(\mathbf{x}^{\prime})$.

From $h_{\gamma_{1}}^{v^{\mu}}(\mathbf{x}^{\prime}) > 0$, and notice that the vector of unbalanced excesses at $\mathbf{x}^{\prime}$ is denoted as $\vec{\xi}^{\,v^{\mu}}$, we derive the following relationship 
\begin{equation*}
  h_{\gamma_{1}}^{v^{\mu}}(\mathbf{x}^{\prime}) = \Arrowvert\,\vec{\xi}^{\,v^{\mu}} \,\Arrowvert^{2} = \Arrowvert\, \vec{\xi}^{\,v} + \mu \cdot \boldsymbol{\mathcal{V}}_{1}^{\top}\,v^{\Delta}\,\Arrowvert^{2} = \Arrowvert\,\mu \cdot \boldsymbol{\mathcal{V}}_{1}^{\top}\,v^{\Delta} \,\Arrowvert^{2} = \mu^{2} \cdot \Arrowvert\, \boldsymbol{\mathcal{V}}_{1}^{\top}\,v^{\Delta} \,\Arrowvert^{2} > 0, 
\end{equation*}
with $\mu \not=0$. Thus, we have $\boldsymbol{\mathcal{V}}_{1}^{\top}\,v^{\Delta} \not= \mathbf{0}$, and therefore $\Delta \in \mathcal{N}_{\boldsymbol{\EuScript{W}}} \backslash \mathcal{N}_{\boldsymbol{\EuScript{W}_{1}}}$. Observe that $\vec{\xi}^{\,v} = \boldsymbol{\mathcal{V}}_{1}^{\top}\,(v-\overline{\mathbf{x}^{\prime}}) = \mathbf{0}$, since vector $\mathbf{x}^{\prime} \in \overline{[\vec{\gamma}_{1}]}$ is a pre-kernel element of game $v$. Take the vector $\mathbf{y} \in [\vec{\gamma}_{1}]$ from above that was on the line segment as vector $\mathbf{x}^{\prime}$ under game $v$ which constituted a part of the pre-kernel of game $v$, we conclude that $\mathbf{y} \not\in \mathcal{P\text{\itshape r}K}(v^{\mu})$ in accordance with $\boldsymbol{\mathcal{V}}_{1}^{\top}\,v^{\Delta} \not= \mathbf{0}$. 

 \item By our hypothesis, $\mathbf{x}$ is the pre-nucleolus of game $v$, and an interior point of equivalence class $[\vec{\gamma}]$ of the related game $v^{\mu}$. Using a similar argument as under (1) we can inscribe an ellipsoid with maximum volume $\varepsilon$, whereas $h_{\gamma}^{v^{\mu}}$ is of type~\eqref{eq:objf2} and $\bar{c} > 0$. In view of the assumption that $\mathbf{x}$ is also pre-kernel element of game $v^{\mu}$, we can draw the conclusion that the set of lexicographically smallest most effective coalitions $\mathcal{S}(\mathbf{x})$ has not changed under $v^{\mu}$. But then, we have $\mu\cdot v^{\Delta} \in [-\mathsf{C},\mathsf{C}]^{p^{\prime}}$. In addition, there exists a $\bar{\psi} \ge  \psi^{*}$ s.t. $\mathcal{S}(\mathbf{x}) \subseteq \mathcal{D}^{v}(\bar{\psi},\mathbf{x})$, that is, it satisfies Property I of~\citet{kohlb:71}. Moreover, matrix $\mathbf{E}^{\top}$ induced from $\mathcal{S}(\mathbf{x})$ has full rank, therefore, the column vectors of matrix $\mathbf{E}^{\top}$ are a spanning system of $\mathbb{R}^{n}$. Hence, we get $span\,\{\mathbf{1}_{S}\,\arrowvert\, S \in \mathcal{S}(\mathbf{x}) \} = \mathbb{R}^{n}$ as well, which implies that matrix $[\mathbf{1}_{S}]_{S \in \mathcal{S}(\mathbf{x})}$ has rank $n$, the collection $\mathcal{S}(\mathbf{x})$ must be balanced. In accordance with vector $\mathbf{x}$ as the pre-nucleolus of game $v$, we can choose the largest $\psi \in \mathbb{R}$ s.t. $\emptyset \not= \mathcal{D}^{v}(\psi,\mathbf{x}) \subseteq \mathcal{S}(\mathbf{x})$ is valid, which is a balanced set. Since $\mathsf{C} > 0$,  the set $\mathcal{D}^{v}(\psi - 2\,\mathsf{C},\mathbf{x}) \not=\emptyset$ is balanced as well. Now observe that $e^{v}(S,\mathbf{x}) -\mathsf{C} \le e^{v}(S,\mathbf{x}) + \mu\cdot v^{\Delta}(S) \le e^{v}(S,\mathbf{x}) + \mathsf{C}$ for all $S \subseteq N$. This implies $\mathcal{D}^{v}(\psi,\mathbf{x}) \subseteq \mathcal{S}(\mathbf{x}) \subseteq \mathcal{D}^{v^{\mu}}(\psi - \mathsf{C},\mathbf{x}) \subseteq \mathcal{D}^{v}(\psi - 2\,\mathsf{C},\mathbf{x})$, hence, $ \mathcal{D}^{v^{\mu}}(\psi - \mathsf{C},\mathbf{x})$ is balanced. To conclude, let $c \in [-\mathsf{C},\mathsf{C}]$, and from the observation $\lim_{c \uparrow 0}\, \mathcal{D}^{v^{\mu}}(\psi + c,\mathbf{x}) = \mathcal{D}^{v^{\mu}}(\psi,\mathbf{x}) \supseteq \mathcal{D}^{v}(\psi,\mathbf{x}) $, we draw the implication $\mathbf{x} = \nu(N,v^{\,\mu})$. 
\end{enumerate}
Finally, recall that the vector $\mathbf{x}$ is also the unique minimizer of function $h_{\gamma}^{v^{\mu}}$, which is an interior point of payoff equivalence class $[\vec{\gamma}]$, therefore the pre-kernel of the related game $v^{\mu}$ can not be connected. Otherwise the pre-kernel of the game consists of a single point.  
\end{proof}

\begin{corollary}
  \label{cor:prnmu3}
 Let $\langle\, N, v\, \rangle$ be a TU game that has a non single-valued pre-kernel such that $\mathbf{x} \in \mathcal{P\text{\itshape r}N}(v) \cap \partial\overline{[\vec{\gamma}]}$ and let $\langle\, N, v^{\mu}\, \rangle$ be a related game of $v$ derived from $\mathbf{x}$, whereas $\mathbf{x} \in int\,[\vec{\gamma}]_{v^{\mu}}$, then $\mathbf{x} = \nu(N,v^{\,\mu})$.
\end{corollary}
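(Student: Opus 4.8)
The plan is to deduce the claim directly from Kohlberg's balancedness criterion (Theorem~\ref{thm:kohlb}), running the same line of reasoning as in part~(2) of the proof of Theorem~\ref{thm:siva3} and in Theorem~\ref{thm:prnmu}. The observation that makes this possible is that the argument of Theorem~\ref{thm:prnmu} never really used the \emph{singleton} pre-kernel hypothesis as such; it used only that $\mathbf{x} = \nu(v)$ (to secure, via Theorem~\ref{thm:kohlb}, that every non-empty level set of the default game is balanced) and that $\mathbf{x}$ is an \emph{interior} point of its payoff equivalence class under the related game (to secure full rank of $\mathbf{E}^{\top}$ and an inscribed ellipsoid of positive volume). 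Here the first holds because $\mathbf{x} \in \mathcal{P\text{\itshape r}N}(v)$, while the second is furnished by the hypothesis $\mathbf{x} \in int\,[\vec{\gamma}]_{v^{\mu}}$. The decisive point is therefore the boundary-to-interior transition: although $\mathbf{x}$ is only a boundary point of $\overline{[\vec{\gamma}]}$ under $v$, the parameter change enlarges the class so that $\mathbf{x}$ becomes genuinely interior under $v^{\mu}$.

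First I would record the inherited data. Since $v^{\mu}$ is a related game derived from $\mathbf{x}$ in the sense of Lemma~\ref{lem:repl_min}, the configuration $\vec{\alpha}$ is invariant under the parameter change, so the set of lexicographically smallest most effective coalitions is preserved, $\mathcal{S}^{v}(\mathbf{x}) = \mathcal{S}^{v^{\mu}}(\mathbf{x}) =: \mathcal{S}(\mathbf{x})$. Invoking $\mathbf{x} \in int\,[\vec{\gamma}]_{v^{\mu}}$ together with Lemma~\ref{lem:repl_prk}, the payoff equivalence class under $v^{\mu}$ has non-empty interior, so there is an inscribed ellipsoid $\varepsilon := \{\mathbf{y}^{\prime}\,\arrowvert\, h^{v^{\mu}}_{\gamma}(\mathbf{y}^{\prime}) \le \bar{c}\}$ of maximum volume with $\bar{c} > 0$ having $\mathbf{x}$ as its centre, and $\mathbf{x} \in \mathcal{P\text{\itshape r}K}(v^{\mu})$. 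Full dimensionality forces the matrix $\mathbf{E}^{\top}$ induced from $\mathcal{S}(\mathbf{x})$ to have full rank $n$, whence $span\,\{\mathbf{1}_{S}\,\arrowvert\, S \in \mathcal{S}(\mathbf{x})\} = \mathbb{R}^{n}$ and $\mathcal{S}(\mathbf{x})$ is a \emph{spanning} balanced collection. The spanning property is what matters: any collection containing $\mathcal{S}(\mathbf{x})$ is again balanced, since the positive weights realising $\mathbf{1}_{N}$ on $\mathcal{S}(\mathbf{x})$ can be perturbed to assign strictly positive weights to the additional coalitions without destroying the balancing identity.

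Next I would transport balancedness across the parameter change. With $\mathsf{C} > 0$ supplied by Theorem~\ref{thm:repl_prk} and $\mu\cdot v^{\Delta} \in [-\mathsf{C},\mathsf{C}]^{p^{\prime}}$, one has $e^{v}(S,\mathbf{x}) - \mathsf{C} \le e^{v}(S,\mathbf{x}) + \mu\cdot v^{\Delta}(S) \le e^{v}(S,\mathbf{x}) + \mathsf{C}$ for all $S \subseteq N$. Choosing the thresholds exactly as in Theorem~\ref{thm:prnmu} (the largest $\psi$ with $\emptyset \neq \mathcal{D}^{v}(\psi,\mathbf{x}) \subseteq \mathcal{S}(\mathbf{x})$, which is balanced because $\mathbf{x} = \nu(v)$), these bounds give the nesting
\[
  \mathcal{D}^{v}(\psi,\mathbf{x}) \subseteq \mathcal{S}(\mathbf{x}) \subseteq \mathcal{D}^{v^{\mu}}(\psi - \mathsf{C},\mathbf{x}) \subseteq \mathcal{D}^{v}(\psi - 2\,\mathsf{C},\mathbf{x}),
\]
and since $\mathcal{S}(\mathbf{x})$ spans, the intermediate set $\mathcal{D}^{v^{\mu}}(\psi - \mathsf{C},\mathbf{x})$ is balanced. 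Finally, a passage to the limit with $c \in [-\mathsf{C},\mathsf{C}]$ and $\lim_{c \uparrow 0}\mathcal{D}^{v^{\mu}}(\psi + c,\mathbf{x}) = \mathcal{D}^{v^{\mu}}(\psi,\mathbf{x}) \supseteq \mathcal{D}^{v}(\psi,\mathbf{x})$ transfers balancedness to every non-empty level set of $\mathbf{x}$ under $v^{\mu}$, and Theorem~\ref{thm:kohlb} yields $\mathbf{x} = \nu(N,v^{\mu})$.

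I expect the main obstacle to be the balancedness bookkeeping rather than any new geometry. The step requiring genuine care is that the sandwiched collection $\mathcal{D}^{v^{\mu}}(\psi - \mathsf{C},\mathbf{x})$ inherits balancedness: this is not automatic from merely lying between two balanced collections and relies on $\mathcal{S}(\mathbf{x})$ being \emph{spanning}, so that balancedness is monotone under enlargement. One must also confirm that the boundary-to-interior transition encoded in $\mathbf{x} \in int\,[\vec{\gamma}]_{v^{\mu}}$ is legitimately available even though $v$ has a non single-valued pre-kernel with $\mathbf{x} \in \partial\overline{[\vec{\gamma}]}$ under $v$; this is exactly what distinguishes the corollary from Theorem~\ref{thm:prnmu} and what permits the inscribed-ellipsoid argument of Lemma~\ref{lem:repl_prk} to be reused here.
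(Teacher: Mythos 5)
Your proposal is correct and takes essentially the same route as the paper: the corollary is proved there implicitly by part~(2) of the proof of Theorem~\ref{thm:siva3}, which is exactly the Kohlberg-criterion argument of Theorem~\ref{thm:prnmu} with the singleton pre-kernel hypothesis replaced by the two facts you isolate, namely $\mathbf{x} = \nu(v)$ (so every non-empty level set $\mathcal{D}^{v}(\psi,\mathbf{x})$ is balanced) and $\mathbf{x} \in int\,[\vec{\gamma}]_{v^{\mu}}$ (so $\mathcal{S}^{v}(\mathbf{x})=\mathcal{S}^{v^{\mu}}(\mathbf{x})$, the ellipsoid of Lemma~\ref{lem:repl_prk} exists, and $\mathbf{E}^{\top}$ has full rank). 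Your explicit justification that the sandwiched collection $\mathcal{D}^{v^{\mu}}(\psi-\mathsf{C},\mathbf{x})$ inherits balancedness because it contains the \emph{spanning} balanced collection $\mathcal{S}(\mathbf{x})$ — balancedness being monotone under enlargement only in the spanning case — makes precise a step the paper leaves tacit.
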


\section{Concluding Remarks}
\label{sec:rem}
In this paper we have established that the set of related games derived from a default game with an unique pre-kernel must also possess this pre-kernel element as its single pre-kernel  point. Moreover, we have shown that the pre-kernel correspondence in the game space restricted to the convex hull comprising the default and related games is single-valued and constant, and therefore continuous. Although, we could provide some sufficient conditions under which the pre-nucleolus of a default game -- whereas the pre-kernel constitutes a line segment -- induces at least a disconnected pre-kernel for the set of related games, it is, however, still an open question if it is possible to obtain from a game with a non-unique pre-kernel some related games that have an unique pre-kernel. In this respect, the knowledge of more general conditions that preserve the pre-nucleolus property is of particular interest.

Even though, we have not provided a new set of game classes with a sole pre-kernel element, we nevertheless think that the presented approach is also very useful to bring forward our knowledge about the classes of transferable utility games where the pre-kernel coalesces with the pre-nucleolus. To answer this question, one need just to select boundary points of the convex cone of the class of convex games to enlarge the convex cone within the game space to identify game classes that allow for a singleton pre-kernel.         

\pagestyle{scrheadings} \chead{\empty}  
\footnotesize
\bibliography{siva_prk}

\begin{thebibliography}{12}
\providecommand{\natexlab}[1]{#1}
\providecommand{\url}[1]{\texttt{#1}}
\expandafter\ifx\csname urlstyle\endcsname\relax
  \providecommand{\doi}[1]{doi: #1}\else
  \providecommand{\doi}{doi: \begingroup \urlstyle{rm}\Url}\fi

\bibitem[Get{\'a}n et~al.(2012)Get{\'a}n, Izquierdo, Montes, and
  Rafels]{getraf:12}
J.~Get{\'a}n, J.~Izquierdo, J.~Montes, and C.~Rafels.
\newblock {The Bargaining Set and the Kernel of Almost-Convex Games}.
\newblock Technical report, University of Barcelona, Spain, 2012.

\bibitem[Kohlberg(1971)]{kohlb:71}
E.~Kohlberg.
\newblock {On the Nucleolus of a Characteristic Function Game}.
\newblock \emph{SIAM Journal of Applied Mathematics}, 20:\penalty0 62--66,
  1971.

\bibitem[Martinez-Legaz(1996)]{mart:96}
J-E. Martinez-Legaz.
\newblock {Dual Representation of Cooperative Games based on Fenchel-Moreau
  Conjugation}.
\newblock \emph{{Optimization}}, 36:\penalty0 291--319, 1996.

\bibitem[Maschler et~al.(1972)Maschler, Peleg, and Shapley]{MPSh:72}
M.~Maschler, B.~Peleg, and L.S. Shapley.
\newblock {The Kernel and Bargaining Set for Convex Games}.
\newblock \emph{{International Journal of Game Theory}}, 1:\penalty0 73--93,
  1972.

\bibitem[Maschler et~al.(1979)Maschler, Peleg, and Shapley]{MPSh:79}
M.~Maschler, B.~Peleg, and L.~S. Shapley.
\newblock {Geometric Properties of the Kernel, Nucleolus, and Related Solution
  Concepts}.
\newblock \emph{{Mathematics of Operations Research}}, 4:\penalty0 303--338,
  1979.

\bibitem[Meinhardt(2013)]{mei:13}
H.~I. Meinhardt.
\newblock \emph{{The Pre-Kernel as a Tractable Solution for Cooperative Games:
  An Exercise in Algorithmic Game Theory}}, volume~45 of \emph{{Theory and
  Decision Library: Series C}}.
\newblock Springer Publisher, Heidelberg/Berlin, 2013.

\bibitem[Meinhardt(2015{\natexlab{a}})]{mei:10a}
H.~I. Meinhardt.
\newblock {TuGames: A Mathematica Package for Cooperative Game Theory}.
\newblock Version 2.4, Karlsruhe Institute of Technology (KIT), Karlsruhe,
  Germany, 2015{\natexlab{a}}.
\newblock URL \url{http://library.wolfram.com/infocenter/MathSource/5709}.

\bibitem[Meinhardt(2015{\natexlab{b}})]{mei:11}
H.~I. Meinhardt.
\newblock {MatTuGames: A Matlab Toolbox for Cooperative Game Theory}.
\newblock Version 0.7, Karlsruhe Institute of Technology (KIT), Karlsruhe,
  Germany, 2015{\natexlab{b}}.
\newblock URL
  \url{http://www.mathworks.com/matlabcentral/fileexchange/35933-mattugames}.


\bibitem[Meseguer-Artola(1997)]{mes:97}
A.~Meseguer-Artola.
\newblock {Using the Indirect Function to characterize the Kernel of a
  TU-Game}.
\newblock Technical report, Departament d'Economia i d'Hist{\`o}ria
  Econ{\`o}mica, Universitat Aut{\`o}noma de Barcelona, Nov. 1997.
\newblock mimeo.

\bibitem[Peleg and Sudh{\"o}lter(2007)]{pel_sud:07}
B.~Peleg and P.~Sudh{\"o}lter.
\newblock \emph{{Introduction to the Theory of Cooperative Games}}, volume~34
  of \emph{{Theory and Decision Library: Series C}}.
\newblock {Springer-Verlag}, 2 edition, 2007.

\bibitem[Rockafellar(1970)]{Rocka:70}
R.~Rockafellar.
\newblock \emph{{Convex Analysis}}.
\newblock {Princeton University Press}, {Princeton}, 1970.

\bibitem[Shapley(1971)]{Shapley:71}
L.~S. Shapley.
\newblock {Cores of Convex Games,}.
\newblock \emph{{International Journal of Game Theory}}, 1:\penalty0 11--26,
  1971.

\end{thebibliography}

\end{document}